\documentclass[10pt]{amsart}
\usepackage{graphicx}
\usepackage{amscd}
\usepackage{amsmath}
\usepackage{amsthm}
\usepackage{amsfonts}
\usepackage{amssymb}
\usepackage{mathrsfs}
\usepackage{enumitem}
\usepackage{amsrefs}
\usepackage{xcolor}
\usepackage[colorlinks, citecolor=blue, linkcolor=red, pdfstartview=FitB]{hyperref}
\usepackage[latin1]{inputenc}

%      Blackboard bold letters

\newcommand{\bB}{{\mathbb{B}}}
\newcommand{\bC}{{\mathbb{C}}}

\newcommand{\bN}{{\mathbb{N}}}

%      Capital script letters
  \newcommand{\A}{{\mathcal{A}}}
  \newcommand{\B}{{\mathcal{B}}}

  \newcommand{\E}{{\mathcal{E}}}

\renewcommand{\H}{{\mathcal{H}}}
  \newcommand{\I}{{\mathcal{I}}}
  \newcommand{\J}{{\mathcal{J}}}
  \newcommand{\K}{{\mathcal{K}}}  

  \newcommand{\M}{{\mathcal{M}}}
  \newcommand{\N}{{\mathcal{N}}}

\renewcommand{\S}{{\mathcal{S}}}
  
  \newcommand{\U}{{\mathcal{U}}}

  \newcommand{\X}{{\mathcal{X}}}
  
  \newcommand{\Z}{{\mathcal{Z}}}

%Fraktur letters
\newcommand{\fA}{{\mathfrak{A}}}

\newcommand{\fC}{{\mathfrak{C}}}
\newcommand{\fc}{{\mathfrak{c}}}

\newcommand{\fd}{{\mathfrak{d}}}

\newcommand{\fH}{{\mathfrak{H}}}

\newcommand{\fT}{{\mathfrak{T}}}

%Roman letters for math

%Greek Letters

\newcommand{\eps}{\varepsilon}
\renewcommand{\phi}{\varphi}

\newcommand{\upchi}{{\raise.35ex\hbox{$\chi$}}}

%      Lower case bold letters

%      Useful shortforms

\newcommand{\ol}{\overline}

%      Text used in equations

%Operators

\newcommand{\re}{\operatorname{Re}}

\newcommand{\CB}{\operatorname{CB}}
\newcommand{\CP}{\operatorname{CP}}

\newcommand{\UCB}{\operatorname{UCB}}
\newcommand{\card}{\operatorname{card }}
\newcommand{\Pau}{\operatorname{\mathscr{P}}}

\newtheorem{lemma}{Lemma}[section]
\newtheorem{theorem}[lemma]{Theorem}

\newtheorem{corollary}[lemma]{Corollary}
\theoremstyle{definition}

\newtheorem{example}{Example}

\begin{document}
\author{Rapha\"el Clou\^atre}

\address{Department of Mathematics, University of Manitoba, Winnipeg, Manitoba, Canada R3T 2N2}

\email{raphael.clouatre@umanitoba.ca\vspace{-2ex}}
\thanks{The first author was partially supported by an NSERC Discovery Grant}
\author{Christopher Ramsey}
\email{christopher.ramsey@umanitoba.ca\vspace{-2ex}}

\begin{abstract}
We develop a completely bounded counterpart to the non-commu\-tative Choquet boundary of an operator space.
We show how the class of completely bounded linear maps is too large to accommodate our purposes. To overcome this obstacle, we isolate the subset of completely bounded linear maps on an operator space admitting a dilation of the same norm which is multiplicative on the generated $C^*$-algebra. We view such maps as analogues of the familiar unital completely contractive maps,  and we exhibit many of their structural properties. Of particular interest to us are those maps which are extremal with respect to a natural dilation order. We establish the existence of extremals and show that they have a certain unique extension property. In particular, they  give rise to $*$-homomorphisms which we use to associate to any operator space an entire scale of $C^*$-envelopes. We conjecture that these $C^*$-envelopes are all $*$-isomorphic, and verify this in some important cases.
\end{abstract}

\title[A completely bounded non-commutative Choquet boundary]{A completely bounded non-commutative Choquet boundary for operator spaces}
\subjclass[2010]{Primary 46L07; Secondary 47A20, 46L52}
\keywords{Operator space, completely bounded map, non-commutative Choquet boundary, $C^*$-envelope}

\maketitle

\section{Introduction}\label{S:intro}
Let $X$ be a compact metrizable space and let $\A$ be  a uniform algebra on $X$, that is a closed unital algebra of continuous functions which separates the points of $X$. The Shilov boundary is the smallest closed subset $\Sigma_\A\subset X$ with the property that the restriction map
\[
f\mapsto f|_{\Sigma_\A}, \quad f\in \A
\]
is isometric. One way of constructing $\Sigma_\A$ is to take the closure of the Choquet boundary of $\A$, which consists of the points $\xi\in X$ with the property that there is a \emph{unique} probability measure on $X$, namely the point mass at $\xi$, satisfying
\[
f(\xi)=\int_X fd\mu, \quad f\in \A.
\]
In other words, the point $\xi$ lies in the Choquet boundary of $\A$ if the point evaluation functional
\[
f\mapsto f(\xi), \quad f\in \A
\]
extends to a unique state on the $C^*$-algebra $C(X)$.

This paper will be primarily concerned with the investigation of similar questions in a non-commutative context. More precisely, we replace uniform algebras by subspaces $\M$ of arbitrary $C^*$-algebras $\fA$ such that $\fA=C^*(\M)$. Such subspaces are called operator spaces, and throughout the years the surrounding theory has developed into a powerful machine. In particular, it is now well-known that operator spaces and operator algebras can be defined in a purely abstract fashion, independent of any ambient $C^*$-algebra (these are results of Ruan \cite{ruan1988}, Choi-Effros \cite{choi1977} and Blecher-Ruan-Sinclair \cite{BRS1990} respectively). 
Despite the sophistication of this theory, it is often desirable to have access to the wealth of structure available for $C^*$-algebras, and thus the question arises of how to identify some sort of canonical smallest $C^*$-algebra containing the object of interest. Such a $C^*$-algebra would be the non-commutative analogue of the Shilov boundary of a uniform algebra.

In a seminal paper, Arveson \cite{arveson1969} initiated a program to construct the sought after $C^*$-algebra by developing a non-commutative version of the Choquet boundary. The central objects in his approach are the so-called boundary representations, certain unital completely positive linear maps having a unique extension property, much in the spirit of the defining property for points to lie in the classical Choquet boundary. Although Arveson was not able to fully realize his plan initially, the impact that his approach had is still very much felt to this day. The solution to the problem was eventually found by Hamana who constructed in \cite{hamana1979}  the $C^*$-envelope of an operator system using a different argument. Nevertheless, the objects introduced by Arveson were interesting in their own right, and spurred on significant results by Muhly-Solel \cite{muhlysolel1998} and Dritschel-McCullough \cite{dritschel2005}. Drawing from these contributions,  Arveson himself later managed to fulfill his initial vision and construct the $C^*$-envelope of a unital operator space using boundary representations, at least in the separable case \cite{arveson2008}. The separability assumption was later removed by Davidson-Kennedy in \cite{davidson2015} (see also \cite{kleski2014bdry} for related work).  There are still interesting unresolved issues regarding boundary representations, such as Arveson's hyperrigidity conjecture \cite{arveson2011} which has witnessed recent progress \cite{kleski2014hyper},\cite{DK2016}.

We mention here that the very foundation of Arveson's boundary representations approach is Stinespring's dilation theorem, which guarantees that unital completely positive maps on $C^*$-algebras can be dilated to unital $*$-homomorphisms. In particular, the construction of the $C^*$-envelope of a general (possibly non-unital) operator space presents some difficulties, although there has been a meaningful theory developed in that setting as well. Indeed, Hamana \cite{hamana1999} (see also Blecher \cite{blecher2001}) associates to any operator space its triple envelope, that is a ternary ring of operators that is the smallest such in the usual universal sense. This object can also be obtained by using a technique close in spirit to Arveson's non-commutative Choquet boundary, as was recently done by Fuller-Hartz-Lupini in \cite{FHL2016}. A device known as Paulsen's ``off-diagonal" technique and the associated generalization of Stinespring's dilation theorem \cite{paulsen1984} for completely contractive maps play a central role therein.

Heuristically, Arveson's approach yields the existence of what one may call a unital completely positive non-commutative Choquet boundary. The corresponding adaptation of Fuller-Hartz-Lupini yields the existence of a completely contractive  non-commutative Choquet boundary. However, the analogy with classical uniform algebra theory in the latter case is less accurate, as the resulting non-commutative Shilov boundary is not an algebra. We note that this apparent imperfection is somehow intrinsic due to the lack of a perfect analogue of the Stinespring dilation theorem. Accordingly, our goal in this paper is two-fold. First, we aim to develop a completely bounded counterpart to the aforementioned Choquet boundaries. Second, we wish to use this completely bounded non-commutative Choquet boundary to construct a non-commutative Shilov boundary that is still a $C^*$-algebra. To achieve these objectives, we consider operator spaces along with the extra data of a completely isometric representation on some Hilbert space. Even when we restrict our attention to the unital setting, the completely bounded theory faces the usual obstacle related to the absence of a Stinespring dilation. We overcome this difficulty by focusing on a subclass of the completely bounded linear maps. This ultimately allows us to obtain the desired objects, although subtleties arise in our construction that are not present in the works cited above.

We now outline the organization of the paper and state our main results. In Section \ref{S:prelim} we collect various preliminary notions that we require throughout. In Section \ref{S:extmachine} we adapt the machinery of \cite{dritschel2005} and \cite{davidson2015} to construct extremal elements in a very general framework. This tool is used in two different contexts later on. In Section \ref{S:CBext}, we show the following theorem (Theorem \ref{T:extmultcc}), which illustrates that the class of completely bounded linear maps is not appropriate for our purposes, and that the machinery developed in Section \ref{S:extmachine} cannot be used to produce a Shilov boundary that is an algebra by means of so-called $\CB_r$-extremal elements. 

\begin{theorem}\label{T:main5}
Let $\A\subset B(\H)$ be an operator algebra. Let $\omega:\A\to B(\H_\omega)$ be a completely bounded linear map. Assume that $\omega$ is $\CB_r(\A)$-extremal for some $r\geq 1$. Then, the following statements are equivalent.
\begin{enumerate}[label=\normalfont{(\roman*)}]
\item The map $\omega$ is multiplicative.

\item The map $\omega$ is completely contractive, and there is a $*$-homomorphism $\sigma:C^*(\A)\to B(\H_\omega)$ that agrees with $\omega$ on $\A$.

\item There is a contractive completely positive map $\Psi: C^*(\A)\to B(\H_\omega)$ that agrees with $\omega$ on $\A$.
\end{enumerate}
\end{theorem}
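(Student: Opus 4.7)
The implications $(ii)\Rightarrow(i)$ and $(ii)\Rightarrow(iii)$ are immediate, since a $*$-homomorphism on a $C^{*}$-algebra is automatically multiplicative, contractive, and completely positive.

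For $(iii)\Rightarrow(ii)$, my plan is to invoke Stinespring's theorem on the contractive completely positive extension $\Psi$, producing a $*$-representation $\pi:C^{*}(\A)\to B(\K)$ and an operator $V:\H_\omega\to\K$ with $\Psi(x)=V^{*}\pi(x)V$. After the standard enlargement that turns $V$ into an isometric embedding $\H_\omega\hookrightarrow\K$, the restriction $\pi|_{\A}$ lies in $\CB_{1}(\A)\subseteq\CB_{r}(\A)$ and provides a dilation of $\omega$ in the sense of Section \ref{S:extmachine}. The $\CB_{r}(\A)$-extremality of $\omega$ forces $\H_\omega$ to be reducing for $\pi|_{\A}$, and since $\pi$ is a $*$-representation this subspace is then automatically reducing for the full $C^{*}$-algebra $\pi(C^{*}(\A))$, because invariance under $\pi(\A)$ and $\pi(\A)^{*}=\pi(\A^{*})$ suffices. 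The compression $\sigma(x):=\pi(x)|_{\H_\omega}$ is the desired $*$-homomorphism extending $\omega$.

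The substantive direction is $(i)\Rightarrow(iii)$, which I propose to reduce to showing that $\omega$ must itself be completely contractive; once this is in hand, Arveson's extension theorem applied to the completely contractive homomorphism $\omega$ will furnish the required contractive completely positive extension. To reach this conclusion I would invoke Paulsen's similarity theorem to obtain an invertible $S\in B(\H_\omega)$ with $\omega_{S}:=S\omega(\cdot)S^{-1}$ completely contractive and $\|S\|\,\|S^{-1}\|=\|\omega\|_{\mathrm{cb}}$, and then test extremality against multiplicative dilations of the form
\[
\Omega(a)=\begin{pmatrix}\omega_{S}(a) & \omega(a)T-T\omega_{S}(a)\\ 0 & \omega(a)\end{pmatrix}\quad\text{on }\H_\omega\oplus\H_\omega,
\]
parametrised by an auxiliary intertwining operator $T\in B(\H_\omega)$. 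Direct computation shows that $\Omega$ is multiplicative for every $T$ and that $\H_\omega$ (the bottom summand) is invariant under $\Omega(\A)$ with compression $\omega$, so $\Omega$ is a dilation of $\omega$. Extremality would then force the off-diagonal $\omega(a)T-T\omega_{S}(a)$ to vanish identically for each admissible $T$, and exploiting this intertwining for a sufficiently rich family of $T$ should collapse $\omega_{S}$ onto $\omega$ and trivialise the similarity, yielding $\|\omega\|_{\mathrm{cb}}=1$.

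The principal obstacle is arranging $\|\Omega\|_{\mathrm{cb}}\le r$ so that $\Omega$ genuinely lies in the class against which $\CB_{r}(\A)$-extremality can be tested. When $\|\omega\|_{\mathrm{cb}}<r$ there is slack and one may scale $T$ so as to keep $\Omega$ inside $\CB_{r}(\A)$; the borderline case $\|\omega\|_{\mathrm{cb}}=r$ is more delicate and will likely require a limiting argument, a careful normalisation of the similarity $S$, or an alternative construction of the testing dilation that does not pay a cb-norm penalty for a non-zero off-diagonal.
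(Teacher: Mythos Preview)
Your treatment of $(iii)\Rightarrow(ii)$ is correct and coincides with the paper's argument for $(iii)\Rightarrow(i)$ (which immediately yields $(ii)$, since the compression of $\pi$ to the reducing subspace $\H_\omega$ is the desired $*$-homomorphism).

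The direction $(i)\Rightarrow(iii)$, however, has a genuine gap that you flag but do not resolve. First a minor point: your $\Omega$ is not multiplicative as written; the off-diagonal entry must be $\omega_{S}(a)T-T\omega(a)$ (so that $\Omega$ is the conjugate of $\omega_{S}\oplus\omega$ by the unipotent $\bigl(\begin{smallmatrix}I&T\\0&I\end{smallmatrix}\bigr)$) rather than $\omega(a)T-T\omega_{S}(a)$. More seriously, even with the corrected formula one only has $\|\Omega\|_{cb}\le \|\omega\|_{cb}\cdot f(\|T\|)$ for some $f$ with $f(0)=1$, so $\Omega\in\CB_{r}$ can be arranged only when $\|\omega\|_{cb}<r$. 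The borderline case $\|\omega\|_{cb}=r$ cannot be excluded a priori---that is precisely what you are trying to prove---and neither a limiting argument nor a renormalisation of $S$ closes the gap: any non-zero off-diagonal pushes the $cb$-norm strictly above that of the diagonal part.

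The paper bypasses this difficulty by using a different dilation. Paulsen's representation theorem for completely bounded maps (Theorem~\ref{T:paulsencbdilation1}) furnishes unitaries $R_{1},R_{2}$ and a $*$-homomorphism $\pi$ with
\[
\omega(a)=\|\omega\|_{cb}\,P_{\H_{\omega}}R_{1}^{*}\pi(a)R_{2}|_{\H_{\omega}},\quad a\in\A.
\]
The map $a\mapsto \|\omega\|_{cb}R_{1}^{*}\pi(a)R_{2}$ has $cb$-norm exactly $\|\omega\|_{cb}\le r$ \emph{by construction}, so extremality applies without any norm bookkeeping and turns the compression into a restriction. Multiplicativity of $\omega$ together with non-degeneracy (Lemma~\ref{L:extnondeg}, itself a consequence of extremality) then forces
\[
(\|\omega\|_{cb}R_{2}-R_{1})\H_{\omega}\subset\bigcap_{a\in\A}\ker\pi(a),
\]
whence $\omega(a)=R_{1}^{*}\pi(a)R_{1}|_{\H_{\omega}}$ and $\omega$ is completely contractive. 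The idea missing from your proposal is to start from a dilation that is \emph{automatically} in $\CB_{r}$ and then use multiplicativity to identify the two legs $R_{1}$ and $R_{2}$, rather than to build a test dilation by hand whose membership in $\CB_{r}$ has to be forced.
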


This theorem motivates the introduction of a subclass of maps on which we focus in subsequent sections. Roughly speaking, for every $r\geq 1$ this subclass $\Pau_r$ consists of completely bounded linear maps that have a multiplicative dilation that is well-behaved.  To make this precise, we introduce in Section \ref{S:Paulsensimprop} what we call Paulsen's similarity property, and provide a characterization of it (Theorem \ref{T:paulsenchar}) in terms of unital extensions of homomorphisms. We formally introduce the class $\Pau_r$ in Section \ref{S:Fr}, and establish many of its structural properties. We summarize some of them in the following (see Theorem \ref{T:Frdilation} and Corollary \ref{C:distance}).

\begin{theorem}\label{T:main1}
Let $\M\subset B(\H)$ be an operator space and let $\phi:\M\to B(\H_\phi)$ be an element of $ \Pau_r(\M)$, where $r\geq 1$. Then, there is a Hilbert space $\K_\phi$ containing $\H_\phi$, a $*$-homomorphism $\sigma: C^*(\M)\to B(\K_\phi)$ and an invertible operator $X\in B(\K_\phi)$ with 
\[
\|X\|=\|X^{-1}\|\leq r^{1/2}
\]
such that 
\[
\phi(a)=P_{\H_\phi} X\sigma(a)X^{-1}|_{\H_\phi},\quad a\in \M.
\]
In particular, there is a contractive completely positive linear map $\psi:C^*(\M)\to B(\H_\phi)$ with the property that
\[
\|\phi-\psi|_\M\|_{cb}\leq r-1.
\]
\end{theorem}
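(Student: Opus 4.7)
The plan is to combine the defining property of $\Pau_r(\M)$ (from Section \ref{S:Fr}) with Paulsen's classical similarity theorem to produce the triple $(\K_\phi, \sigma, X)$, and then derive the completely positive approximation via an exponential estimate for the deviation $X\sigma(\cdot)X^{-1} - \sigma(\cdot)$.

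By definition of $\Pau_r(\M)$, the map $\phi$ admits a dilation to a completely bounded homomorphism $\rho: C^*(\M) \to B(\K_\phi)$ with $\|\rho\|_{cb} \leq r$ and $\phi(a) = P_{\H_\phi}\rho(a)|_{\H_\phi}$ for $a \in \M$. Paulsen's similarity theorem applied to $\rho$ then supplies an invertible $X_0 \in B(\K_\phi)$ with $\|X_0\|\,\|X_0^{-1}\| \leq r$ such that $\sigma := X_0^{-1}\rho(\cdot)X_0$ is a $*$-homomorphism from $C^*(\M)$ to $B(\K_\phi)$. Rescaling $X := c\,X_0$ with $c = (\|X_0^{-1}\|/\|X_0\|)^{1/2}$ normalizes the operator to $\|X\| = \|X^{-1}\| \leq r^{1/2}$ while preserving $\rho(\cdot) = X\sigma(\cdot)X^{-1}$, which establishes the first conclusion.

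For the ``in particular'' part, I would first put $X$ into positive self-adjoint form by writing $X = |X^*|V$ with $V$ unitary and replacing $\sigma$ by the $*$-homomorphism $V\sigma(\cdot)V^*$; this reduces matters to the case where $X$ is positive with $r^{-1/2}I \leq X \leq r^{1/2}I$. The natural candidate $\psi(a) := P_{\H_\phi}\sigma(a)|_{\H_\phi}$ is automatically contractive and completely positive on $C^*(\M)$. Writing $X = e^T$ for self-adjoint $T$ with $\|T\| \leq \tfrac{1}{2}\log r$, and setting $f(t) := e^{tT}\sigma(a)e^{-tT}$, the identity $f'(t) = [T, f(t)]$ yields
\[
\|X\sigma(a)X^{-1} - \sigma(a)\| = \|f(1) - f(0)\| \leq \int_0^1 2\|T\|\,\|f(t)\|\,dt \leq (e^{2\|T\|} - 1)\|a\| \leq (r - 1)\|a\|.
\]
Running the identical computation at each matrix level (with $T \otimes I_n$, whose norm equals $\|T\|$) produces the completely bounded estimate $\|\phi - \psi|_\M\|_{cb} \leq r - 1$.

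The principal obstacle is the final estimate: the naive triangle inequality only bounds $\|X\sigma(a)X^{-1} - \sigma(a)\|$ by $r + 1$, whereas the exponential parameterization is precisely what converts the multiplicative constraint $\|X\|\,\|X^{-1}\| \leq r$ into the additive deviation $r - 1$. Everything else reduces to assembling the definition of $\Pau_r$ with Paulsen's theorem and standard polar decomposition manipulations.
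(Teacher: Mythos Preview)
Your argument for the first part has a real gap stemming from a misreading of the definition of $\Pau_r(\M)$. Membership in $\Pau_r(\M)$ does \emph{not} merely assert the existence of a homomorphic dilation $\rho:C^*(\M)\to B(\K_\phi)$ with $\|\rho\|_{cb}\leq r$; it asserts that $\rho$ has Paulsen's similarity property with constant $r^{1/2}$, i.e.\ that an invertible $X$ with $\|X\|=\|X^{-1}\|\leq r^{1/2}$ making $X\rho(\cdot)X^{-1}$ completely contractive is \emph{already provided}. With only the weaker hypothesis $\|\rho\|_{cb}\leq r$, Paulsen's theorem (Theorem~\ref{T:paulsensim}) yields $\|X_0\|\,\|X_0^{-1}\|\leq r$ \emph{only} when $\rho$ is unital; in the non-unital case the bound degrades to $(\sqrt r+\sqrt{r+1})^2$, and the conclusion $\|X\|=\|X^{-1}\|\leq r^{1/2}$ would fail. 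The fix is simply to invoke the actual definition, which hands you the similarity directly and makes the appeal to Paulsen's theorem superfluous. A second, smaller point: you assert that $\sigma=X_0^{-1}\rho(\cdot)X_0$ is a $*$-homomorphism, whereas Paulsen's theorem (and the definition of $\Pau_r$) only yield that it is a completely contractive homomorphism. It is true that a completely contractive homomorphism on a $C^*$-algebra is automatically a $*$-homomorphism (via Stinespring and the multiplicative-domain argument, or by citing Haagerup), but this deserves a sentence. The paper instead passes through Corollary~\ref{C:hom} and Lemma~\ref{L:dilationsim}, dilating once more to a larger space; your shortcut is legitimate here precisely because the domain is already a $C^*$-algebra.

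Your treatment of the distance estimate is correct and genuinely different from the paper's. The paper uses the elementary algebraic bound
\[
\|a-XaX^{-1}\|\leq \|a\|(1+\|X\|)\max\{\|X\|-1,\,1-\|X\|^{-1}\}\leq (1+r^{1/2})(r^{1/2}-1)\|a\|=(r-1)\|a\|,
\]
valid for positive $X$ with $\|X\|=\|X^{-1}\|\leq r^{1/2}$. Your exponential parametrisation $X=e^T$ with $\|T\|\leq\tfrac12\log r$ and the Duhamel-type estimate $\|f(1)-f(0)\|\leq\int_0^1 2\|T\|\,\|f(t)\|\,dt\leq (e^{2\|T\|}-1)\|a\|\leq (r-1)\|a\|$ reaches exactly the same constant by a different route. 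Both arguments are cb-stable for the same reason (the scalar bounds on $X$, $X^{-1}$, $T$ are unchanged at matrix levels), so either one completes the proof once the first part is repaired.
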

Section \ref{S:Frext} contains the main technical development of the paper, and is devoted to the study of the extremal elements of the class $\Pau_r$. This is where the benefits of working with this subclass are made manifest, and we obtain the following (see Corollary \ref{C:Frextnorm},  Theorem \ref{T:Frextmult} and Theorem \ref{T:Frextsimuep}).

\begin{theorem}\label{T:main2}
Let $\M\subset B(\H)$ be an operator space and let $\A_\M$ be the operator algebra that it generates. Then, the following statements hold.
\begin{enumerate}
 
  \item[\rm{(1)}] There is a $\Pau_r(\M)$-extremal element $\omega$  such that for every $n\in \bN$ and every $a\in M_n(\M)$ we have $\|a\|\leq \|\omega^{(n)}(a)\|$.
  
 \item[\rm{(2)}] Assume that $\omega$ is $\Pau_r(\M)$-extremal. Then, $\omega$ has a unique $\Pau_r$-extension to $\A_\M$, and that extension is a completely bounded homomorphism. Furthermore, there is an invertible operator $X$ with $\|X\|=\|X^{-1}\|\leq r^{1/2}$ such that the map defined as
\[
\omega_X(a)=X\omega(a)X^{-1}, \quad a\in \M
\]
has a unique contractive completely positive extension to $C^*(\M)$. That extension is a $*$-homomorphism.  
 \end{enumerate}
\end{theorem}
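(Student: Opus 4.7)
The plan is to build the extremal element in part~(1) by a dilation procedure and then, in part~(2), to exploit the dilation supplied by Theorem~\ref{T:main1} together with the rigidity that extremality enforces. In both parts, the mechanism that drives the argument is that $\Pau_r(\M)$-extremality prevents the existence of any proper $\Pau_r(\M)$-dilation of $\omega$, and this rules out certain geometric configurations of the ambient $*$-representation.

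For part~(1), I would begin with the identity embedding $\iota\colon\M\hookrightarrow B(\H)$, which is completely isometric and therefore lies in $\Pau_r(\M)$ for every $r\geq 1$. Applying the extremal machinery of Section~\ref{S:extmachine} to $\iota$ within the class $\Pau_r(\M)$ produces a $\Pau_r(\M)$-extremal dilation $\omega$. Since dilations preserve matrix norms from below, writing $\iota^{(n)}(a)=P_\H^{(n)}\omega^{(n)}(a)|_{\H^{(n)}}$ gives $\|a\|=\|\iota^{(n)}(a)\|\leq\|\omega^{(n)}(a)\|$ for every $a\in M_n(\M)$. The only non-routine point to verify is that the general framework of Section~\ref{S:extmachine} applies to $\Pau_r(\M)$, which should reduce to checking the closure properties of this class under the relevant dilations.

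For part~(2), the starting point is Theorem~\ref{T:main1} applied to the given extremal $\omega$: it furnishes a Hilbert space $\K\supset \H_\omega$, a $*$-homomorphism $\sigma\colon C^*(\M)\to B(\K)$, and an invertible $Y\in B(\K)$ with $\|Y\|=\|Y^{-1}\|\leq r^{1/2}$ such that $\omega(a)=P_{\H_\omega}Y\sigma(a)Y^{-1}|_{\H_\omega}$ for $a\in\M$. The crucial step is to deduce from extremality that $\H_\omega$ is reducing for $Y$, because any non-trivial semi-invariance would assemble into a proper $\Pau_r(\M)$-dilation of $\omega$, contradicting extremality. Writing $X_0=Y|_{\H_\omega}\in B(\H_\omega)$, a direct computation yields
\[
X_0^{-1}\omega(a)X_0=P_{\H_\omega}\sigma(a)|_{\H_\omega},\quad a\in\M.
\]
After a rescaling $X=cX_0^{-1}$ to balance the norms (so that $\|X\|=\|X^{-1}\|\leq r^{1/2}$), the right-hand side extends to the contractive completely positive map $\psi(b)=P_{\H_\omega}\sigma(b)|_{\H_\omega}$ on $C^*(\M)$. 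Uniqueness of this extension follows from a second rigidity argument: any competing contractive CP extension $\td\psi$ has a Stinespring dilation whose conjugate by $X_0$ produces another $\Pau_r(\M)$-dilation of $\omega$, so extremality forces $\td\psi=\psi$. Uniqueness then upgrades to the statement that $\H_\omega$ is reducing (not just semi-invariant) for $\sigma(C^*(\M))$, so $\psi$ is a $*$-homomorphism. Finally, defining $\td\omega(a)=X_0(P_{\H_\omega}\sigma(a)|_{\H_\omega})X_0^{-1}$ for $a\in\A_\M$ gives a $\Pau_r(\A_\M)$-extension of $\omega$; it is multiplicative because it conjugates the $*$-homomorphism $\sigma|_{\A_\M}$ restricted to a reducing subspace, and any other $\Pau_r$-extension reduces under Theorem~\ref{T:main1} to $\psi$ on its Stinespring level, hence must coincide with $\td\omega$.

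The main obstacle will be the twin rigidity arguments, converting the abstract extremality hypothesis into the concrete geometric conclusions that $\H_\omega$ reduces $Y$ and then reduces $\sigma$. For each, one must manufacture from a purported non-reducing piece an actual $\Pau_r(\M)$-dilation of $\omega$, and this requires keeping careful track of Paulsen's similarity property from Section~\ref{S:Paulsensimprop} so that the constructed dilation lives in the right subclass; a mere completely bounded dilation would not contradict $\Pau_r(\M)$-extremality. This bookkeeping, together with the rescaling needed to enforce $\|X\|=\|X^{-1}\|$ exactly, is where most of the technical effort should go.
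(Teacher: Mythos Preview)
Your outline for part~(1) is fine and matches the paper's Corollary~\ref{C:Frextnorm}: start with a completely isometric map in $\Pau_r(\M)$, invoke the extremal machinery (whose applicability to $\Pau_r(\M)$ is precisely Lemma~\ref{L:Frlimit}, proved via ultraproducts), and dilate.

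Part~(2), however, has a genuine gap. Your ``crucial step'' asserts that extremality of $\omega$ forces $\H_\omega$ to be reducing for the similarity operator $Y\in B(\K)$ coming from Theorem~\ref{T:main1}. This is not what extremality gives you, and it is false in general. Extremality says only that $\H_\omega$ is reducing for $\delta(\M)$ whenever $\delta\in\Pau_r(\M)$ satisfies $\omega\prec\delta$. Applied to $\delta(a)=Y\sigma(a)Y^{-1}$, you obtain that $\H_\omega$ reduces $Y\sigma(\M)Y^{-1}$, not $Y$ itself. For a concrete obstruction, take $\sigma=\omega\oplus\omega$ on $\K=\H_\omega\oplus\H_\omega$ and $Y=\begin{pmatrix}I & tI\\ 0 & I\end{pmatrix}$ for small $t>0$: then $Y$ commutes with $\sigma(a)$, so $Y\sigma(a)Y^{-1}=\omega(a)\oplus\omega(a)$ and the compression identity holds with $\H_\omega$ trivially reducing for $\delta(\M)$, yet $\H_\omega$ is certainly not reducing for $Y$. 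Thus there is no way to define $X_0=Y|_{\H_\omega}$ as an invertible operator on $\H_\omega$, and the subsequent computations collapse.

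The paper avoids this by reversing the order of the two conclusions. First (Theorem~\ref{T:Frextmult}), take \emph{any} $\Psi\in\Pau_r(\A_\M,\H_\omega)$ agreeing with $\omega$ on $\M$; by definition $\Psi$ is the compression of some homomorphism $\rho$ on $C^*(\M)$ with Paulsen's property at constant $r^{1/2}$, and extremality applied to $\rho|_\M$ makes $\H_\omega$ reducing for $\rho(\M)$, hence for $\rho(\A_\M)$, so $\Psi$ is multiplicative. This yields the unique homomorphic extension $\Omega:\A_\M\to B(\H_\omega)$, and Corollary~\ref{C:paulsendirectsum} shows $\Omega$ inherits Paulsen's property with constant $r^{1/2}$. \emph{Now} apply Corollary~\ref{C:hom} to the homomorphism $\Omega$ \emph{living on $\H_\omega$}: this produces $X\in B(\H_\omega)$ with $\|X\|=\|X^{-1}\|\leq r^{1/2}$ such that $X\Omega X^{-1}$ is completely contractive, so $\omega_X\in\Pau_1(\M,\H_\omega)$. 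The similarity operator is obtained on the small space directly; no restriction from $\K$ is needed. For the uniqueness of the CP extension (Theorem~\ref{T:Frextsimuep}), one then Stinespring-dilates any competing CP extension $\Psi$ to a $*$-homomorphism $\pi$ on a larger space, \emph{lifts} $X$ to that space via Lemma~\ref{L:dilationsim} (which by construction makes $\H_\omega$ reducing for the lift), and uses extremality once more to see that $\H_\omega$ reduces $\pi(C^*(\M))$, forcing $\Psi$ to be a $*$-homomorphism. The direction of the lift---from $\H_\omega$ up to $\K$, not from $\K$ down to $\H_\omega$---is exactly what makes the bookkeeping work.
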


In Section \ref{S:C*env}, given an operator space $\M$ and a completely isometric linear map $\mu:\M\to B(\H_\mu)$, we define the $C^*$-envelope that we seek. For any $r\geq 1$ we build a $*$-homomorphism $\eps_{\mu,r}$ on $C^*(\mu(\M))$ using the extremal elements from the class $\Pau_r$. We then define the \emph{$C^*_r$-envelope} as $C^*_{e,r}(\M,\mu)=\eps_{\mu,r}(C^*(\mu(\M)))$. Although this construction is not independent of the representation $\mu$ in general, we show that it is invariant under appropriate isomorphisms, and that it has a universal property (see Theorem \ref{T:C*envinv} and Corollary \ref{C:C*envuniv}).

\begin{theorem}\label{T:main4}
Let $\M,\N$ be operator spaces and let 
\[
\mu:\M\to B(\H_\mu),\quad \nu:\N\to B(\H_\nu)
\]
be completely isometric linear maps.  Let $r\geq 1$ and let $\tau:\mu(\M)\to \nu(\N)$ be a $\Pau$-isomorphism. Then, $C^*_{e,r}(\M,\mu)$ and $C^*_{e,r}(\N,\nu)$ are unitarily equivalent. Moreover, there is a surjective $*$-homomorphism $\rho:C^*(\nu(\N))\to C^*_{e,r}((\M,\mu))$  such that $\rho  \circ  \tau=\eps_{\mu,r} $ on $\mu(\M)$. 
\end{theorem}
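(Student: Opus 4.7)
The plan is to produce $\rho$ by pulling back the defining data of $\eps_{\mu,r}$ through $\tau^{-1}$, and then to derive the unitary equivalence by symmetry. By the construction preceding this theorem, $\eps_{\mu,r}$ arises from some $\Pau_r(\mu(\M))$-extremal element $\omega\colon \mu(\M)\to B(\H_\omega)$ together with the invertible operator $X$ supplied by Theorem \ref{T:main2}(2), in such a way that the completely contractive map $a\mapsto X\omega(a)X^{-1}$ admits a unique contractive completely positive extension to $C^*(\mu(\M))$, and that extension is precisely $\eps_{\mu,r}$.

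Set $\omega':=\omega\circ\tau^{-1}\colon \nu(\N)\to B(\H_\omega)$. The membership $\omega'\in\Pau_r(\nu(\N))$ is built into the definition of a $\Pau$-isomorphism. I would then verify that $\omega'$ is in fact $\Pau_r(\nu(\N))$-extremal by showing that every $\Pau_r$-dilation of $\omega'$ pulls back through $\tau$ to a $\Pau_r$-dilation of $\omega$, which by hypothesis is trivial. Applying Theorem \ref{T:main2}(2) to $\omega'$, the conjugation $b\mapsto X\omega'(b)X^{-1}$ coincides with $\eps_{\mu,r}\circ\tau^{-1}$, which is completely contractive as the restriction of a $*$-homomorphism; the uniqueness clause in Theorem \ref{T:main2}(2) therefore supplies a $*$-homomorphism $\rho\colon C^*(\nu(\N))\to B(\H_\omega)$ extending $\eps_{\mu,r}\circ\tau^{-1}$. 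By construction $\rho\circ\tau=\eps_{\mu,r}$ on $\mu(\M)$, and since the image of $\rho$ is a $C^*$-subalgebra containing the generating set $\eps_{\mu,r}(\mu(\M))$, it coincides with $C^*_{e,r}(\M,\mu)$, so $\rho$ is surjective.

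For the unitary equivalence, I would run the same construction with $(\mu,\M)$ and $(\nu,\N)$ interchanged, using the $\Pau$-isomorphism $\tau^{-1}$, to obtain a surjective $*$-homomorphism $\rho'\colon C^*(\mu(\M))\to C^*_{e,r}(\N,\nu)$ satisfying $\rho'\circ\tau^{-1}=\eps_{\nu,r}$ on $\nu(\N)$. Combining $\rho$ and $\rho'$ gives $*$-homomorphic maps between $C^*_{e,r}(\M,\mu)$ and $C^*_{e,r}(\N,\nu)$ that on the generating subspaces $\eps_{\mu,r}(\mu(\M))$ and $\eps_{\nu,r}(\nu(\N))$ are mutually inverse; the uniqueness of $*$-homomorphism extensions from generating sets then upgrades this to the claimed unitary equivalence. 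I expect the extremality transfer to be the main obstacle: one must set up a clean correspondence between $\Pau_r$-dilations of $\omega$ and of $\omega\circ\tau^{-1}$, which in turn relies on the $\Pau$-isomorphism $\tau$ being compatible with the underlying multiplicative dilations on generated $C^*$-algebras that define the class $\Pau_r$. Once this is in place, the uniqueness part of Theorem \ref{T:main2}(2) does the heavy lifting and the rest is symmetric bookkeeping.
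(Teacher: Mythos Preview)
Your argument rests on a misreading of the construction of $\eps_{\mu,r}$. It is \emph{not} obtained from a single $\Pau_r$-extremal $\omega$ and a single similarity $X$; rather,
\[
\eps_{\mu,r}=\bigoplus_{\fc\le\fd(\M)}\ \bigoplus_{\omega\in\E_r(\mu,\fc)}\ \bigoplus_{X\in\I_r(\omega)}\pi_{\omega,X},
\]
a direct sum over \emph{all} extremals and \emph{all} admissible similarities. Thus the identity ``$a\mapsto X\omega(a)X^{-1}$ coincides with $\eps_{\mu,r}$ on $\mu(\M)$'' on which your first two paragraphs are built is false: each pair $(\omega,X)$ contributes only one summand $\pi_{\omega,X}$. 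Consequently the map $\rho$ you produce via Theorem~\ref{T:main2}(2) is only a $*$-homomorphism into a single block, not onto $C^*_{e,r}(\M,\mu)$.

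Your extremality-transfer idea is exactly right and is the content of the paper's Lemma~\ref{L:Frisom}, but one needs it in both directions and paired with the equality $\I_r(\omega)=\I_r(\omega\circ\tau^{-1})$, so that $\omega\mapsto\omega\circ\tau^{-1}$ is a \emph{bijection} between the full indexing sets of the two direct sums. With that in hand the paper proceeds directly: the summands of $\eps_{\mu,r}\circ\tau^{-1}$ and of $\eps_{\nu,r}$ on $\nu(\N)$ are literally the same operators, merely reindexed, so a permutation unitary $U$ gives $\eps_{\mu,r}\circ\tau^{-1}=U\eps_{\nu,r}(\cdot)U^*$ on $\nu(\N)$, hence on the generated $C^*$-algebras. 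Setting $\rho(b)=U\eps_{\nu,r}(b)U^*$ then yields both the surjective $*$-homomorphism and the unitary equivalence simultaneously. Your alternative route to unitary equivalence via two surjections $\rho,\rho'$ has a further gap: $\rho$ is defined on $C^*(\nu(\N))$, not on $C^*_{e,r}(\N,\nu)$, and you have not shown it factors through $\eps_{\nu,r}$ (i.e.\ that $\ker\eps_{\nu,r}\subset\ker\rho$); and even granting that, ``mutually inverse on generators'' would deliver a $*$-isomorphism rather than the asserted \emph{unitary} equivalence.
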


 We elucidate the dependence of the $C^*_r$-envelopes on the parameter $r$. We conjecture that $C_{e,r}^*(\M,\mu)=C_{e,1}^*(\M,\mu)$ for every $r\geq 1$, but are unable to prove it in general. Nevertheless, we provide supporting evidence for the conjecture in some important cases. First, we obtain the following variation on Arveson's boundary theorem \cite{arveson1972} (Theorem \ref{T:Crbdrythm}).
 
 \begin{theorem}\label{T:main6}
Let $\M$ be an operator space and let $\mu:\M\to B(\H_\mu)$ be a completely isometric linear map such that $C^*(\mu(\M))$ contains the ideal $\K$ of compact operators on $\H_\mu$. Let $r\geq 1$ and assume that there is $n\in \bN$ and $a\in M_n(\M)$ such that $\|\mu(a)+\K\|<r^{-1}\|a\|$. Then, the $C^*$-algebras $C_{e,r}^*(\M,\mu)$ and $C^*(\mu(\M))$ are $*$-isomorphic.
\end{theorem}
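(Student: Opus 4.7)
The plan is to follow the strategy of Arveson's classical boundary theorem \cite{arveson1972}: show that $\eps_{\mu,r}: C^*(\mu(\M)) \to C^*_{e,r}(\M,\mu)$ is injective (and hence a $*$-isomorphism, since it is surjective by construction) by contradicting the minimality of $\K$ under the quantitative gap hypothesis.

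The first step is a standard minimality observation: since $\K \subseteq C^*(\mu(\M))$ and $\K$ is simple, every non-zero closed two-sided ideal $J$ of $C^*(\mu(\M))$ must contain $\K$. Indeed, $J \cdot \K \subseteq J \cap \K$, and the latter is a closed ideal of the simple algebra $\K$; if $J \cap \K = \{0\}$, then every $j \in J$ annihilates every compact operator on $\H_\mu$, forcing $j = 0$. Consequently, if $\ker \eps_{\mu,r} \neq \{0\}$, then $\K \subseteq \ker \eps_{\mu,r}$, so $\eps_{\mu,r}$ factors through $C^*(\mu(\M))/\K$. For the distinguished $a \in M_n(\M)$ provided by the hypothesis, this produces the upper bound
\[
\|\eps_{\mu,r}^{(n)}(\mu(a))\| \le \|\mu(a) + M_n(\K)\| < r^{-1}\|a\|.
\]

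The decisive step is to establish a matching lower bound $\|\eps_{\mu,r}^{(n)}(\mu(a))\| \ge r^{-1}\|a\|$. Theorem \ref{T:main2}(1) supplies a $\Pau_r(\mu(\M))$-extremal element $\omega: \mu(\M) \to B(\H_\omega)$ satisfying $\|\omega^{(m)}(\mu(b))\| \ge \|b\|$ for every $m \in \bN$ and every $b \in M_m(\M)$. Theorem \ref{T:main2}(2) then yields an invertible $X \in B(\H_\omega)$ with $\|X\| = \|X^{-1}\| \le r^{1/2}$ and a $*$-homomorphism $\sigma: C^*(\mu(\M)) \to B(\H_\omega)$ such that $\sigma(\mu(b)) = X \omega(\mu(b)) X^{-1}$ for $b \in \M$; inverting gives $\omega(\mu(b)) = X^{-1}\sigma(\mu(b))X$, and a norm squeeze produces
\[
\|b\| \le \|\omega^{(m)}(\mu(b))\| \le \|X^{-1}\|\,\|\sigma^{(m)}(\mu(b))\|\,\|X\| \le r \|\sigma^{(m)}(\mu(b))\|,
\]
whence $\|\sigma^{(m)}(\mu(b))\| \ge r^{-1}\|b\|$. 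Appealing to the universal property of the $C^*_r$-envelope reflected in Theorem \ref{T:main4}, one factors $\sigma$ through $\eps_{\mu,r}$, giving $\|\eps_{\mu,r}^{(n)}(\mu(a))\| \ge \|\sigma^{(n)}(\mu(a))\| \ge r^{-1}\|a\|$. This contradicts the previous display and forces $\ker \eps_{\mu,r} = \{0\}$.

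The main obstacle will be making the final universality step rigorous, since the construction of $\eps_{\mu,r}$ is only outlined in the excerpt. The ideal-minimality argument and the extremal squeeze are essentially routine; the nontrivial bookkeeping lies in verifying that $\eps_{\mu,r}$ dominates in norm every $*$-homomorphism $\sigma$ arising from a $\Pau_r$-extremal element, a property that should be built into the definition of $C^*_{e,r}(\M,\mu)$ as an appropriate amalgamation of such $\sigma$'s. Once this factorization is in place the entire argument collapses to the single inequality $r^{-1}\|a\| \le \|\eps_{\mu,r}^{(n)}(\mu(a))\| < r^{-1}\|a\|$.
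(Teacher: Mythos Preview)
Your argument is correct and follows essentially the same route as the paper: the paper packages your lower bound as Corollary~\ref{C:embedding} (namely $r^{-1}\|a\|\le\|\eps_{\mu,r}^{(n)}(a)\|$ for $a\in M_n(\mu(\M))$) and uses the representation decomposition $\eps_{\mu,r}\cong \id^{(\fc)}\oplus \sigma\circ q$ in place of your ideal-minimality reduction, but the logic is identical. Your appeal to Theorem~\ref{T:main4} for the final factorization is misplaced---that universal property produces maps \emph{into} $C^*_{e,r}(\M,\mu)$, not out of it---but your closing remark correctly identifies the real reason: the $*$-homomorphism $\sigma=\pi_{\omega,X}$ arising from your extremal $\omega$ and similarity $X$ is literally one of the direct summands in the definition of $\eps_{\mu,r}$, which is exactly how the paper proves Corollary~\ref{C:embedding}.
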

 
Finally, we make a connection with the original motivational example of uniform algebras (Theorem \ref{T:Crcomm}).

\begin{theorem}\label{T:main7}
Let $\A$ be a uniform algebra on a compact metrizable space $X$. Let $\Sigma_\A\subset X$ denote the Shilov boundary of $\A$. Let $\alpha:\A\to B(\H_\alpha)$ be a completely isometric algebra homomorphism.  Then, for each $r\geq 1$ the $C^*$-algebras $C^*_{e,r}(\A,\alpha)$ and $C(\Sigma_\A)$ are $*$-isomorphic.
\end{theorem}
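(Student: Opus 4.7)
The plan is to exploit the classical Hamana $C^*$-envelope theory to reduce to a canonical representation of $\A$ on the Shilov boundary, and then to build the $C^*_r$-envelope from point evaluations at Choquet boundary points.

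I first reduce via invariance. Let $\iota_0: \A \to B(L^2(\Sigma_\A, \nu))$ be the completely isometric representation sending $a \in \A$ to multiplication by $a|_{\Sigma_\A}$, where $\nu$ is a probability measure of full support on $\Sigma_\A$. The classical fact that $C(\Sigma_\A)$ is the $C^*$-envelope of $\A$ furnishes a surjective $*$-homomorphism $q: C^*(\alpha(\A)) \to C^*(\iota_0(\A)) = C(\Sigma_\A)$ with $q \circ \alpha = \iota_0$ on $\A$. In particular, $\tau := \iota_0 \circ \alpha^{-1}: \alpha(\A) \to \iota_0(\A)$ is a completely isometric algebra isomorphism that is the restriction of a $*$-homomorphism, and I would verify this qualifies as a $\Pau$-isomorphism in the sense of Section \ref{S:C*env}. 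Applying Theorem \ref{T:main4}, this yields a unitary equivalence $C^*_{e,r}(\A, \alpha) \cong C^*_{e,r}(\A, \iota_0)$, reducing the problem to the canonical representation $\iota_0$.

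Next I would compute $C^*_{e,r}(\A, \iota_0)$. By Stone--Weierstrass, $C^*(\iota_0(\A)) = C(\Sigma_\A)$, so $C^*_{e,r}(\A, \iota_0) = \eps_{\iota_0, r}(C(\Sigma_\A)) \cong C(Y)$ for some closed $Y \subseteq \Sigma_\A$. It therefore suffices to show that $\Ch(\A) \subseteq Y$, since $\Sigma_\A = \overline{\Ch(\A)}$. For each $\xi \in \Ch(\A)$, let $\omega_\xi: \iota_0(\A) \to \bC$ be the point evaluation $f \mapsto f(\xi)$. This is a completely contractive character admitting $\delta_\xi: C(\Sigma_\A) \to \bC$ as its $*$-homomorphism extension, so $\omega_\xi \in \Pau_1(\iota_0(\A)) \subseteq \Pau_r(\iota_0(\A))$. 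I would then show $\omega_\xi$ is $\Pau_r$-extremal: any $\phi \in \Pau_r(\iota_0(\A))$ dilating $\omega_\xi$ admits via Theorem \ref{T:main1} a similarity representation $\phi = P_{\H_\phi} X \sigma X^{-1}|_{\H_\phi}$ with $\sigma$ a $*$-representation of $C(\Sigma_\A)$, so any vector state composed with $\phi$ yields a state extension of $\omega_\xi$ to $C(\Sigma_\A)$. The Choquet-boundary uniqueness of such extensions, combined with a careful analysis of the similarity $X$, would force $\phi$ to coincide with $\omega_\xi$. By the construction of $\eps_{\iota_0, r}$ from $\Pau_r$-extremals (Theorem \ref{T:main2} and Section \ref{S:C*env}), each $\delta_\xi$ factors through $\eps_{\iota_0, r}$. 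Hence $\Ch(\A) \subseteq Y$ and by density $Y = \Sigma_\A$, yielding $C^*_{e,r}(\A, \iota_0) \cong C(\Sigma_\A)$.

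The hard part will be proving $\Pau_r$-extremality of the characters $\omega_\xi$ for general $r > 1$. Classically, extremality of point evaluations follows from the unique state extension property at Choquet boundary points; here one must lift this uniqueness from the completely contractive setting to the $\Pau_r$-setting, where extensions are only similarities of $*$-homomorphisms, so the similarity $X$ must be controlled carefully in order to recover the classical uniqueness argument. A secondary difficulty will be rigorously verifying that $\tau$ in the reduction step qualifies as a $\Pau$-isomorphism in the precise sense used in the paper, though the fact that $\tau$ arises as the restriction of a $*$-homomorphism between the generated $C^*$-algebras makes this plausible.
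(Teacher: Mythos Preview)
Your reduction step is essentially the same as the paper's: both pass via Theorem~\ref{T:C*envinv} to the restriction representation $\sigma:\A\to C(\Sigma_\A)$ (your $\iota_0$ is just the multiplication-operator realization of this). Both then identify $C^*_{e,r}(\A,\sigma)$ with $C(Y)$ for some closed $Y\subset\Sigma_\A$ via Gelfand.

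The divergence, and the genuine gap in your plan, is in showing $Y=\Sigma_\A$. Your strategy is to prove that for each Choquet boundary point $\xi$, the point evaluation $\omega_\xi$ is $\Pau_r(\iota_0(\A))$-extremal. But this is \emph{impossible} when $r>1$: by Theorem~\ref{T:Frextbig}(1), any non-zero $\Pau_r$-extremal element $\omega$ satisfies $\omega\notin\Pau_s$ for every $1\leq s<r$. Since $\omega_\xi\in\Pau_1$ (it extends to the $*$-homomorphism $\delta_\xi$), it lies in $\Pau_s$ for all $s\geq 1$ and hence cannot be $\Pau_r$-extremal for $r>1$. Concretely, one can always dilate $\omega_\xi$ nontrivially inside $\Pau_r$ using the ``room'' created by $r>1$, exactly as in the constructions behind Theorem~\ref{T:Frextbig} and Examples~\ref{E:extsim1}--\ref{E:extsim2}. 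So the ``hard part'' you flagged is not merely hard; the statement you set out to prove is false.

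The paper circumvents this entirely. Rather than locating specific $\Pau_r$-extremals, it uses only the aggregate norm bound of Corollary~\ref{C:embedding}, namely $\|\eps_{\sigma,r}(a)\|\geq r^{-1}\|a\|$ for $a\in\sigma(\A)$. If some peak point $\xi$ were missing from $Y$, one takes a peaking function $f\in\A$ with $f(\xi)=1$ and $|f|<1$ off $\xi$; for $n$ large enough, $\|f^n|_Y\|<r^{-1}$ while $\|f^n\|=1$, contradicting the bound. Thus $Y$ contains all peak points, hence $Y=\Sigma_\A$. This argument never asks which maps are $\Pau_r$-extremal, only what norm the resulting $*$-homomorphism $\eps_{\sigma,r}$ must have on $\A$.
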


\vspace{5mm}

\section{Preliminaries}\label{S:prelim}
\subsection{Operator spaces and completely bounded maps}\label{SS:opspaces}

We start by recalling, very briefly, terminology and basic results from operator space theory. A good reference on the subject is \cite{paulsen2002}, which the reader can consult for more details.

Let $\H$ be a Hilbert space and let $B(\H)$ denote the $C^*$-algebra of bounded linear operators on $\H$. A subspace $\M\subset B(\H)$ is called an \emph{operator space}. If $\M$ is self-adjoint and contains the identity, then it is called an \emph{operator system}. If $\M$ is a subalgebra of $B(\H)$, we say that it is an \emph{operator algebra}. As mentioned in the introduction, these three concepts can be defined abstractly, but for our purposes the previous definitions will suffice. 

Let $n\in \bN$ and let $M_n(\M)$ denote the space of $n\times n$ matrices with entries from $\M$. If we put $\H^{(n)}=\H\oplus\ldots \oplus \H$, then we may view $M_n(\M)$ as a subspace of $B(\H^{(n)})$. The norm on $M_n(\M)$ is that induced by this identification. If $\phi:\M\to B(\H_\phi)$ is a linear map, then it induces another linear map $\phi^{(n)}:M_n(\M)\to B(\H_\phi^{(n)})$ defined as
\[
\phi^{(n)}([a_{ij}]_{i,j})=[\phi(a_{ij})]_{i,j}, \quad [a_{ij}]_{i,j}\in M_n(\M).
\]
If $\M$ is an operator algebra and $\phi$ is a homomorphism, then so is $\phi^{(n)}$.\
The \emph{completely bounded norm} of $\phi$ is defined as
\[
\|\phi\|_{cb}=\sup_{n\in \bN}\|\phi^{(n)}\|.
\]
The map $\phi$ is said to be \emph{completely bounded} if $\|\phi\|_{cb}$ is finite, and \emph{completely contractive} if $\|\phi\|_{cb}\leq 1$. If  $\phi^{(n)}$ is isometric for every $n\in \bN$, then $\phi$ is said to be \emph{completely isometric}. If $\phi^{(n)}$ is positive for every $n\in \bN$, then $\phi$ is said to be \emph{completely positive}. It is well-known that if $\M$ is an operator system and $\phi$ is completely positive, then $\|\phi\|_{cb}=\|\phi(1)\|$.

Wittstock's extension theorem says that given an operator space $\M\subset B(\H)$ and a completely bounded linear map $\phi:\M\to B(\H_\phi)$, there exists a linear map $\Phi:B(\H)\to B(\H_\phi)$ that agrees with $\phi$ on $\M$ and such that $\|\Phi\|_{cb}=\|\phi\|_{cb}$. If $\M$ is an operator system and $\phi$ is completely positive, then by Arveson's extension theorem $\Phi$ can be chosen to be completely positive as well. 

Finally, we mention a very important observation of Arveson that will be used frequently without mention.  If $\M$ is a unital operator space, then $\M+\M^*$ is an operator system, and any unital completely contractive map on $\M$ extends uniquely to a unital completely positive map on $\M+\M^*$.

\subsection{Unitizations of operator spaces}\label{SS:unitization}

We typically do not assume that operator spaces contain the identity element of the ambient $B(\H)$. Even in the event that they do contain it, we typically do not assume that the maps we consider preserve the identity. Accordingly, there is a certain standard unitization procedure that we will have the occasion to use several times throughout. It associates to an operator space $\M$ a unital operator space $\Upsilon(\M)$, and to a linear map $\phi$ on $\M$ a unital linear map $\Upsilon(\phi)$ on $\Upsilon(\M)$. The notation established here will be used tacitly throughout the paper.

More precisely, let $\M\subset B(\H)$ be an operator space and let $\fA=C^*(\M)$ be the $C^*$-algebra that it generates. Then, there exist a unital $C^*$-algebra $\Upsilon(\fA)$, a unital subspace $\Upsilon(\M)\subset \Upsilon(\fA)$ and an injective $*$-homomorphism $\upsilon:\fA\to \Upsilon(\fA)$ with the property that 
\[
\Upsilon(\fA)=\upsilon(\fA)+\bC e \quad \textrm{and} \quad \Upsilon(\M)=\upsilon(\M)+\bC e
\]
where $e$ is the unit of $\Upsilon(\fA)$. To show this, we distinguish two cases. If $\fA$ is not unital, we put 
\[
\Upsilon(\fA)=\fA+\bC I_{\H}, \quad \Upsilon(\M)=\M+\bC I_{\H}
\]
and we simply let $\upsilon:\fA\to \Upsilon(\fA)$ be the inclusion. If $\fA$ is unital, we let 
\[
\Upsilon(\fA)=\fA\oplus \bC, \quad \Upsilon(\M)=\M\oplus \bC
\]
and we define the injective $*$-homomorphism  $\upsilon:\fA\to \Upsilon(\fA)$ as 
\[
\upsilon(a)=a\oplus 0, \quad a\in \fA.
\]
This establishes the claim. 

Next, we note that  the unitization can also be performed on maps. If $\phi:\M\to B(\H_\phi)$ is a linear map, then there is a unique unital linear map $\Upsilon(\phi):\Upsilon(\M)\to B(\H_\phi)$ that satisfies
\[
(\Upsilon(\phi)\circ \upsilon)(m)=\phi(m), \quad m\in \M.
\]
In the following two particular cases, more can be said about the map $\Upsilon(\phi)$.
\begin{itemize}

\item Let $\phi$ be a contractive linear map on $\M$ that extends to a contractive completely positive linear map on $\fA$. Since $\upsilon^{-1}$ is contractive and completely positive on $\upsilon(\fA)$, we may use  \cite[Proposition 2.2.1]{brownozawa2008} to see that $\Upsilon(\phi)$ is a unital map on $\Upsilon(\M)$ that extends to a completely positive map on $\Upsilon(\fA)$.

\item Let $\M$ be an operator algebra and let $\phi$ be a completely contractive homomorphism. It is easily verified that $\Upsilon(\phi)$ is a unital homomorphism, and moreover it is completely contractive by \cite[Corollary 3.3]{meyer2001} (see also \cite[Theorem 2.1.13]{blecherlemerdy2004}).

\end{itemize}
Finally, we note that given a linear map $\Phi:\Upsilon(\M)\to B(\H)$, there is a linear map $\phi:\M\to B(\H)$ such that $\Upsilon(\phi)=\Phi$ if and only if $\Phi$ is unital. Moreover, $\Phi$ is unital and completely contractive on $\Upsilon(\M)$ if and only if $\Phi=\Upsilon(\phi)$ for some linear map $\phi$ on $\M$ that extends to a contractive completely positive linear map on $\fA$.

\subsection{Ultraproducts}\label{SS:ultra}
Several of our arguments in the sequel will require the machinery of ultraproducts. The following material is folklore, but we collect it here for the convenience of the reader.

Let $\Lambda$ be a directed set and let $\U$ be a cofinal ultrafilter on $\Lambda$. For each $\lambda\in \Lambda$, let $\K_\lambda$ be a Hilbert space containing a fixed Hilbert space $\H$. We denote by $\prod_{\lambda\in \Lambda}\K_\lambda$ the Banach space of all bounded nets $(\xi_\lambda)_{\lambda\in \Lambda}$. The \emph{ultraproduct Hilbert space} $\K_\U$ of $(\K_\lambda)_{\lambda\in \Lambda}$ along $\U$ is defined as 
\[
\K_\U=\left(\prod_{\lambda\in \Lambda}\K_\lambda\right)/\Z_\U
\]
where 
\[
\Z_\U=\left\{(\xi_\lambda)_{\lambda\in \Lambda}\in \prod_{\lambda\in \Lambda}\K_\lambda:\lim_\U \|\xi_\lambda\|=0\right\}.
\]
Given $\xi=(\xi_\lambda)_{\lambda\in \Lambda}\in \prod_{\lambda\in \Lambda}\K_\lambda $, we denote its canonical image in $\K_\U$ by $[\xi]$. The inner product on $\K_\U$ is given as follows: for $\xi=(\xi_\lambda)_{\lambda\in \Lambda}\in \prod_{\lambda\in \Lambda}\K_\lambda$ and $\eta=(\eta_\lambda)_{\lambda\in \Lambda}\in \prod_{\lambda\in \Lambda}\K_\lambda$ we have
\[
\langle [\xi],[\eta]\rangle_{\K_\U}=\lim_\U \langle \xi_\lambda,\eta_\lambda \rangle_{\K_\lambda}.
\]
In particular, we see that the linear map $V:\H\to \K_\U$ defined as 
\[
Vh=[(h)_{\lambda\in \Lambda}], \quad h\in \H
\]
is an isometry.

For each $\lambda\in \Lambda$, let $T_\lambda:\K_\lambda\to \K_\lambda$ be a bounded linear operator. Assume that 
\[
\sup_{\lambda\in \Lambda} \|T_\lambda\|<\infty.
\]
Then, the \emph{ultraproduct operator} $T_\U:\K_\U\to \K_\U$ of $(T_\lambda)_{\lambda\in \Lambda}$ along $\U$ is the bounded linear operator defined as
\[
T_\U[(\xi_\lambda)_{\lambda\in \Lambda}]=[(T_\lambda \xi_\lambda)_{\lambda\in \Lambda}], \quad (\xi_\lambda)_{\lambda\in \Lambda}\in \prod_{\lambda\in \Lambda}\K_\lambda.
\]
It is readily verified that $\|T_\U\|\leq \lim_\U (\|T_\lambda\|)_{\lambda\in \Lambda}$. 

Let $\M$ be an operator space. For each $\lambda\in \Lambda$, let $\phi_\lambda: \M\to B(\K_\lambda)$ be a completely bounded linear map. Assume that
\[
\sup_{\lambda\in \Lambda}\|\phi_\lambda\|_{cb}<\infty.
\]
Then, the map 
\[
\lim_\U (\phi_\lambda)_{\lambda\in \Lambda}:\M\to B(\K_\U)
\]
defined as 
\[
\lim_\U (\phi_\lambda)_{\lambda\in \Lambda}(a)=\lim_\U (\phi_\lambda(a))_{\lambda\in \Lambda}, \quad a\in \M
\]
is linear with
\[
\left\|\lim_\U (\phi_\lambda)_{\lambda\in \Lambda} \right\|_{cb}\leq \lim_\U (\|\phi_\lambda\|_{cb})_{\lambda\in \Lambda}.
\]
If $\M$ is an operator algebra and each $\phi_\lambda$ is multiplicative, then so is $\lim_\U (\phi_\lambda)_{\lambda\in \Lambda}$.

\section{Dilation orders and maximal maps}\label{S:extmachine}

Let $\M$ be an operator space and let $r\geq 0$. For each Hilbert space $\H$ we denote by $\CB_r(\M, \H)$ the set of linear maps $\phi:\M\to B(\H)$  with $\|\phi\|_{cb}\leq r$.  Assume that for every Hilbert space $\H$, we are given a subset $\fC_r(\M,\H)$ of $\CB_r(\M,\H)$. We will say that $\fC_r(\M)$ is a \emph{subclass} of $\CB_r(\M)$ to describe such a situation. 

A partial order $\prec$ defined on the subclass $\fC_r(\M)$ is said to be a \emph{dilation order} if whenever $\phi\in \fC_r(\M,\H_\phi), \psi\in \fC_r(\M,\H_\psi)$ satisfy $\phi\prec \psi$, then we must have that $\H_\phi\subset \H_\psi$ and
\[
\|\phi(a)\xi\|\leq \|\psi(a)\xi\|, \quad \|\phi(a)^*\xi\|\leq \|\psi(a)^*\xi\|
\]
 for every $a\in \M, \xi\in\H_\phi$.
We say that the subclass $\fC_r(\M)$ has the \emph{limit property with respect to} $\prec$ if given a totally ordered set $\Lambda$ and a net
\[
\phi_\lambda\in \fC_r(\M,\H_\lambda), \quad \lambda\in \Lambda
\]
with the property that $\phi_\lambda\prec\phi_{\mu}$ if $\lambda\leq \mu$, then we can find an element $\psi\in \fC_r(\M,\ol{\cup_\lambda \H_\lambda})$  satisfying $\phi_\lambda\prec \psi$ for every $\lambda\in \Lambda$.  Moreover, we say that an element $\omega\in \fC_r(\M,\H_\omega)$ is \emph{maximal} if whenever $\delta\in \fC_r(\M,\H_\delta)$ satisfies $\omega\prec \delta$ we must have that
\[
\|\omega(a)\xi\|=\|\delta(a)\xi\|, \quad \|\omega(a)^*\xi\|=\|\delta(a)^*\xi\|
\]
for every $a\in \M, \xi\in \H_\omega$. 

We now describe a general procedure to construct maximal maps \cite{agler1988}, which we will use subsequently in two different situations. It is standard in the context of completely contractive maps on operator spaces \cite{FHL2016}, or that of unital completely positive maps on operator systems \cite{dritschel2005, davidson2015}, but because our context is different we provide the straightforward adaptations of the usual proofs.  First, we show that maximality can be achieved ``locally".

\begin{lemma}\label{L:maxlocal}
Let $\M$ be an operator space, let $r\geq 0$ and let $\fC_r(\M)$ be a subclass of $\CB_r(\M)$. Assume that $\fC_r(\M)$ has the limit property with respect to the dilation order $\prec$. Let $\phi\in \fC_r(\M,\H_\phi)$. Then, for each $a\in \M$ and $\xi\in \H_\phi$, there is $\omega\in \fC_r(\M,\H_\omega)$ such that $\phi\prec\omega$ with the property that if  $\delta\in \fC_r(\M,\H_\delta)$ and $\omega\prec\delta$ then
\[
\|\omega(a)\xi\|=\|\delta(a)\xi\|, \quad \|\omega(a)^*\xi\|=\|\delta(a)^*\xi\|.
\]

\end{lemma}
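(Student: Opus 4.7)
Fix $a\in\M$ and $\xi\in\H_\phi$, and let $\P=\{\psi\in\fC_r(\M):\phi\prec\psi\}$. I would mimic the Dritschel--McCullough strategy \cite{dritschel2005} and aim to produce $\omega$ as an element of $\P$ at which the functional
\[
F(\psi)=\|\psi(a)\xi\|^2+\|\psi(a)^*\xi\|^2, \qquad \psi\in\P,
\]
attains its supremum.

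The two relevant properties of $F$ are immediate from the setup. First, the definition of a dilation order forces $F$ to be monotone along $\prec$: if $\psi_1,\psi_2\in\P$ satisfy $\psi_1\prec\psi_2$, then $F(\psi_1)\leq F(\psi_2)$. Second, the inequality $\|\psi\|_{cb}\leq r$ yields the uniform bound $F(\psi)\leq 2r^2\|a\|^2\|\xi\|^2$, so
\[
M=\sup_{\psi\in\P}F(\psi)
\]
is finite.

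Next, I would build a chain $(\phi_\alpha)_\alpha$ in $\P$ by transfinite recursion, starting from $\phi_0=\phi$. At a successor stage $\alpha+1$: if $F(\phi_\alpha)=M$, then terminate the recursion and set $\omega=\phi_\alpha$; otherwise use the definition of $M$ together with transitivity of $\prec$ to choose $\phi_{\alpha+1}\in\P$ with $\phi_\alpha\prec\phi_{\alpha+1}$ and $F(\phi_{\alpha+1})>\tfrac{1}{2}(F(\phi_\alpha)+M)$. At a limit ordinal $\lambda$, invoke the limit property of $\fC_r$ on the chain $(\phi_\alpha)_{\alpha<\lambda}$ to produce an upper bound $\phi_\lambda\in\fC_r$, which again lies in $\P$ by transitivity. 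The recursion is forced to terminate at some countable stage, since $F$ strictly increases at every non-terminating successor step and no strictly increasing transfinite sequence of real numbers can be uncountably long. The output is an element $\omega\in\P$ with $F(\omega)=M$.

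To finish, let $\delta\in\fC_r(\M)$ satisfy $\omega\prec\delta$. Transitivity places $\delta$ in $\P$, so $F(\delta)\leq M=F(\omega)$, while monotonicity of $F$ gives the reverse inequality, hence $F(\omega)=F(\delta)$. Combined with the coordinate inequalities $\|\omega(a)\xi\|\leq\|\delta(a)\xi\|$ and $\|\omega(a)^*\xi\|\leq\|\delta(a)^*\xi\|$ supplied by the dilation order, this forces equality in each, which is exactly the desired conclusion. The principal technical nuance is the termination argument; this is also why I favour transfinite recursion over a direct appeal to Zorn's lemma, since the totality of dilations of $\phi$ is naturally a proper class rather than a set.
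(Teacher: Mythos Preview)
Your strategy is sound and close to the paper's: exploit monotonicity of $F(\psi)=\|\psi(a)\xi\|^2+\|\psi(a)^*\xi\|^2$ along $\prec$ and produce an $\omega$ above which $F$ cannot increase (the paper treats the two summands by an alternating scheme; your combined functional is a clean simplification). There is, however, a genuine gap at the successor step. You claim that when $F(\phi_\alpha)<M$ one may choose $\phi_{\alpha+1}\succ\phi_\alpha$ with $F(\phi_{\alpha+1})>\tfrac12(F(\phi_\alpha)+M)$, citing ``the definition of $M$ together with transitivity of $\prec$''. But $M$ is the supremum of $F$ over $\P=\{\psi:\phi\prec\psi\}$, not over $\{\psi:\phi_\alpha\prec\psi\}$: the definition of $M$ only supplies some $\psi\succ\phi$ with $F(\psi)$ near $M$, and such $\psi$ need not dominate $\phi_\alpha$ (transitivity runs the wrong way --- it tells you that elements above $\phi_\alpha$ lie above $\phi$, not conversely). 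Nothing in the hypotheses prevents the local supremum $M_\alpha=\sup\{F(\psi):\phi_\alpha\prec\psi\}$ from being strictly less than $M$, in which case your recursion can neither terminate nor advance.

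The fix is to work with the local suprema throughout, which is exactly what the paper does. Build a countable chain $\phi=\phi_0\prec\phi_1\prec\cdots$ with $F(\phi_{n+1})>M_n-\tfrac{1}{n+1}$, where $M_n=\sup\{F(\psi):\phi_n\prec\psi\}$, and apply the limit property once to obtain $\omega$ above every $\phi_n$. For any $\delta\succ\omega$ one has $\phi_n\prec\delta$, hence $F(\delta)\le M_n$, while $F(\omega)\ge F(\phi_{n+1})>M_n-\tfrac{1}{n+1}$ for all $n$; letting $n\to\infty$ gives $F(\delta)\le F(\omega)$, and monotonicity gives the reverse, so the coordinatewise equalities follow. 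The transfinite machinery and the termination argument via uncountable real sequences then become unnecessary.
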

\begin{proof}
Let $\psi_0=\phi$. Then, there is $\psi_1\in \fC_r(\M,\H_1)$ such that $\psi_0\prec \psi_1$ and
\[
\|\psi_{1}(a)\xi\|\geq \sup\{ \|\psi(a)\xi\|:\psi_0\prec \psi\}-1.
\]
Next, choose $\psi_2\in \fC_r(\M,\H_2)$ such that $\psi_1\prec \psi_2$ and
\[
\|\psi_{2}(a)^*\xi\|\geq \sup\{ \|\psi(a)^*\xi\|:\psi_1\prec \psi\}-1/2.
\]
Arguing by induction, for each $n\geq 1$ we find $\psi_n\in \fC_r(\M,\H_n)$ such that $\psi_{n-1}\prec\psi_{n}$,
\[
\|\psi_{2n-1}(a)\xi\|\geq \sup\{ \|\psi(a)\xi\|:\psi_{2n-2}\prec \psi\}-1/(2n-1)
\]
and
\[
\|\psi_{2n}(a)^*\xi\|\geq \sup\{ \|\psi(a)^*\xi\|:\psi_{2n-1}\prec \psi\}-1/2n.
\]
Since $\fC_r(\M)$ has the limit property with respect to $\prec$, we find $\omega\in \fC_r(\M,\H_\omega)$ such that $\psi_n \prec \omega$ for every $n\geq 0$.
In particular, we have $\phi\prec\omega$.
Finally, given $\delta\in \fC_r(\M,\H_\delta)$ such that $\omega\prec \delta$ we have  $\psi_n\prec \delta$ for every $n\geq 1$ whence
\[
\|\psi_{2n-1}(a)\xi\|\geq \|\delta(a)\xi\|-1/(2n-1),
\]
\[
\|\psi_{2n}(a)^*\xi\|\geq \|\delta(a)^*\xi\|-1/2n
\]
and thus
\begin{align*}
\|\omega(a)\xi\|&\geq\|\psi_{2n-1}(a)\xi\|\geq \|\delta(a)\xi\|-1/(2n-1),
\end{align*}
\begin{align*}
\|\omega(a)^*\xi\|&\geq\|\psi_{2n}(a)^*\xi\|\geq \|\delta(a)^*\xi\|-1/2n
\end{align*}
for every $n\geq 1$. Hence 
\[
\|\omega(a)\xi\|\geq \|\delta(a)\xi\|, \quad \|\omega(a)^*\xi\|\geq \|\delta(a)^*\xi\|.
\]
But the reverse inequalities are always satisfied since $\prec$ is a dilation order, and we have
\[
\|\omega(a)\xi\|=\|\delta(a)\xi\|, \quad \|\omega(a)^*\xi\|= \|\delta(a)^*\xi\|
\]
as desired.
\end{proof}

Using the previous lemma, a standard induction argument yields the existence of maximal elements.

\begin{theorem}\label{T:extconstruction}
Let $\M$ be an operator space, let $r\geq 0$ and let $\fC_r(\M)$ be a subclass of $\CB_r(\M)$. Assume that $\fC_r(\M)$ has the limit property with respect to the dilation order $\prec$. Then, for each $\phi\in \fC_r(\M,\H_\phi)$  there is a maximal element $\omega\in \fC_r(\M,\H_\omega)$ such that $\phi\prec\omega$.
\end{theorem}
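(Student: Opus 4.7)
The plan is to bootstrap from the pointwise maximality provided by Lemma \ref{L:maxlocal} to global maximality via transfinite induction, using the limit property of $\fC_r(\M)$ with respect to $\prec$ to handle limit ordinals. Starting from $\omega_0 = \phi$, I would construct a $\prec$-increasing transfinite chain $(\omega_\alpha)_{\alpha < \Gamma}$ that eventually stabilizes at a maximal element.

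At a successor ordinal $\alpha+1$, if $\omega_\alpha$ is not already maximal, there is some pair $(a,\xi) \in \M \times \H_{\omega_\alpha}$ and some dilation $\delta$ with $\omega_\alpha \prec \delta$ witnessing the failure of maximality. Applying Lemma \ref{L:maxlocal} to $\omega_\alpha$ at $(a,\xi)$ produces $\omega_{\alpha+1} \in \fC_r(\M, \H_{\omega_{\alpha+1}})$ with $\omega_\alpha \prec \omega_{\alpha+1}$ such that every further dilation of $\omega_{\alpha+1}$ agrees with it in norm at $(a,\xi)$, and similarly for the adjoint. At a limit ordinal $\alpha$, the limit property furnishes a $\prec$-upper bound $\omega_\alpha \in \fC_r(\M, \ol{\cup_{\beta < \alpha} \H_{\omega_\beta}})$ for the chain $(\omega_\beta)_{\beta < \alpha}$. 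The crucial monotone observation is that local maximality at a pair $(a,\xi)$ is preserved along the chain: if $(a,\xi)$ is locally maximal at some $\omega_\beta$ and $\gamma \geq \beta$, then for any further dilation $\omega_\gamma \prec \delta'$, transitivity of $\prec$ gives $\omega_\beta \prec \delta'$, and the dilation-order inequalities squeeze $\|\omega_\gamma(a)\xi\|$ between the equal quantities $\|\omega_\beta(a)\xi\|$ and $\|\delta'(a)\xi\|$ (and similarly for the adjoints). Thus pairs already maximized are never un-maximized.

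The main obstacle is ensuring that this induction terminates: the Hilbert spaces $\H_{\omega_\alpha}$, and hence the pool of pairs yet to be maximized, grow along the transfinite sequence. To handle this, I would fix at the outset a cardinal $\kappa$ dominating the relevant data (e.g.\ $\kappa \geq \max(|\M|, \dim \H_\phi, \aleph_0)$), embed everything into a fixed ambient Hilbert space of dimension $2^\kappa$, and verify by cardinality analysis of the countable iteration inside Lemma \ref{L:maxlocal} that the successor step does not push the cardinality of $\H_{\omega_\alpha}$ beyond this bound. A standard counting argument then forces stabilization before $\kappa^+$ stages, yielding the desired maximal $\omega$ with $\phi \prec \omega$. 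Equivalently, one may apply Zorn's lemma to the set of $\prec$-extensions of $\phi$ living inside a fixed sufficiently large universe, the chain condition being exactly the limit property, and then invoke Lemma \ref{L:maxlocal} one final time to check that Zorn-maximality within this universe coincides with maximality in the sense of the theorem. This set-theoretic bookkeeping is the most delicate point, but is routine once the lemma and the limit property are in place.
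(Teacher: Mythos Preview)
Your high-level strategy is sound and shares its skeleton with the paper's: build a $\prec$-increasing transfinite chain using Lemma~\ref{L:maxlocal} at successor stages and the limit property at limit stages, and observe that local maximality at a pair $(a,\xi)$ propagates forward along the chain. The gap is in the termination argument. You propose bounding the Hilbert-space dimensions by $2^\kappa$ and claim that each application of Lemma~\ref{L:maxlocal} respects this bound. But inside that lemma one chooses dilations $\psi_n\succ\psi_{n-1}$ achieving a near-supremum of $\|\psi(a)\xi\|$ over \emph{all} dilations in $\fC_r(\M)$, and nothing in the abstract hypotheses controls $\dim\H_{\psi_n}$. The natural fix---compressing $\psi_n$ to a small subspace containing $\H_{\psi_{n-1}}$, $\psi_n(a)\xi$, and $\psi_n(a)^*\xi$---works for $\CB_r$ and $\Pau_r$ because those classes are closed under compression, but the theorem is stated for an arbitrary subclass with only the limit property, where closure under compression is not assumed. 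The Zorn variant inherits the same defect: a Zorn-maximal element inside a fixed ambient space need not be maximal against dilations living outside it, and Lemma~\ref{L:maxlocal} may well produce such external dilations, so the ``one final application'' you propose does not close the loop.

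The paper sidesteps the cardinality issue entirely by organizing the induction differently. Rather than chasing failing pairs (a moving target, since new vectors appear as the space grows), it fixes the current Hilbert space $\X_{n-1}$, enumerates the \emph{fixed} set $\M\times\X_{n-1}$ by an ordinal $\gamma_0$, and runs the transfinite loop to its end---this terminates simply because the index set is fixed in advance, regardless of how large the intermediate spaces become. The output $\theta_n$ is locally maximal at every pair in $\M\times\X_{n-1}$ but lives on a possibly much larger space $\X_n$. Iterating this \emph{countably} many times and applying the limit property once more yields $\omega$ on $\X=\ol{\cup_n\X_n}$; maximality then follows because any $\xi\in\X$ is approximated by vectors in some $\X_n$, where $\theta_{n+1}$ (and hence $\omega$, by exactly the squeeze you noted) is already locally maximal. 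This two-level scheme---transfinite inside over a fixed set, countable outside, density at the end---is the device your proposal is missing.
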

\begin{proof}

Let $\gamma_0$ be an ordinal with the property that there is an enumeration $\{x_\alpha:\alpha<\gamma_0\}$ of $\M\times \H_\phi$. Using transfinite recursion, we construct a net
\[
\phi_\alpha\in \fC_r(\M,\H_\alpha), \quad \alpha\leq \gamma_0
\]
such that: 
\begin{itemize}

\item $\phi\prec \phi_\alpha \prec \phi_\beta$ if $\alpha<\beta$, and

\item for every $\alpha$ we have that
if $\delta\in \fC_r(\M,\H_\delta)$ satisfies $\phi_\alpha\prec \delta$, then 
\[
\|\phi_\alpha(a)\xi\|=\|\delta(a)\xi\|, \quad \|\phi_\alpha(a)^*\xi\|=\|\delta(a)^*\xi\|
\]
for $(a,\xi)\in \{x_\beta:\beta<\alpha\}$. 
\end{itemize} 
Put $\phi_0=\phi$. Let $\alpha$ be an ordinal and assume that we have constructed $\{\phi_\beta\}_{\beta<\alpha}$. We now show how to find $\phi_\alpha$.
There are two cases to consider according to whether $\alpha$ is a successor or a limit ordinal. 

If $\alpha$ is a successor, then we let $\phi_\alpha\in \fC_r(\M,\H_\alpha)$ be the element obtained from applying Lemma \ref{L:maxlocal} to $\phi_{\alpha-1}$ and to the pair $(a,\xi)=x_{\alpha-1}$. It is readily verified that this has the required properties.

Alternatively, if $\alpha$ is a limit ordinal, then since $\fC_r(\M)$ has the limit property we find $\phi_\alpha\in\fC_r(\M,\H_\alpha)$ such that $\phi_\beta\prec \phi_\alpha$ for every ordinal $\beta<\alpha$. We claim that $\phi_\alpha$ has the desired property. In other words, we claim that if $\delta\in \fC_r(\M,\H_\delta)$ and $\phi_\alpha\prec \delta$ then
\[
\|\phi_\alpha(a)\xi\|=\|\delta(a)\xi\|, \quad \|\phi_\alpha(a)^*\xi\|=\|\delta(a)^*\xi\|
\]
for $(a,\xi)\in \{x_\beta:\beta<\alpha\}$. Fixing $\beta<\alpha$ we proceed to show that
\[
\|\phi_\alpha(a)\xi\|=\|\delta(a)\xi\|, \quad \|\phi_\alpha(a)^*\xi\|=\|\delta(a)^*\xi\|
\]
for $(a,\xi)=x_\beta$. Since $\alpha$ is a limit ordinal, there is an ordinal $\beta'$ such that $\beta<\beta'<\alpha$. In particular, we see that $\phi_\beta\prec \phi_{\beta'}\prec \phi_\alpha$ and $\phi_{\beta'}\prec \delta$ so that
\[
\|\phi_{\beta'}(a)\xi\|=\|\delta(a)\xi\|, \quad \|\phi_{\beta'}(a)^*\xi\|=\|\delta(a)^*\xi\|
\]
by the recursive assumption on $\phi_{\beta'}$. This forces 
\[
\|\phi_\alpha(a)\xi\|= \|\delta(a)\xi\|, \quad \|\phi_\alpha(a)^*\xi\|= \|\delta(a)\xi\|
\]
and establishes the existence of the collection $\{\phi_\alpha\}_{\alpha\leq \gamma_0}$. 

Next, we put $\theta_1=\phi_{\gamma_0}$ and $\X_1=\H_{\gamma_0}$.  By choice of $\gamma_0$, we see that 
\[
\|\theta_1(a)\xi\|=\|\delta(a)\xi\|, \quad  \|\theta_1(a)^*\xi\|=\|\delta(a)^*\xi\|
\]
for every $a\in \M, \xi\in \H_\phi$ and every $\delta\in \fC_r(\M,\H_\delta)$ such that $\theta_1\prec \delta$. 
By repeating the argument of the previous paragraphs and proceeding by (usual) induction, we obtain a sequence of maps
\[
\theta_n\in \fC_r(\M,\X_n), \quad n\geq 1
\]
such that 
\[
\phi\prec \theta_n\prec \theta_{n+1}
\]
 and
\[
\|\theta_n(a)\xi\|=\|\delta(a)\xi\|, \quad \|\theta_n(a)^*\xi\|=\|\delta(a)^*\xi\|, \quad  a\in \M, \xi\in \X_{n-1}
\]
whenever $\delta\in \fC_r(\M,\H_\delta)$ satisfies $\theta_n\prec\delta$. Let $\X=\ol{\cup_n \X_n}$.
Since $\fC_r(\M)$ has the limit property, we obtain a map $\omega\in \fC_r(\M,\X)$ such that
\[
\phi\prec \theta_n\prec \omega, \quad n\geq 1.
\]
Thus, we see that
\[
\|\omega(a)\xi\|=\|\theta_{n+1}(a)\xi\|=\|\delta(a)\xi\|, \quad \|\omega(a)^*\xi\|=\|\theta_{n+1}(a)^*\xi\|=\|\delta(a)^*\xi\|
\]
for every $n\in \bN$, $a\in \M$ and $\xi\in \X_n$, whenever $\delta\in \fC_r(\M,\H_\delta)$ with $\omega\prec \delta$. But we have that $\cup_n \X_n$ is dense in $\X$, so we infer 
\[
\|\omega(a)\xi\|=\|\delta(a)\xi\|, \quad \|\omega(a)^*\xi\|=\|\delta(a)^*\xi\|
\]
for every $a\in \M, \xi\in  \X$ and every $\delta\in \fC_r(\M,\H_\delta)$ such that $\omega\prec \delta$. Thus, $\omega$ is maximal.
\end{proof}

\section{Extremals in the class of completely bounded maps}\label{S:CBext}

Let $\M$ be an operator space and let $r\geq 0$.
In this section, we analyze the natural dilation order on the full class $\CB_r(\M)$. That is, given $\phi\in \CB_r(\M,\H_\phi)$ and $\psi\in \CB_r(\M,\H_\psi)$,  we write $\phi \prec \psi$ if $\H_\phi\subset \H_\psi$ and  
\[
\phi(a)=P_{\H_\phi}\psi(a)|_{\H_\phi}, \quad a\in \M.
\]
It is clear that this defines a dilation order on $\CB_r(\M)$.
We say that $\omega\in \CB_r(\M,\H_\omega)$ is $\CB_r(\M)$-\emph{extremal} if whenever $\delta\in \CB_r(\M,\H_\delta)$ satisfies $\omega\prec \delta$, then we necessarily have that $\H_\omega$  is reducing for $\delta(\M)$. 
The technical tool we need to establish the existence of $\CB_r(\M)$-extremals is the following standard fact.

\begin{lemma}\label{L:chaincc}
Let $\M$ be an operator space. Then, the class $\CB_r(\M)$ has the limit property with respect to the dilation order $\prec$.
\end{lemma}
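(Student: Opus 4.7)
The plan is to build $\psi$ as a pointwise limit of the compressions $\phi_\mu(a)$. Set $\H=\overline{\bigcup_{\lambda\in\Lambda}\H_\lambda}$, which contains every $\H_\lambda$ as a closed subspace. I would first define $\psi(a)$ on the dense subspace $\bigcup_\lambda\H_\lambda$ and extend by continuity.

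The crucial observation is a Pythagorean identity coming from the compression relation. For $\lambda\leq\mu\leq\nu$ in $\Lambda$, $a\in\M$ and $\xi\in\H_\mu$, the relation $\phi_\mu\prec\phi_\nu$ gives $\phi_\mu(a)\xi=P_{\H_\mu}\phi_\nu(a)\xi$, so $\phi_\nu(a)\xi-\phi_\mu(a)\xi\in\H_\mu^\perp$ and
\[
\|\phi_\nu(a)\xi\|^2=\|\phi_\mu(a)\xi\|^2+\|\phi_\nu(a)\xi-\phi_\mu(a)\xi\|^2.
\]
Fixing $\xi\in\H_{\lambda_0}$ and $a\in\M$, the map $\mu\mapsto\|\phi_\mu(a)\xi\|$ is therefore monotone non-decreasing for $\mu\geq\lambda_0$, and bounded above by $r\|a\|\|\xi\|$ since $\|\phi_\mu\|_{cb}\leq r$. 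Hence it converges, which in turn forces $(\phi_\mu(a)\xi)_{\mu\geq\lambda_0}$ to be Cauchy in $\H$. I can then set $\psi(a)\xi:=\lim_\mu\phi_\mu(a)\xi$; the estimate $\|\psi(a)\xi\|\leq r\|a\|\|\xi\|$ allows me to extend $\psi(a)$ to a bounded operator on all of $\H$, and linearity of $a\mapsto\psi(a)$ is inherited from each $\phi_\mu$.

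The remaining verifications are routine. For the completely bounded norm, given $[a_{ij}]\in M_n(\M)$ and a tuple $\vec{\xi}=(\xi_1,\dots,\xi_n)\in\H^n$ with each $\xi_i$ lying in some common $\H_{\lambda_0}$ (which exists since $\Lambda$ is totally ordered), the coordinatewise identity
\[
\psi^{(n)}([a_{ij}])\vec{\xi}=\lim_\mu\phi_\mu^{(n)}([a_{ij}])\vec{\xi}
\]
and the uniform bound $\|\phi_\mu^{(n)}\|\leq r\|[a_{ij}]\|$ yield $\|\psi^{(n)}([a_{ij}])\vec{\xi}\|\leq r\|[a_{ij}]\|\|\vec{\xi}\|$ on a dense subspace of $\H^n$, so $\|\psi\|_{cb}\leq r$. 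For the dilation relation $\phi_\lambda\prec\psi$, one computes on $\xi\in\H_\lambda$:
\[
P_{\H_\lambda}\psi(a)\xi=\lim_{\mu\geq\lambda}P_{\H_\lambda}\phi_\mu(a)\xi=\phi_\lambda(a)\xi,
\]
using $P_{\H_\lambda}\phi_\mu(a)|_{\H_\lambda}=\phi_\lambda(a)$ coming from $\phi_\lambda\prec\phi_\mu$. I do not foresee any genuine obstacle: the Pythagorean identity does all the work, reducing an abstract limit question to convergence of a bounded monotone net of real numbers.
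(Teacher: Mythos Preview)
Your proof is correct and follows essentially the same inductive-limit construction as the paper's. The only difference is presentational: the paper defines $\psi(a)$ weakly via the sesquilinear form $\langle\psi(a)x,y\rangle=\langle\phi_{\lambda_0}(a)x,y\rangle$ (where $x,y\in\H_{\lambda_0}$) and checks consistency directly from $\phi_\lambda\prec\phi_\mu$, whereas you construct $\psi(a)\xi$ as a norm limit using the Pythagorean identity to establish that the net is Cauchy. Both routes yield the same operator and the same verifications of $\|\psi\|_{cb}\leq r$ and $\phi_\lambda\prec\psi$; your version does a little more work but also yields the slightly stronger information that $\phi_\mu(a)\xi\to\psi(a)\xi$ in norm.
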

\begin{proof}
Let $\Lambda$ be a totally ordered set. For each $\lambda\in \Lambda$,  let $\phi_\lambda\in \CB_r(\M,\H_\lambda)$. Assume that $\phi_\lambda\prec\phi_\mu$ whenever $\mu\geq \lambda$. Let $\K= \ol{\cup_{\lambda\in \Lambda} \H_\lambda}$.
For $a\in \M$ we define a linear operator 
\[
\psi(a):\cup_{\lambda\in \Lambda} \H_\lambda\to \cup_{\lambda\in \Lambda} \H_\lambda
\]
as follows. Given $x\in \cup_\lambda \H_\lambda$ and $y\in \cup_\lambda\H_\lambda$, since the spaces $\H_\lambda$ increase with $\lambda$ we may find an index $\lambda_0\in \Lambda$ such that both $x$ and $y$ lie in $\H_{\lambda_0}$. We then put
\[
\langle \psi(a) x , y\rangle=\langle \phi_{\lambda_0}(a)x ,y \rangle.
\]
We claim that $\psi(a)$ is well-defined. Indeed, assume that $x$ and $y$ both lie in $\H_\lambda\cap \H_\mu$ for some $\lambda\leq \mu$. Then, using that $\phi_\lambda\prec\phi_\mu$ we find
\[
\langle \phi_\lambda(a)x,y \rangle=\langle \phi_\mu(a)x,y \rangle
\]
which shows that $\psi(a)$ is well-defined. Moreover, it is clear that 
\[
\|\psi(a)\|\leq \sup_\lambda \|\phi_\lambda(a)\|\leq  r \|a\|
\]
so that we may extend $\psi(a)$ to a bounded linear operator on $\K$ with norm at most $r\|a\|$. 
We thus obtain a bounded linear map $\psi:\M\to B(\K)$, that is easily seen to satisfy $\|\psi\|_{cb}\leq r$. By construction, we have that 
\[
\phi_\lambda(a)=P_{\H_\lambda} \psi(a) |_{\H_\lambda}, \quad a\in \M
\]
so that $\phi_\lambda\prec \psi$.
\end{proof}

We then obtain the following useful consequence.

\begin{theorem}\label{T:extcc}
Let $\M$ be an operator space, let $r\geq 0$ and let $\phi\in \CB_r(\M,\H_\phi)$. Then, there exists a $\CB_r(\M)$-extremal $\omega:\M\to B(\H_\omega)$ with $\phi\prec \omega$.
\end{theorem}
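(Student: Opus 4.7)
My plan is to derive the existence of extremals directly from the machinery developed earlier in the section. The existence of a maximal element follows immediately from Theorem \ref{T:extconstruction} applied to the subclass $\CB_r(\M)$: Lemma \ref{L:chaincc} verifies the hypothesis that this subclass has the limit property with respect to the dilation order $\prec$, so we obtain a maximal element $\omega \in \CB_r(\M,\H_\omega)$ with $\phi\prec\omega$. It therefore remains only to show that maximality in the sense of Section \ref{S:extmachine} coincides with $\CB_r(\M)$-extremality in the present setting.

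For that, I would take an arbitrary $\delta\in \CB_r(\M,\H_\delta)$ with $\omega\prec \delta$ and consider the block matrix decomposition of $\delta(a)$ with respect to the orthogonal decomposition $\H_\delta=\H_\omega\oplus\H_\omega^\perp$. Writing
\[
\delta(a)=\begin{pmatrix} \omega(a) & B(a) \\ C(a) & D(a) \end{pmatrix},
\]
the Pythagorean identity gives
\[
\|\delta(a)\xi\|^2=\|\omega(a)\xi\|^2+\|C(a)\xi\|^2
\]
for every $\xi\in\H_\omega$, and an identical computation for $\delta(a)^*$ yields
\[
\|\delta(a)^*\xi\|^2=\|\omega(a)^*\xi\|^2+\|B(a)^*\xi\|^2.
\]
Since $\omega$ is maximal, both norms on the left coincide with the corresponding norms involving $\omega$ alone, forcing $C(a)=0$ and $B(a)^*=0$. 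Thus $\H_\omega$ is invariant for both $\delta(a)$ and $\delta(a)^*$, which is precisely what it means for $\H_\omega$ to reduce $\delta(\M)$. Hence $\omega$ is $\CB_r(\M)$-extremal.

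There is no real obstacle here — the entire statement is a direct packaging of Theorem \ref{T:extconstruction} with the elementary block-matrix observation above. The only thing worth double-checking is that the definition of the dilation order on $\CB_r(\M)$ (namely $\phi(a)=P_{\H_\phi}\psi(a)|_{\H_\phi}$) is genuinely a dilation order in the sense of Section \ref{S:extmachine}, i.e., that it implies the required norm inequalities $\|\phi(a)\xi\|\leq \|\psi(a)\xi\|$ and $\|\phi(a)^*\xi\|\leq \|\psi(a)^*\xi\|$ for $\xi\in \H_\phi$; this is immediate from the same Pythagorean identity, and was already asserted in the text preceding Lemma \ref{L:chaincc}.
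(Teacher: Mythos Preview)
Your proposal is correct and follows essentially the same approach as the paper's own proof: both invoke Lemma \ref{L:chaincc} and Theorem \ref{T:extconstruction} to produce a maximal element, and then verify that maximality forces $\H_\omega$ to be reducing for $\delta(\M)$. The only difference is cosmetic: the paper computes $\|(\delta(a)-\omega(a))\xi\|^2=\|\delta(a)\xi\|^2-\|\omega(a)\xi\|^2=0$ directly (and similarly for adjoints), whereas you phrase the same identity via the block-matrix decomposition and the Pythagorean theorem; these are the same calculation in different notation.
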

\begin{proof}
The class $\CB_r(\M)$ has the limit property with respect to $\prec$ in view of Lemma \ref{L:chaincc}. By Theorem \ref{T:extconstruction}, we see that there is a maximal element $\omega\in \CB_r(\M,\H_\omega)$ such that $\phi\prec \omega$. The proof is now completed by noting that such an element must be $\CB_r(\M)$-extremal. Indeed, let $\delta\in \CB_r(\M,\H_\delta)$ such that $\omega\prec \delta$. We calculate for every $a\in \M$ and $\xi\in \H_\omega$ that
\begin{align*}
\|(\delta(a)-\omega(a))\xi\|^2&=\| \delta(a)\xi\|^2+\|\omega(a)\xi \|^2-2\re \langle \delta(a)\xi,\omega(a)\xi\rangle\\
&=\|\delta(a)\xi \|^2-\| \omega(a)\xi\|^2=0
\end{align*}
so that $\delta(a)\xi=\omega(a)\xi$. Likewise,
\begin{align*}
\|(\delta(a)^*-\omega(a)^*)\xi\|^2&=\| \delta(a)^*\xi\|^2+\|\omega^*(a)\xi \|^2-2\re \langle \delta(a)^*\xi,\omega(a)^*\xi\rangle\\
&=\|\delta(a)^*\xi \|^2-\| \omega(a)^*\xi\|^2=0
\end{align*}
so that $\delta(a)^*\xi=\omega(a)^*\xi$.
Hence, $\delta(\M)\H_\omega\subset \H_\omega$ and $\delta(\M)^*\H_\omega\subset \H_\omega$, so indeed $\omega$ is $\CB_r(\M)$-extremal.
\end{proof}

Before we can state and prove the main result of this section, we require some preparation. The following observation is elementary but we will need it repeatedly throughout the paper. As such, we recall the proof here for the reader's convenience.

\begin{lemma}\label{L:V}
Let $\M$ be an operator space and let $\phi:\M\to B(\H_\phi)$ be a linear map. Assume that there is a Hilbert space $\K$, two isometries 
\[
V_1:\H_\phi\to \K, \quad V_2:\H_\phi\to \K
\]
and a linear map $\psi:\M\to B(\K)$ such that
\[
\phi(a)=V_1^*\psi(a)V_2, \quad a\in \M.
\]
Then, there is a Hilbert space $\K_\phi$ containing $\H_\phi$ and two unitary operators 
\[
R_1:\K_\phi\to \K\oplus \K, \quad R_2:\K_\phi\to \K\oplus \K
\]
such that
\[
\phi(a)=P_{\H_\phi}R^*_1(\psi(a)\oplus \psi(a))R_2|_{\H_\phi}, \quad a\in \M.
\]
When $V_1=V_2$ we may choose $R_1=R_2$.
\end{lemma}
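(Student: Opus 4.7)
The plan is to explicitly construct $\K_\phi$ as an abstract direct sum $\H_\phi \oplus \L$ for a suitably chosen auxiliary Hilbert space $\L$, and then build each $R_i$ by extending an obvious isometric embedding of $\H_\phi$ into $\K \oplus \K$ to a unitary via a choice of identification on the orthogonal complement.

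More concretely, I would first introduce the isometries $W_i : \H_\phi \to \K \oplus \K$ defined by $W_i h = V_i h \oplus 0$. The key observation is that
\[
\dim\bigl((\K\oplus \K)\ominus W_i(\H_\phi)\bigr) = 2\dim\K - \dim\H_\phi
\]
does not depend on $i$, so if I choose any Hilbert space $\L$ of this dimension then I can select unitaries $U_i : \L \to (\K\oplus\K)\ominus W_i(\H_\phi)$ for $i=1,2$. Setting $\K_\phi = \H_\phi\oplus\L$, with $\H_\phi$ sitting inside $\K_\phi$ in the obvious way, I define
\[
R_i(h\oplus \ell) = W_i h + U_i\ell, \qquad h\in \H_\phi,\; \ell\in \L.
\]
Each $R_i$ is then a unitary $\K_\phi \to \K\oplus\K$ whose restriction to $\H_\phi$ equals $W_i$, that is, $R_i h = V_i h \oplus 0$.

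The verification of the dilation formula is then a direct computation. For $h \in \H_\phi$, the vector $R_2 h = V_2 h \oplus 0$ lies in the first copy of $\K$, so $(\psi(a)\oplus \psi(a))R_2 h = \psi(a)V_2 h \oplus 0$. Testing against any $h'\in \H_\phi$ and using that $R_1 h' = V_1 h' \oplus 0$ gives
\[
\langle R_1^*(\psi(a)V_2h\oplus 0), h'\rangle = \langle \psi(a)V_2 h, V_1 h'\rangle_{\K} = \langle V_1^*\psi(a)V_2 h, h'\rangle = \langle \phi(a)h, h'\rangle,
\]
whence $P_{\H_\phi}R_1^*(\psi(a)\oplus\psi(a))R_2 h = \phi(a)h$, as required. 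Finally, when $V_1 = V_2$ one has $W_1 = W_2$ and the two orthogonal complements coincide, so one may take $U_1 = U_2$, forcing $R_1 = R_2$.

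There is no serious obstacle here; the only mild bookkeeping point is ensuring the cardinal arithmetic $2\dim \K - \dim \H_\phi$ makes sense and yields a Hilbert space $\L$ that works for both $i=1$ and $i=2$ simultaneously, which is automatic since the two orthogonal complements have the same dimension. Everything else is a direct unitary extension argument followed by a one-line inner product check.
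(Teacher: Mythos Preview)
Your proposal is correct and follows essentially the same approach as the paper: both arguments embed $\H_\phi$ into $\K\oplus\K$ via $h\mapsto V_ih\oplus 0$, observe that the two orthogonal complements of these ranges have equal dimension, and use this to extend to unitaries on a common domain containing $\H_\phi$. The paper carries out the bookkeeping by first building unitaries $U_k:\K\oplus\K\to\H_\phi\oplus(V_k\H_\phi)^\perp\oplus\K$ and then aligning the two target spaces with an auxiliary unitary $Z$, whereas you pick an abstract $\L$ of the right dimension once and build each $R_i$ directly; the content is the same, and your version is arguably tidier.

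One small remark on presentation: the expression ``$2\dim\K-\dim\H_\phi$'' is only heuristic, since cardinal subtraction is not well-defined in general. The rigorous statement (which you do give) is simply that $(\K\oplus\K)\ominus W_1(\H_\phi)$ and $(\K\oplus\K)\ominus W_2(\H_\phi)$ have the same dimension; this follows because each decomposes as $(V_i\H_\phi)^\perp\oplus\K$, and adding a copy of $\K$ absorbs any possible discrepancy between $\dim(V_1\H_\phi)^\perp$ and $\dim(V_2\H_\phi)^\perp$ (in the finite-dimensional case these are already equal, and in the infinite-dimensional case both sums are $\dim\K$). You might state it this way to avoid any ambiguity.
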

\begin{proof}

For $k=1,2$, define an isometry
\[
V_k':\H_\phi\to \K\oplus \K
\]
as
\[
V_k'\xi=V_k\xi\oplus 0, \quad \xi\in \H_\psi.
\]
Note that
\[
\phi(a)=V_1'^{*}(\psi(a)\oplus \psi(a))V_2', \quad a\in \M.
\]
Define also a unitary operator 
\[
U_k:\K \oplus \K=V_k\H_\phi\oplus (V_k\H_\phi)^\perp \oplus \K \to \H_\phi\oplus (V_k\H_\phi)^\perp \oplus \K
\]
as 
\[
U_k (V_k \xi\oplus z\oplus \eta)=\xi\oplus z\oplus \eta, \quad \xi\in \H_\phi, z\in (V_k\H_\phi)^\perp, \eta\in \K.
\]
It is readily verified that 
\[
U_k V'_k:\H_\phi\to  \H_\phi\oplus (V_k\H_\phi)^\perp\oplus \K
\]
is simply the inclusion in the first component, 
so that $(U_k V'_k)^*$ is the projection $P_{\H_\phi}$ onto the first component. For every $a\in\M$ we obtain that
\begin{align*}
\phi(a)&=V_1'^{*}(\psi(a)\oplus \psi(a))V_2'\\
&=(U_1 V_1')^* U_1 (\psi(a)\oplus \psi(a)) U_2^* (U_2 V'_2)\\
&=P_{\H_\phi} U_1(\psi(a)\oplus \psi(a))U_2^*|_{\H_\phi}.
\end{align*}
Note that $(V_1\H_\phi)^\perp \oplus \K$ and $(V_2\H_\phi)^\perp \oplus \K$ have the same dimension. Choose a unitary operator 
\[
Z:(V_1\H_\phi)^\perp\oplus \K \to (V_2 \H_\phi)^\perp \oplus \K.
\]
Then, the operator
\[
W=I\oplus Z: \H_\phi\oplus (V_1\H_\phi)^\perp\oplus \K \to \H_\phi\oplus (V_2\H_\phi)^\perp \oplus \K
\]
is unitary as well, and satisfies $P_{\H_\phi}=P_{\H_\phi}W$. Thus, we have that
\[
\phi(a)=P_{\H_\phi} WU_1(\psi(a)\oplus \psi(a))U_2^*|_{\H_\phi}, \quad a\in \M.
\]
The desired equality now follows upon setting $\K_\phi=\H_\phi\oplus (V_2\H_\phi)^\perp \oplus \K$, $R^*_1=WU_1, R_2=U^*_2$.
Finally, we note that a careful look at the proof reveals that indeed $R_1$ and $R_2$ may be chosen to be equal whenever $V_1=V_2$.
\end{proof}

Another piece of preparation we need for the main result of this section is the following generalization of Stinespring's dilation theorem.

\begin{theorem}\label{T:paulsencbdilation1}
Let $\M\subset B(\H)$ be an operator space and $\phi:\M\to B(\H_\phi)$ a completely bounded linear map. Then, there is a Hilbert space $\K$, a $*$-homomorphism $\pi:C^*(\M)\to B(\K)$, another Hilbert space $\K_\phi$ containing $\H_\phi$  and two unitary operators 
\[
R_1:\K_\phi \to \K, \quad R_2:\K_\phi\to \K
\]
with the property that 
\[
\phi(a)=\|\phi\|_{cb} P_{\H_\phi} R^*_1\pi(a)R_2|_{\H_\phi}, \quad a\in \M.
\]
\end{theorem}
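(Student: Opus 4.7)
The plan is to combine three ingredients: Wittstock's extension theorem (recalled in Section~\ref{SS:opspaces}), Paulsen's generalization of Stinespring's dilation theorem to completely bounded maps on unital $C^*$-algebras, and Lemma~\ref{L:V} to convert a sandwich by isometries into a compression of a doubled representation. Throughout, if $\|\phi\|_{cb}=0$ then $\phi=0$ and the statement is trivial, so we assume $\|\phi\|_{cb}>0$.

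First, by Wittstock's extension theorem I extend $\phi$ to a completely bounded map $\Phi:B(\H)\to B(\H_\phi)$ with $\|\Phi\|_{cb}=\|\phi\|_{cb}$. Since $B(\H)$ is a unital $C^*$-algebra, Paulsen's version of Stinespring's theorem supplies a Hilbert space $\K'$, a unital $*$-homomorphism $\rho:B(\H)\to B(\K')$, and bounded linear operators $V_1,V_2:\H_\phi\to \K'$ such that
\[
\Phi(a)=V_1^*\rho(a)V_2,\quad a\in B(\H),
\]
with $\|V_1\|\cdot \|V_2\|=\|\Phi\|_{cb}$. By the standard rescaling trick ($V_1\mapsto tV_1$, $V_2\mapsto t^{-1}V_2$) I may arrange $\|V_1\|=\|V_2\|=\|\phi\|_{cb}^{1/2}$. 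Setting $W_i=\|\phi\|_{cb}^{-1/2}V_i$ gives two isometries from $\H_\phi$ into $\K'$ satisfying
\[
\phi(a)=\|\phi\|_{cb}\, W_1^* \rho(a) W_2,\quad a\in \M.
\]

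Next, I invoke Lemma~\ref{L:V} with the map $\psi(a)=\|\phi\|_{cb}\,\rho(a)$ and the isometries $W_1,W_2$. The lemma then produces a Hilbert space $\K_\phi$ containing $\H_\phi$ and two unitary operators $R_1,R_2:\K_\phi\to \K'\oplus \K'$ with
\[
\phi(a)=P_{\H_\phi}R_1^*\bigl(\psi(a)\oplus \psi(a)\bigr)R_2|_{\H_\phi}=\|\phi\|_{cb}\,P_{\H_\phi}R_1^*\bigl(\rho(a)\oplus\rho(a)\bigr)R_2|_{\H_\phi}
\]
for every $a\in \M$. Since $\rho\oplus\rho$ is a $*$-homomorphism on $B(\H)$, its restriction $\pi=(\rho\oplus\rho)|_{C^*(\M)}:C^*(\M)\to B(\K'\oplus \K')$ is again a $*$-homomorphism, and taking $\K=\K'\oplus \K'$ yields the conclusion.

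The only subtle steps are (i) arranging that the Paulsen--Stinespring dilation can be chosen with $\|V_1\|=\|V_2\|$, which is settled by the elementary rescaling noted above; and (ii) ensuring that the two operators $V_1,V_2$ can be turned into isometries so that Lemma~\ref{L:V} genuinely applies---this is exactly what rescaling by $\|\phi\|_{cb}^{1/2}$ accomplishes. No deeper obstacle arises: the theorem is essentially a bookkeeping exercise assembling existing dilation and extension results, with Lemma~\ref{L:V} playing the role of replacing ``sandwich by isometries'' with ``compression of a doubled representation''.
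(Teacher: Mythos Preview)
Your overall strategy---Paulsen's generalized Stinespring dilation followed by Lemma~\ref{L:V}---is exactly what the paper does, and the detour through Wittstock's extension theorem is harmless (though unnecessary: Paulsen's off-diagonal technique applies directly to operator spaces, which is why the paper cites \cite{paulsen1984} without extending to $B(\H)$ first).

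There is, however, a genuine gap in the step where you produce the isometries $W_i$. From the bare statement of Paulsen's theorem you quote, the operators $V_1,V_2$ are only known to be \emph{bounded} with $\|V_1\|\,\|V_2\|=\|\Phi\|_{cb}$; after rescaling, $W_i=\|\phi\|_{cb}^{-1/2}V_i$ satisfies $\|W_i\|=1$, but a norm-one operator need not be an isometry, and Lemma~\ref{L:V} requires honest isometries. The fix is to do what the paper does: normalize $\phi$ (or $\Phi$) to be completely contractive \emph{before} invoking Paulsen's result. For a completely contractive map, the Paulsen-system construction yields a \emph{unital} completely positive $2\times 2$ map, whose Stinespring dilation produces an isometry $W:\H_\phi\oplus\H_\phi\to\K$; evaluating at the diagonal matrix units $e_{11},e_{22}$ then forces $W=W_1\oplus W_2$ with each $W_i$ an isometry. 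Equivalently, one may observe that the $V_i$ in the standard proof of the general theorem are scalar multiples of isometries, but this is not part of the statement you cite and must be argued. Once this is repaired, your proof coincides with the paper's.
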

\begin{proof}
The claim is trivial if $\phi=0$, so that upon replacing $\phi$ with $\phi/\|\phi\|_{cb}$, we may assume that $\phi$ is completely contractive. By \cite[Theorems 2.4 and 2.7]{paulsen1984}, 
we may find a Hilbert space $\K$, two isometries
\[
V_1:\H_\phi\to \K, \quad V_2:\H_\phi\to \K
\]
and a $*$-homomorphism $\pi:C^*(\M)\to B(\K)$ such that
\[
\phi(a)=V_1^*\pi(a)V_2, \quad a\in \M.
\]
The desired conclusion now follows at once from Lemma \ref{L:V}.
\end{proof}

We also require the following elementary non-unital adaptation of the usual Stinespring dilation theorem.

\begin{lemma}\label{L:ccp}
Let $\fA$ be a $C^*$-algebra and let $\phi:\fA\to B(\H)$ be a contractive completely positive map. Then, there is a Hilbert space $\K$ containing $\H$ and $*$-homomorphism $\sigma:\fA\to B(\K)$ such that
\[
\phi(a)=P_{\H}\sigma(a)|_{\H}, \quad a\in \fA.
\]
\end{lemma}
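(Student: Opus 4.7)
The plan is to reduce to the classical (unital) Stinespring dilation theorem by passing through the unitization procedure described in Section \ref{SS:unitization}. Since $\phi$ is already contractive and completely positive on $\fA$, it trivially extends to a contractive completely positive linear map on $\fA$ (namely itself), so the first bullet point at the end of Section \ref{SS:unitization} applies verbatim and yields that the associated unital linear map
\[
\Upsilon(\phi):\Upsilon(\fA)\to B(\H)
\]
is unital and completely positive on the unital $C^*$-algebra $\Upsilon(\fA)$.

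Next, I would invoke the standard Stinespring dilation theorem for the unital completely positive map $\Upsilon(\phi)$. This furnishes a Hilbert space $\K$ containing $\H$ and a unital $*$-homomorphism $\widetilde{\sigma}:\Upsilon(\fA)\to B(\K)$ such that
\[
\Upsilon(\phi)(x)=P_{\H}\widetilde{\sigma}(x)|_{\H},\qquad x\in \Upsilon(\fA).
\]

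Finally, I would define $\sigma:\fA\to B(\K)$ by $\sigma=\widetilde{\sigma}\circ \upsilon$, where $\upsilon:\fA\to \Upsilon(\fA)$ is the injective $*$-homomorphism from the unitization construction. Then $\sigma$ is a $*$-homomorphism as a composition of $*$-homomorphisms, and by the defining property of $\Upsilon(\phi)$, for every $a\in \fA$ one has
\[
P_{\H}\sigma(a)|_{\H}=P_{\H}\widetilde{\sigma}(\upsilon(a))|_{\H}=\Upsilon(\phi)(\upsilon(a))=\phi(a),
\]
which is the desired dilation identity. There is no serious obstacle in this argument; the only point worth noting is that all the work has already been done in setting up the unitization machinery so that $\Upsilon(\phi)$ lands in the UCP category, after which classical Stinespring finishes the job.
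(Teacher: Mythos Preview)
Your proof is correct and is essentially identical to the paper's own argument: unitize to obtain the unital completely positive map $\Upsilon(\phi)$ on $\Upsilon(\fA)$, apply the usual Stinespring dilation to get a unital $*$-homomorphism on $\Upsilon(\fA)$, and then compose with $\upsilon$ to descend to $\fA$. The only difference is notation (the paper writes $\pi$ where you write $\widetilde{\sigma}$).
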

\begin{proof}
Consider the unital $C^*$-algebra $\Upsilon(\fA)$ and the associated unital completely positive map $\Upsilon(\phi):\Upsilon(\fA)\to B(\H)$ (see Subsection \ref{SS:unitization}). Apply the usual Stinespring dilation theorem to $\Upsilon(\phi)$ to find a Hilbert space $\K$ containing $\H$ and unital $*$-homomorphism $\pi:\Upsilon(\fA)\to B(\K)$ such that
\[
\Upsilon(\phi)(b)=P_{\H}\pi(b)|_{\H}, \quad b\in \Upsilon(\fA).
\]
Then, the map $\sigma=\pi\circ \upsilon:\fA\to B(\K)$ is a $*$-homomorphism such that 
\[
\phi(a)=P_{\H}\sigma(a)|_{\H}, \quad a\in \fA.
\]
\end{proof}

The last preliminary we need shows that $\CB_r$-extremal elements are necessarily non-degenerate.

\begin{lemma}\label{L:extnondeg}
Let $\M$ be an operator space, let $r\geq 1$ and let $\omega:\M\to B(\H_\omega)$ be $\CB_r(\M)$-extremal. Then, $\ol{\omega(\M)\H_\omega}=\H_\omega$.
\end{lemma}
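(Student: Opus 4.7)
The plan is to argue by contrapositive. Suppose $\H_0:=\ol{\omega(\M)\H_\omega}$ is a proper subspace of $\H_\omega$ and pick a unit vector $\eta$ in its orthogonal complement. Then $\langle \omega(a)\xi,\eta\rangle=0$ for every $a\in\M$ and $\xi\in\H_\omega$, which is equivalent to the identity $\omega(a)^*\eta=0$ for all $a\in \M$. This orthogonality is the sole input that drives the construction below.

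Next, I fix any nonzero bounded linear functional $f:\M\to\bC$ with $\|f\|\leq r$, which exists by Hahn--Banach since $r\geq 1$, and define $\delta:\M\to B(\H_\omega\oplus\bC)$ by
\[
\delta(a)=\begin{pmatrix}\omega(a) & f(a)\eta \\ 0 & 0 \end{pmatrix},
\]
where the $(1,2)$ entry denotes the rank-one map $\bC\to \H_\omega$, $\alpha\mapsto \alpha f(a)\eta$. Evidently $\omega\prec\delta$. To see that $\H_\omega$ fails to be reducing for $\delta(\M)$, I compute
\[
\delta(a)^*(\eta,0)=(\omega(a)^*\eta,\overline{f(a)})=(0,\overline{f(a)}),
\]
which lies outside $\H_\omega\oplus 0$ for any $a\in \M$ with $f(a)\neq 0$, and such an $a$ exists because $f\neq 0$.

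The only subtle step is verifying $\|\delta\|_{cb}\leq r$. A crude triangle-inequality estimate only yields $\|\delta\|_{cb}\leq \|\omega\|_{cb}+\|f\|$, which may exceed $r$ in the borderline case $\|\omega\|_{cb}=r$. The crucial observation is that the identity $\omega(a)^*\eta=0$ forces the off-diagonal blocks of $\delta^{(n)}(c)^*\delta^{(n)}(c)$ to vanish for every $n\in\bN$ and every $c=[c_{ij}]\in M_n(\M)$; a direct computation yields
\[
\delta^{(n)}(c)^*\delta^{(n)}(c)=\begin{pmatrix}\omega^{(n)}(c)^*\omega^{(n)}(c) & 0 \\ 0 & [f(c_{ij})]^*[f(c_{ij})] \end{pmatrix}.
\]
Taking norms gives $\|\delta^{(n)}(c)\|=\max\{\|\omega^{(n)}(c)\|,\|[f(c_{ij})]\|\}$, and a supremum over $n$ and $c$ produces $\|\delta\|_{cb}=\max\{\|\omega\|_{cb},\|f\|_{cb}\}\leq r$, where the last inequality uses the standard equality $\|f\|_{cb}=\|f\|$ for scalar-valued maps.

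Combining everything, $\delta\in \CB_r(\M,\H_\omega\oplus\bC)$ dilates $\omega$ while $\H_\omega$ is not reducing for $\delta(\M)$. This contradicts the $\CB_r(\M)$-extremality of $\omega$, and so $\ol{\omega(\M)\H_\omega}=\H_\omega$.
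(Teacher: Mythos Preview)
Your proof is correct and follows essentially the same strategy as the paper: construct an upper-triangular dilation whose off-diagonal entry lands in the orthogonal complement $\H_0^\perp$, and exploit that orthogonality to compute the cb-norm as a maximum rather than a sum. The only cosmetic differences are that the paper enlarges by a full copy of $\H_\omega$ and places a $B(\H_0^\perp)$-valued map in the off-diagonal corner (computing the norm via $\delta\delta^*$), whereas you enlarge by $\bC$ and use a scalar functional times a fixed unit vector $\eta\in\H_0^\perp$ (computing the norm via $\delta^*\delta$).
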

\begin{proof}
Let $\H_\omega'=\ol{\omega(\M)\H_\omega}$. According to the orthogonal decomposition $\H_\omega=\H_\omega'\oplus\H'^\perp_\omega$, for every $a\in \M$ we have
\[
\omega(a)=
\begin{bmatrix}
 * & *\\
 0 & 0
\end{bmatrix}.
\]
If $\H'^\perp_\omega\neq \{0\}$, then we may choose $\phi:\M\to B(\H'^\perp_\omega)$ with $\|\phi\|_{cb}=r$. Let $\Phi:\M\to B(\H_\omega)$ be defined as
\[
\Phi(a)=\begin{bmatrix} 0 & 0\\ 0 & \phi(a) \end{bmatrix}, \quad a\in \M.
\]
Next, define $\delta:\M\to B(\H_\omega\oplus \H_\omega)$ as
\[
\delta(a)=
\begin{bmatrix}
 \omega(a) & \Phi(a) \\
 0 & 0
\end{bmatrix}, \quad a\in \M.
\]
A straightforward verification yields that $\|\delta\|_{cb}=r$. Moreover, if we identify $\H_\omega$ with $\H_\omega\oplus \{0\}\subset \H_\omega\oplus \H_\omega$, then we note that $\omega\prec \delta$, yet $\H_\omega$ is not reducing for $\delta(\M)$ and thus $\omega$ is not $\CB_r$-extremal.
\end{proof}

Finally, we come to the main result of this section that elucidates the structure of $\CB_r(\M)$-extremal elements, at least for operator algebras.

\begin{theorem}\label{T:extmultcc}
Let $\A\subset B(\H)$ be an operator algebra. Let $\omega:\A\to B(\H_\omega)$ be a completely bounded linear map. Assume that $\omega$ is $\CB_r(\A)$-extremal for some $r\geq 1$. Then, the following statements are equivalent.
\begin{enumerate}[label=\normalfont{(\roman*)}]
\item The map $\omega$ is multiplicative.

\item The map $\omega$ is completely contractive, and there is a $*$-homomorphism $\sigma:C^*(\A)\to B(\H_\omega)$ that agrees with $\omega$ on $\A$.

\item There is a contractive completely positive map $\Psi: C^*(\A)\to B(\H_\omega)$ that agrees with $\omega$ on $\A$.
\end{enumerate}
\end{theorem}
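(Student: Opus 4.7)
The proof will address the three implications (ii)$\Leftrightarrow$(iii), (ii)$\Rightarrow$(i), and (i)$\Rightarrow$(ii). Since a $*$-homomorphism is both contractive completely positive and multiplicative, both (ii)$\Rightarrow$(iii) and (ii)$\Rightarrow$(i) are immediate. For (iii)$\Rightarrow$(ii), I will apply Lemma \ref{L:ccp} to the contractive completely positive extension $\Psi$, producing a Hilbert space $\K \supset \H_\omega$ and a $*$-homomorphism $\sigma: C^*(\A) \to B(\K)$ with $\Psi = P_{\H_\omega}\sigma(\cdot)|_{\H_\omega}$. Then $\sigma|_\A$ lies in $\CB_1(\A) \subset \CB_r(\A)$ and satisfies $\omega \prec \sigma|_\A$; by $\CB_r$-extremality, $\H_\omega$ must be reducing for $\sigma(\A)$, hence also for $\sigma(\A^*)$ and the full algebra $\sigma(C^*(\A))$. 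The restriction of $\sigma$ to $\H_\omega$ then gives the $*$-homomorphism extending $\omega$ demanded by (ii), and this restriction being a $*$-homomorphism forces $\omega$ to be completely contractive.

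The substantive step is (i)$\Rightarrow$(ii). After reducing to the case of a unital $\A$ with $\omega$ unital (via Meyer's theorem and the unitization procedure of Subsection \ref{SS:unitization}, which preserve multiplicativity, the cb-norm, and the dilation order), set $c = \|\omega\|_{cb}$ and invoke Theorem \ref{T:paulsencbdilation1} to write
\[
\omega(a) = c\, P_{\H_\omega} R_1^* \pi(a) R_2|_{\H_\omega},\quad a\in\A,
\]
for a (WLOG unital) $*$-homomorphism $\pi: C^*(\A) \to B(\K)$ and unitaries $R_1, R_2: \K_\omega \to \K$ with $\H_\omega \subset \K_\omega$. The lifted map $\Omega(a) = c R_1^*\pi(a) R_2$ on $\K_\omega$ has $\|\Omega\|_{cb} \leq c \leq r$ and satisfies $\omega \prec \Omega$, so $\CB_r$-extremality forces $\H_\omega$ to be reducing for $\Omega(\A)$. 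Consequently, $\Omega(a)|_{\H_\omega} = \omega(a)$ on the nose (not merely after compression), and multiplicativity of $\omega$ on $\H_\omega$ translates into $\Omega(a)\Omega(b)h = \Omega(ab)h$ for all $a,b \in \A$ and $h \in \H_\omega$.

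Unpacking this identity via the Paulsen formula and cancelling $R_1^*$ on the left yields
\[
\pi(a)\bigl(cR_2 R_1^* - I_\K\bigr)\pi(b) R_2 h = 0, \quad a,b \in \A,\ h \in \H_\omega.
\]
Setting $a = b = 1_\A$ and using $\pi(1_\A) = I_\K$ gives $(cR_2R_1^* - I_\K)R_2 h = 0$, equivalently $R_1^* R_2 h = c^{-1} h$ for all $h \in \H_\omega$. Since $R_1^* R_2$ is unitary on $\K_\omega$ and $\H_\omega$ is invariant under it, the restriction $R_1^* R_2|_{\H_\omega}$ must be an isometry, and the scalar operator $c^{-1} I_{\H_\omega}$ is an isometry only when $c = 1$. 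Thus $\omega$ is completely contractive. An application of Arveson's extension theorem (via the unitization procedure) then produces a contractive completely positive extension of $\omega$ to $C^*(\A)$, establishing (iii); the already-proved (iii)$\Rightarrow$(ii) finishes the argument.

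The main obstacles will be (a) the reduction to the unital setting — in particular verifying that multiplicativity, the cb-norm, and $\CB_r$-extremality all survive the unitization — and (b) the algebraic manipulation needed to extract the eigenvector identity $R_1^* R_2 h = c^{-1} h$ from the reducing condition and multiplicativity. Once that identity is in hand, the conclusion $c = 1$ is an elementary observation about unitary operators, and the remainder of the argument reduces to the same extremality machinery used in (iii)$\Rightarrow$(ii).
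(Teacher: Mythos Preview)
Your treatment of (iii)$\Rightarrow$(ii) is correct and matches the paper's argument for (iii)$\Rightarrow$(i). The core algebraic step in (i)$\Rightarrow$(ii)---deriving from multiplicativity and extremality the identity
\[
\pi(a)\bigl(cR_2R_1^*-I\bigr)\pi(b)R_2h=0,\qquad a,b\in\A,\ h\in\H_\omega
\]
---is also exactly what the paper does. The gap is in how you extract $c=1$ from this identity.

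Your reduction to a unital $\A$ with $\omega$ unital is not justified. Meyer's theorem yields $\|\Upsilon(\theta)\|_{cb}\le\|\theta\|_{cb}$ only for completely \emph{contractive} homomorphisms $\theta$; for general completely bounded homomorphisms the cb-norm can strictly increase under unitization, as the paper itself shows in Example~\ref{E:IdempotentGrows}. Since $\|\omega\|_{cb}\leq 1$ is precisely what you are trying to prove, you cannot invoke Meyer here. More critically, $\CB_r$-extremality does not transfer along the unitization: a dilation $\Delta\in\CB_r(\Upsilon(\A),\H_\Delta)$ of $\Upsilon(\omega)$ restricts to a dilation of $\omega$ in $\CB_r(\A)$, so extremality of $\omega$ forces $\H_\omega$ to reduce $\Delta(\upsilon(\A))$, but nothing forces $\H_\omega$ to reduce $\Delta(e)$, since $P_{\H_\omega}\Delta(e)|_{\H_\omega}=I$ is compatible with nonzero off-diagonal blocks once $\|\Delta(e)\|$ may exceed $1$. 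There is also a secondary issue: Theorem~\ref{T:paulsencbdilation1} as stated does not guarantee $\pi(1_\A)=I_\K$, and without this your eigenvalue argument $R_1^*R_2h=c^{-1}h$ degrades to $R_1^*\pi(1_\A)R_2h=c^{-1}h$, which only gives $c\geq 1$.

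The paper replaces the unit by non-degeneracy: Lemma~\ref{L:extnondeg} shows that any $\CB_r$-extremal $\omega$ satisfies $\overline{\omega(\A)\H_\omega}=\H_\omega$. From your displayed identity one rewrites $\pi(b)R_2h=c^{-1}R_1\omega(b)h$ to obtain $(cR_2-R_1)\omega(b)h\in\bigcap_{a\in\A}\ker\pi(a)$, and non-degeneracy then gives $(cR_2-R_1)\H_\omega\subset\bigcap_a\ker\pi(a)$. Substituting back yields $\omega(a)=R_1^*\pi(a)R_1|_{\H_\omega}$ directly (so $\omega$ is completely contractive), and a second appeal to extremality shows $\H_\omega$ reduces $R_1^*\pi(\cdot)R_1$, producing the desired $*$-homomorphism on $C^*(\A)$.
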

\begin{proof}
(i) $\Rightarrow$ (ii) 
Assume that $\omega$ is multiplicative. It is no loss of generality to assume that $\|\omega\|_{cb}\neq 0$. By Theorem \ref{T:paulsencbdilation1}, there is a Hilbert space $\K$, a $*$-homomorphism $\pi:C^*(\A)\to B(\K)$, another Hilbert space $\K_\omega$ containing $\H_\omega$  and two unitary operators 
\[
R_1:\K_\omega \to \K, \quad R_2:\K_\omega \to \K
\]
with the property that 
\[
\omega(a)=\|\omega\|_{cb}P_{\H_\omega} R^*_1\pi(a)R_2|_{\H_\omega}, \quad a\in \A.
\]
Using that $\omega$ is $\CB_r(\A)$-extremal we see that $R^*_1\pi(\A)R_2\H_\omega\subset\H_\omega$.  Consequently, we obtain
\[
\omega(a)=\|\omega\|_{cb}R^*_1\pi(a)R_2|_{\H_\omega}, \quad a\in \A.
\]
Now, we calculate for $a\in \A$ and $b\in \A$ that
\begin{align*}
0&=R_1(\omega(a)\omega(b)-\omega(ab)) \\
&=R_1(\|\omega\|_{cb}^2R^*_1\pi(a)R_2R^*_1\pi(b)R_2|_{\H_\omega}-\|\omega\|_{cb}R^*_1\pi(ab)R_2|_{\H_\omega})\\
&=\|\omega\|_{cb}\pi(a)(\|\omega\|_{cb}R_2R^*_1-I)\pi(b)R_2|_{\H_\omega}\\
&=\|\omega\|_{cb}\pi(a)(\|\omega\|_{cb}R_2-R_1)R^*_1\pi(b)R_2|_{\H_\omega}\\
&=\pi(a)(\|\omega\|_{cb}R_2-R_1)\omega(b).
\end{align*}
By Lemma \ref{L:extnondeg}, we see that $\H_\omega=\ol{\omega(\A)\H_\omega}$ so we find
\[
(\|\omega\|_{cb}R_2-R_1)\H_\omega\subset \cap_{a\in\A}\ker \pi(a).
\]
Hence, we have
\[
\omega(a)=\|\omega\|_{cb}R^*_1\pi(a)R_2|_{\H_\omega}=R^*_1\pi(a)R_1|_{\H_\omega}, \quad a\in \A.
\]
In particular, we see that $\omega$ is completely contractive. Using again that  $\omega$ is $\CB_r(\A)$-extremal we see that $\H_\omega$ is reducing for $R_1^*\pi(\A)R_1$ so that $R_1\H_\omega$ is invariant for $\pi(C^*(\A))$.
Thus, the map $\sigma:C^*(\A)\to B(\H_\omega)$ defined as 
\[
\sigma(a)=P_{\H_\omega}R_1^*\pi(a)R_1|_{\H_\omega}, \quad a\in C^*(\A)
\]
is a $*$-homomorphism that agrees with $\omega$ on $\A$. This establishes (ii). 

The implication (ii) $\Rightarrow$ (iii) is trivial.

(iii) $\Rightarrow$ (i) 
Assume that there is a contractive completely positive map on $\Psi: C^*(\A)\to B(\H_\omega)$ that agrees with $\omega$ on $\A$. By Lemma \ref{L:ccp}, there is a Hilbert space $\K$ containing $\H_\omega$ and a $*$-homomorphism $\pi:C^*(\A)\to B(\K)$ such that
\[
\Psi(a)=P_{\H_\omega}\pi(a)|_{\H_\omega}, \quad a\in C^*(\A).
\]
In particular, we have that
\[
\omega(a)=P_{\H_\omega}\pi(a)|_{\H_\omega}, \quad a\in \A.
\]
Since $\omega$ is $\CB_r(\A)$-extremal, we conclude that $\pi(\A)\H_\omega\subset \H_\omega$, and thus $\omega$ is multiplicative.
\end{proof}

Guided by the approach of \cite{arveson1969}, \cite{dritschel2005} and \cite{davidson2015}, one expects the $\CB_r(\M)$-extremal elements to be the natural analogues of points in the Choquet boundary. The previous theorem shows that the class of completely bounded linear maps is too large to use the associated non-commutative Choquet boundary to produce a non-commutative Shilov boundary that is an algebra. Indeed, if $r>1$, then by Theorem \ref{T:extcc} there are $\CB_r(\A)$-extremals $\omega$ such that $\|\omega\|_{cb}>1$, and those are not multiplicative by virtue of Theorem \ref{T:extmultcc}.
If one is willing to settle for weaker algebraic properties of the Shilov boundary, then some meaningful results can be obtained in the completely contractive setting in full generality, as was done in \cite{hamana1999}, \cite{blecher2001} and \cite{FHL2016}. This leads to the notion of triple envelope of an operator space. As mentioned in the introduction, we take a different path: we restrict the class of bounded linear maps under consideration in order to obtain extremals that automatically admit a multiplicative extension.

\section{Paulsen's similarity property}\label{S:Paulsensimprop}
Before we can define the subclass of completely bounded linear maps we are interested in, we make a detour to carefully examine the class of completely bounded homomorphisms. The foundation of our investigation is the following.

\begin{theorem}\label{T:paulsencbdilation2}
Let $\M\subset B(\H)$ be an operator space and $\phi:\M\to B(\H_\phi)$ a completely bounded linear map. Then, there is a Hilbert space $\K_\phi$ containing $\H_\phi$ and a completely bounded homomorphism $\theta:C^*(\M)\to B(\K_\phi)$ with the property that 
\[
\|\theta\|_{cb}\leq \left(\|\phi\|_{cb}^{1/2}+ \left(\|\phi\|_{cb}+1\right)^{1/2}\right)^2
\]
and
\[
\phi(a)=P_{\H_\phi} \theta(a)|_{\H_\phi}, \quad a\in \M.
\]
\end{theorem}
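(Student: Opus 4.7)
The plan is to upgrade the ``two-sided compression'' representation provided by Theorem \ref{T:paulsencbdilation1} into a one-sided compression of an honest completely bounded algebra homomorphism, via a $2\times 2$ similarity construction.  Writing $r=\|\phi\|_{cb}$, the first step is to invoke Theorem \ref{T:paulsencbdilation1} to obtain a $*$-homomorphism $\pi:C^*(\M)\to B(\K)$ and unitaries $R_1,R_2:\K_\phi\to \K$ (with $\H_\phi\subset \K_\phi$) so that $\phi(a)= r\,P_{\H_\phi}R_1^*\pi(a)R_2|_{\H_\phi}$. By folding the unitaries into $\pi$ (set $\widetilde \pi=R_1^*\pi(\cdot)R_1$, which is still a $*$-representation on $\K_\phi$, and $U=R_1^*R_2$, unitary on $\K_\phi$), this reads $\phi(a)=r\,P_{\H_\phi}\widetilde\pi(a)\,U|_{\H_\phi}$. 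The essential feature is that $U$ is a unitary ``twist'' on the fixed Hilbert space $\K_\phi$ that spoils multiplicativity of $a\mapsto \widetilde\pi(a)U$; my task is to hide it inside a similarity.

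Next, I would work on $\K_\phi\oplus\K_\phi$ with the ampliation $\widetilde\pi^{(2)}=\widetilde\pi\oplus\widetilde\pi$ and the invertible operator
\[
X=\begin{bmatrix}I & sU\\ 0 & I\end{bmatrix}, \qquad X^{-1}=\begin{bmatrix}I & -sU\\ 0 & I\end{bmatrix},
\]
where $s>0$ is a real parameter. Setting $\theta=X\widetilde\pi^{(2)}(\cdot)X^{-1}$ yields a genuine algebra homomorphism of $C^*(\M)$ of the explicit form
\[
\theta(a)=\begin{bmatrix}\widetilde\pi(a) & s[\widetilde\pi(a),U]\\ 0 & \widetilde\pi(a)\end{bmatrix}.
\]
Since $\widetilde\pi^{(2)}$ is a $*$-homomorphism we have $\|\theta\|_{cb}\le \|X\|\,\|X^{-1}\|=\|X\|^2$, and a direct computation (diagonalizing $X^*X$) gives $\|X\|^2=1+s^2/2+(s/2)\sqrt{s^2+4}$. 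Choosing $s=2\sqrt{r}$ simplifies this to $1+2r+2\sqrt{r(r+1)}=(\sqrt r+\sqrt{r+1})^2$, which is exactly the claimed bound on $\|\theta\|_{cb}$.

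What remains is to realize $\phi$ as the compression of $\theta$ to $\H_\phi$, suitably embedded inside $\K_\phi\oplus\K_\phi$. The natural attempt is an isometry $W:\H_\phi\to \K_\phi\oplus\K_\phi$ of the form $W\xi=(\alpha\xi,\beta U\xi)$ with $\alpha^2+\beta^2=1$; then $W^*\theta(a)W$ picks out a linear combination of the ``diagonal'' pieces $P_{\H_\phi}\widetilde\pi(a)|_{\H_\phi}$ and $P_{\H_\phi}U^*\widetilde\pi(a)U|_{\H_\phi}$ together with the cross term $s P_{\H_\phi}\widetilde\pi(a)U|_{\H_\phi}$ (up to commutator adjustments). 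The scalars $\alpha,\beta$ and the parameter $s$ are then to be tuned so that the cross term gives $rP_{\H_\phi}\widetilde\pi(a)U|_{\H_\phi}=\phi(a)$ and the two diagonal pieces cancel against each other.

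The main obstacle is precisely this last verification: the ``diagonal'' contributions $P_{\H_\phi}\widetilde\pi(a)|_{\H_\phi}$ and $P_{\H_\phi}U^*\widetilde\pi(a)U|_{\H_\phi}$ do not vanish on their own, so one must either absorb them by passing to a larger multiplicity (replacing $\K_\phi\oplus \K_\phi$ by further copies and using Lemma \ref{L:V} to rearrange ranges) or cancel them by a cleverer embedding that exploits the unitarity of $U$. I expect that after one such enlargement the pieces line up: Lemma \ref{L:V} lets us assume the two ``sides'' $R_1,R_2$ of the original dilation have orthogonal ranges in the enlarged space, which is precisely the geometric input needed for the unwanted diagonal terms to cancel, leaving the commutator with $sU$ to reproduce $\phi(a)$ on the nose. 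The cb norm estimate is unchanged by this enlargement since $\|X\|$ depends only on $\|sU\|=s=2\sqrt r$, so the final bound $(\sqrt r+\sqrt{r+1})^2$ persists.
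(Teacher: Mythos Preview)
Your approach is different from the paper's, which simply quotes \cite[Theorems 2.4 and 2.8]{paulsen1984} to obtain directly an isometry $V:\H_\phi\to\K$, a $*$-homomorphism $\pi$, and an invertible $X$ with $\|X\|\,\|X^{-1}\|$ bounded by the stated constant, such that $\phi(a)=V^*X\pi(a)X^{-1}V$; Lemma~\ref{L:V} then turns the single isometry $V$ into an honest inclusion $\H_\phi\subset\K_\phi$. You are instead trying to manufacture the content of Paulsen's Theorem~2.8 from Theorem~\ref{T:paulsencbdilation1} via your own $2\times 2$ similarity, and this is where the argument breaks.

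The concrete gap is the last step. With your $X$, the $(1,2)$ entry of $\theta(a)=X\widetilde\pi^{(2)}(a)X^{-1}$ is the commutator $s\bigl(U\widetilde\pi(a)-\widetilde\pi(a)U\bigr)$, not $s\,\widetilde\pi(a)U$. When you compress by $W\xi=(\alpha\xi,\beta U\xi)$ you obtain (for real $\alpha,\beta$)
\[
W^*\theta(a)W\xi=\alpha^2 P_{\H_\phi}\widetilde\pi(a)\xi+\alpha\beta s\,P_{\H_\phi}U\widetilde\pi(a)U\xi-\alpha\beta s\,P_{\H_\phi}\widetilde\pi(a)U^2\xi+\beta^2 P_{\H_\phi}U^*\widetilde\pi(a)U\xi,
\]
and \emph{no} term of the form $P_{\H_\phi}\widetilde\pi(a)U\xi$ appears at all. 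So there is no choice of $\alpha,\beta,s$ that makes this equal $rP_{\H_\phi}\widetilde\pi(a)U\xi=\phi(a)\xi$; the commutator structure is the wrong shape to reproduce the one-sided twist $\widetilde\pi(a)U$. Your closing paragraph does not fix this: Lemma~\ref{L:V} produces \emph{unitaries} $R_1,R_2:\K_\phi\to\K$, whose ranges are all of $\K$ and hence cannot be orthogonal, so the ``geometric input'' you invoke is not available. The cancellation you are hoping for would require an entirely different embedding (and likely a different similarity, closer to the asymmetric $3\times 3$ or $\H_\phi\oplus\K$ constructions in Paulsen's original argument), which you have not supplied. As written, the proposal proves the cb-norm bound for $\theta$ but does not establish that $\phi$ is a compression of $\theta$.
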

\begin{proof}
By \cite[Theorems 2.4 and 2.8]{paulsen1984}, there is a Hilbert space $\K$, an isometry $V:\H_\phi\to \K$, a $*$-homomorphism $\pi:C^*(\M)\to B(\K)$ and an invertible operator $X\in B(\K)$ such that
\[
\|X\| \|X^{-1}\| \leq \left(\|\phi\|_{cb}^{1/2}+ \left(\|\phi\|_{cb}+1\right)^{1/2}\right)^2
\]
and
\[
\phi(a)=V^* X\pi(a) X^{-1} V, \quad a\in \M.
\]
By Lemma \ref{L:V}, there is a Hilbert space $\K_\phi$ containing $\H_\phi$ and a unitary operator $U:\K\to \K_\phi$ such that if we let $\theta:C^*(\M)\to B(\K_\phi)$ be defined as
\[
\theta(a)=U X\pi(a) X^{-1}U^*, \quad a\in C^*(\M)
\]
then we find
\[
\phi(a)=P_{\H_\phi} \theta(a)|_{\H_\phi}, \quad a\in \M.
\]
Finally, note that 
\[
\|\theta\|_{cb}\leq \|X\| \|X^{-1}\|\leq  \left(\|\phi\|_{cb}^{1/2}+ \left(\|\phi\|_{cb}+1\right)^{1/2}\right)^2.
\]
\end{proof}

We also require a celebrated result of Paulsen (see \cite[Theorem 3.1]{paulsen1984} and \cite{paulsen1984PAMS}).

\begin{theorem}\label{T:paulsensim}
Let $\A$ be an operator algebra and $\theta:\A\to B(\H)$ a completely bounded homomorphism. Then, there is an invertible operator $X\in B(\H)$ with
\[
\|X\|=\|X^{-1}\|\leq \|\theta\|_{cb}^{1/2}+(\|\theta\|_{cb}+1)^{1/2}
\]
such that the map
\[
a\mapsto X\theta(a)X^{-1},\quad a\in \A
\]
is completely contractive. When $\theta$ is unital, it can be arranged that 
\[
\|X\|=\|X^{-1}\|=\|\theta\|_{cb}^{1/2}.
\]
\end{theorem}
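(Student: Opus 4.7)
This is Paulsen's celebrated similarity theorem for completely bounded homomorphisms. The plan is to derive it from the Wittstock--Haagerup--Paulsen representation theorem for cb maps (which underlies the proof of Theorem \ref{T:paulsencbdilation2}), combined with the multiplicative structure of $\theta$ that allows the descent of the similarity from the dilation space down to $\H$ itself.

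I first handle the unital case. Applying the cb representation theorem yields a Hilbert space $\widehat\H$, a $*$-homomorphism $\pi : C^*(\A) \to B(\widehat\H)$, and bounded operators $V_1, V_2 : \H \to \widehat\H$ with $\theta(a) = V_1^* \pi(a) V_2$ and $\|V_1\|\|V_2\| = K := \|\theta\|_{cb}$; after balancing by a positive scalar, $\|V_1\| = \|V_2\| = K^{1/2}$. Unitality forces $V_1^* V_2 = I_\H$, and the multiplicativity $\theta(a)\theta(b) = \theta(ab)$ yields the structural identity
\[
V_1^* \pi(a)(I - P)\pi(b) V_2 = 0, \quad a, b \in \A,
\]
where $P := V_2 V_1^*$ is an idempotent on $\widehat\H$. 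Setting $X := (V_2^* V_2)^{1/2}$, the operator $W := V_2 X^{-1}$ is an isometry, $V_2 = WX$, and $\|X\| = \|X^{-1}\| = K^{1/2}$. The identity above, together with a minimality assumption on $\pi$ that may be arranged without loss, allows one to take $V_1 = WX^{-1}$. We then obtain $X\theta(a)X^{-1} = W^* \pi(a) W$, a completely contractive homomorphism (being the compression of a $*$-representation to the semi-invariant subspace $W\H$), establishing the unital case.

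For the non-unital case, consider the unital extension $\Upsilon(\theta) : \Upsilon(\A) \to B(\H)$ from Subsection \ref{SS:unitization}. A direct matrix-norm comparison---splitting off the scalar component via compression to the orthogonal complement of $\overline{\A \H_\A}$ in the ambient representation of $\A$---gives $\|\Upsilon(\theta)\|_{cb} \leq 2K + 1 \leq (K^{1/2} + (K+1)^{1/2})^2$. Applying the unital case to $\Upsilon(\theta)$ then yields an invertible $X$ on $\H$ with $\|X\| = \|X^{-1}\| \leq K^{1/2} + (K+1)^{1/2}$ such that $X\Upsilon(\theta)(\cdot)X^{-1}$ is completely contractive; restricting to $\A$ completes the proof.

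The main obstacle is precisely the descent step that produces the alignment $V_1 = WX^{-1}$, $V_2 = WX$. Theorem \ref{T:paulsencbdilation2} already furnishes a similarity on the dilation space $\widehat\H$ for any cb map, but bringing the similarity down to $\H$ crucially requires the multiplicativity of $\theta$, encoded in the identity $V_1^*\pi(a)(I-P)\pi(b)V_2 = 0$; for a merely cb linear map no such alignment is available, which is why Paulsen's similarity theorem cannot be extended beyond the homomorphism setting.
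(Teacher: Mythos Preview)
The paper does not prove this theorem; it simply cites Paulsen's original papers \cite{paulsen1984} and \cite{paulsen1984PAMS}. So there is no in-paper argument to compare against, and your sketch must stand on its own.

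Your reduction of the non-unital case to the unital one is essentially correct: the bound $\|\Upsilon(\theta)\|_{cb}\leq 2K+1$ follows because the scalar projection $\Upsilon(\A)\to\bC$ is contractive (so $|\lambda|\leq\|\upsilon(a)+\lambda e\|$ and hence $\|a\|\leq 2\|\upsilon(a)+\lambda e\|$), and indeed $2K+1\leq(K^{1/2}+(K+1)^{1/2})^2$.

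The genuine gap is in the unital case, precisely at the line ``a minimality assumption on $\pi$ \ldots\ allows one to take $V_1=WX^{-1}$.'' This assertion would force $V_1=V_2(V_2^*V_2)^{-1}$, in particular $\ran V_1=\ran V_2$, and no minimality hypothesis on the Stinespring data delivers that. The displayed identity $V_1^*\pi(a)(I-P)\pi(b)V_2=0$ is correct, but it does not pin down $V_1$ in terms of $V_2$; it is an intertwining condition, not an alignment of ranges. As written, the step that produces the similarity on $\H$ is circular: assuming $V_1=WX^{-1}$ is equivalent to assuming $X\theta(a)X^{-1}=W^*\pi(a)W$, which is exactly the conclusion.

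What actually works is the following. Let $\N=\overline{\pi(\A)V_2\H}$, which is $\pi(\A)$-invariant, and set $R=V_1^*|_{\N}:\N\to\H$. Multiplicativity gives the genuine intertwining $R\,\pi(a)|_{\N}=\theta(a)\,R$ on $\N$. The kernel of $R$ is therefore $\pi(\A)|_{\N}$-invariant; passing to $\N_1=\N\ominus\ker R$, the restriction $R_1=R|_{\N_1}$ is a bijection onto $\H$ with inverse $P_{\N_1}V_2$, so $\|R_1\|\,\|R_1^{-1}\|\leq\|V_1\|\,\|V_2\|=K$. Since $\N_1$ is semi-invariant for $\pi(\A)$, the compression $\sigma(a)=P_{\N_1}\pi(a)|_{\N_1}$ is a completely contractive homomorphism, and $\theta(a)=R_1\sigma(a)R_1^{-1}$. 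Transporting by a unitary $\N_1\to\H$ and rescaling gives the required $X$ with $\|X\|=\|X^{-1}\|\leq K^{1/2}$. This is the mechanism your sketch gestures at but does not supply.
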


Inspired by Theorem \ref{T:paulsensim}, we make the following definition. Let $\A$ be an operator algebra, let $r\geq 1$ and let $\theta:\A\to B(\H)$ be a completely bounded homomorphism. Then, we say that $\theta$ has \emph{Paulsen's similarity property with constant }$r$ if there is an invertible operator $X\in B(\H)$ with
\[
\|X\|=\|X^{-1}\|\leq r
\]
such that the map
\[
a\mapsto X\theta(a)X^{-1},\quad a\in \A
\]
is completely contractive. We now single out a refinement of Theorem \ref{T:paulsensim}.

\begin{corollary}\label{C:hom}
Let $\A\subset B(\H)$ be an operator algebra, let $r\geq 1$ and let $\theta:\A\to B(\H_\theta)$ be a completely bounded homomorphism that has Paulsen's similarity property with constant $r$. Then, there is a Hilbert space $\K_\theta$ containing $\H_\theta$, a $*$-homomorphism $\sigma: C^*(\A)\to B(\K_\theta)$ and an invertible operator $X\in B(\H_\theta)$ with 
\[
\|X\|=\|X^{-1}\|\leq r
\]
such that 
\[
X\theta(a)X^{-1}=P_{\H_\theta} \sigma(a)|_{\H_\theta}, \quad a\in \A.
\]
\end{corollary}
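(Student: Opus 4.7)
Since $\theta$ has Paulsen's similarity property with constant $r$, there is an invertible operator $X\in B(\H_\theta)$ with $\|X\|=\|X^{-1}\|\leq r$ such that the map
\[
\phi(a)=X\theta(a)X^{-1},\quad a\in \A
\]
is completely contractive. Because $\theta$ is a homomorphism and conjugation by an invertible preserves multiplicativity, $\phi$ is itself a completely contractive homomorphism on $\A$. The strategy is therefore to produce a $*$-homomorphic dilation of $\phi$ on $C^*(\A)$ via the standard pipeline: unitize, extend completely positively, then apply Stinespring.

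First I would pass to the unitization $\Upsilon(\A)\subset \Upsilon(C^*(\A))$ introduced in Subsection \ref{SS:unitization} and consider the unital map $\Upsilon(\phi):\Upsilon(\A)\to B(\H_\theta)$. By the bullet point in that subsection (invoking \cite[Corollary 3.3]{meyer2001}), $\Upsilon(\phi)$ is a unital completely contractive homomorphism. By the Arveson observation recalled at the end of Subsection \ref{SS:opspaces}, $\Upsilon(\phi)$ extends uniquely to a unital completely positive map on the operator system $\Upsilon(\A)+\Upsilon(\A)^*\subset \Upsilon(C^*(\A))$. Arveson's extension theorem then provides a unital completely positive map
\[
\Psi:\Upsilon(C^*(\A))\to B(\H_\theta)
\]
that agrees with $\Upsilon(\phi)$ on $\Upsilon(\A)$.

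Next, I would apply the (unital) Stinespring dilation theorem to $\Psi$ to obtain a Hilbert space $\K_\theta$ containing $\H_\theta$ and a unital $*$-homomorphism $\pi:\Upsilon(C^*(\A))\to B(\K_\theta)$ with
\[
\Psi(b)=P_{\H_\theta}\pi(b)|_{\H_\theta},\quad b\in \Upsilon(C^*(\A)).
\]
Finally, I would set $\sigma=\pi\circ\upsilon:C^*(\A)\to B(\K_\theta)$, which is a $*$-homomorphism since $\upsilon$ is. For every $a\in \A$, using $(\Upsilon(\phi)\circ\upsilon)(a)=\phi(a)$,
\[
X\theta(a)X^{-1}=\phi(a)=\Psi(\upsilon(a))=P_{\H_\theta}\pi(\upsilon(a))|_{\H_\theta}=P_{\H_\theta}\sigma(a)|_{\H_\theta},
\]
which is exactly the desired conclusion.

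There is no genuine obstacle here: the real content was already packaged in Paulsen's similarity property (Theorem \ref{T:paulsensim}), which reduces the completely bounded problem to a completely contractive one, and the unitization formalism of Subsection \ref{SS:unitization}, which allows us to handle potentially non-unital $\A$. The only mild care needed is to verify that $\Upsilon(\phi)$ is unital and completely contractive as a map on the unital operator algebra $\Upsilon(\A)$, so that the Arveson extension/Stinespring machine applies verbatim; this is precisely what the cited bullet point in Subsection \ref{SS:unitization} guarantees.
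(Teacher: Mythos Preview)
Your proposal is correct and follows essentially the same approach as the paper: reduce to a completely contractive homomorphism $\phi=X\theta(\cdot)X^{-1}$, extend it to a contractive completely positive map on $C^*(\A)$, and dilate via Stinespring. The only organizational difference is that the paper separates into two cases (unital versus non-unital $\A$, handling the unital case via the projection $\theta_X(I_\H)$) and then invokes Lemma~\ref{L:ccp}, whereas you treat both cases uniformly through the unitization $\Upsilon$ and effectively unroll the proof of Lemma~\ref{L:ccp} inline; both routes are valid and yield the same conclusion.
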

\begin{proof}
By assumption, there is an invertible operator $X\in B(\H_\theta)$ with 
\[
\|X\|=\|X^{-1}\|\leq r
\] 
such that the map $\theta_X:\A\to B(\H_\theta)$ defined as
\[
\theta_X(a)= X\theta(a)X^{-1}, \quad a\in \A
\]
is a completely contractive homomorphism. We first claim that $\theta_X$ extends to a contractive completely positive map on $C^*(\A)$.

If $\A$ is unital, then  $P=\theta_X(I_{\H})$ is a contractive idempotent, and thus a self-adjoint projection in $\theta_X(\A)'$. Hence, 
\[
\theta_X(a)=P\theta_X(a)P\oplus 0, \quad a\in \A
\]
and the \emph{unital} completely contractive map
\[
a\mapsto P\theta_X(a)P\in B(P\H_\theta), \quad a\in \A
\]
extends to a contractive completely positive map on $C^*(\A)$, so the same is true of $\theta_X$. In the non-unital case, we saw in Subsection \ref{SS:unitization} that there is unital completely contractive homomorphism  on $\A+\bC I_{\H}$ extending $\theta_X$. We conclude  that $\theta_X$ extends to a unital completely positive map on $C^*(\A+\bC I_{\H})$. The claim is established. 

By Lemma \ref{L:ccp}, there is a Hilbert space $\K_\theta$ containing $\H_\theta$ and a $*$-homomorphism $\sigma:C^*(\A)\to B(\K_\theta)$ such that
\[
X\theta(a)X^{-1}=P_{\H_\theta}\sigma(a)|_{\H_\theta}, \quad a\in \A
\]
as desired.
\end{proof}

We examine Paulsen's similarity property more closely. First observe that completely contractive homomorphisms trivially have Paulsen's similarity property with constant $1$. Moreover, Theorem \ref{T:paulsensim} shows that any completely bounded homomorphism $\theta$ has Paulsen's similarity property with constant $r$ provided that $r\geq \|\theta\|_{cb}^{1/2}+(\|\theta\|_{cb}+1)^{1/2}$. In the other direction, if  $\theta$ has Paulsen's similarity property with constant $r$, then clearly $r\geq  \|\theta\|_{cb}^{1/2}$. Any unital completely bounded homomorphism $\theta$ has Paulsen's similarity property with the sharp constant $\|\theta\|_{cb}^{1/2}$, by Theorem \ref{T:paulsensim} again.

The next result is a characterization of Paulsen's similarity property in terms of the existence of unital completely bounded extensions. It uses the unitization procedure described in Subsection \ref{SS:unitization}.

\begin{theorem}\label{T:paulsenchar}
Let $\A$ be an operator algebra, let $r\geq 1$ and let $\theta:\A\to B(\H)$ be a homomorphism. Then, the following statements are equivalent.
\begin{enumerate}

\item[\rm{(i)}] The map $\theta$ has Paulsen's similarity property with constant $r$.

\item[\rm{(ii)}] The unital homomorphism $\Upsilon(\theta):\Upsilon(\A)\to B(\H)$ satisfies $\|\Upsilon(\theta)\|_{cb}\leq r^2 $.

\end{enumerate}
\end{theorem}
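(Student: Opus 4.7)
The plan is to exploit the fact that conjugation by an invertible operator interchanges the two cb-norm estimates, combined with the unital case of Theorem \ref{T:paulsensim}.

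For (i)$\Rightarrow$(ii), I would assume the existence of an invertible $X\in B(\H)$ with $\|X\|=\|X^{-1}\|\leq r$ such that $\theta_X(a):=X\theta(a)X^{-1}$ is completely contractive on $\A$. Since $\theta_X$ is then a completely contractive homomorphism, the bullet in Subsection \ref{SS:unitization} dedicated to completely contractive homomorphisms guarantees that the unitization $\Upsilon(\theta_X):\Upsilon(\A)\to B(\H)$ is a unital completely contractive homomorphism. The key observation is that the map $\Psi:b\mapsto X\Upsilon(\theta)(b)X^{-1}$ is a unital linear map on $\Upsilon(\A)$ satisfying
\[
\Psi(\upsilon(a))=X\theta(a)X^{-1}=\theta_X(a), \quad a\in\A;
\]
by the uniqueness clause in the definition of $\Upsilon(\theta_X)$ this forces $\Upsilon(\theta_X)(b)=X\Upsilon(\theta)(b)X^{-1}$ on all of $\Upsilon(\A)$. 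Rearranging and taking completely bounded norms then gives $\|\Upsilon(\theta)\|_{cb}\leq \|X^{-1}\|\cdot\|\Upsilon(\theta_X)\|_{cb}\cdot\|X\|\leq r\cdot 1\cdot r=r^2$.

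For (ii)$\Rightarrow$(i), I would first verify directly that $\Upsilon(\theta)$ is in fact multiplicative; this is a short calculation using the decomposition $\Upsilon(\A)=\upsilon(\A)+\bC e$, the fact that $\upsilon$ is a $*$-homomorphism, and that $\Upsilon(\theta)$ is the unique unital linear extension of $\theta$ along $\upsilon$. With $\Upsilon(\theta)$ recognised as a unital completely bounded homomorphism with $\|\Upsilon(\theta)\|_{cb}\leq r^2$, the unital case of Theorem \ref{T:paulsensim} produces an invertible $X\in B(\H)$ satisfying $\|X\|=\|X^{-1}\|=\|\Upsilon(\theta)\|_{cb}^{1/2}\leq r$ such that $b\mapsto X\Upsilon(\theta)(b)X^{-1}$ is completely contractive on $\Upsilon(\A)$. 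Precomposing with the completely isometric embedding $\upsilon$ yields that $a\mapsto X\theta(a)X^{-1}$ is completely contractive on $\A$, establishing Paulsen's similarity property with constant $r$.

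I do not anticipate a serious obstacle: both implications reduce to checking that the unitization procedure behaves well under conjugation by invertibles and under the embedding $\upsilon$, and these compatibilities are immediate consequences of the universal property of $\Upsilon$. The only step deserving momentary care is the multiplicativity of $\Upsilon(\theta)$ in the (ii)$\Rightarrow$(i) direction, since Subsection \ref{SS:unitization} records this fact explicitly only in the completely contractive setting; in the present generality it is nonetheless a routine computation from the definitions.
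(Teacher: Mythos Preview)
Your proposal is correct and follows essentially the same argument as the paper: both directions hinge on the identity $\Upsilon(\theta_X)(b)=X\Upsilon(\theta)(b)X^{-1}$ together with the unital case of Theorem \ref{T:paulsensim}, and you verify each implication in the same way. Your extra care in noting that the multiplicativity of $\Upsilon(\theta)$ needs a quick check (since the bullet in Subsection \ref{SS:unitization} only states it in the completely contractive case) is a minor refinement; the paper simply asserts that $\Upsilon(\theta)$ is a unital homomorphism, which is indeed immediate from the definitions.
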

\begin{proof}
(i) $\Rightarrow$ (ii) 
Assume that $\theta$ has Paulsen's similarity property with constant $r$, so there is an invertible operator $X\in B(\H)$ with
\[
\|X\|=\|X^{-1}\|\leq r
\]
such that the map $\theta_X:\A\to B(\H)$ defined as
\[
a\mapsto X\theta(a)X^{-1},\quad a\in \A
\]
is completely contractive. Thus, the unital homomorphism $\Upsilon(\theta_X)$ is completely contractive as seen in Subsection \ref{SS:unitization}. It is readily verified that
\[
\Upsilon(\theta_X)(b)=X(\Upsilon(\theta)(b))X^{-1}, \quad b\in \Upsilon(\A)
\]
so that
\[
\|\Upsilon(\theta)\|_{cb}\leq \|X\| \|X^{-1}\| \|\Upsilon(\theta_X)\|_{cb}\leq r^2.
\]

(ii) $\Rightarrow$ (i)
Conversely, assume that the unital homomorphism $\Upsilon(\theta):\Upsilon(\A)\to B(\H)$ satisfies $\|\Upsilon(\theta)\|_{cb}\leq r^2 $. By Theorem \ref{T:paulsensim}, there is an invertible operator $X\in B(\H)$ with
\[
\|X\|=\|X^{-1}\|=\|\Upsilon(\theta)\|_{cb}^{1/2}\leq r
\]
such that the map
\[
b\mapsto X\Upsilon(\theta)(b)X^{-1},\quad b\in \Upsilon(\A)
\]
is completely contractive. In particular, we see that the map
\[
a\mapsto X\Upsilon(\theta)(\upsilon(a))X^{-1}=X\theta(a)X^{-1}, \quad a\in \A
\]
is completely contractive, since $\upsilon:\A\to \Upsilon(\A)$ is completely isometric. We conclude that $\theta$ has Paulsen's similarity property with constant $r$.
\end{proof}
A useful consequence is the following, that describes how Paulsen's similarity property behaves with respect to direct sums.

\begin{corollary}\label{C:paulsendirectsum}
Let $\A$ be an operator algebra and let $\theta_1: \A\to B(\H_1), \theta_2 :\A\to B(\H_2)$ be completely bounded homomorphisms. Let $\theta=\theta_1\oplus \theta_2:\A\to B(\H_1\oplus \H_2)$. Then, the following statements hold.
\begin{enumerate}

\item[\rm{(1)}] If $\theta_1$ and $\theta_2$ have  Paulsen's similarity property with constants $r_1$ and $r_2$ respectively, then $\theta$ has Paulsen's similarity property with constant $\max\{r_1,r_2\}$.

\item[\rm{(2)}] If $\theta$ has  Paulsen's similarity property with constant $r$,  then $\theta_1$ and $\theta_2$ both have Paulsen's similarity property with constant $r$.
\end{enumerate}
\end{corollary}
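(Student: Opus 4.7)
The plan is to exploit the characterization of Paulsen's similarity property furnished by Theorem \ref{T:paulsenchar}, which translates the property into a bound on the completely bounded norm of the unitization $\Upsilon(\theta)$. Once this tool is in play, both parts of the statement reduce to standard inequalities for direct sums.

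The key preliminary observation is that $\Upsilon(\theta) = \Upsilon(\theta_1)\oplus \Upsilon(\theta_2)$ as maps from $\Upsilon(\A)$ to $B(\H_1\oplus \H_2)$. Both sides are unital linear maps on $\Upsilon(\A)$ whose composition with $\upsilon$ recovers $\theta=\theta_1\oplus \theta_2$, so they must coincide by the uniqueness of the unital extension recorded in Subsection \ref{SS:unitization}. A brief separate inspection of the two cases in the definition of $\Upsilon(\A)$ (unital versus non-unital) confirms the identity without difficulty.

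For part (1), if $\theta_i$ has Paulsen's similarity property with constant $r_i$, then Theorem \ref{T:paulsenchar} gives $\|\Upsilon(\theta_i)\|_{cb}\leq r_i^2$. Since the completely bounded norm of a direct sum of maps is the maximum of the completely bounded norms of the summands, I obtain
\[
\|\Upsilon(\theta)\|_{cb}=\max\{\|\Upsilon(\theta_1)\|_{cb},\|\Upsilon(\theta_2)\|_{cb}\}\leq \max\{r_1,r_2\}^2,
\]
and a second application of Theorem \ref{T:paulsenchar} in the reverse direction yields the desired conclusion for $\theta$. For part (2), if $\theta$ has Paulsen's similarity property with constant $r$, then $\|\Upsilon(\theta)\|_{cb}\leq r^2$, and the same direct-sum identity forces $\|\Upsilon(\theta_i)\|_{cb}\leq r^2$ for each $i$; Theorem \ref{T:paulsenchar} then gives Paulsen's similarity property with constant $r$ for each $\theta_i$.

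I anticipate no serious obstacle here, as the argument is essentially a bookkeeping exercise once Theorem \ref{T:paulsenchar} is available. The only item requiring care is the identity $\Upsilon(\theta)=\Upsilon(\theta_1)\oplus \Upsilon(\theta_2)$, which is handled by the uniqueness clause of the unitization. As an aside, part (1) also admits a direct proof avoiding Theorem \ref{T:paulsenchar}: if $X_i$ is a similarity witness for $\theta_i$, then $X=X_1\oplus X_2$ satisfies $\|X\|=\|X^{-1}\|=\max\{\|X_1\|,\|X_2\|\}\leq \max\{r_1,r_2\}$ (using $\|X_i\|=\|X_i^{-1}\|$), and $X\theta(\cdot)X^{-1}=X_1\theta_1(\cdot)X_1^{-1}\oplus X_2\theta_2(\cdot)X_2^{-1}$ is completely contractive as a direct sum of completely contractive homomorphisms.
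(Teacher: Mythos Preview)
Your proposal is correct. The argument for Part (2) is essentially identical to the paper's: both observe that $\Upsilon(\theta)$ decomposes as $\Upsilon(\theta_1)\oplus\Upsilon(\theta_2)$ (the paper phrases this as ``$\H_1$ and $\H_2$ are reducing for $\Upsilon(\theta)(\Upsilon(\A))$'' and then verifies the summands are $\Upsilon(\theta_i)$), and then apply Theorem~\ref{T:paulsenchar}.

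For Part (1) there is a minor difference in route. The paper uses the direct construction you mention as an aside: take witnesses $X_1,X_2$ and set $X=X_1\oplus X_2$. You instead make Theorem~\ref{T:paulsenchar} do the work in both directions, using $\|\Upsilon(\theta_1)\oplus\Upsilon(\theta_2)\|_{cb}=\max\{\|\Upsilon(\theta_1)\|_{cb},\|\Upsilon(\theta_2)\|_{cb}\}$. Your approach has the virtue of uniformity (one tool for both parts), while the paper's direct proof of (1) is slightly more elementary in that it avoids invoking the characterization theorem. Since you explicitly record the direct argument as well, your write-up effectively subsumes the paper's.
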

\begin{proof}
(1) Assume that $\theta_1$ and $\theta_2$ have Paulsen's similarity property with constants $r_1$ and $r_2$ respectively. Accordingly, we can find invertible operators $X_1\in B(\H_1),X_2\in B(\H_2)$  with
\[
\|X_1\|=\|X_1^{-1}\|\leq r_1 ,  \quad \|X_2\|=\|X_2^{-1}\|\leq r_2
\]
such that the maps
\[
a\mapsto X_1 \theta_1(a) X_1^{-1}, \quad a\in \A
\]
and
\[
a\mapsto X_2 \theta_2(a) X_2^{-1}, \quad a\in \A
\]
are completely contractive. If we put $X=X_1\oplus X_2$, then
\[
\|X\|=\|X^{-1}\|\leq \max\{r_1,r_2 \},
\]
and the map
\[
a\mapsto X (\theta_1(a)\oplus  \theta_2(a)) X^{-1}, \quad a\in \A
\]
is completely contractive, so that $\theta_1\oplus \theta_2$ has Paulsen's similarity property with constant $\max\{r_1,r_2\}$.

(2) Assume that $\theta=\theta_1\oplus \theta_2$ has Paulsen's similarity property with constant $r$. It suffices to establish the claim for $\theta_1$. Using Theorem \ref{T:paulsenchar}, we see that $\|\Upsilon(\theta)\|_{cb}\leq r^2 $. By construction of $\Upsilon(\theta)$, we see that $\H_1$ and $\H_2$ are reducing for $\Upsilon(\theta)(\Upsilon(\A))$. We can thus write $\Upsilon(\theta)=\Theta_1\oplus \Theta_2$ where 
\[
\Theta_1:\Upsilon(\A)\to B(\H_1), \quad \Theta_2:\Upsilon(\A)\to B(\H_2)
\]
 are unital completely bounded homomorphisms. It is readily verified that $\Theta_1=\Upsilon(\theta_1)$ and it is clear that $\|\Theta_1\|_{cb}\leq r^2$. Another application of Theorem \ref{T:paulsenchar} shows that $\theta_1$ has Paulsen's similarity property with constant $r$.
\end{proof}

In light of Theorem \ref{T:paulsenchar}, it is easy to construct  homomorphisms $\theta$ that do not have Paulsen's similarity property with the smallest possible constant, namely $\|\theta\|_{cb}^{1/2}$.

\begin{example}\label{E:IdempotentGrows}
Let
\[
E= \left[\begin{array}{cc} 2 & -2 \\ 1 & -1 \end{array}\right]\in M_2(\bC)
\]
which is an idempotent. Consider the homomorphism $\theta : \bC \rightarrow M_2(\bC)$ uniquely determined by $\theta(1)=E$.
Then, $\theta$ is a completely bounded homomorphism with 
\[
\|\theta\|_{cb} = \|E\| = \left\|\left[\begin{array}{cc} 5 & -5 \\ -5 & 5 \end{array}\right]\right\|^{1/2} = \sqrt{10}. 
\]
Since $\bC$ is unital, we have that $\Upsilon(\bC)=\bC\oplus \bC$ and $\upsilon: \bC \rightarrow \Upsilon(\bC)$ is given by 
\[
\upsilon(\lambda) = \lambda \oplus 0, \quad \lambda\in \bC.
\]
We find
\begin{align*}
\Upsilon(\theta)(1\oplus -1) &= \Upsilon(\theta)(2\oplus 0) - \Upsilon(\theta)(1\oplus 1) =\theta(2)-I
\\ & = \left[\begin{array}{cc} 4 & -4 \\ 2 & -2 \end{array}\right] - \left[\begin{array}{cc} 1 & 0 \\ 0 & 1\end{array}\right]
 = \left[\begin{array}{cc} 3& -4 \\ 2 & -3\end{array}\right]
\end{align*}
and so
\begin{align*}
\|\Upsilon(\theta)\|_{cb} & \geq  \|\Upsilon(\theta)(1 \oplus -1)\| =  \left\|\left[\begin{array}{cc} 13 & -18 \\ -18 & 25\end{array}\right]\right\|^{1/2}
\\ & >  \|\theta\|_{cb}.
\end{align*}
By Theorem \ref{T:paulsenchar}, we conclude that $\theta$ does not have Paulsen's similarity property with constant $r = \|\theta\|_{cb}^{1/2}$.

\end{example}

\section{A subclass of the completely bounded maps}\label{S:Fr}

The main issue behind the shortcomings of the class $\CB_r(\M)$ exhibited in Section \ref{S:CBext} is the lack of a perfect analogue of Stinespring's dilation theorem. Even though Theorems \ref{T:paulsencbdilation1} and \ref{T:paulsencbdilation2} are useful replacements, the fact remains that a completely bounded linear map does not necessarily dilate to a completely bounded homomorphism \emph{of the same norm}, which conflicts with the machinery developed in Section \ref{S:extmachine}. In this section, we attempt to remedy this problem by restricting our attention to a smaller class of maps. 

We recall at the onset that \emph{we always assume that operator spaces are concretely represented on some Hilbert space}. This will be a standing assumption throughout this section and the next. In particular, it makes sense to consider the $C^*$-algebra generated by an operator space, although this depends on the choice of representation. This dependence is not relevant for the purposes of Sections \ref{S:Fr} and \ref{S:Frext}. We will carefully analyze the impact of the choice of representation in Section \ref{S:C*env}. 

Let $\M$ be an operator space and let $r\geq 1$. Given a Hilbert space $\H$,  we denote by $\Pau_r(\M,\H)$ the set of linear maps $\phi:\M\to B(\H)$ for which there is a Hilbert space $\K_\phi$ containing $\H$ along with a completely bounded homomorphism $\theta:C^*(\M)\to B(\K_\phi)$ that has Paulsen's similarity property with constant $r^{1/2}$ and such that
\[
\phi(a)=P_{\H}\theta(a)|_{\H}, \quad a\in \M.
\]
In view of Theorem \ref{T:paulsenchar}, the reader may venture to guess that $\phi\in \Pau_r(\M,\H)$ if and only if $\Upsilon(\phi)\in \CB_r(\Upsilon(\M),\H)$. However, simple examples show that such a description unfortunately does not hold (see Example \ref{E:UCBneqFr} below).

The remainder of this section is devoted to exhibiting some of the basic properties of the subclass $\Pau_r(\M)$.
We first note that it is clear that $\Pau_r(\M,\H)\subset\CB_r(\M,\H)$, and it follows from Theorems \ref{T:paulsencbdilation2} and \ref{T:paulsensim} that there is a positive constant $C_r>r$ depending only on $r$ such that
\[
\CB_r(\M,\H)\subset \Pau_{C_r}(\M,\H).
\]
This seems to indicate that the inclusion $\Pau_r(\M,\H)\subset\CB_r(\M,\H)$ may be strict. This is indeed the case.  Before proceeding with the example illustrating this fact, we note in passing that similar questions were considered in \cite{paulsensuen1985}.

\begin{example}\label{E:double}
Let $\H=\bC$, let $\M=B(\H)$ (so that $\M=C^*(\M)$) and let 
\[
\phi:\M\to B(\H)
\]
be the completely bounded linear map of multiplication by $r>1$, so that $\phi(\lambda)=r\lambda$ for $\lambda\in \M$. Let $\K$ be a Hilbert space containing $\H$ and let $\theta:C^*(\M)\to B(\K)$ be a homomorphism such that 
\[
\phi(\lambda)=P_{\H} \theta(\lambda)|_{\H}, \quad \lambda\in\M.
\]
Then, with respect to the decomposition $\K=\H\oplus (\K\ominus \H)$ we have
\[
\theta(1)=\left[
\begin{array}{cc}
r& a\\
b & c
\end{array}
\right].
\]
Since $\theta$ is multiplicative, we must have that $\theta(1)$ is idempotent, which is easily seen to force $b\neq 0$ as $r^2\neq r$. Hence
\[
\|\theta\|_{cb}\geq \|\theta(1)\|\geq \sqrt{\|b\|^2+r^2}>r=\|\phi\|_{cb}
\]
and we infer that $\phi\in \CB_r(\M,\bC)\setminus \Pau_r(\M,\bC)$.

\end{example}

Although we needed to choose $r>1$ in the example above, it can be shown that the inclusion $\Pau_1(\M,\H)\subset \CB_1(\M,\H)$ is still strict (see Example \ref{E:nilpotent} below). We note also that the basic idea behind the previous example is to exploit a multiplicative ``relation" within $\M$ that is not preserved by the linear map $\phi$.  This idea can be extended to identify an obstruction for a map in $\CB_r(\M)$ to lie inside the class $\Pau_r(\M)$.

\begin{theorem}\label{T:Frobs}
Let $\M$ be an operator space and let $\phi:\M\to B(\H_\phi)$ be a linear map with $\|\phi\|_{cb}=r$. Assume that there are two elements $a_0,b_0\in \M$ such that $\|a_0\|=\|b_0\|=1$, $b_0 a_0=0$, $\phi(a_0)=r I$ and $\phi(b_0)\neq 0$. Then, $\phi$ does not lie in $\Pau_r(\M,\H_\phi)$.
\end{theorem}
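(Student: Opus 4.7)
The plan is to argue by contradiction. Suppose $\phi\in \Pau_r(\M,\H_\phi)$, so there is a Hilbert space $\K_\phi$ containing $\H_\phi$ and a completely bounded homomorphism $\theta:C^*(\M)\to B(\K_\phi)$ with Paulsen's similarity property with constant $r^{1/2}$ such that $\phi(a)=P_{\H_\phi}\theta(a)|_{\H_\phi}$ for every $a\in \M$. By definition, conjugation by some invertible $X$ with $\|X\|=\|X^{-1}\|\leq r^{1/2}$ makes $\theta$ completely contractive, which yields the crucial norm bound $\|\theta\|_{cb}\leq r$, and in particular $\|\theta(a_0)\|\leq r$ and $\|\theta(b_0)\|\leq r$.

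The first step is to pin down the block form of $\theta(a_0)$ with respect to the decomposition $\K_\phi=\H_\phi\oplus \H_\phi^\perp$. Since $P_{\H_\phi}\theta(a_0)|_{\H_\phi}=\phi(a_0)=rI$, one may write
\[
\theta(a_0)=\begin{bmatrix} rI & A \\ B & C \end{bmatrix}.
\]
For any unit vector $\xi\in \H_\phi$ the inequality $\|\theta(a_0)\xi\|^2=r^2+\|B\xi\|^2\leq r^2$ forces $B=0$. The very same argument applied to $\theta(a_0)^*$, whose $(1,1)$ block is again $rI$ and whose norm is also at most $r$, identifies its lower-left block $A^*$ as zero, hence $A=0$. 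Thus $\theta(a_0)$ is block diagonal with $rI$ in the upper-left corner.

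The second step uses the multiplicative relation $b_0 a_0=0$ in $C^*(\M)$: since $\theta$ is a homomorphism, $\theta(b_0)\theta(a_0)=0$. Writing
\[
\theta(b_0)=\begin{bmatrix} D & E \\ F & G \end{bmatrix}
\]
and multiplying by the block-diagonal $\theta(a_0)$, the invertibility of its $(1,1)$ entry $rI$ forces the first column of $\theta(b_0)$ to vanish, so $D=0$. Compressing back to $\H_\phi$ then gives $\phi(b_0)=P_{\H_\phi}\theta(b_0)|_{\H_\phi}=D=0$, contradicting the hypothesis $\phi(b_0)\neq 0$.

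The main technical point is the block-diagonality of $\theta(a_0)$. It rests on the sharpness of the equality $\|\phi(a_0)\|=r=\|\theta\|_{cb}\|a_0\|$: because the compression to $\H_\phi$ already consumes the entire norm budget of $\theta(a_0)$, no mass can leak into the off-diagonal blocks. This is also where the exponent $r^{1/2}$ in the definition of $\Pau_r$ is used in a decisive way, since the weaker bound $\|\theta\|_{cb}\leq r^2$ obtained from a cruder similarity constant would leave room for nontrivial off-diagonal entries.
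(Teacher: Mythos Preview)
Your proof is correct and follows essentially the same route as the paper's: both use the block decomposition of $\theta(a_0)$ and $\theta(b_0)$ together with the multiplicative relation $\theta(b_0)\theta(a_0)=0$ and the norm saturation $\|\phi(a_0)\|=r$. The paper phrases this as a direct argument that any homomorphic dilation $\theta$ must satisfy $\|\theta\|_{cb}>r$ (hence cannot have Paulsen's similarity property with constant $r^{1/2}$), and only needs the lower-left block $B$ to vanish; your additional step showing $A=0$ via $\theta(a_0)^*$ is harmless but unnecessary, since once $B=0$ the $(1,1)$ block of $\theta(b_0)\theta(a_0)$ is already $rD$.
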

\begin{proof}
Let $\K_\phi$ be a Hilbert space containing $\H_\phi$ and let $\theta:C^*(\M)\to B(\K_\phi)$ be a homomorphism such that
\[
\phi(a)=P_{\H_\phi}\theta(a)|_{\H_\phi},\quad a\in \M.
\]
Since $b_0a _0=0$, we must have $\theta(b_0)\theta(a_0)=0$ and in particular 
\begin{align*}
0&=P_{\H_\phi}\theta(b_0)\theta(a_0)P_{\H_\phi}=\phi(b_0)\phi(a_0)+P_{\H_\phi}\theta(b_0)P_{\H^\perp_\phi}\theta(a_0)P_{\H_\phi}\\
&=r\phi(b_0)+P_{\H_\phi}\theta(b_0)P_{\H^\perp_\phi}\theta(a_0)P_{\H_\phi}.
\end{align*}
Hence, we find
\[
\|P_{\H_\phi}\theta(b_0)P_{\H^\perp_\phi}\|\|P_{\H^\perp_\phi}\theta(a_0)P_{\H_\phi}\|\geq r\|\phi(b_0)\|>0.
\]
Since $\phi(a_0)=rI$, we find
\begin{align*}
\|\theta(a_0)\|^2&\geq \|\theta(a_0)P_{\H_\phi}\|^2=\|\phi(a_0)+P_{\H^\perp_\phi}\theta(a_0)P_{\H_\phi}\|^2\\
&=r^2+\|P_{\H^\perp_\phi}\theta(a_0)P_{\H_\phi}\|^2>r^2
\end{align*}
and thus $\|\theta\|_{cb}>r$. We conclude that $\phi$ does not lie in $\Pau_r(\M,\H_\phi)$.
\end{proof}

Examples satisfying the conditions of the previous theorem are easily constructed. 

\begin{example}\label{E:nilpotent}
Let $\M$ be an operator space containing an operator $N$ such that $\|N\|=1$ and $N^2=0$. Choose a linear functional $\phi: \M\to \bC$ with $\|\phi\|=1$ and $\phi(N)=1$. Then, $\|\phi\|_{cb}=1$. Applying Theorem \ref{T:Frobs} with $a_0=b_0=N$ shows that $\phi\in \CB_1(\M,\bC)\setminus\Pau_1(\M,\bC)$. 
\end{example}

Obviously, the element $a_0$ from Theorem \ref{T:Frobs} can never be the identity of $\M$. Nevertheless, even if we insist that a map $\phi\in \CB_r(\M,\H)$ be unital, it still may not lie in $\Pau_r(\M,\H)$.

\begin{example}\label{E:UCBr}
Let $\M\subset M_2(\bC)$ be the unital operator space consisting of upper triangular Toeplitz matrices, that is
\[
\M=\left\{ 
\begin{bmatrix}
x & y \\
0 & x
\end{bmatrix}:x,y\in \bC
\right\}.
\]
For convenience, given $x,y\in \bC$ we use the following notation
\[
T_{x,y}=\begin{bmatrix}
x & y \\
0 & x
\end{bmatrix}.
\]
Let $\phi:\M\to \bC$ be the unital linear functional defined as
\[
\phi(T_{x,y})=x+y, \quad x,y\in \bC.
\]
If $x\neq 0$ and $y\neq 0$, a standard verification shows that
\[
\|T_{x,y}\|^2>|x|^2+|y|^2.
\]
Thus,
\[
|\phi(T_{x,y}) |=|x+y|\leq \sqrt{2}\sqrt{|x|^2+|y|^2}<\sqrt{2}\|T_{x,y}\|
\]
by the Cauchy-Schwarz inequality. Moreover, for every $x\in \bC$ and $y\in \bC$ we have
\[
|\phi(T_{0,y})|=|y|=\|T_{0,y}\|, \quad |\phi(T_{x,0})|=|x|=\|T_{x,0}\|.
\]
Hence, we conclude that $|\phi(T)|<\sqrt{2}\|T\|$ for every $T\in \M$ with $\|T\|=1$. By compactness, we infer that $\|\phi\|<\sqrt{2}$, and since $\phi$ is a functional we find $\phi\in \CB_{\sqrt{2}-\eps}(\M,\bC)$ for some $\eps>0$.

Let $\K$ be a Hilbert space containing $\H = \bC$ as a subspace. Assume that $\theta:C^*(\M)\to B(\K)$ is a homomorphism such that
\[
\phi(T)=P_{\H}\theta(T)|_{\H}, \quad T\in \M.
\]
We see that $\phi(T_{0,1})=1$  so we may write
\[
\theta(T_{0,1})=\begin{bmatrix}
1 & A\\
B& C
\end{bmatrix}.
\]
Since $\theta$ is multiplicative and $T_{0,1}^2=0$, we have $\theta(T_{0,1})^2=0$. In particular this forces $1+AB=0$. Thus, either $\|A\|\geq 1$ or $\|B\|\geq 1$, which implies that $\|\theta(T_{0,1})\|\geq \sqrt{2}$. Since $\|T_{0,1}\|=1$, we conclude that $\|\theta\|_{cb}\geq \sqrt{2}$. Thus, we see that 
\[
\phi\in \CB_{\sqrt{2}-\eps}(\M,\bC)\setminus \Pau_{\sqrt{2}-\eps}(\M,\bC).
\]

\end{example}

Before establishing a useful property of maps in the class $\Pau_r(\M)$, we need the following elementary calculation.

\begin{lemma}\label{L:dilationsim}
Let $\M$ be an operator space and let $\phi:\M\to B(\H_\phi)$ be a linear map. Assume that there is a Hilbert space $\K_\phi$ containing $\H_\phi$ and a linear map $\psi:\M\to B(\K_\phi)$ such that
\[
\phi(a)=P_{\H_\phi}\psi(a)|_{\H_\phi}, \quad a\in \M.
\]
Let $X\in B(\H_\phi)$ be invertible with $\|X\|=\|X^{-1}\|$. Then, there is another invertible operator $X'\in B(\K_\phi)$ for which the following assertions hold.
\begin{enumerate}

\item[\rm{(a)}] We have $\|X\|=\|X'\|=\|X^{-1}\|=\|X'^{-1}\|$.

\item[\rm{(b)}] The space $\H_\phi$ is reducing for $X'$ and $X'\H_\phi=\H_\phi=X'^*\H_\phi$.

\item[\rm{(c)}] We have
\[
X\phi(a)X^{-1}=P_{\H_\phi}X'\psi(a)X'^{-1}|_{\H_\phi}, \quad a\in \M.
\]
\end{enumerate}
\end{lemma}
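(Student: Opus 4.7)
The natural construction is to simply extend $X$ by the identity on the orthogonal complement of $\H_\phi$ in $\K_\phi$. More precisely, with respect to the orthogonal decomposition $\K_\phi = \H_\phi \oplus \H_\phi^\perp$, I would define
\[
X' = X \oplus I_{\H_\phi^\perp}.
\]
The first step is to check that this is well-defined and invertible with inverse $X'^{-1} = X^{-1} \oplus I_{\H_\phi^\perp}$; this is immediate from invertibility of $X$.

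For (a), I would first observe that $r:=\|X\|=\|X^{-1}\|$ satisfies $r\geq 1$, since $1\leq \|XX^{-1}\|\leq \|X\|\|X^{-1}\|=r^2$. Then the norm of a block-diagonal operator is the maximum of the norms of its blocks, so
\[
\|X'\|=\max\{\|X\|,1\}=r=\max\{\|X^{-1}\|,1\}=\|X'^{-1}\|,
\]
which gives (a). For (b), since $X'$ is block-diagonal relative to $\H_\phi\oplus \H_\phi^\perp$, both $\H_\phi$ and $\H_\phi^\perp$ are reducing for $X'$, and since $X$ is invertible on $\H_\phi$ we have $X'\H_\phi=X\H_\phi=\H_\phi$; the same argument applied to $X'^*=X^*\oplus I_{\H_\phi^\perp}$ yields $X'^*\H_\phi=\H_\phi$.

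Finally, for (c), I would write out $\psi(a)$ as a $2\times 2$ block matrix with respect to $\H_\phi\oplus \H_\phi^\perp$, whose $(1,1)$ entry is $P_{\H_\phi}\psi(a)|_{\H_\phi}=\phi(a)$. Conjugating this matrix by $X'=X\oplus I_{\H_\phi^\perp}$ leaves the $(1,1)$ entry equal to $X\phi(a)X^{-1}$, and compression back to $\H_\phi$ extracts precisely this block, yielding the required identity. There is no genuine obstacle here: everything reduces to a routine $2\times 2$ block calculation once the correct definition of $X'$ is made, and the only mild observation needed is that $\|X\|\geq 1$ so that the identity block does not inflate the norm.
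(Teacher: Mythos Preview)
Your proposal is correct and is exactly the construction used in the paper: define $X' = X \oplus I$ with respect to $\K_\phi = \H_\phi \oplus \H_\phi^\perp$ and verify (a)--(c) by direct block computation. In fact you supply a bit more detail than the paper (which simply declares (a) and (b) ``clearly satisfied''), notably the observation that $\|X\|=\|X^{-1}\|$ forces $\|X\|\geq 1$ so the identity summand does not affect the norm.
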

\begin{proof}
Define the invertible operator $X': \K_\phi \to \K_\phi$ as $X'=X\oplus I$ according to the decomposition $\K_\phi=\H_\phi\oplus \H_\phi^\perp$. Properties (a) and (b) are clearly satisfied, so it remains to establish (c). We have 
\[
X=X'|_{\H_\phi} , \quad X^{-1}=X'^{-1}|_{\H_\phi}
\]
and
\[
X'P_{\H_\phi}=P_{\H_\phi}X', \quad P_{\H_\phi}X'^{-1}=X'^{-1}P_{\H_\phi}
\]
so we find
\begin{align*}
X\phi(a)X^{-1}&=X'\phi(a)X'^{-1}|_{\H_\phi}=X'P_{\H_\phi}\psi(a)P_{\H_\phi}X'^{-1}|_{\H_\phi}\\
&=P_{\H_\phi}X'\psi(a)X'^{-1}|_{\H_\phi}
\end{align*}
for every $a\in \M$.
\end{proof}

We now exhibit an important dilation property of maps in the class $\Pau_r$.

\begin{theorem}\label{T:Frdilation}
Let $\M$ be an operator space, let $r\geq 1$ and let $\phi\in \Pau_r(\M,\H_\phi)$. Then, there is a Hilbert space $\K_\phi$ containing $\H_\phi$, a $*$-homomorphism $\sigma: C^*(\M)\to B(\K_\phi)$ and an invertible operator $X\in B(\K_\phi)$ with 
\[
\|X\|=\|X^{-1}\|\leq r^{1/2}
\]
such that 
\[
\phi(a)=P_{\H_\phi} X\sigma(a)X^{-1}|_{\H_\phi},\quad a\in \M.
\]
\end{theorem}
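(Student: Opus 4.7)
The plan is to unpack the definition of $\Pau_r(\M,\H_\phi)$ and then apply Corollary \ref{C:hom} to dilate once more to a $*$-homomorphism, taking some care to merge the two similarities into a single one living on the large space.

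By definition of $\Pau_r$, there exist a Hilbert space $\K'$ containing $\H_\phi$ and a completely bounded homomorphism $\theta: C^*(\M) \to B(\K')$ having Paulsen's similarity property with constant $r^{1/2}$, such that $\phi(a) = P_{\H_\phi}\theta(a)|_{\H_\phi}$ for every $a \in \M$. Applying Corollary \ref{C:hom} to $\theta$ produces a Hilbert space $\K_\phi$ containing $\K'$, a $*$-homomorphism $\sigma: C^*(\M) \to B(\K_\phi)$, and an invertible operator $Y \in B(\K')$ with $\|Y\| = \|Y^{-1}\| \leq r^{1/2}$ satisfying
\[
Y\theta(a)Y^{-1} = P_{\K'}\sigma(a)|_{\K'}, \quad a \in C^*(\M).
\]

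The key remaining step is to promote the similarity $Y$, which acts only on $\K'$, to a similarity on the whole of $\K_\phi$. I would set $X = Y^{-1} \oplus I_{(\K')^\perp}$, viewed as an invertible operator on $\K_\phi = \K' \oplus (\K')^\perp$. Then $\K'$ is reducing for $X$, so $X$ commutes with $P_{\K'}$, and $\|X\| = \|X^{-1}\| = \|Y^{-1}\| \leq r^{1/2}$. A direct block calculation shows that for $\xi \in \K'$, writing $\sigma(a)Y\xi = \eta_1 + \eta_2$ with $\eta_1 \in \K'$ and $\eta_2 \in (\K')^\perp$, one has $X\sigma(a)X^{-1}\xi = Y^{-1}\eta_1 + \eta_2$, and hence $P_{\K'}X\sigma(a)X^{-1}|_{\K'} = Y^{-1}P_{\K'}\sigma(a)Y|_{\K'}$. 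By the displayed identity this last expression equals $Y^{-1}(Y\theta(a)Y^{-1})Y = \theta(a)$. Composing with the compression to $\H_\phi \subset \K'$ and using $P_{\H_\phi}P_{\K'} = P_{\H_\phi}$, I obtain
\[
\phi(a) = P_{\H_\phi}\theta(a)|_{\H_\phi} = P_{\H_\phi}X\sigma(a)X^{-1}|_{\H_\phi}, \quad a \in \M,
\]
which is the desired conclusion.

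There is no substantive obstacle here: the argument is careful bookkeeping of two successive dilations. The only subtle point is the choice to extend $Y^{-1}$ (rather than $Y$) by the identity on $(\K')^\perp$; this inversion is what ensures that the compression of the conjugate of $\sigma$ by $X$ recovers $\theta$ exactly, rather than some further conjugate of it. The norm bound $\|X\| = \|X^{-1}\| \leq r^{1/2}$ is immediate from the block-diagonal form.
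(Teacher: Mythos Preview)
Your proof is correct and follows essentially the same route as the paper: unpack the definition of $\Pau_r$, apply Corollary~\ref{C:hom}, and then extend the similarity from $\K'$ to $\K_\phi$ by the identity on the orthogonal complement. The paper packages that last step as an invocation of Lemma~\ref{L:dilationsim} (whose proof is precisely your block calculation with $X' = Y \oplus I$ and final similarity $X'^{-1}$), whereas you carry it out explicitly inline; the two arguments are otherwise identical.
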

\begin{proof}
By assumption, there is a Hilbert space $\K$ containing $\H_\phi$ and a homomorphism $\theta:C^*(\M)\to B(\K)$ that has Paulsen's similarity property with constant $r^{1/2}$ and such that
\[
\phi(a)=P_{\H_\phi} \theta(a)|_{\H_\phi},\quad a\in \M.
\]
Applying Corollary \ref{C:hom} to $\theta$, we obtain a Hilbert space $\K_\phi$ containing $\K$, a $*$-homomorphism $\sigma: C^*(\M)\to B(\K_\phi)$ and an invertible operator $X\in B(\K)$
with 
\[
\|X\|=\|X^{-1}\|\leq r^{1/2}
\]
such that
\[
X\theta(a)X^{-1}=P_{\K} \sigma(a)|_{\K},\quad a\in C^*(\M).
\]
By Lemma \ref{L:dilationsim}, we can find an invertible operator $X'\in B(\K_\phi)$ such that
\[
\|X'\|=\|X\|\leq r^{1/2}, \quad \|X'^{-1}\|=\|X^{-1}\|\leq r^{1/2}
\]
and
\[
\theta(a)=P_{\K}X'^{-1}\sigma(a)X'|_{\K}, \quad a\in C^*(\M).
\]
We conclude that
\[
\phi(a)=P_{\H_\phi} X'^{-1}\sigma(a)X'|_{\H_\phi},\quad a\in \M.
\]
\end{proof}

Given an operator space $\M$, a positive number $r$  and a Hilbert space $\H$, we denote by $\CP_r(\M,\H)$ the set of linear maps $\phi:\M\to B(\H)$ that admit a completely positive extension $\psi:C^*(\M)\to B(\H)$ satisfying $\|\psi\|_{cb}\leq r$. An interesting consequence of the previous theorem is an upper bound for the distance of an element of $\Pau_r(\M,\H)$ to the set $\CP_1(\M,\H)$.

Before stating it, we need a simple preliminary calculation. Let $\fA$ be a unital $C^*$-algebra and let $X\in \fA$ be a positive invertible operator with $\|X\|=\|X^{-1}\|$. Then, for each $a\in \fA$ we have
\begin{align*}
\|a-XaX^{-1}\|&\leq \|(I-X)a\|+\|Xa(I-X^{-1})\|\\\
&\leq \|a\|(\|I-X\|+\|X\| \|I-X^{-1}\|)\\
&\leq \|a\| (1+\|X\|) \max\{\|I-X\|,\|I-X^{-1}\|\}.
\end{align*}
Since $X$ is positive and satisfies $\|X\|=\|X^{-1}\|$, we may use the spectral theorem to conclude that
\[
\max\{\|I-X\|,\|I-X^{-1}\|\}\leq \max\{\|X\|-1,1-1/\|X\|\}
\]
so that
\[
\|a-XaX^{-1}\|\leq \|a\|(1+\|X\|)\max\{\|X\|-1,1-1/\|X\|\}.
\]
We can now establish the announced distance estimate.

\begin{corollary}\label{C:distance}
Let $\M$ be an operator space, let $r\geq 1$ and let $\phi\in \Pau_r(\M,\H)$. Then, there is a linear map $\psi\in \CP_1(\M,\H)$ with the property that
\[
\|\phi-\psi\|_{cb}\leq r-1.
\]
\end{corollary}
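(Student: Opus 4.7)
The plan is to extract from Theorem \ref{T:Frdilation} a concrete dilation of $\phi$ of the form $P_{\H_\phi}X\sigma(\cdot)X^{-1}|_{\H_\phi}$, and then to build $\psi$ by ``removing'' the unitary factor in the polar decomposition of $X$, so that what remains is a compression of a $*$-homomorphism (and hence completely positive of cb-norm at most $1$), while the discrepancy is controlled by the preliminary inequality displayed just before the statement of the corollary.

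First, apply Theorem \ref{T:Frdilation} to obtain a Hilbert space $\K_\phi$ containing $\H_\phi$, a $*$-homomorphism $\sigma:C^*(\M)\to B(\K_\phi)$, and an invertible operator $X\in B(\K_\phi)$ with $\|X\|=\|X^{-1}\|\leq r^{1/2}$ satisfying
\[
\phi(a)=P_{\H_\phi}X\sigma(a)X^{-1}|_{\H_\phi},\quad a\in\M.
\]
Next, write the polar decomposition $X=U|X|$; since $X$ is invertible, $U\in B(\K_\phi)$ is unitary and $|X|=(X^*X)^{1/2}$ is a positive invertible operator satisfying $\||X|\|=\|X\|$ and $\||X|^{-1}\|=\|X^{-1}\|$, hence both are bounded by $r^{1/2}$. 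Define $\Psi:C^*(\M)\to B(\H_\phi)$ by $\Psi(a)=P_{\H_\phi}U\sigma(a)U^*|_{\H_\phi}$. Because $a\mapsto U\sigma(a)U^*$ is a $*$-homomorphism, $\Psi$ is a compression of a $*$-homomorphism and is therefore completely positive with $\|\Psi\|_{cb}\leq 1$. Set $\psi=\Psi|_\M\in\CP_1(\M,\H_\phi)$.

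It remains to estimate $\|\phi-\psi\|_{cb}$. For each $n\in\bN$ and $a\in M_n(\M)$, substituting the two formulas and factoring out $U^{(n)}$ and $(U^*)^{(n)}$ yields
\[
\phi^{(n)}(a)-\psi^{(n)}(a)=P_{\H_\phi^{(n)}}U^{(n)}\bigl(|X|^{(n)}\sigma^{(n)}(a)(|X|^{(n)})^{-1}-\sigma^{(n)}(a)\bigr)(U^*)^{(n)}\big|_{\H_\phi^{(n)}}.
\]
Applying the preliminary calculation in the $C^*$-algebra $M_n(B(\K_\phi))$ to the positive invertible operator $|X|^{(n)}$, whose norm equals that of $|X|$, and using that $\|\sigma^{(n)}(a)\|\leq\|a\|$ because $\sigma$ is a $*$-homomorphism, gives
\[
\|\phi^{(n)}(a)-\psi^{(n)}(a)\|\leq \|a\|(1+\||X|\|)(\||X|\|-1)=\|a\|(\||X|\|^2-1)\leq (r-1)\|a\|,
\]
where I have used that the positivity of $|X|$ and the equality $\||X|\|=\||X|^{-1}\|$ force $\max\{\||X|\|-1,1-1/\||X|\|\}=\||X|\|-1$. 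Taking the supremum over $n$ yields $\|\phi-\psi\|_{cb}\leq r-1$, as desired.

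The only mildly delicate point is to make sure that the preliminary estimate applies at each matrix level with the same constant; this is immediate once one observes that amplification preserves positivity, invertibility, and the norms $\||X|\|$ and $\||X|^{-1}\|$, so no loss occurs in passing from the scalar bound to the completely bounded bound.
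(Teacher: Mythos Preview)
Your proof is correct and follows essentially the same approach as the paper's. The only cosmetic difference is that the paper absorbs the unitary $U$ into the $*$-homomorphism (writing ``we may assume $X$ is positive'' and taking $\psi(a)=P_\H\sigma(a)|_\H$), whereas you keep $\sigma$ fixed and set $\psi(a)=P_{\H_\phi}U\sigma(a)U^*|_{\H_\phi}$; these are the same map after relabelling, and the final estimate is identical.
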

\begin{proof}
By Theorem \ref{T:Frdilation}, there is a Hilbert space $\K_\phi$ containing $\H$, a $*$-homomorphism $\sigma: C^*(\M)\to B(\K_\phi)$ and an invertible operator $X\in B(\K_\phi)$ with 
\[
\|X\|=\|X^{-1}\|\leq r^{1/2}
\]
such that 
\[
\phi(a)=P_{\H} X\sigma(a)X^{-1}|_{\H},\quad a\in \M.
\]
Using the polar decomposition if necessary, we may assume that $X$ is positive. Let now $\psi:\M\to B(\H)$ be the linear map defined as
\[
\psi(a)=P_{\H} \sigma(a)|_{\H},\quad a\in \M.
\]
Then, $\psi\in \CP_1(\M,\H)$. Using the calculation preceding the corollary, we see that 
\begin{align*}
\|\phi-\psi\|_{cb}&\leq (1+r^{1/2})\max\{r^{1/2}-1,1-1/r^{1/2}\}\\
&= (1+r^{1/2})(r^{1/2}-1)=r-1.
\end{align*}
\end{proof}

We can now show that $\Pau_1(\M,\H)=\CP_1(\M,\H)$ for every Hilbert space $\H$.

\begin{corollary}\label{C:F1}
Let $\M$ be an operator space and let $\phi:\M\to B(\H)$ be a linear map. Then, $\phi\in \Pau_1(\M,\H)$ if and only if $\phi$ admits a contractive completely positive extension to $C^*(\M)$.
\end{corollary}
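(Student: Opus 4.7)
The plan is to prove both implications directly from earlier results, with Corollary \ref{C:distance} providing the forward direction essentially for free and Lemma \ref{L:ccp} handling the reverse direction.

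For the forward implication, I would invoke Corollary \ref{C:distance} with $r=1$: given $\phi\in\Pau_1(\M,\H)$, the corollary produces a map $\psi\in\CP_1(\M,\H)$ with $\|\phi-\psi\|_{cb}\leq r-1=0$, so $\phi=\psi$ and $\phi$ itself lies in $\CP_1(\M,\H)$. Unpacking the definition of $\CP_1(\M,\H)$, this exactly means that $\phi$ admits a completely positive extension to $C^*(\M)$ of $\CB$-norm at most $1$, i.e.\ a contractive completely positive extension. Alternatively, one can run the same argument by hand using Theorem \ref{T:Frdilation}: the dilation produces an invertible $X\in B(\K_\phi)$ with $\|X\|=\|X^{-1}\|\leq 1$, which by the submultiplicativity inequality $\|X\|\,\|X^{-1}\|\geq 1$ forces $X$ to be unitary. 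Conjugating the $*$-homomorphism $\sigma$ by $X$ yields another $*$-homomorphism, whose compression to $\H$ is the desired contractive completely positive extension.

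For the reverse implication, suppose $\phi$ admits a contractive completely positive extension $\psi:C^*(\M)\to B(\H)$. By Lemma \ref{L:ccp}, there exist a Hilbert space $\K_\phi$ containing $\H$ and a $*$-homomorphism $\sigma:C^*(\M)\to B(\K_\phi)$ such that
\[
\psi(c)=P_\H\sigma(c)|_\H,\quad c\in C^*(\M).
\]
Since $\sigma$ is a $*$-homomorphism, it is a completely contractive homomorphism, and Paulsen's similarity property with constant $1$ holds trivially by choosing $X=I$. Restricting the compression to $\M$ recovers $\phi$, so $\phi\in\Pau_1(\M,\H)$ by definition.

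I do not anticipate any obstacle: both directions are short and use only the tools already assembled, namely the dilation Theorem \ref{T:Frdilation} (or its quantitative corollary Corollary \ref{C:distance}) and the non-unital Stinespring-type Lemma \ref{L:ccp}. The only minor point worth mentioning in writing is justifying that $\|X\|=\|X^{-1}\|\leq 1$ forces $X$ unitary, which is immediate from $\|X\|\,\|X^{-1}\|\geq 1$, but this is sidestepped entirely by appealing directly to Corollary \ref{C:distance}.
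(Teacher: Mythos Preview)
Your proposal is correct and matches the paper's proof essentially verbatim: the paper invokes Corollary~\ref{C:distance} with $r=1$ for the forward direction and Lemma~\ref{L:ccp} for the converse, exactly as you do. Your additional remarks (the alternative via Theorem~\ref{T:Frdilation} and the observation that $\|X\|=\|X^{-1}\|\leq 1$ forces $X$ unitary) are correct elaborations that the paper omits.
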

\begin{proof}
If $\phi \in \Pau_1(\M,\H)$ then $\phi\in \CP_1(\M,\H)$ by Corollary \ref{C:distance}.
Conversely, assume that $\phi$ admits a contractive completely positive extension to $C^*(\M)$. By Lemma \ref{L:ccp}, there is a Hilbert space $\K_\phi$ containing $\H$ and $*$-homomorphism $\pi:C^*(\M)\to B(\K_\phi)$ such that
\[
\phi(a)=P_{\H}\pi(a)|_{\H}, \quad a\in \M.
\]
Thus, $\phi\in \Pau_1(\M,\H)$.

\end{proof}

We make a few remarks regarding the previous result. First, note that if $\M$ is unital and $\phi:\M\to B(\H)$ is a \emph{unital} completely contractive map, then $\phi$ admits a unital (hence contractive) completely positive extension to $C^*(\M)$, so that $\phi\in \Pau_1(\M,\H)$ by Corollary \ref{C:F1}. This shows that Example \ref{E:UCBr} is somewhat sharp: the norm of the functional $\phi$ cannot be taken to be $1$ therein. Moreover, we mention that Example \ref{E:double} shows that Corollary \ref{C:F1} fails beyond the completely contractive setting. Indeed, that example exhibits a completely positive map $\phi:\bC\to \bC$ with $\|\phi\|_{cb}=r>1$ such that $\phi$ does not lie in $\Pau_r(\bC,\bC)$. The reverse inclusion also fails as the next example illustrates.

\begin{example}\label{E:sim}
Let $\H$ be a Hilbert space and let $X\in B(\H)$ be a non-unitary invertible operator with 
\[
\|X\|=\|X^{-1}\|=r^{1/2}.
\]
In particular, $r>1$. Let $\theta:B(\H)\to B(\H)$ be defined as
\[
\theta(a)=XaX^{-1}, \quad a\in B(\H).
\]
It is clear that $\theta\in \Pau_r(B(\H),\H)$. Moreover, by applying $\theta$ to rank-one operators it is easily verified that $\|\theta\|=r>1$, so that $\|\theta\|>\|\theta(I)\|$. Thus, $\theta$ is not completely positive.
\end{example}

A useful consequence of Corollary \ref{C:F1} is the following corollary. It guarantees that certain maps preserve the classes $\Pau_r(\M)$.

\begin{corollary}\label{C:Frcomp}
Let $\M\subset B(\H_\M)$ and $\N\subset B(\H_\N)$ be operator spaces, let $\tau\in \Pau_1(\M,\H_\N)$ such that $\tau(\M)\subset \N$ and let $\phi\in \Pau_r(\N,\H_\phi)$. Then, $\phi\circ \tau\in \Pau_r(\M,\H_\phi)$. 
\end{corollary}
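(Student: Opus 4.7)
The strategy is to build a $*$-homomorphism on $C^*(\M)$ together with a similarity that jointly dilate $\phi\circ\tau$ in the sense required by the definition of $\Pau_r$. The key is that $\tau$ being in $\Pau_1$ makes its completely positive extension available, so that composition with a Stinespring-type dilation of $\phi$ yields the desired structure.

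First, invoke Corollary \ref{C:F1} to extend $\tau$ to a contractive completely positive map $\widetilde{\tau}: C^*(\M) \to B(\H_\N)$. Apply Theorem \ref{T:Frdilation} to $\phi$ to obtain a Hilbert space $\K_\phi \supset \H_\phi$, a $*$-homomorphism $\sigma: C^*(\N) \to B(\K_\phi)$, and an invertible $X \in B(\K_\phi)$ with $\|X\|=\|X^{-1}\|\le r^{1/2}$ such that $\phi(n)=P_{\H_\phi}X\sigma(n)X^{-1}|_{\H_\phi}$ for all $n\in\N$. By Arveson's extension theorem, the completely contractive completely positive map $\sigma$ extends to a c.c.p.\ map $\widetilde{\sigma}: B(\H_\N)\to B(\K_\phi)$. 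Then $\Psi := \widetilde{\sigma}\circ\widetilde{\tau}: C^*(\M)\to B(\K_\phi)$ is contractive and completely positive, and since $\tau(m)\in\N\subset C^*(\N)$ for $m\in\M$, we have $\Psi(m)=\widetilde{\sigma}(\tau(m))=\sigma(\tau(m))$. Apply Lemma \ref{L:ccp} to $\Psi$ to obtain a Hilbert space $\K\supset\K_\phi$ and a $*$-homomorphism $\rho:C^*(\M)\to B(\K)$ with $\Psi(a)=P_{\K_\phi}\rho(a)|_{\K_\phi}$ for every $a\in C^*(\M)$.

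Next, extend $X$ to an invertible operator $X'\in B(\K)$ by setting $X'=X\oplus I$ relative to the decomposition $\K=\K_\phi\oplus(\K\ominus\K_\phi)$ (this is the explicit form of the construction in Lemma \ref{L:dilationsim}). Then $\|X'\|=\|X'^{-1}\|\le r^{1/2}$, and $X'$ commutes with $P_{\K_\phi}$. A direct verification (using that $X'$ preserves $\K_\phi$ and its orthogonal complement, together with $P_{\H_\phi}P_{\K_\phi}=P_{\H_\phi}$) yields, for every $m\in\M$,
\[
P_{\H_\phi}X'\rho(m)X'^{-1}|_{\H_\phi}
=P_{\H_\phi}X\bigl(P_{\K_\phi}\rho(m)|_{\K_\phi}\bigr)X^{-1}|_{\H_\phi}
=P_{\H_\phi}X\sigma(\tau(m))X^{-1}|_{\H_\phi}=\phi(\tau(m)).
\]
Define $\theta':C^*(\M)\to B(\K)$ by $\theta'(a)=X'\rho(a)X'^{-1}$. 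This is a homomorphism, and conjugation by $Y=X'^{-1}$ (which satisfies $\|Y\|=\|Y^{-1}\|\le r^{1/2}$) returns $\rho$, which is completely contractive. Hence $\theta'$ has Paulsen's similarity property with constant $r^{1/2}$, and $\phi\circ\tau\in\Pau_r(\M,\H_\phi)$.

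The main technical point is the careful tracking of how compressions interact when passing from $\K_\phi$ up to $\K$; the block form $X'=X\oplus I$ ensures $X'$ commutes with $P_{\K_\phi}$, which is what makes the compression of $\theta'$ to $\H_\phi$ collapse back to $\phi\circ\tau$ via the Stinespring relation for $\rho$. Beyond that verification, the argument is essentially a routine assembly of Corollary \ref{C:F1}, Theorem \ref{T:Frdilation}, Arveson's extension theorem, and Lemma \ref{L:ccp}.
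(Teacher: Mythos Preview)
Your proof is correct and follows essentially the same approach as the paper's: extend $\tau$ to a contractive completely positive map via Corollary~\ref{C:F1}, compose with the completely positive part of a similarity-twisted Stinespring dilation of $\phi$, extend through $B(\H_\N)$ via Arveson, dilate to a $*$-homomorphism via Lemma~\ref{L:ccp}, and reinstate the similarity using the $X\oplus I$ construction of Lemma~\ref{L:dilationsim}. The only cosmetic differences are that the paper starts from the raw definition of $\Pau_r$ and invokes Corollary~\ref{C:hom} to obtain the similarity, whereas you invoke the packaged Theorem~\ref{T:Frdilation}; and the paper cites Lemma~\ref{L:dilationsim} abstractly where you write out the block-diagonal extension explicitly.
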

\begin{proof}

By assumption, there is a Hilbert space $\K_\phi$ containing $\H_\phi$ and a homomorphism $\theta:C^*(\N)\to B(\K_\phi)$ that has Paulsen's similarity property with constant $r^{1/2}$ and is such that
\[
\phi(a)=P_{\H_\phi}\theta(a)|_{\H_\phi}, \quad a\in \N.
\]
By Corollary \ref{C:hom}, there is an invertible operator $X\in B(\K_\phi)$ with $\|X\|=\|X^{-1}\|\leq r^{1/2}$ such that the homomorphism $\theta_X:C^*(\N)\to B(\K_\phi)$ defined as
\[
\theta_X(a)=X\theta(a)X^{-1}, \quad a\in C^*(\N)
\]
is contractive and completely positive. Next, use Corollary \ref{C:F1} to find a contractive completely positive map $\Psi:C^*(\M)\to B(\H_\N)$ that agrees with $\tau$ on $\M$. By Arveson's extension theorem there is a contractive completely positive map $\Xi:B(\H_\N)\to B(\K_\phi)$ that agrees with $\theta_X$ on $C^*(\N)$. Then, $\Xi\circ \Psi$ is contractive and completely positive, and it agrees with $\theta_X\circ \tau$ on $\M$. Hence, by Lemma \ref{L:ccp} we obtain a Hilbert space $\K$ containing $\K_\phi$ and a $*$-homomorphism $\pi:C^*(\M)\to B(\K)$ such that
\[
\theta_X\circ \tau(a)=P_{\K_\phi}\pi(a)|_{\K_\phi}, \quad a\in \M.
\]
By Lemma \ref{L:dilationsim}, we see that
\[
\theta\circ \tau(a)=P_{\K_\phi}\pi_X(a)|_{\K_\phi}, \quad a\in \M
\]
where $\pi_X:C^*(\M)\to B(\K)$ is a homomorphism that has Paulsen's similarity property with constant $r^{1/2}$. Finally, we note that
\[
\phi\circ \tau(a)=P_{\H_\phi}\theta\circ \tau(a)|_{\H_\phi}=P_{\H_\phi}\pi_X(a)|_{\H_\phi}, \quad a\in \M
\]
so that $\phi\circ \tau\in \Pau_r(\M,\H_\phi)$.
\end{proof}

We close this section by describing a relationship between the classes $\Pau_r(\M)$ for different values of the parameter $r\geq 1$. The key technical tool is the following, that we require in later sections as well. See Subsection \ref{SS:ultra} for some background on ultraproducts.

\begin{lemma}\label{L:Frultra}
Let $\M$ be an operator space, let $\Lambda$ be a directed set and let $\U$ be a cofinal ultrafilter on $\Lambda$. For each $\lambda\in \Lambda$, let $\phi_\lambda\in \Pau_{r_\lambda}(\M,\H_\lambda)$.  Assume that $(r_\lambda)_{\lambda\in \Lambda}$ is a bounded. Then, the ultraproduct $\lim_\U (\phi_\lambda)_{\lambda\in \Lambda}$ yields an element of $\Pau_r(\M,\H_\U)$, where $r=\lim_\U (r_\lambda)_{\lambda\in \Lambda}$ and $\H_\U$ is the ultraproduct Hilbert space of $(\H_\lambda)_{\lambda\in \Lambda}$ along $\U$.
\end{lemma}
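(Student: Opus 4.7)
The plan is to lift the ultraproduct of the $\phi_\lambda$'s to an ultraproduct of their structural dilations coming from Theorem~\ref{T:Frdilation}. First, for each $\lambda \in \Lambda$ apply that theorem to produce a Hilbert space $\K_\lambda \supset \H_\lambda$, a $*$-homomorphism $\sigma_\lambda : C^*(\M) \to B(\K_\lambda)$, and an invertible operator $X_\lambda \in B(\K_\lambda)$ with $\|X_\lambda\| = \|X_\lambda^{-1}\| \le r_\lambda^{1/2}$ such that $\phi_\lambda(a) = P_{\H_\lambda} X_\lambda \sigma_\lambda(a) X_\lambda^{-1}|_{\H_\lambda}$ for every $a \in \M$.

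Second, form the ultraproduct Hilbert space $\K_\U$ of $(\K_\lambda)_{\lambda \in \Lambda}$ along $\U$. Since each $\H_\lambda$ sits isometrically inside $\K_\lambda$, the Hilbert space $\H_\U$ embeds isometrically in $\K_\U$ in the obvious way. Using the ultraproduct machinery recalled in Subsection~\ref{SS:ultra}, I would assemble the ultraproduct $*$-homomorphism $\sigma_\U : C^*(\M) \to B(\K_\U)$ together with the ultraproducts $X_\U$ of $(X_\lambda)$ and $Y_\U$ of $(X_\lambda^{-1})$. A direct calculation on representative bounded nets gives $X_\U Y_\U = Y_\U X_\U = I_{\K_\U}$, so $X_\U$ is invertible with $X_\U^{-1} = Y_\U$, and the general ultraproduct norm bound yields
\[
\|X_\U\| \le \lim_\U \|X_\lambda\| \le r^{1/2}, \qquad \|X_\U^{-1}\| \le r^{1/2}.
\]

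Third, identify $P_{\H_\U}$ with the ultraproduct of the $P_{\H_\lambda}$. The key observation is the Pythagorean decomposition $\|\xi_\lambda - h_\lambda\|^2 = \|\xi_\lambda - P_{\H_\lambda}\xi_\lambda\|^2 + \|P_{\H_\lambda}\xi_\lambda - h_\lambda\|^2$, which, after taking $\lim_\U$, shows that $[(P_{\H_\lambda}\xi_\lambda)_\lambda]$ realizes the orthogonal projection of $[(\xi_\lambda)_\lambda]$ onto $\H_\U \subset \K_\U$. Evaluating on a representative $[(h_\lambda)_\lambda] \in \H_\U$ then gives
\begin{align*}
\lim_\U(\phi_\lambda)_{\lambda\in\Lambda}(a) [(h_\lambda)_\lambda]
&= [(\phi_\lambda(a) h_\lambda)_\lambda]
= [(P_{\H_\lambda} X_\lambda \sigma_\lambda(a) X_\lambda^{-1} h_\lambda)_\lambda] \\
&= P_{\H_\U} X_\U \sigma_\U(a) X_\U^{-1} [(h_\lambda)_\lambda]
\end{align*}
for every $a \in \M$.

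Finally, to satisfy the normalization $\|X\| = \|X^{-1}\|$ built into Paulsen's similarity property, rescale: replace $X_\U$ by $c X_\U$ where $c = (\|X_\U^{-1}\|/\|X_\U\|)^{1/2}$. This preserves the conjugation identity and makes $\|cX_\U\| = \|(cX_\U)^{-1}\| = (\|X_\U\|\|X_\U^{-1}\|)^{1/2} \le r^{1/2}$. Setting $\theta = (cX_\U)\sigma_\U(\cdot)(cX_\U)^{-1}$ gives a completely bounded homomorphism on $C^*(\M)$ whose conjugation by $(cX_\U)^{-1}$ recovers $\sigma_\U$, which is completely contractive; hence $\theta$ has Paulsen's similarity property with constant $r^{1/2}$ and dilates $\lim_\U(\phi_\lambda)_{\lambda \in \Lambda}$, witnessing membership in $\Pau_r(\M, \H_\U)$.

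The main obstacle I expect is bookkeeping rather than conceptual: the step of showing that the orthogonal projection onto the ultraproduct subspace is the ultraproduct of the pointwise orthogonal projections. Once that is in place, verifying that $X_\U$ is invertible with $X_\U^{-1}$ equal to the ultraproduct of $(X_\lambda^{-1})$, and that $\sigma_\U$ is multiplicative, is immediate from Subsection~\ref{SS:ultra}, and the rescaling at the end is routine.
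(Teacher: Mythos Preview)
Your proof is correct and follows essentially the same route as the paper's: both take ultraproducts of the dilating homomorphisms and similarities, verify that the ultraproduct similarity is invertible with the right norm bounds (followed by the same renormalization), and check that compression to $\H_\U$ recovers $\lim_\U(\phi_\lambda)$. The only cosmetic difference is that you invoke Theorem~\ref{T:Frdilation} to work with $*$-homomorphisms $\sigma_\lambda$ whereas the paper works directly from the definition with the homomorphisms $\theta_\lambda$; since these are conjugate by $X_\lambda$, the two arguments are interchangeable.
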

\begin{proof}
For each $\lambda\in \Lambda$, there is a Hilbert space $\K_\lambda$ containing $\H_\lambda$ along with a completely bounded homomorphism $\theta_\lambda: C^*(\M)\to B(\K_\lambda)$ that has Paulsen's similarity property with constant $r_\lambda^{1/2}$ and such that 
\[
\phi_\lambda(a)=P_{\H_\lambda}\theta_\lambda(a)|_{\H_\lambda}, \quad a\in \M.
\]
Thus, for each $\lambda\in \Lambda$ there is an invertible operator $X_\lambda\in B(\K_\lambda)$ with
\[
\|X_\lambda\|=\|X_\lambda^{-1}\|\leq r_\lambda^{1/2}
\]
and such that the homomorphism $\Xi_\lambda:C^*(\M)\to B(\K_\lambda)$ defined as
\[
\Xi_\lambda(a)= X_\lambda\theta_\lambda(a)X_\lambda^{-1},\quad a\in C^*(\M)
\]
is completely contractive.

Let $X\in B(\K_\U)$ be defined as $\lim_\U(X_\lambda)_{\lambda\in \Lambda}$. Then, $X$ is invertible with $\|X\|\leq r^{1/2}$ and $\|X^{-1}\|\leq r^{1/2}$. Upon renormalizing, we may assume that $\|X\|=\|X^{-1}\|\leq r^{1/2}$. Let $\theta:C^*(\M)\to B(\K_\U)$ be the homomorphism defined as $\lim_\U (\theta_\lambda)_{\lambda\in \Lambda}$. Note  that
\[
X\theta(a)X^{-1}=\lim_\U (X_\lambda \theta_\lambda(a) X_\lambda^{-1})_{\lambda\in \Lambda}, \quad a\in C^*(\M)
\]
so that $\theta$ has Paulsen's similarity property with constant $r^{1/2}$. Finally, we observe that $\H_\U\subset \K_\U$, and if $\xi=(\xi_\lambda)_{\lambda\in \Lambda}, \eta=(\eta_\lambda)_{\lambda\in \Lambda}$ are two elements of $\prod_{\lambda\in \Lambda} \H_\lambda$ then we have
\begin{align*}
\langle  \theta(a)[\xi],[\eta]\rangle_{\K_\U}&=\lim_\U (\langle \theta_\lambda(a)\xi_\lambda,\eta_\lambda\rangle_{\K_\lambda})_{\lambda\in \Lambda}\\
&=\lim_\U (\langle \phi_\lambda(a)\xi_\lambda,\eta_\lambda\rangle_{\H_\lambda})_{\lambda\in \Lambda}\\
&=\langle (\lim_\U (\phi_\lambda)_{\lambda\in \Lambda})(a)[\xi],[\eta]\rangle_{\H_\U}.
\end{align*}
We conclude that $\lim_\U (\phi_\lambda)_{\lambda\in \Lambda}$ lies in $\Pau_r(\M,\H_\U)$.
\end{proof}

We can now describe a certain continuity property of the class $\Pau_r(\M)$ with respect to $r$.

\begin{theorem}\label{T:Frinter}
Let $\M$ be an operator space, let $r\geq 1$ and let $\H$ be a Hilbert space. Then,
\[
\Pau_r(\M,\H)=\cap_{\eps>0}\Pau_{r+\eps}(\M,\H).
\]
\end{theorem}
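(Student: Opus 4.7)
The inclusion $\Pau_r(\M,\H)\subset \cap_{\eps>0}\Pau_{r+\eps}(\M,\H)$ is immediate from the definition: any completely bounded homomorphism on $C^*(\M)$ with Paulsen's similarity property with constant $r^{1/2}$ also has it with constant $(r+\eps)^{1/2}$ for every $\eps>0$. The content of the theorem lies in the reverse inclusion, and the plan is to deduce it from the ultraproduct machinery encapsulated in Lemma \ref{L:Frultra}.

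Fix $\phi\in \cap_{\eps>0}\Pau_{r+\eps}(\M,\H)$. For each $n\in \bN$, take $\Lambda=\bN$ with any cofinal (free) ultrafilter $\U$, and set $\H_n=\H$, $\phi_n=\phi$, $r_n=r+1/n$. Then $\phi_n\in \Pau_{r_n}(\M,\H_n)$ by hypothesis and $(r_n)_n$ is bounded. Lemma \ref{L:Frultra} therefore produces a map
\[
\td\phi=\lim_\U (\phi_n)_{n\in \bN}\in \Pau_s(\M,\H_\U),
\]
where $s=\lim_\U r_n=r$ and $\H_\U$ denotes the ultraproduct Hilbert space of $(\H_n)_n$ along $\U$.

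The next step is to relate $\td\phi$ back to $\phi$ using the canonical diagonal isometry $V:\H\to \H_\U$ defined by $Vh=[(h)_{n\in \bN}]$. A direct computation with the inner product on $\H_\U$ shows that for every $a\in \M$ and $h_1,h_2\in \H$,
\[
\langle \td\phi(a)Vh_1,Vh_2\rangle_{\H_\U}=\lim_\U \langle \phi(a)h_1,h_2\rangle_\H=\langle \phi(a)h_1,h_2\rangle_\H,
\]
so that $\phi(a)=V^*\td\phi(a)V$ for every $a\in \M$.

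Since $\td\phi\in \Pau_r(\M,\H_\U)$, there is a Hilbert space $\K'$ containing $\H_\U$ and a completely bounded homomorphism $\td\theta:C^*(\M)\to B(\K')$ with Paulsen's similarity property with constant $r^{1/2}$ such that $\td\phi(a)=P_{\H_\U}\td\theta(a)|_{\H_\U}$ for every $a\in \M$. Composing with the isometry $V$ yields
\[
\phi(a)=V^*P_{\H_\U}\td\theta(a)P_{\H_\U}V=(P_{\H_\U}V)^*\td\theta(a)(P_{\H_\U}V), \quad a\in \M.
\]
Identifying $\H$ with its isometric image $V\H\subset \K'$, we obtain a Hilbert space $\K_\phi\supset \H$ and a completely bounded homomorphism on $C^*(\M)\to B(\K_\phi)$ with Paulsen's similarity property with constant $r^{1/2}$ dilating $\phi$, which shows $\phi\in \Pau_r(\M,\H)$. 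The only subtlety in the argument is the bookkeeping needed to pass from the ultraproduct dilation (a priori living over $\H_\U$) to a dilation living over the original Hilbert space $\H$, and this is resolved by the formal identification of $\H$ with $V\H$.
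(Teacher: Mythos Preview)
Your proof is correct and follows essentially the same route as the paper: both use Lemma \ref{L:Frultra} applied to the constant net $(\phi)_n$ with parameters $r_n=r+1/n$, then compress the resulting ultraproduct map back to $\H$ via the diagonal isometry $V$. The only cosmetic difference is that the paper invokes Lemma \ref{L:V} for the final step of passing from $\phi(a)=V^*\td\phi(a)V$ to a genuine dilation over $\H$, whereas you perform that identification directly.
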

\begin{proof}

It follows from the definition that $\Pau_r(\M,\H)\subset \cap_{\eps>0}\Pau_{r+\eps}(\M,\H)$. Conversely, let $\phi\in\cap_{\eps>0}\Pau_{r+\eps}(\M,\H)$. 
Thus, $\phi\in\cap_n \Pau_{r+1/n}(\M,\H)$.
Let $\U$ be a cofinal ultrafilter on $\bN$. By Lemma \ref{L:Frultra} we see that $\lim_\U (\phi)_{n\in \bN}\in \Pau_r(\M,\H_\U)$. Finally, let $V:\H\to \H_\U$ be the isometry defined as
\[
V\xi=\lim_\U (\xi)_{n\in \bN}, \quad \xi \in \H.
\]
A standard verification yields
\[
\phi(a)=V^*\left(\lim_\U (\phi)_{n\in \bN}\right)(a) V, \quad a\in \M
\]
so an application of Lemma \ref{L:V} shows that indeed $\phi\in \Pau_r(\M,\H)$.
\end{proof}

\section{Extremals in the class $\Pau_r(\M)$}\label{S:Frext}

We emphasize once more that we always assume that operator spaces are concretely represented on some Hilbert space.

Let $\M$ be an operator space and let $r\geq 1$.  In this section we restrict the dilation order defined on the class $\CB_r(\M)$ in Section \ref{S:CBext} to the subclass $\Pau_r(\M)$.  We recall the notation and terminology here for convenience. Let $\phi\in \Pau_r(\M,\H_\phi)$ and $\psi\in \Pau_r(\M,\H_\psi)$. We write  $\phi \prec \psi$ if $\H_\phi\subset \H_\psi$ and  
\[
\phi(a)=P_{\H_\phi}\psi(a)|_{\H_\phi}, \quad a\in \M.
\]
Clearly, this is a dilation order on $\Pau_r(\M)$.
We say that an element $\omega \in \Pau_r(\M,\H_\omega)$ is $\Pau_r(\M)$-\emph{extremal} if whenever $\delta\in 
\Pau_r(\M,\H_\delta)$ satisfies $\omega\prec \delta$, we necessarily have that $\H_\omega$  is reducing for $\delta(\M)$. It is an easy consequence of Lemma \ref{L:dilationsim} that $\Pau_r(\M)$-extremal elements are preserved by unitary equivalence. This will be used throughout, often without mention. We will also require the following simple observation.

\begin{lemma}\label{L:Frextdirectsum}
Let $\M$ be an operator space and let $r\geq 1$. Let $\omega\in \Pau_r(\M,\H_\omega)$ be a $\Pau_r(\M)$-extremal element and let $\X\subset \H_\omega$ be a reducing subspace for $\omega(\M)$. If we define $\omega':\M\to B(\X)$ as
\[
\omega'(a)=\omega(a)|_\X, \quad a\in \M
\]
then $\omega'\in\Pau_r(\M,\X)$ and it is a $\Pau_r(\M)$-extremal element.
\end{lemma}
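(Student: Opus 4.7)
The plan is to establish both assertions by pulling back information from $\omega$ using the reducing decomposition $\H_\omega=\X\oplus\X^{\perp}$, where $\X^{\perp}$ denotes the orthogonal complement of $\X$ inside $\H_\omega$.

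For the membership statement, fix a dilation space $\K_\omega\supset\H_\omega$ and a completely bounded homomorphism $\theta:C^{*}(\M)\to B(\K_\omega)$ with Paulsen's similarity property with constant $r^{1/2}$ that compresses to $\omega$ on $\H_\omega$. Because $\X\subset\H_\omega$ is reducing for $\omega(\M)$, for any $\xi\in\X$ we have $\omega(a)\xi\in\X$, and hence
\[
P_{\X}\theta(a)\xi=P_{\X}P_{\H_\omega}\theta(a)\xi=P_{\X}\omega(a)\xi=\omega(a)\xi=\omega'(a)\xi,
\]
so the same $\K_\omega$ and $\theta$ witness $\omega'\in\Pau_r(\M,\X)$. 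The same argument applied to $\X^{\perp}$ yields $\omega'':=\omega(\cdot)|_{\X^{\perp}}\in\Pau_r(\M,\X^{\perp})$, and this map will be the auxiliary piece in the second part.

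For extremality, suppose $\delta\in\Pau_r(\M,\H_\delta)$ satisfies $\omega'\prec\delta$, so that $\X\subset\H_\delta$ and $\omega'(a)=P_{\X}\delta(a)|_{\X}$. I would then form the direct sum $\delta':=\delta\oplus\omega''$ acting on the Hilbert space $\H_{\delta'}:=\H_\delta\oplus\X^{\perp}$. The key preliminary observation is that the direct sum of two elements of $\Pau_r$ is again in $\Pau_r$: this follows from direct-summing the underlying dilating homomorphisms and applying Corollary \ref{C:paulsendirectsum} to see that the resulting homomorphism still has Paulsen's similarity property with constant $r^{1/2}$. Via the isometric embedding $\H_\omega=\X\oplus\X^{\perp}\hookrightarrow\H_\delta\oplus\X^{\perp}=\H_{\delta'}$ coming from $\X\subset\H_\delta$, a direct computation using $P_{\X}\delta(\cdot)|_{\X}=\omega'$ and the block form of $\omega$ shows that $\omega\prec\delta'$.

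Since $\omega$ is $\Pau_r(\M)$-extremal, this forces $\H_\omega$ to be reducing for $\delta'(\M)$. Finally, I would read off the consequence on $\delta$: for $\xi\in\X\subset\H_\omega$ we have $\delta'(a)\xi=\delta(a)\xi\oplus 0$, and the requirement that this lies in $\H_\omega=\X\oplus\X^{\perp}$ forces $\delta(a)\xi\in\X$; the same argument applied to $\delta'(a)^{*}$ gives $\delta(a)^{*}\xi\in\X$. Hence $\X$ is reducing for $\delta(\M)$, as required. The only conceptual step that needs care is verifying that $\Pau_r$ is closed under direct sums (which reduces to Corollary \ref{C:paulsendirectsum}); everything else is routine bookkeeping with the reducing decomposition.
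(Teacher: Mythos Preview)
Your proposal is correct and follows essentially the same approach as the paper: form the direct sum of the given dilation $\delta$ with the complementary piece $\omega|_{\X^\perp}$, invoke Corollary~\ref{C:paulsendirectsum} to see this direct sum lies in $\Pau_r$, apply extremality of $\omega$, and read off that $\X$ is reducing for $\delta(\M)$. You simply spell out in more detail the steps the paper declares ``straightforward'' or ``readily verified'' (notably the membership $\omega'\in\Pau_r(\M,\X)$ and the closure of $\Pau_r$ under direct sums), and your $\delta,\delta'$ are swapped relative to the paper's naming.
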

\begin{proof}
It is straightforward to see that $\omega'\in\Pau_r(\M,\X)$. Let $\delta'\in \Pau_r(\M,\H_\delta)$ such that $\omega'\prec \delta'$ and define $\delta:\M\to \H_\delta\oplus (\H_\omega\ominus \X)$ as
\[
\delta(a)=\delta'(a)\oplus \omega(a)|_{(\H_\omega\ominus \X)}, \quad a\in \M.
\]
Upon invoking Corollary \ref{C:paulsendirectsum}, it is readily verified that $\delta\in \Pau_r(\M, \H_\delta\oplus (\H_\omega\ominus \X))$. Moreover, we note that $\omega\prec \delta$. Hence, $\H_\omega$ is reducing for $\delta(\M)$, which implies in particular that $\X$ is reducing for $\delta'(\M)$.
\end{proof}

Our immediate goal is to establish the existence of $\Pau_r(\M)$-extremal elements. For that purpose, we first show that $\Pau_r(\M)$ has the limit property with respect to the dilation order $\prec$. This is the first instance where our working with this smaller class of linear maps creates difficulties that are not present in the standard setting of \cite{dritschel2005}, \cite{davidson2015} and \cite{FHL2016}. Indeed, on top of the usual inductive limit procedure of Lemma \ref{L:chaincc}, we have to use the ultraproduct machinery from Subsection \ref{SS:ultra} to keep track of the multiplicative dilations.

\begin{lemma}\label{L:Frlimit}
Let $\M$ be an operator space and let $r\geq 1$. Then, the class $\Pau_r(\M)$ has the limit property with respect to the dilation order $\prec$.
\end{lemma}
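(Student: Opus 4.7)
The plan is to strengthen the proof of Lemma \ref{L:chaincc} by running an ultraproduct construction on the multiplicative dilations in parallel with the directed-union construction on the $\phi_\lambda$'s. The hard part is that the dilations $\theta_\lambda$ realizing membership in $\Pau_r(\M)$ live on ambient Hilbert spaces $\K_\lambda$ that are not compatibly nested, so the straightforward increasing-union argument cannot be applied directly to the $\theta_\lambda$. The ultraproduct technology of Subsection \ref{SS:ultra}, already used in Lemma \ref{L:Frultra}, will paper over this incompatibility.

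First, for each $\lambda \in \Lambda$ fix a Hilbert space $\K_\lambda \supset \H_\lambda$, a completely bounded homomorphism $\theta_\lambda : C^*(\M) \to B(\K_\lambda)$ with Paulsen's similarity property with constant $r^{1/2}$, and an invertible $X_\lambda \in B(\K_\lambda)$ implementing this property, such that $\phi_\lambda(a) = P_{\H_\lambda} \theta_\lambda(a)|_{\H_\lambda}$. Fix a cofinal ultrafilter $\U$ on $\Lambda$, form the ultraproduct Hilbert space $\K_\U$, and define the ultraproduct homomorphism $\theta_\U := \lim_\U \theta_\lambda : C^*(\M) \to B(\K_\U)$. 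As in the proof of Lemma \ref{L:Frultra}, $X := \lim_\U X_\lambda$ is invertible with $X^{-1} = \lim_\U X_\lambda^{-1}$, $\|X\| = \|X^{-1}\| \leq r^{1/2}$, and $X \theta_\U(\cdot) X^{-1}$ is completely contractive, so $\theta_\U$ itself has Paulsen's similarity property with constant $r^{1/2}$.

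Next I would embed $\H := \ol{\cup_\lambda \H_\lambda}$ into $\K_\U$. For $\mu \in \Lambda$ and $\xi \in \H_\mu$, set $J \xi := [(\xi_\lambda)_\lambda]$, where $\xi_\lambda = \xi$ when $\lambda \geq \mu$ (meaningful because $\H_\mu \subset \H_\lambda$) and $\xi_\lambda = 0$ otherwise. Cofinality of $\U$ yields $\|J\xi\|_{\K_\U} = \|\xi\|$ and ensures the definition is unambiguous on overlaps $\H_\mu \cap \H_{\mu'}$, since the two candidate representatives agree on the cofinal tail $\{\lambda : \lambda \geq \max(\mu, \mu')\}$. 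Extension by density produces an isometry $J : \H \to \K_\U$, and one defines $\psi : \M \to B(\H)$ by $\psi(a) := J^* \theta_\U(a) J$. For $\xi, \eta \in \H_\mu$, the computation
\[
\langle \theta_\U(a) J\xi, J\eta \rangle = \lim_\U \langle \theta_\lambda(a) \xi, \eta \rangle = \lim_\U \langle \phi_\lambda(a) \xi, \eta \rangle = \langle \phi_\mu(a) \xi, \eta \rangle
\]
(using on the tail $\lambda \geq \mu$ that $\phi_\lambda = P_{\H_\lambda} \theta_\lambda|_{\H_\lambda}$ together with $\phi_\mu \prec \phi_\lambda$) shows that $\phi_\mu(a) = P_{\H_\mu} \psi(a)|_{\H_\mu}$, i.e.\ $\phi_\mu \prec \psi$ for every $\mu \in \Lambda$.

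It remains to check that $\psi$ lies in $\Pau_r(\M, \H)$ and not merely in $\CB_r(\M, \H)$. Applying Lemma \ref{L:V} with $V_1 = V_2 = J$ to the identity $\psi(a) = J^* \theta_\U(a) J$ yields a Hilbert space $\K_\psi \supset \H$ and a unitary $R : \K_\psi \to \K_\U \oplus \K_\U$ such that $\psi(a) = P_\H R^* (\theta_\U(a) \oplus \theta_\U(a)) R |_\H$. The map $a \mapsto R^*(\theta_\U(a) \oplus \theta_\U(a))R$ is a unitary conjugate of $\theta_\U \oplus \theta_\U$, hence is a completely bounded homomorphism, and it has Paulsen's similarity property with constant $r^{1/2}$ by Corollary \ref{C:paulsendirectsum} combined with the trivial fact that unitary conjugation preserves this property. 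Therefore $\psi \in \Pau_r(\M, \H)$ provides the required common upper bound for the chain.
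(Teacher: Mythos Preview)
Your proof is correct and follows essentially the same route as the paper's. The only organizational difference is that the paper first builds $\psi$ via the increasing-union construction of Lemma~\ref{L:chaincc} and then verifies $\psi\in\Pau_r(\M,\H)$ by embedding $\H$ into the ultraproduct $\H_\U$ of the $\H_\lambda$'s, invoking Lemma~\ref{L:Frultra} to see $\lim_\U\phi_\lambda\in\Pau_r(\M,\H_\U)$, and then compressing via Lemma~\ref{L:V}; you instead inline the content of Lemma~\ref{L:Frultra}, working directly with the ultraproduct of the larger dilation spaces $\K_\lambda$ and defining $\psi$ from the outset as $J^*\theta_\U J$.
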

\begin{proof}

Let $\Lambda$ be a totally ordered set. For each $\lambda\in \Lambda$, let $\phi_\lambda\in \Pau_r(\M,\H_\lambda).$ Assume that $\H_\lambda\subset \H_\mu$ and
\[
\phi_\lambda(a)=P_{\H_\lambda} \phi_\mu(a)|_{\H_\lambda}, \quad a\in \M
\]
whenever $\mu\geq \lambda$. Set $\H=\ol{\cup_\lambda \H_\lambda}$. We need to find an element $\psi\in \Pau_r(\M,\H)$ such that $\phi_\lambda\prec \psi$ for every $\lambda\in \Lambda$. Let $\psi:\M\to B(\H)$ be the map constructed from the collection $(\phi_\lambda)_{\lambda\in\Lambda}$ as in the proof of Lemma \ref{L:chaincc}. It is clear that $\phi_\lambda\prec \psi$ for every $\lambda\in \Lambda$. We need only verify that $\psi\in \Pau_r(\M,\H)$.

Let $\U$ be a cofinal ultrafilter on $\Lambda$ and let $\H_\U$ be the ultraproduct Hilbert space of $(\H_\lambda)_{\lambda\in \Lambda}$ along $\U$.   For each $\mu\in \Lambda$ we define an isometry $V_\mu:\H_\mu\to \H_\U$ as follows. If $\xi\in \H_\mu$ then $V_\mu \xi=[(\xi_\lambda)_{\lambda\in \Lambda}]$, where 
\[
\xi_{\lambda}=
\begin{cases}
\xi & \text{ if }  \lambda\geq \mu\\
0 & \text{ if }  \lambda<\mu.
\end{cases}
\]
Using that $\U$ is cofinal, it is easily verified that there exists another isometry $V:\H\to \H_\U$ such that $V\xi=V_\mu \xi$ if $\xi\in \H_\mu$. Now, let $\phi:\M\to B(\H_\U)$ be defined as $\phi=\lim_\U (\phi_{\lambda})_{\lambda\in \Lambda}$. We show that
\[
\psi(a)=V^*\phi(a)V, \quad a\in \M.
\]
Indeed, assume that $\xi,\eta\in \H_\mu$. Then, using again the fact that $\U$ is cofinal we find
\begin{align*}
\langle V^*\phi(a)V \xi,\eta\rangle_{\H}&=\langle \phi(a)V \xi,V\eta\rangle_{\H_\U}=\lim_\U \left( \langle \phi_\lambda(a)\xi,\eta\rangle_{\H_\lambda}\right)_{\lambda\in \Lambda}\\
&=\lim_\U \left( \langle \phi_\mu(a)\xi,\eta\rangle_{\H_\mu}\right)_{\lambda\in \Lambda}=\langle \phi_\mu(a)\xi,\eta\rangle_{\H_\mu}=\langle \psi(a)\xi,\eta\rangle_{\H}.
\end{align*}
Since $\H=\ol{\cup_\mu \H_\mu}$ we conclude that 
\[
\psi(a)=V^*\phi(a)V, \quad a\in \M
\]
as claimed. By Lemma \ref{L:Frultra} we see that $\phi\in \Pau_r(\M,\H_\U)$, so that an application of Lemma \ref{L:V} shows that $\psi\in \Pau_r(\M,\H)$.
\end{proof}

The following is now a straightforward consequence.

\begin{theorem}\label{T:Frext}
Let $\M$ be an operator space, let $r\geq 1$ and let $\phi\in \Pau_r(\M,\H_\phi)$. Then, there exists a $\Pau_r(\M)$-extremal element $\omega\in \Pau_r(\M,\H_\omega)$ such that $\phi\prec \omega$ and such that
\[
\dim \H_\omega\leq   (1+\aleph_0 \dim \M) \dim \H_\phi
\]
\end{theorem}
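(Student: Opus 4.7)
I would derive the theorem from the abstract machinery of Section~\ref{S:extmachine}. Since $\Pau_r(\M)$ has the limit property with respect to $\prec$ by Lemma~\ref{L:Frlimit}, Theorem~\ref{T:extconstruction} applied to $\phi$ produces a maximal $\omega \in \Pau_r(\M,\H_\omega)$ with $\phi \prec \omega$. To upgrade maximality to $\Pau_r(\M)$-extremality I would rerun the norm-expansion argument from the proof of Theorem~\ref{T:extcc}: for any $\delta \in \Pau_r(\M,\H_\delta)$ with $\omega \prec \delta$, maximality gives $\|\delta(a)\xi\| = \|\omega(a)\xi\|$ for all $a\in\M$, $\xi\in\H_\omega$, and combined with $\omega(a)\xi = P_{\H_\omega}\delta(a)\xi$ this yields
\begin{align*}
\|(\delta(a) - \omega(a))\xi\|^2 = \|\delta(a)\xi\|^2 - \|\omega(a)\xi\|^2 = 0;
\end{align*}
the same reasoning applied to $\omega(a)^*$ then shows $\H_\omega$ is reducing for $\delta(\M)$.

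The new ingredient is the cardinality bound, and my central observation is that each dilation step inside Lemma~\ref{L:maxlocal} can be made ``minimal''. Suppose $\psi \in \Pau_r(\M,\H_\psi)$ and $\psi'' \in \Pau_r(\M,\H_{\psi''})$ with $\psi \prec \psi''$, and let $\theta'' : C^*(\M) \to B(\K'')$ be the multiplicative dilation witnessing $\psi'' \in \Pau_r(\M)$, so that $\H_{\psi''} \subset \K''$ and $\theta''$ has Paulsen's similarity property with constant $r^{1/2}$. For a fixed pair $(a, \xi) \in \M \times \H_\psi$ I would replace $\psi''$ with the compression $P_{\H'} \theta''|_{\H'}$ to $\H' = \H_\psi + \spn\{\theta''(a)\xi,\, \theta''(a)^* \xi\}$. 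Since $\H_\psi \subset \H' \subset \K''$, this compression still dilates $\psi$, still lies in $\Pau_r(\M,\H')$ via the same multiplicative dilation $\theta''$, and attains exactly the same norms $\|\theta''(a)\xi\|$ and $\|\theta''(a)^*\xi\|$ that $\psi''$ does; crucially, it enlarges $\dim \H_\psi$ by at most $2$. Iterating Lemma~\ref{L:maxlocal} with this refinement adds only $\aleph_0$ dimensions per local maximality step.

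To finish the cardinality estimate I would enumerate a $\bQ[i]$-linear norm-dense subset $D_\M \subset \M$ of cardinality $\aleph_0 \dim \M$ together with a Hamel basis $D_\phi \subset \H_\phi$ of cardinality $\dim \H_\phi$, and run the transfinite recursion of Theorem~\ref{T:extconstruction} only over $D_\M \times D_\phi$. Each stage adds at most $\aleph_0$ dimensions, so the total added is at most $\aleph_0 \dim \M \cdot \dim \H_\phi$, giving $\dim \H_\omega \le (1 + \aleph_0 \dim \M)\dim \H_\phi$ on top of the initial $\dim \H_\phi$. Finally, a norm-continuity argument — available because all maps are uniformly completely bounded by $r$ and $D_\M$ is norm-dense — propagates the maximality condition from $D_\M \times D_\phi$ to the whole of $\M \times \H_\phi$, ensuring that $\omega$ is genuinely maximal and hence $\Pau_r(\M)$-extremal. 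The main technical obstacle I anticipate is making the compression trick compatible with the limit-ordinal steps of the recursion, which I expect to handle using the inductive-limit / ultraproduct construction already in the proof of Lemma~\ref{L:Frlimit}.
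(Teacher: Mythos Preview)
Your argument that a maximal element of $\Pau_r(\M)$ is $\Pau_r(\M)$-extremal is correct and is exactly what the paper does. The divergence is in the cardinality bound, and there your proposal has a genuine gap.

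You only establish maximality for pairs $(a,\xi)\in D_\M\times D_\phi$, and your continuity argument upgrades this to $\M\times\H_\phi$. But maximality of $\omega$ (and hence extremality) requires the condition $\|\omega(a)\xi\|=\|\delta(a)\xi\|$ for every $\xi\in\H_\omega$, not merely for $\xi\in\H_\phi$. This is precisely why the proof of Theorem~\ref{T:extconstruction} has an \emph{outer} countable iteration $\theta_1,\theta_2,\ldots$: after each round the ambient space has grown, and one must rerun the recursion over $\M\times\X_n$ to catch the new vectors. Your proposal never addresses this, and the single pass over $D_\M\times D_\phi$ cannot produce a map that is maximal on the full final space. (The issue you flagged --- compatibility of the compression trick with limit ordinals --- is not the real obstruction; the limit-property construction in Lemma~\ref{L:Frlimit} already controls dimension adequately at limit stages.) Your strategy can be repaired by running the outer iteration and observing that the cardinal $(1+\aleph_0\dim\M)\dim\H_\phi$ is idempotent under the map $\kappa\mapsto(1+\aleph_0\dim\M)\kappa$, but this is considerably more bookkeeping than you have written.

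The paper sidesteps all of this with a much cleaner idea: run Theorem~\ref{T:extconstruction} with no dimension control whatsoever to obtain some $\Pau_r(\M)$-extremal $\zeta$ on a possibly enormous space $\K$, and only \emph{afterwards} cut down. Let $\H_\omega=\overline{\H_\phi+C^*(\zeta(\M))\H_\phi}$ be the smallest $\zeta(\M)$-reducing subspace containing $\H_\phi$, and set $\omega=\zeta|_{\H_\omega}$. Lemma~\ref{L:Frextdirectsum} says restrictions of extremals to reducing subspaces remain extremal, and a Hamel-basis count on $C^*(\zeta(\M))$ gives $\dim\H_\omega\le(1+\aleph_0\dim\M)\dim\H_\phi$ directly. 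This decouples the existence argument from the dimension estimate and avoids tracking cardinals through a transfinite recursion.
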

\begin{proof}
The class $\Pau_r(\M)$ has the limit property with respect to the dilation order $\prec$ by Lemma \ref{L:Frlimit}. By Theorem \ref{T:extconstruction}, we see that there is a maximal element $\zeta\in \Pau_r(\M,\K)$ such that $\phi\prec \zeta$. We claim that $\zeta$ is $\Pau_r(\M)$-extremal. Indeed, let $\delta\in \Pau_r(\M,\H_\delta)$ such that $\zeta\prec \delta$. We calculate for every $a\in \M$ and $\xi\in \K$ that
\begin{align*}
\|(\delta(a)-\zeta(a))\xi\|^2&=\| \delta(a)\xi\|^2+\|\zeta(a)\xi \|^2-2\re \langle \delta(a)\xi,\zeta(a)\xi\rangle\\
&=\|\delta(a)\xi \|^2-\| \zeta(a)\xi\|^2=0
\end{align*}
so that $\delta(a)\xi=\zeta(a)\xi$. Likewise,
\begin{align*}
\|(\delta(a)^*-\zeta(a)^*)\xi\|^2&=\| \delta(a)^*\xi\|^2+\|\zeta^*(a)\xi \|^2-2\re \langle \delta(a)^*\xi,\zeta(a)^*\xi\rangle\\
&=\|\delta(a)^*\xi \|^2-\|\zeta(a)^*\xi\|^2=0
\end{align*}
so that $\delta(a)^*\xi=\zeta(a)^*\xi$.
Hence, $\delta(\M)\K\subset \K$ and $\delta(\M)^*\K\subset \K$, so indeed $\zeta$ is $\Pau_r(\M)$-extremal. 

Let $\H_\omega=\ol{\H_\phi+C^*(\zeta(\M))\H_\phi}$. This is the smallest reducing subspace for $\zeta(\M)$ that contains $\H_\phi$. By choosing a Hamel basis for $\zeta(\M)$, we can find a dense subset of $C^*(\zeta(\M))$ with cardinality at most $\aleph_0 \dim \zeta(\M)$. Hence, there is a total subset of $\H_\omega$ with cardinality at most
\[
\dim \H_\phi+\aleph_0 \dim \zeta(\M) \dim \H_\phi.
\]
By applying the Gram-Schmidt algorithm, we find
\begin{align*}
\dim \H_\omega &\leq   (1+\aleph_0 \dim \zeta(\M)) \dim \H_\phi\leq  (1+\aleph_0 \dim\M) \dim \H_\phi.
\end{align*}
We define $\omega:\M\to B(\H)$ as
\[
\omega(a)=\zeta(a)|_{\H_\omega}, \quad a\in\M.
\]
It is clear that $\phi \prec \omega$, and it follows from Lemma \ref{L:Frextdirectsum} that $\omega\in \Pau_r(\M,\H_\omega)$ is $\Pau_r(\M)$-extremal.
\end{proof}

For our purposes, it will be relevant to know if the collection of $\Pau_r(\M)$-extremals ``completely norms" the operator space $\M$, in the following precise sense.

\begin{corollary}\label{C:Frextnorm}
Let $\M$ be an operator space and let $r\geq 1$. Then, there is a $\Pau_r(\M)$-extremal element $\omega\in\Pau_r(\M,\H_\omega)$  such that for every $n\in \bN$ and every $a\in M_n(\M)$ we have $\|a\|\leq \|\omega^{(n)}(a)\|$. Moreover, we have that
\[
\dim \H_\omega\leq(\dim \M)^2 (1+ \aleph_0\dim \M).
\]
\end{corollary}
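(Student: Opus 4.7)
The plan is to apply Theorem~\ref{T:Frext} to a carefully chosen starting map $\phi$ that is completely isometric on $\M$ and lives on a Hilbert space of controlled dimension. Since $\phi\prec\omega$ forces $\phi^{(n)}(a)$ to be the compression of $\omega^{(n)}(a)$ to $\H_\phi^{(n)}$, and hence $\|\phi^{(n)}(a)\|\leq\|\omega^{(n)}(a)\|$, any extremal dilation produced by Theorem~\ref{T:Frext} will automatically inherit the complete norming property from $\phi$.

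To construct such a $\phi$, fix the ambient representation $\M\subset B(\H)$. For each $n\in\bN$ choose a norm-dense subset $D_n\subset M_n(\M)$ of cardinality at most $\aleph_0(1+\dim\M)$, and for each $a\in D_n$ and each $k\in\bN$ pick a unit vector $\xi_{a,k}\in\H^{(n)}$ with $\|a\xi_{a,k}\|\geq(1-1/k)\|a\|$. Let $S_0\subset\H$ consist of all coordinate components (in $\H$) of all the $\xi_{a,k}$, so that $|S_0|\leq\aleph_0(1+\dim\M)$, and set
\[
\H_1=\ol{\spn(C^*(\M)S_0)},
\]
the smallest closed subspace containing $S_0$ that reduces $C^*(\M)$. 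A dense subset of $C^*(\M)$ of cardinality at most $\aleph_0(1+\dim\M)$ is obtained from polynomials with rational-complex coefficients in a dense subset of $\M\cup\M^*$, and a straightforward cardinality count then gives $\dim\H_1\leq\aleph_0(1+\dim\M)^2$. The map $\phi\colon\M\to B(\H_1)$ defined by $\phi(a)=a|_{\H_1}$ extends to the $*$-representation $a\mapsto a|_{\H_1}$ of $C^*(\M)$, so $\phi\in\Pau_1(\M,\H_1)\subset\Pau_r(\M,\H_1)$ by Corollary~\ref{C:F1}. For every $n\in\bN$, $a\in D_n$ and $k\in\bN$, the vector $\xi_{a,k}$ lies in $\H_1^{(n)}$ by construction, hence
\[
\|\phi^{(n)}(a)\|\geq\|\phi^{(n)}(a)\xi_{a,k}\|=\|a\xi_{a,k}\|\geq(1-1/k)\|a\|.
\]
Letting $k\to\infty$ and invoking both the density of $D_n$ and the continuity of $\phi^{(n)}$ yields $\|\phi^{(n)}(a)\|\geq\|a\|$ for every $a\in M_n(\M)$.

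With this $\phi$ in hand, Theorem~\ref{T:Frext} produces a $\Pau_r(\M)$-extremal $\omega\in\Pau_r(\M,\H_\omega)$ with $\phi\prec\omega$ and $\dim\H_\omega\leq(1+\aleph_0\dim\M)\dim\H_1$. Plugging in the bound on $\dim\H_1$ gives
\[
\dim\H_\omega\leq(1+\aleph_0\dim\M)\,\aleph_0(1+\dim\M)^2,
\]
which a routine cardinal-arithmetic check (splitting into the cases $\dim\M<\aleph_0$ and $\dim\M\geq\aleph_0$) shows is bounded by $(\dim\M)^2(1+\aleph_0\dim\M)$. The complete norming estimate $\|a\|\leq\|\omega^{(n)}(a)\|$ then follows immediately from the chain $\|a\|\leq\|\phi^{(n)}(a)\|\leq\|\omega^{(n)}(a)\|$. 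The only mildly delicate aspect of the argument is the cardinal bookkeeping used to control $\dim\H_1$ and, ultimately, $\dim\H_\omega$; the structural content resides entirely in Theorem~\ref{T:Frext} and Corollary~\ref{C:F1}.
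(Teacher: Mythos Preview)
Your argument is correct and follows essentially the same route as the paper: build a completely isometric $\phi\in\Pau_r(\M)$ on a small reducing subspace and then invoke Theorem~\ref{T:Frext}. The only structural difference is that the paper norms $C^*(\M)$ at matrix level~$1$ and then uses that an injective $*$-homomorphism is automatically completely isometric, whereas you norm $M_n(\M)$ at every level directly; both work, and the paper's version spares you the extra bookkeeping with the coordinate components of the $\xi_{a,k}$.

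One small slip: your formula $\H_1=\ol{\spn(C^*(\M)S_0)}$ need not contain $S_0$ when $C^*(\M)$ acts degenerately, so the sentence ``$\xi_{a,k}$ lies in $\H_1^{(n)}$ by construction'' is not literally justified. Either take $\H_1=\ol{\spn(S_0\cup C^*(\M)S_0)}$ as the paper does, or observe that $a\xi_{a,k}=aQ^{(n)}\xi_{a,k}$ (where $Q=P_{\H_1}$) since both $\H_1$ and $\H_1^\perp$ are $\M$-invariant and $a\xi_{a,k}\in\H_1^{(n)}$, and then test $\phi^{(n)}(a)$ on $Q^{(n)}\xi_{a,k}$ instead. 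Neither fix affects the cardinality estimates.
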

\begin{proof}
There is an isometric $*$-homomorphism $\pi:C^*(\M)\to B(\H)$. Using a Hamel basis, we may find a dense subset $\S\subset C^*(\M)$ with 
\[
\card \S\leq \aleph_0 \dim \M.
\]
For each $a\in \S$ and $m\in \bN$, choose a unit vector $\xi_{a,m}\in \H$ such that
\[
\|\pi(a)\xi_{a,m}\|\geq \|a\|(1-1/m).
\]
Let $\H'$ denote the smallest closed subspace containing the sets
\[
\{\xi_{a,m}:a\in \S,m\in \bN\}
\]
and 
\[
 \{\pi( C^*(\M))\xi_{a,m}:a\in \S, m\in \bN\}.
\]
Then, $\H'$ is reducing for $\pi(C^*(\M))$. Let $\pi':C^*(\M)\to B(\H')$ be the $*$-homomorphism defined as
\[
\pi'(a)=\pi(a)|_{\H'}, \quad a\in C^*(\M).
\]
By construction, we see that $\pi'$ is isometric, and hence completely isometric. Now, $\H'$ contains a total subset of cardinality at most
\[
\aleph_0 \card \S+\aleph_0 (\card \S)^2.
\]
By applying the Gram-Schmidt algorithm, we find
\[
\dim \H'\leq \aleph_0 \card \S+\aleph_0 (\card \S)^2\leq \aleph_0 (\dim \M)^2.
\]
Let $\phi:\M\to B(\H')$ be defined as
\[
\phi(a)=\pi'(a), \quad a\in \M.
\] 
Clearly, we have that $\phi\in \Pau_r(\M,\H')$. By Theorem \ref{T:Frext}, there is a $\Pau_r(\M)$-extremal element $\omega\in \Pau_r(\M,\H_\omega)$ such that $\phi \prec \omega$ and 
\[
\dim \H_\omega \leq (1+ \aleph_0\dim \M) \dim \H'\leq  (\dim \M)^2 (1+ \aleph_0\dim \M) .
\]
Since $\prec$ is a dilation order, we have 
\[
\|\omega^{(n)}(a)\|\geq \|\phi^{(n)}(a)\|= \|a\|
\]
for every $n\in \bN$ and every $a\in M_n(\M)$.
\end{proof}

Let $\M$ be an operator space, let $\A_\M$ denote the operator algebra it generates, and let $r\geq 1$.
 An element $\phi\in \Pau_r(\M,\H_\phi)$ is said to have the \emph{unique extension property relative to} $\Pau_r(\M)$ if there exits a unique element $\Phi\in \Pau_r(\A_\M,\H_\phi)$ that agrees with $\phi$ on $\M$, and if this unique map $\Phi$ is a homomorphism that has Paulsen's similarity property with constant $r^{1/2}$. It is clear that  $\phi\in \Pau_r(\M,\H_\phi)$ has the unique extension property relative to $\Pau_r(\M)$ if and only if the following two statements hold:
\begin{enumerate}
\item[\rm{(a)}] there is $\Phi\in \Pau_r(\A_\M,\H_\phi)$ that agrees with $\phi$ on $\M$, and

\item[\rm{(b)}] every $\Psi\in \Pau_r(\A_\M,\H_\phi)$ that agrees with $\phi$ on $\M$ is a homomorphism that has Paulsen's similarity property with constant $r^{1/2}$.
\end{enumerate}

We now arrive at an important property of $\Pau_r(\M)$-extremals.

\begin{theorem}\label{T:Frextmult}
Let $\M$ be an operator space, let $r\geq 1$ and let $\omega\in \Pau_r(\M,\H_\omega)$ be a $\Pau_r(\M)$-extremal element. Then, $\omega$ has the unique extension property relative to $\Pau_r(\M)$.
\end{theorem}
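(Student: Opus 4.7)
The plan is to combine the dilation machinery of Theorem~\ref{T:Frdilation} with the extremality hypothesis to pin down the extension, and then use Theorem~\ref{T:paulsenchar} to control its Paulsen constant. For existence of an extension, I would apply Theorem~\ref{T:Frdilation} to $\omega$ to obtain a Hilbert space $\K\supseteq \H_\omega$, a $*$-homomorphism $\sigma\colon C^*(\M)\to B(\K)$ and an invertible $X\in B(\K)$ with $\|X\|=\|X^{-1}\|\leq r^{1/2}$ such that $\omega(a)=P_{\H_\omega}\theta(a)|_{\H_\omega}$ for $a\in \M$, where $\theta=X\sigma(\cdot)X^{-1}$ is a completely bounded homomorphism on $C^*(\M)$ with Paulsen's similarity property with constant $r^{1/2}$. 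Then $\theta|_\M\in \Pau_r(\M,\K)$ with $\omega\prec\theta|_\M$, so extremality of $\omega$ forces $\H_\omega$ to be reducing for $\theta(\M)$, and therefore invariant for the closed algebra $\theta(\A_\M)$ by continuity of $\theta$. The compression $\Phi(a):=\theta(a)|_{\H_\omega}$ for $a\in \A_\M$ is then a homomorphism extending $\omega$, and $\theta$ (acting on $C^*(\A_\M)=C^*(\M)$) serves as a witnessing dilation showing $\Phi\in \Pau_r(\A_\M,\H_\omega)$.

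Next, I would establish property~(b) by repeating this dilation-plus-extremality argument for an arbitrary extension. Let $\Psi\in \Pau_r(\A_\M,\H_\omega)$ agree with $\omega$ on $\M$. Applying Theorem~\ref{T:Frdilation} to $\Psi$ yields a completely bounded homomorphism $\theta'$ on $C^*(\M)$ with Paulsen's similarity property with constant $r^{1/2}$ dilating $\Psi$. Its restriction $\theta'|_\M$ lies in $\Pau_r(\M,\K')$ with $\omega\prec\theta'|_\M$, so extremality of $\omega$ again makes $\H_\omega$ reducing for $\theta'(\M)$, hence invariant for $\theta'(\A_\M)$. This forces $\Psi(a)=\theta'(a)|_{\H_\omega}$ for every $a\in \A_\M$, so $\Psi$ is a homomorphism. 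Uniqueness of the extension then follows automatically, since any two homomorphisms on $\A_\M$ that agree with $\omega$ on $\M$ must coincide by multiplicativity and continuity.

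The main obstacle is to verify that $\Psi$ itself satisfies Paulsen's similarity property with constant $r^{1/2}$, since the naive compression of $X'$ to $\H_\omega$ is not even defined ($\H_\omega$ need not be $X'$-invariant). To bypass this, I would appeal to Theorem~\ref{T:paulsenchar}, which reduces the goal to showing $\|\Upsilon(\Psi)\|_{cb}\leq r$. That same theorem applied to $\theta'$ gives $\|\Upsilon(\theta')\|_{cb}\leq r$. Since $\H_\omega$ is invariant for $\theta'(\A_\M)$ and for the identity operator, it is invariant for $\Upsilon(\theta')(\Upsilon(\A_\M))$, and the map $b\mapsto \Upsilon(\theta')(b)|_{\H_\omega}$ is a unital homomorphism on $\Upsilon(\A_\M)$ that agrees with $\Psi$ on $\upsilon(\A_\M)$. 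By the uniqueness property of the unitization, this map must equal $\Upsilon(\Psi)$, yielding $\|\Upsilon(\Psi)\|_{cb}\leq \|\Upsilon(\theta')\|_{cb}\leq r$, and Theorem~\ref{T:paulsenchar} completes the proof.
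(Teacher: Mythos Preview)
Your proposal is correct and follows essentially the same route as the paper's proof. The only differences are cosmetic: you invoke Theorem~\ref{T:Frdilation} to produce the dilating homomorphism whereas the paper appeals directly to the definition of $\Pau_r$, and for the Paulsen constant of $\Psi$ you unfold the argument via Theorem~\ref{T:paulsenchar} explicitly, while the paper packages that step as an application of Corollary~\ref{C:paulsendirectsum} (using that extremality actually gives $\H_\omega$ reducing, not merely invariant, for the dilation restricted to $\A_\M$).
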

\begin{proof}
By assumption, there is a Hilbert space $\K$ containing $\H_\omega$ along with a homomorphism $\theta:C^*(\M)\to B(\K)$ that has Paulsen's similarity property with constant $r^{1/2}$ and such that
\[
\omega(a)=P_{\H_\omega} \theta(a)|_{\H_\omega},\quad a\in \M.
\]
The map $\Omega:\A_\M\to B(\H_\omega)$ defined as
\[
\Omega(a)=P_{\H_\omega} \theta(a)|_{\H_\omega},\quad a\in \A_\M
\]
clearly lies in $\Pau_r(\A_\M,\H_\omega)$ and agrees with $\omega$ on $\M$. It remains to prove that any such extension is a homomorphism that has Paulsen's similarity property with constant $r^{1/2}$. 

For that purpose, let $\Psi\in \Pau_r(\A_\M,\H_\omega)$  that agrees with $\omega$ on $\M$.
Then, there is a Hilbert space $\K_\omega$ containing $\H_\omega$ along with a homomorphism $\rho:C^*(\M)\to B(\K_\omega)$ that has Paulsen's similarity property with constant $r^{1/2}$ and such that
\[
\Psi(a)=P_{\H_\omega} \rho(a)|_{\H_\omega},\quad a\in \A_\M.
\]
Let $\delta$ denote the restriction of $\rho$ to $\M$, so that $\delta\in \Pau_r(\M,\K_\omega)$ and $\omega\prec \delta.$ Since $\omega$ is assumed to be $\Pau_r(\M)$-extremal, we conclude that $\H_\omega$ is reducing for $\delta(\M)$, and hence for $\rho(\A_\M)$. Thus, we have
\[
\Psi(a)= \rho(a)|_{\H_\omega},\quad a\in \A_\M
\]
which shows that $\Psi$ is multiplicative. Invoking Corollary \ref{C:paulsendirectsum} we see that $\Psi$ has Paulsen's similarity property with constant $r^{1/2}$. 
\end{proof}

Although we will not need this fact, we remark that the argument used in the previous proof can be used to show that the unique multiplicative extension of $\omega$ to $\A_\M$ must be $\Pau_r(\A_\M)$-extremal. We leave the simple details to the reader.

We emphasize here that Theorem \ref{T:Frextmult} justifies our considering the subclass $\Pau_r(\M)$ rather than the full class $\CB_r(\M)$, since in this context \emph{every} extremal extends to be multiplicative on $\A_\M$.  This stands in contrast with the situation for general completely bounded linear maps, as illustrated by Theorem \ref{T:extmultcc}.

Next, we examine another unique extension property. An element $\phi\in\CP_1(\M,\H_\phi)$ is said to have the \emph{unique extension property relative to} $\CP_1(\M)$ if there exists a unique element $\Phi\in \CP_1(C^*(\M),\H_\phi)$ that agrees with $\phi$ on $\M$, and if moreover $\Phi$ is a $*$-homomorphism. By definition, an element $\phi\in \CP_1(\M,\H_\phi)$ has at least one contractive completely positive extension to $C^*(\M)$. Hence, $\phi\in \CP_1(\M,\H_\phi)$ has the unique extension property relative to $\CP_1(\M)$ if and only if every $\Psi\in \CP_1(C^*(\M),\H_\phi)$ that agrees with $\phi$ on $\M$ is a $*$-homomorphism.

We can now refine Theorem \ref{T:Frextmult}.

\begin{theorem}\label{T:Frextsimuep}
Let $\M$ be an operator space and let $r\geq 1$. Let $\omega\in \Pau_r(\M,\H_\omega)$ be $\Pau_r(\M)$-extremal. Then, the following statements hold.

\begin{enumerate}

\item[\rm{(1)}] There is an invertible operator $X\in B(\H_\omega)$ with $\|X\|=\|X^{-1}\|\leq r^{1/2}$  such that the map $\omega_X:\M\to B(\H_\omega)$ defined as
\[
\omega_X(a)=X\omega(a)X^{-1}, \quad a\in \M
\]
belongs to  $\Pau_1(\M,\H_\omega)$.

\item[\rm{(2)}] Let $Y\in B(\H_\omega)$ be an invertible operator with $\|Y\|=\|Y^{-1}\|\leq r^{1/2}$ and let $\omega_Y:\M\to B(\H_\omega)$ be defined as
\[
\omega_Y(a)=Y\omega(a)Y^{-1}, \quad a\in \M.
\]
If the map $\omega_Y$ belongs to $\Pau_1(\M,\H_\omega)$, then it has the unique extension property relative to $\CP_1(\M)$.

\end{enumerate}
\end{theorem}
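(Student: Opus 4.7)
For part (1), my plan is to combine Theorem \ref{T:Frextmult} and Corollary \ref{C:hom}. Theorem \ref{T:Frextmult} extends $\omega$ uniquely to a homomorphism $\Omega \in \Pau_r(\A_\M, \H_\omega)$ that has Paulsen's similarity property with constant $r^{1/2}$. Applying Corollary \ref{C:hom} to $\Omega$ then produces the invertible $X \in B(\H_\omega)$ demanded by the statement, along with a Hilbert space $\K_\Omega \supset \H_\omega$ and a $*$-homomorphism $\sigma : C^*(\A_\M) \to B(\K_\Omega)$ satisfying $X\Omega(a)X^{-1} = P_{\H_\omega}\sigma(a)|_{\H_\omega}$ for $a \in \A_\M$. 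Since $C^*(\A_\M) = C^*(\M)$, restricting this identity to $\M$ exhibits $\omega_X$ as the compression of a $*$-homomorphism on $C^*(\M)$, hence as a contractive completely positive extension of $\omega_X$ to $C^*(\M)$. Corollary \ref{C:F1} then delivers $\omega_X \in \Pau_1(\M, \H_\omega)$.

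For part (2), fix any $\Psi \in \CP_1(C^*(\M), \H_\omega)$ extending $\omega_Y$; the goal is to show that $\Psi$ is automatically a $*$-homomorphism, which yields both existence of a contractive completely positive extension and its uniqueness (two $*$-homomorphisms on $C^*(\M)$ agreeing on the generating set $\M$ necessarily coincide). First, I Stinespring-dilate $\Psi$ via Lemma \ref{L:ccp} to obtain a Hilbert space $\K \supset \H_\omega$ and a $*$-homomorphism $\sigma : C^*(\M) \to B(\K)$ with $\Psi(a) = P_{\H_\omega}\sigma(a)|_{\H_\omega}$. Then, following Lemma \ref{L:dilationsim}, I set $Y' := Y \oplus I_{\K \ominus \H_\omega}$. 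This $Y'$ is invertible with $\|Y'\| = \|Y'^{-1}\| \leq r^{1/2}$, and crucially $Y'$, $Y'^*$ and their inverses all respect the splitting $\K = \H_\omega \oplus \H_\omega^\perp$, restricting to $Y$, $Y^*$, $Y^{-1}$, $Y^{-*}$ on $\H_\omega$. Define $\tau : C^*(\M) \to B(\K)$ by $\tau(a) := Y'^{-1}\sigma(a)Y'$; since $Y'\tau(\cdot)Y'^{-1} = \sigma$ is completely contractive, $\tau$ is a homomorphism with Paulsen's similarity property with constant $r^{1/2}$, and in particular $\tau|_\M \in \Pau_r(\M, \K)$. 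Using that $Y'^{-1}$ commutes with $P_{\H_\omega}$, a direct computation gives
\[
P_{\H_\omega}\tau(a)|_{\H_\omega} = Y^{-1}\Psi(a)Y = Y^{-1}\omega_Y(a)Y = \omega(a), \qquad a \in \M,
\]
so that $\omega \prec \tau|_\M$ in $\Pau_r(\M)$.

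The crux is now to transfer the reducing property from $\tau$ to $\sigma$. By the $\Pau_r(\M)$-extremality of $\omega$, the space $\H_\omega$ reduces $\tau(\M)$. From $\tau(a)\H_\omega \subset \H_\omega$ together with $Y'\H_\omega = \H_\omega$ and $\sigma(a) = Y'\tau(a)Y'^{-1}$, I extract $\sigma(a)\H_\omega \subset \H_\omega$ for every $a \in \M$; the parallel argument, starting from $\tau(a)^*\H_\omega \subset \H_\omega$ and using $Y'^*\H_\omega = \H_\omega$ along with $\sigma(a)^* = Y'^{-*}\tau(a)^*Y'^*$, delivers $\sigma(a)^*\H_\omega \subset \H_\omega$. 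Thus $\H_\omega$ is invariant under the $*$-algebra generated by $\sigma(\M)$, and by continuity under all of $\sigma(C^*(\M))$. Consequently $\Psi(a) = \sigma(a)|_{\H_\omega}$ is a $*$-homomorphism, and the unique extension property follows.

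The principal obstacle is managing the non-unitary similarity $Y'$: it is only the block-diagonal form $Y' = Y \oplus I$ supplied by Lemma \ref{L:dilationsim} that forces $Y'$, $Y'^*$, $Y'^{-1}$ and $Y'^{-*}$ to each map $\H_\omega$ bijectively onto itself, and this is indispensable for transporting the reducing property back from $\tau$ to $\sigma$. Because $Y'$ is not self-adjoint, the invariance of $\H_\omega$ under $\tau(\M)$ and under $\tau(\M)^*$ must be handled separately, rather than being packaged jointly as one would in the unitary or $*$-homomorphism setting.
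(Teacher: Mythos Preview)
Your proof is correct and follows essentially the same route as the paper's own argument: both parts invoke Theorem~\ref{T:Frextmult} and Corollary~\ref{C:hom} for (1), and for (2) both Stinespring-dilate an arbitrary $\CP_1$ extension via Lemma~\ref{L:ccp}, conjugate by the block-diagonal extension $Y' = Y \oplus I$ from Lemma~\ref{L:dilationsim}, apply $\Pau_r(\M)$-extremality of $\omega$, and then transport the reducing property back through $Y'$ using $Y'\H_\omega = \H_\omega = Y'^*\H_\omega$. Your explicit discussion of why the block-diagonal form of $Y'$ is essential, and why invariance under $\tau(\M)$ and $\tau(\M)^*$ must be handled separately, matches the paper's reasoning precisely.
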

\begin{proof}

(1) By Theorem \ref{T:Frextmult} there is a homomorphism $\Omega:\A_\M\to B(\H_\omega)$ that has Paulsen's similarity property with constant $r^{1/2}$ and that agrees with $\omega$ on $\M$. By Corollary \ref{C:hom}, there is an invertible operator $X\in B(\H_\omega)$ such that $\|X\|=\|X^{-1}\|\leq r^{1/2}$, a Hilbert space $\K_\omega$ containing $\H_\omega$ and a $*$-homomorphism $\pi:C^*(\M)\to B(\K_\omega)$ such that 
\[
\omega_X(a)=X\omega(a)X^{-1}=P_{\H_\omega}\pi(a)|_{\H_\omega}, \quad a\in \M.
\]
In particular, we have that $\omega_X\in \CP_1(\M,\H_\omega)$ and thus $\omega_X\in \Pau_1(\M,\H_\omega)$ by Corollary \ref{C:F1}.

(2) We see that $\omega_Y\in \CP_1(\M,\H_\omega)$ by Corollary \ref{C:F1}.  Let $\Psi:C^*(\M)\to B(\H_\omega)$ be a contractive completely positive map  that agrees with $\omega_Y$ on $\M$. We need to show that $\Psi$ is a $*$-homomorphism. By Lemma \ref{L:ccp}, there is a Hilbert space $\K$ containing $\H_\omega$ and a $*$-homomorphism $\pi:C^*(\M)\to B(\K)$ such that
\[
\Psi(a)=P_{\H_\omega}\pi(a)|_{\H_\omega}, \quad a\in C^*(\M).
\]
In particular, we see that
\[
\omega_Y(a)=P_{\H_\omega}\pi(a)|_{\H_\omega}, \quad a\in \M.
\]
We can now invoke Lemma \ref{L:dilationsim} to find another invertible operator $Y'\in B(\K)$ with $\|Y'\|=\|Y'^{-1}\|\leq r^{1/2}$, such that the space $Y'\H_\omega=\H_\omega=Y'^*\H_\omega$ is reducing for $Y'$ and  such that
\[
\omega(a)=Y^{-1}\omega_Y(a)Y=P_{\H_\omega}\pi_Y(a)|_{\H_\omega}, \quad a\in \M
\]
where $\pi_Y:C^*(\M)\to B(\K)$ is defined as
\[
\pi_Y(a)=Y'^{-1}\pi(a)Y', \quad a\in C^*(\M).
\]
If we denote by $\delta$ the restriction of $\pi_Y$ to $\M$, then $\delta\in \Pau_r(\M,\K)$ and $\omega\prec \delta$. Since $\omega$ is assumed to be $\Pau_r(\M)$-extremal, we conclude that 
\[
Y'^{-1}\pi(\M)Y' \H_\omega\subset \H_\omega, \quad (Y'^{-1}\pi(\M)Y')^* \H_\omega\subset \H_\omega
\]
or 
\[
\pi(\M)Y' \H_\omega\subset Y'  \H_\omega, \quad \pi(\M)^*Y'^{*-1} \H_\omega\subset Y'^{*-1} \H_\omega.
\]
Since $\pi$ is a $*$-homomorphism and $Y'\H_\omega=\H_\omega=Y'^{*-1}\H_\omega$, this translates to
\[
\pi(\M)\H_\omega\subset \H_\omega, \quad \pi(\M^*)\H_\omega\subset \H_\omega
\]
and thus $\pi(C^*(\M))\H_\omega\subset \H_\omega$. But recall that
\[
\Psi(a)=P_{\H_\omega}\pi(a)|_{\H_\omega}, \quad a\in C^*(\M)
\]
so that $\Psi$ is a $*$-homomorphism.
\end{proof}

Given an operator space $\M$ and two linear maps 
\[
\phi\in \CP_1(\M,\H_\phi),\quad \psi\in \CP_1(M,\H_\psi)
\]
we use our standard notation $\phi\prec\psi$ to mean that $\H_\phi\subset \H_\psi$ and
\[
\phi(a)=P_{\H_\phi}\psi(a)|_{\H_\phi},\quad a\in \M.
\]
A linear map $\omega\in \CP_1(\M,\H_\omega)$ is said to be $\CP_1(\M)$\emph{-extremal} if whenever $\delta\in \CP_1(\M,\H_\delta)$ satisfies $\omega \prec \delta$, we have that $\H_\omega$ is reducing for $\delta(\M)$. Interestingly, $\CP_1(\M)$-extremality and the unique extension property relative to $\CP_1(\M)$ are equivalent. This is reminiscent of the classical setting of unital completely positive maps \cite{dritschel2005}, and the proof is very similar. We provide it below for completeness.

\begin{lemma}\label{L:CP1uepext}
Let $\M$ be an operator space and let $\omega\in \CP_1(\M,\H_\omega)$. Then, $\omega$ is $\CP_1(\M)$-extremal if and only if it has the unique extension property relative to $\CP_1(\M)$.
\end{lemma}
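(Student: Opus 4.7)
The plan is to prove both implications using the minimal Stinespring-type dilation afforded by Lemma \ref{L:ccp} and a short Schwarz-inequality computation.

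First I would handle the harder direction, that $\CP_1(\M)$-extremality implies the unique extension property relative to $\CP_1(\M)$. Let $\Psi : C^*(\M) \to B(\H_\omega)$ be any contractive completely positive extension of $\omega$. Using Lemma \ref{L:ccp}, I dilate $\Psi$ to a $*$-homomorphism $\pi : C^*(\M) \to B(\K)$ with $\H_\omega \subset \K$ and $\Psi(b) = P_{\H_\omega}\pi(b)|_{\H_\omega}$ for $b \in C^*(\M)$. Restricting $\pi$ to $\M$ produces an element $\pi|_\M \in \CP_1(\M,\K)$ with $\omega \prec \pi|_\M$. By the assumed extremality, $\H_\omega$ is reducing for $\pi(\M)$, and since invariance under $\pi(\M)$ together with invariance under $\pi(\M)^* = \pi(\M^*)$ (the latter following from reducing) implies invariance under $\pi(C^*(\M))$, we conclude that $\H_\omega$ is reducing for $\pi(C^*(\M))$. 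Hence $\Psi(b) = \pi(b)|_{\H_\omega}$ for every $b \in C^*(\M)$, so $\Psi$ is a $*$-homomorphism.

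It remains to establish uniqueness. If $\Psi_1, \Psi_2$ are two such extensions, both are $*$-homo\-morph\-isms by the previous paragraph, and so is their convex combination $\Psi = \tfrac{1}{2}(\Psi_1 + \Psi_2)$, which again extends $\omega$. Expanding the identity $\Psi(a^*a) = \Psi(a)^*\Psi(a)$ against $\Psi_i(a^*a) = \Psi_i(a)^*\Psi_i(a)$ gives $(\Psi_1(a) - \Psi_2(a))^*(\Psi_1(a) - \Psi_2(a)) = 0$ for every $a \in C^*(\M)$, so $\Psi_1 = \Psi_2$.

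For the reverse implication, assume $\omega$ has the unique extension property relative to $\CP_1(\M)$, and denote by $\Phi : C^*(\M) \to B(\H_\omega)$ its unique contractive completely positive extension, which is a $*$-homomorphism. Let $\delta \in \CP_1(\M, \H_\delta)$ satisfy $\omega \prec \delta$, and pick any contractive completely positive extension $\widetilde\delta : C^*(\M) \to B(\H_\delta)$ of $\delta$. The compression $b \mapsto P_{\H_\omega}\widetilde\delta(b)|_{\H_\omega}$ is a contractive completely positive extension of $\omega$, so by uniqueness it coincides with $\Phi$. Now applying the Stinespring/Schwarz-type calculation
\[
P_{\H_\omega}\widetilde\delta(b)^* \widetilde\delta(b)|_{\H_\omega} = \Phi(b^*b) = \Phi(b)^*\Phi(b) = P_{\H_\omega}\widetilde\delta(b)^* P_{\H_\omega} \widetilde\delta(b)|_{\H_\omega}
\]
for $b \in C^*(\M)$ yields $(I - P_{\H_\omega})\widetilde\delta(b)|_{\H_\omega} = 0$, i.e.\ $\widetilde\delta(b)\H_\omega \subset \H_\omega$ for every $b \in C^*(\M)$. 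Applied to $a \in \M$ and to $a^* \in \M^*$ (using $\widetilde\delta(a^*) = \widetilde\delta(a)^*$ because $\widetilde\delta$ is completely positive), this shows that $\H_\omega$ is reducing for $\widetilde\delta(\M) = \delta(\M)|_{\H_\omega}$-style restrictions, precisely the defining condition for $\omega$ to be $\CP_1(\M)$-extremal.

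The only mild subtlety I anticipate is checking that $\widetilde\delta(a^*) = \widetilde\delta(a)^*$ for $a \in \M$ and that $\widetilde\delta(\M) = \delta(\M)$ on $\H_\delta$, but both are immediate from $\widetilde\delta$ being a $*$-linear extension of $\delta$. Everything else is a routine compression argument combined with the Stinespring-type dilation of Lemma \ref{L:ccp}.
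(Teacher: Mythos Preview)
Your proof is correct and follows essentially the same route as the paper: dilate via Lemma~\ref{L:ccp} and use a Schwarz-inequality comparison to force $\H_\omega$ to be invariant. One small correction: the first equality in your displayed chain should be the inequality $P_{\H_\omega}\widetilde\delta(b)^*\widetilde\delta(b)|_{\H_\omega}\leq \Phi(b^*b)$ coming from Schwarz; equality then follows once you combine this with the trivial bound $\Phi(b)^*\Phi(b)=P_{\H_\omega}\widetilde\delta(b)^*P_{\H_\omega}\widetilde\delta(b)|_{\H_\omega}\leq P_{\H_\omega}\widetilde\delta(b)^*\widetilde\delta(b)|_{\H_\omega}$. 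Also, your explicit uniqueness argument in the first direction is valid but unnecessary, since two $*$-homomorphisms agreeing on $\M$ automatically agree on $C^*(\M)$---this is precisely the reformulation of the unique extension property the paper records just before stating the lemma.
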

\begin{proof}
Throughout the proof we assume that $\M\subset B(\H_\M)$.

Assume first that $\omega$  is $\CP_1(\M)$-extremal. Let $\Psi:C^*(\M)\to B(\H_\omega)$ be a contractive completely positive map that agrees with $\omega$ on $\M$. We must show that $\Psi$ is a $*$-homomorphism. By Lemma \ref{L:ccp}, there is a Hilbert space $\K_\omega$ containing $\H_\omega$ and a $*$-homomorphism $\pi:C^*(\M)\to B(\K_\omega)$ such that
\[
\Psi(a)=P_{\H_\omega}\pi(a)|_{\H_\omega}, \quad a\in C^*(\M).
\]
In particular, we have
\[
\omega(a)=P_{\H_\omega}\pi(a)|_{\H_\omega}, \quad a\in \M.
\]
By assumption, we see that $\H_\omega$ is reducing for $\pi(\M)$, and hence for $\pi(C^*(\M))$. This immediately implies that $\Psi$ is a $*$-homomorphism.

Conversely, assume that $\omega$ has the unique extension property relative to $\CP_1(\M)$. Let $\delta\in \CP_1(\M,\H_\delta)$ be such that $\omega\prec \delta$. There is a contractive completely positive map $\Delta:C^*(\M)\to B(\H_\delta)$ that agrees with $\delta$ on $\M$. By Lemma \ref{L:ccp}, there is a Hilbert space $\K_\delta$ containing $\H_\delta$ and a $*$-homomorphism $\pi:C^*(\M)\to B(\K_\delta)$ such that
\[
\Delta(a)=P_{\H_\delta}\pi(a)|_{\H_\delta}, \quad a\in C^*(\M).
\]
If we let $\Psi:C^*(\M)\to B(\H_\omega)$ be defined as
\[
\Psi(a)=P_{\H_\omega}\Delta(a)|_{\H_\omega}, \quad a\in C^*(\M)
\]
then we see that $\Psi$ is contractive and completely positive, and it agrees with $\omega$ on $\M$. 
By assumption, we see that $\Psi$ is a $*$-homomorphism. Let now $a\in C^*(\M)$. Applying the Schwarz inequality for completely positive maps \cite[Exercise 3.4]{paulsen2002} to a contractive completely positive extension of $\Delta$ to $B(\H_\M)$, we obtain 
\[
\Delta(a)^*\Delta(a)\leq \Delta(a^*a)
\]
whence
\[
P_{\H_\omega}\Delta(a)^*\Delta(a)P_{\H_\omega}\leq P_{\H_\omega}\Delta(a^*a)P_{\H_\omega}
\]
and thus
\begin{align*}
\Psi(a)^*\Psi(a)+P_{\H_\omega}\pi(a)^*P_{\H_\omega^\perp} \pi(a)P_{\H_\omega}&\leq \Psi(a^*a).
\end{align*}
But $\Psi$ is a $*$-homomorphism so that
\[
P_{\H_\omega}\pi(a)^*P_{\H_\omega^\perp} \pi(a)P_{\H_\omega}\leq 0
\]
or $P_{\H_\omega^\perp} \pi(a)|_{\H_\omega}=0$. Since $a\in C^*(\M)$ is arbitrary, we conclude that $\pi(C^*(\M))\H_\omega\subset \H_\omega$ and in particular $\delta(\M)\H_\omega\subset \H_\omega$ and $\delta(\M)^*\H_\omega\subset \H_\omega$, so that $\H_\omega$ is reducing for $\delta(\M)$. Hence, $\omega$ is $\CP_1(\M)$-extremal.
\end{proof}

We saw in Theorem \ref{T:Frextmult} that $\Pau_r(\M)$-extremals have the unique extension property relative to $\Pau_r(\M)$. We will see in Example \ref{E:extsim1} below that the converse is false. Hence, the previous lemma does not extend to the class $\Pau_r(\M)$ for $r>1$. 

The following is an easy consequence of Corollary \ref{C:F1}.

\begin{corollary}\label{C:Frextsimbdry}

Let $\M$ be an operator space and let $r\geq 1$. Let $\omega\in \Pau_r(\M,\H_\omega)$ be $\Pau_r(\M)$-extremal.  Let $Y\in B(\H_\omega)$ be an invertible operator  such that $\|Y\|=\|Y^{-1}\|\leq r^{1/2}$ and let $\omega_Y:\M\to B(\H_\omega)$ be defined as
\[
\omega_Y(a)=Y\omega(a)Y^{-1}, \quad a\in \M.
\]
If the map $\omega_Y$ belongs to $\Pau_1(\M,\H_\omega)$, then it is $\Pau_1(\M)$-extremal.
\end{corollary}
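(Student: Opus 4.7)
The plan is to deduce this corollary almost directly from three earlier results: Corollary \ref{C:F1}, Theorem \ref{T:Frextsimuep}(2), and Lemma \ref{L:CP1uepext}. The essential content is already in place; we just need to line the pieces up correctly.

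First I would observe that the two classes of maps coincide at $r=1$: by Corollary \ref{C:F1}, for every Hilbert space $\H$ we have $\Pau_1(\M,\H)=\CP_1(\M,\H)$. Since the dilation order on $\Pau_1(\M)$ and on $\CP_1(\M)$ is defined by the same compression formula $\phi(a)=P_{\H_\phi}\psi(a)|_{\H_\phi}$, the notions of $\Pau_1(\M)$-extremality and $\CP_1(\M)$-extremality are identical. Hence it suffices to show that $\omega_Y$ is $\CP_1(\M)$-extremal.

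Next, I would apply Theorem \ref{T:Frextsimuep}(2) directly to the situation: $\omega$ is $\Pau_r(\M)$-extremal, $Y$ satisfies $\|Y\|=\|Y^{-1}\|\leq r^{1/2}$, and $\omega_Y$ is assumed to lie in $\Pau_1(\M,\H_\omega)$. That theorem gives us that $\omega_Y$ has the unique extension property relative to $\CP_1(\M)$, meaning that the unique contractive completely positive extension of $\omega_Y$ to $C^*(\M)$ exists and is a $*$-homomorphism.

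Finally, I would invoke Lemma \ref{L:CP1uepext}, which asserts that a map in $\CP_1(\M,\H_\omega)$ is $\CP_1(\M)$-extremal if and only if it has the unique extension property relative to $\CP_1(\M)$. Combined with the first step, this yields that $\omega_Y$ is $\Pau_1(\M)$-extremal, completing the proof. There is no real obstacle here: the only point requiring care is the bookkeeping identification between $\Pau_1$ and $\CP_1$, which is immediate from Corollary \ref{C:F1} and the fact that the dilation orders on the two classes coincide.
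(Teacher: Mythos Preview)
Your proposal is correct and follows exactly the same approach as the paper: the paper's proof simply says to combine Corollary \ref{C:F1}, Theorem \ref{T:Frextsimuep} and Lemma \ref{L:CP1uepext}, which is precisely what you do. Your added remark that the dilation orders on $\Pau_1(\M)$ and $\CP_1(\M)$ coincide (so that the two notions of extremality agree) is a helpful bit of bookkeeping that the paper leaves implicit.
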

\begin{proof}
Simply combine Corollary \ref{C:F1}, Theorem \ref{T:Frextsimuep} and Lemma \ref{L:CP1uepext}.
\end{proof}

Theorem \ref{T:Frextsimuep} along with Lemma \ref{L:CP1uepext} shows that any $\Pau_r(\M)$-extremal is similar to a $\CP_1(\M)$-extremal, and it is natural to wonder whether the converse holds. Our next task is to show that this is not the case. First, we need the following result which is of independent interest. It shows that if $\omega$ is $\Pau_r(\M)$-extremal, then $\|\omega\|_{cb}$ cannot be too small, and $\omega$ does not belong to $\Pau_s(\M)$ for any $s<r$. This fact is not obvious from the definition of extremality.

\begin{theorem}\label{T:Frextbig}
Let $\M$ be an operator space, let $r\geq 1$ and let $\omega\in \Pau_r(\M,\H_\omega)$. Assume that $\omega$ is a non-zero $\Pau_r(\M)$-extremal element. Then, the following statements hold.
\begin{enumerate}
\item[\rm{(1)}] The map $\omega$ does not belong to $\Pau_s(\M,\H_\omega)$ if $1\leq s<r$. In particular, the unique homomorphism $\Omega\in \Pau_r(\A_\M,\H_\omega)$ that agrees with $\omega$ on $\M$ satisfies $\|\Upsilon(\Omega)\|_{cb}=r$.

\item[\rm{(2)}] The map $\omega$ satisfies
\[
\|\omega\|_{cb}\geq \frac{r}{16}-\frac{5}{4}.
\]

\end{enumerate}

\end{theorem}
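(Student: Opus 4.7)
The plan is to prove (1) by exhibiting an explicit $\Pau_r$-dilation of $\omega$ that fails to reduce $\H_\omega$, then derive the ``in particular'' equality as a consequence of (1), and finally obtain (2) by chaining Paulsen's two dilation theorems.

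\textbf{Setup for (1).} I argue by contradiction: assume $\omega\neq 0$ is $\Pau_r(\M)$-extremal and $\omega\in\Pau_s(\M,\H_\omega)$ for some $1\leq s<r$. Theorem \ref{T:Frdilation} produces a $*$-homomorphism $\sigma:C^*(\M)\to B(\K_\omega)$ on some $\K_\omega\supset\H_\omega$ together with an invertible $X\in B(\K_\omega)$ satisfying $\|X\|=\|X^{-1}\|\leq s^{1/2}$ and $\omega(a)=P_{\H_\omega}\theta(a)|_{\H_\omega}$, where $\theta(a):=X\sigma(a)X^{-1}$. Since $\theta|_\M\in\Pau_s(\M,\K_\omega)\subset\Pau_r(\M,\K_\omega)$ and $\omega\prec\theta|_\M$, the $\Pau_r$-extremality of $\omega$ forces $\H_\omega$ to be reducing for $\theta(\M)$; the resulting intertwining identity $P_{\H_\omega}\theta(a)=\omega(a)P_{\H_\omega}$ for every $a\in\M$ is the technical leverage provided by extremality.

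\textbf{The perturbation.} Enlarge the ambient space by one dimension to $\K_\omega\oplus\bC$, form the $*$-homomorphism $\tilde\sigma:=\sigma\oplus 0$ on $C^*(\M)$, and for a vector parameter $T\in\K_\omega$ introduce the invertible operator $\tilde Y_T:=\bigl(\begin{smallmatrix}X & T\\ 0 & 1\end{smallmatrix}\bigr)$ with inverse $\bigl(\begin{smallmatrix}X^{-1} & -X^{-1}T\\ 0 & 1\end{smallmatrix}\bigr)$. A direct block calculation gives
\[
\delta_T(a):=\tilde Y_T\tilde\sigma(a)\tilde Y_T^{-1}=\begin{pmatrix}\theta(a) & -\theta(a)T\\ 0 & 0\end{pmatrix}.
\]
After normalizing $\tilde Y_T$ by a positive scalar, its norm and the norm of its inverse coincide and equal $\sqrt{\|\tilde Y_T\|\,\|\tilde Y_T^{-1}\|}$, a quantity that tends to $\sqrt{\|X\|\,\|X^{-1}\|}\leq s^{1/2}$ as $T\to 0$. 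Hence for $\|T\|$ sufficiently small $\delta_T$ has Paulsen's similarity property with constant strictly less than $r^{1/2}$, so $\delta_T|_\M\in\Pau_r(\M,\K_\omega\oplus\bC)$. Embedding $\H_\omega\hookrightarrow\K_\omega\oplus\bC$ as the first summand via $\xi\mapsto(\xi,0)$, the vanishing lower-left block of $\delta_T$ together with the intertwining identity yields $\omega\prec\delta_T|_\M$.

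\textbf{Reaching the contradiction and the ``in particular'' clause.} A quick adjoint computation shows that $\H_\omega\oplus 0$ is reducing for $\delta_T(\M)$ exactly when $P_{\H_\omega}\theta(a)T=0$ for every $a\in\M$, which by the intertwining identity simplifies to $\omega(a)P_{\H_\omega}T=0$ for every $a\in\M$. Since $\omega\neq 0$, I select $a_0\in\M$ and $\xi_0\in\H_\omega$ with $\omega(a_0)\xi_0\neq 0$ and set $T:=\varepsilon\xi_0\in\H_\omega\subset\K_\omega$ for $\varepsilon>0$ small enough to keep $\delta_T|_\M\in\Pau_r$; then $\omega(a_0)P_{\H_\omega}T=\varepsilon\omega(a_0)\xi_0\neq 0$, so $\H_\omega\oplus 0$ is not reducing for $\delta_T(\M)$, contradicting the $\Pau_r$-extremality of $\omega$ and proving (1). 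The equality $\|\Upsilon(\Omega)\|_{cb}=r$ then follows immediately: the inequality $\|\Upsilon(\Omega)\|_{cb}\leq r$ is given by combining Theorems \ref{T:Frextmult} and \ref{T:paulsenchar}, and any strict $\|\Upsilon(\Omega)\|_{cb}=t<r$ would, via Theorem \ref{T:paulsenchar}, Corollary \ref{C:hom} and Lemma \ref{L:dilationsim} applied to $\Omega|_\M=\omega$, place $\omega$ in $\Pau_t(\M,\H_\omega)$, violating (1).

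\textbf{Plan for (2) and the main obstacle.} For (2) I chain the two dilation results: Theorem \ref{T:paulsencbdilation2} dilates $\omega$ to a cb-homomorphism $\theta$ on $C^*(\M)$ with $\|\theta\|_{cb}\leq(\|\omega\|_{cb}^{1/2}+(\|\omega\|_{cb}+1)^{1/2})^2\leq 4(\|\omega\|_{cb}+1)$, and Theorem \ref{T:paulsensim} endows $\theta$ with Paulsen's similarity property of constant at most $2(\|\theta\|_{cb}+1)^{1/2}$. Consequently $\omega\in\Pau_s(\M,\H_\omega)$ for some $s\leq 4(\|\theta\|_{cb}+1)\leq 16\|\omega\|_{cb}+20$. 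Part (1) forces $s\geq r$, so $r\leq 16\|\omega\|_{cb}+20$ and hence $\|\omega\|_{cb}\geq r/16-5/4$. The principal obstacle is the perturbation step of (1): the crucial observation is that taking the second summand of $\tilde\sigma$ to be the zero $*$-homomorphism collapses the reducing condition all the way down to the transparent linear constraint $\omega(a)P_{\H_\omega}T=0$, which is manifestly incompatible with $\omega\neq 0$.
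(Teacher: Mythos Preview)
Your argument is correct, and both parts take a genuinely different route from the paper's.

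For part (1), the paper works on $\K_\omega^{(4)}$ and builds a one-parameter family of homomorphisms
\[
\rho_\eps(a)=
\begin{pmatrix}
\theta(a) & 0 & \eps\theta(a) & 0\\
0 & \theta(a) & 0 & 0\\
0 & 0 & 0 & 0\\
0 & \eps\theta(a) & 0 & 0
\end{pmatrix},
\]
shows $\|\Upsilon(\rho_\eps)\|_{cb}\to\|\Upsilon(\theta\oplus\theta\oplus 0\oplus 0)\|_{cb}\leq s<r$, and concludes via Theorem~\ref{T:paulsenchar}. You instead perturb the \emph{similarity operator} rather than the homomorphism: you append a single scalar summand, take $\tilde\sigma=\sigma\oplus 0$, and conjugate by $\tilde Y_T=\bigl(\begin{smallmatrix}X&T\\0&1\end{smallmatrix}\bigr)$. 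This is more economical (a rank-one perturbation on $\K_\omega\oplus\bC$ versus four copies of $\K_\omega$), and the reducing condition collapses to the transparent linear constraint $\omega(a)P_{\H_\omega}T=0$. The price you pay is the preliminary step of invoking extremality once against $\theta|_\M$ itself to obtain the intertwining $P_{\H_\omega}\theta(a)=\omega(a)P_{\H_\omega}$; the paper's $4\times 4$ construction avoids this because the obstruction to reducibility there lives entirely in the ambient $\K_\omega$-coordinates and only needs $\omega\neq 0$.

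For part (2), the paper builds a concrete non-reducing $\Pau$-dilation $\delta(a)=\bigl(\begin{smallmatrix}\omega(a)&0\\\eps\omega(a)&0\end{smallmatrix}\bigr)$ and bounds its Paulsen constant via Theorems~\ref{T:paulsencbdilation2} and~\ref{T:paulsensim}, then lets $\eps\to 0$. You skip the auxiliary $\delta$ entirely: you apply the same chain of theorems directly to $\omega$, read off that $\omega\in\Pau_s$ for some $s\leq 16\|\omega\|_{cb}+20$, and then invoke part (1) to force $s\geq r$. This is cleaner and makes the logical dependence of (2) on (1) explicit, whereas the paper's proof of (2) is self-contained and re-derives the relevant extremality obstruction by hand.
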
 
\begin{proof}
(1) Let $1\leq s<r$ and suppose on the contrary that $\omega\in \Pau_s(\M, \H_\omega)$. Then, there is a Hilbert space $\K_\omega$ containing $\H_\omega$ along with a homomorphism $\theta:\A_\M\to B(\K_\omega)$ that has Paulsen's similarity property with constant $s^{1/2}$ and such that
\[
\omega(a)=P_{\H_\omega}\theta(a)|_{\H_\omega}, \quad a\in \M.
\]
Let $\eps>0$ and define a linear map $\rho_\eps:\A_\M\to B(\K_\omega^{(4)})$ as
\[
\rho_\eps(a)=
\begin{bmatrix}
\theta(a) & 0 & \eps\theta(a) & 0\\
0 & \theta(a) & 0 & 0\\
0 & 0 & 0 & 0\\
0 & \eps\theta(a) & 0 & 0
\end{bmatrix}, \quad a\in \A_\M.
\]
It is readily verified that $\rho_\eps$ is a homomorphism with $\|\rho_\eps\|_{cb}\leq (1+2\eps)\|\theta\|_{cb}$. 
In addition, if we let 
\[
\Theta: \A_\M\to B(\K_\omega^{(4)})
\]
be defined as
\[
\Theta(a)=\theta(a)\oplus \theta(a)\oplus 0\oplus 0, \quad a\in \A_\M
\]
then
\[
\lim_{\eps\to 0}\|\Upsilon(\rho_\eps)-\Upsilon(\Theta)\|_{cb}=0.
\]
Since $\theta$ has Paulsen's similarity property with constant $s^{1/2}$, we may use Corollary \ref{C:paulsendirectsum} along with Theorem \ref{T:paulsenchar} to conclude that $\|\Upsilon(\Theta)\|_{cb}\leq s < r$. Thus, there is $\eps_0>0$ small enough so that $\|\Upsilon(\rho_{\eps_0})\|_{cb}<r$. Another application of Theorem \ref{T:paulsenchar} yields that $\rho_{\eps_0}$ has Paulsen's similarity property with constant $r^{1/2}$. Note that $\H_\omega$ can be identified with
\[
\H_\omega\oplus 0 \oplus 0 \oplus 0\subset \K_\omega^{(4)},
\]
in which case
\[
\omega(a)=P_{\H_\omega}\rho_{\eps_0}(a)|_{\H_\omega}, \quad a\in \M.
\]
But $\H_\omega$ is not reducing for $\rho_{\eps_0}(\M)$, since $\omega$ is non-zero. This contradicts the fact that $\omega$ is $\Pau_r(\M)$-extremal. We conclude that $\omega\in \Pau_r(\M,\H_\omega)\setminus \cup_{s<r}\Pau_r(\M,\H_\omega)$. 
Since $\omega$ is $\Pau_r(\M)$-extremal we know from Theorem \ref{T:Frextmult} that there is a unique homomorphism $\Omega: \A_\M\to B(\H_\omega)$  that agrees with $\omega$ on $\M$ and that has Paulsen's similarity property with constant $r^{1/2}$, but not with constant $s^{1/2}$ for any $s<r$. Using Theorem \ref{T:paulsenchar}, we conclude that $\|\Upsilon(\Omega)\|_{cb}=r$.

(2)  Let $\eps>0$ and define a completely bounded linear map $\delta:\M\to B(\H_\omega\oplus \H_\omega)$ as
\[
\delta(a)=
\begin{bmatrix}
\omega(a) & 0\\
\eps\omega(a) & 0
\end{bmatrix}, \quad a\in \M.
\]
Then, we have $\|\delta\|_{cb}=\sqrt{1+\eps^2}\|\omega\|_{cb}$. By Theorem \ref{T:paulsencbdilation2}, there is a Hilbert space $\K_\omega$ containing $\H_\omega\oplus \H_\omega$ and a homomorphism $\theta:C^*(\M)\to B(\K_\omega)$ such that
\[
\delta(a)=P_{\H_\omega\oplus \H_\omega}\theta(a)|_{\H_\omega\oplus \H_\omega}, \quad a\in\M.
\]
Moreover, we can assume that $\|\theta\|_{cb}\leq C$ where
\begin{align*}
C= \left(\left(\sqrt{1+\eps^2}\|\omega\|_{cb}\right)^{1/2}+\left(1+\sqrt{1+\eps^2}\|\omega\|_{cb}\right)^{1/2}\right)^2.
\end{align*}
By Theorem \ref{T:paulsensim}, we see that $\theta$ has Paulsen's similarity property with constant  $\sqrt{C}+\sqrt{1+C}$. Thus, 
\[
\delta\in\Pau_{(\sqrt{C}+\sqrt{1+C})^2}(\M,\H_\omega\oplus \H_\omega).
\]
On the other hand, note that $\H_\omega$ can be identified as the first coordinate in $\H_\omega\oplus \H_\omega$, in which case
\[
\omega(a)=P_{\H_\omega}\delta(a)|_{\H_\omega}, \quad a\in \M.
\]
The subspace $\H_\omega$ is clearly not reducing for $\delta(\M)$, since $\omega$ is non-zero. However, $\omega$ is assumed to be $\Pau_r(\M)$-extremal, so we must have that $(\sqrt{C}+\sqrt{1+C})^2>r$. Now, $C\leq 4 (1+\sqrt{1+\eps^2}\|\omega\|_{cb})$ so that
\[
r<4(5+4\sqrt{1+\eps^2}\|\omega\|_{cb}).
\]
Letting $\eps\to 0$ and rearranging yields the announced inequality.

\end{proof}

Using the previous theorem, we show that a $\CP_1(\M)$-extremal element, let alone a map merely similar to one, is not necessarily a $\Pau_r(\M)$-extremal if $r$ is allowed to be arbitrarily large.

\begin{example}\label{E:extsim1}
Let $\omega:B(\H)\to B(\H)$  be the identity map. Trivially, we see that $\omega$ has the unique extension property relative to $\Pau_r(B(\H))$ for every $r\geq 1$, and also relative to $\CP_1(B(\H))$. In particular, we see that $\omega$ is $\CP_1(B(\H))$-extremal by Lemma \ref{L:CP1uepext}. On the other hand, by virtue of part (2) of Theorem \ref{T:Frextbig}, we see that $\omega$ is not $\Pau_r(\M)$-extremal as soon as
\[
\frac{r}{16}-\frac{5}{4}>1
\]
since $\|\omega\|_{cb}=1$.

\end{example}

The previous example is a bit artificial. The following is a more satisfying example that does not rely on forcing $r$ to be large so that we can apply Theorem \ref{T:Frextbig}. We exhibit a linear map $\omega$ with $\|\omega\|_{cb}=r$ and with the property that there is an invertible operator $X$ such that $\|X\|=\|X^{-1}\|= r^{1/2}$ and such that the map 
\[
a\mapsto X\omega(a)X^{-1}, \quad a\in \M
\]
is  $\CP_1(\M)$-extremal, yet $\omega$ itself is not $\Pau_r(\M)$-extremal. The basic idea is to append a direct summand to create room.

\begin{example}\label{E:extsim2}
Let $\H$ be a Hilbert space and let $\eps>0$. Let $\H_\delta=\H\oplus \H$ and define a linear map $\delta:B(\H)\to B(\H_\delta)$ as 
\[
\delta(a)=
\begin{bmatrix}
a & 0\\
 \eps a & 0
\end{bmatrix}, \quad a\in B(\H).
\]
We see that $\|\delta\|_{cb}=\sqrt{1+\eps^2}$. Using Theorem \ref{T:paulsencbdilation2} and arguing as in the proof of part (2) of Theorem \ref{T:Frextbig}, we infer that there is $r\geq 1$ large enough so that $\delta\in \Pau_r(B(\H),\H_\delta)$. Compressing to the copy of $\H$ corresponding to the first coordinate in $\H_\delta$, we observe also that
\[
a=P_{\H}\delta(a)|_{\H}, \quad a\in B(\H).
\]
Next, fix an invertible operator $X\in B(\H)$ such that $\|X\|=\|X^{-1}\|=r^{1/2}\geq 1$. Let $\omega:B(\H)\to B(\H\oplus \H)$ be the homomorphism defined as
\[
\omega(a)=XaX^{-1}\oplus a, \quad a\in B(\H).
\]
Then, $\|\omega\|_{cb}=r$ and $\omega\in \Pau_r(B(\H),\H\oplus \H)$. Consider the linear map $\Delta:B(\H)\to B(\H\oplus \H_\delta)$ defined as
\[
\Delta(a)=XaX^{-1}\oplus \delta(a),\quad a\in B(\H).
\]
By Corollary \ref{C:paulsendirectsum}, we have that $\Delta\in \Pau_r(B(\H),\H\oplus\H_\delta)$ and clearly $\omega\prec \Delta$. Since the copy of $\H$ corresponding to the first coordinate in $\H_\delta$ is not reducing for $\delta(B(\H))$, the space $\H\oplus \H\subset \H\oplus \H_\delta$ is not reducing for $\Delta(B(\H))$. Thus, $\omega$ is not $\Pau_r(B(\H))$-extremal.

On the other hand, let $Y\in B(\H\oplus \H)$ be the invertible operator defined as $Y=X\oplus I$. Then, $\|Y\|=\|Y^{-1}\|\leq r^{1/2}$. Consider the map $\omega_Y:B(\H)\to B(\H\oplus \H)$ defined as
\[
\omega_Y(a)=Y^{-1}\omega(a)Y=a\oplus a, \quad a\in B(\H).
\]
Clearly, $\omega_Y$ has the unique extension property relative to $\CP_1(B(\H))$. Hence, $\omega_Y$ is $\CP_1(B(\H))$-extremal by Lemma \ref{L:CP1uepext}. Thus, $\omega$ is similar to a $\CP_1(B(\H))$-extremal element, yet it is not $\Pau_r(B(\H))$-extremal itself.
\end{example}

Finally, we exhibit an example that shows that given a linear map $\phi$, in general it is not sufficient that $\|\Upsilon(\phi)\|_{cb}\leq r$ to force $\phi$ to lie in $\Pau_r(\M)$. This was mentioned in passing at the beginning of Section \ref{S:Fr}.

\begin{example}\label{E:UCBneqFr}
We return to the map from Example \ref{E:double}. Let $\H=\bC$, let $\M=B(\H)$ and $\phi:\M\to B(\H)$ be defined as $\phi(\lambda)=2\lambda$ for each $\lambda\in \M$. Assume that $\phi\in \Pau_3(\M,\bC)$. Then, by Theorem \ref{T:Frext}, there is a $\Pau_3(\M)$-extremal element $\omega\in \Pau_3(\M, \H_\omega)$ with $\phi\prec \omega$. Since $\M$ is a $C^*$-algebra, we see from Theorem \ref{T:Frextmult} that $\omega(1)$ is an idempotent. Moreover, we have that 
\[
2=\phi(1)=P_{\H}\omega(1)|_{\H}
\]
which is easily seen to force $\|\omega(1)\|>2$. 
Next, we note that $\|\Upsilon(\omega)\|_{cb}=3$ by Theorem \ref{T:Frextbig}. Since $\M$ is unital, we have that $\Upsilon(\M)=\M\oplus \bC$ and
\[
2\|\omega(1)\|-1 \leq \|2\omega(1) - I_{\H_\omega}\| = \|\Upsilon(\omega)(1\oplus -1)\| \leq \|\Upsilon(\omega)\|_{cb} = 3
\]
whence  $\|\omega(1)\|\leq 2$, which is a contradiction. Thus, $\phi\notin \Pau_3(\M,\bC)$. 

On the other hand, a straightforward verification shows that
\[
\Upsilon(\phi)(\lambda\oplus \mu)=2\lambda-\mu, \quad \lambda\in\M,\mu\in \bC.
\]
Hence, $\|\Upsilon(\phi)\|_{cb}=\|\Upsilon(\phi)\|\leq 3$. 

\end{example}

\section{A scale of $C^*$-envelopes for representations of operator spaces}\label{S:C*env}

In this section, we define a scale of $C^*$-envelopes associated to operator spaces. Traditionally, one would expect such envelopes to depend only on the completely isometric isomorphism class of the operator space, and not on the choice of representation, as is the case in \cite{hamana1999},\cite{blecher2001},\cite{FHL2016}. However, since our present setting is that of possibly non-unital operator spaces, to obtain $C^*$-envelopes we will need to consider an operator space along with an associated completely isometric representation on some Hilbert space. We emphasize that unlike ours, the usual constructions (\cite{hamana1999},\cite{blecher2001},\cite{FHL2016}) do not require this additional piece of data, and although their resulting envelopes are not algebras, they exhibit stronger universality properties.
 
Let $\M$ be an operator space and put 
\[
\fd(\M)=(1+\aleph_0\dim \M)(\dim \M)^2.
\]
For each cardinal number $n\leq \fd(\M)$ we fix a Hilbert space $\fH_n$ of dimension $n$, and implicitly identify every other such space with $\fH_n$.  This is a standard procedure to avoid set theoretic difficulties, and this convention will be used tacitly henceforth.

Next, let $r\geq 1$ and let $\mu:\M\to B(\H_\mu)$ be a completely isometric linear map. For a cardinal number $\fc$ we let $\E_r(\mu,\fc)$ be the subset of elements $\omega\in\Pau_r(\mu(\M),\fH_{\fc})$ that are $\Pau_r(\mu(\M))$-extremal. Given $\omega\in \E_r(\mu,\fc)$, we denote by $\I_r(\omega)$ the collection of invertible operator $X\in B(\fH_\fc)$ such that $\|X\|=\|X^{-1}\|\leq r^{1/2}$ and with the property that the map $\omega_X:\mu(\M)\to B(\fH_\fc)$ defined as
\[
\omega_X(a)=X\omega(a)X^{-1}, \quad a\in \mu(\M)
\]
belongs to $\Pau_1(\mu(\M),\fH_\fc)$. We know from Theorem \ref{T:Frextsimuep} that $\I_r(\omega)$ is non-empty, and that for each $X\in \I_r(\omega)$ there is a unique $*$-homomorphism $\pi_{\omega,X}:C^*(\mu(\M))\to B(\fH_\fc)$ such that
\[
\pi_{\omega,X}(a)=X \omega(a)X^{-1}, \quad a\in \mu(\M).
\]
We let
\[
\fH^{r,\mu}=\bigoplus_{\fc\leq \fd(\M)}\bigoplus_{\omega\in \E_r(\mu,\fc)} \bigoplus_{X\in \I_r(\omega)}\fH_{\fc}
\]
and we define a $*$-homomorphism  $\eps_{\mu,r}: C^*(\mu(\M))\to B(\fH^{r,\mu})$ as
\[
\eps_{\mu,r}(a)=\bigoplus_{\fc\leq \fd(\M)}\bigoplus_{\omega\in \E_r(\mu,\fc)} \bigoplus_{X\in \I_r(\omega)}\pi_{\omega,X}(a), \quad a\in C^*(\mu(\M)).
\]
We verify that $\eps_{\mu,r}(\M)$ can be identified with $\mu(\M)$ in a meaningful way.

\begin{corollary}\label{C:embedding}
Let $\M$ be an operator space, let $\mu:\M\to B(\H_\mu)$ be a completely isometric linear map and let $r\geq 1$. Then, 
\[
r^{-1}\|a\|\leq \|\eps_{\mu,r}^{(n)}(a)\|\leq  \|a\|
\]
for every $n\in \bN$ and every $a\in M_n(\mu(\M))$.
\end{corollary}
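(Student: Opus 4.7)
The upper bound is the easy half: $\eps_{\mu,r}$ is a $\ast$-homomorphism on the $C^*$-algebra $C^*(\mu(\M))$, hence contractive, hence completely contractive. Applying this at each matrix level yields $\|\eps_{\mu,r}^{(n)}(a)\|\leq \|a\|$ for every $n$ and every $a\in M_n(\mu(\M))$.

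For the lower bound, the plan is to locate a single extremal summand of $\eps_{\mu,r}$ that already nearly norms $\mu(\M)$ completely. First, I would apply Corollary \ref{C:Frextnorm} to $\mu(\M)$ to obtain a $\Pau_r(\mu(\M))$-extremal element $\omega\in \Pau_r(\mu(\M),\H_\omega)$ such that $\|a\|\leq\|\omega^{(n)}(a)\|$ for all $n$ and all $a\in M_n(\mu(\M))$, with the dimension bound $\dim \H_\omega\leq \fd(\M)$. This bound is exactly what allows us to identify $\H_\omega$ with some $\fH_\fc$ for $\fc\leq \fd(\M)$, so that $\omega\in \E_r(\mu,\fc)$ and $\omega$ appears among the indexing data of $\eps_{\mu,r}$.

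Next, since $\omega$ is $\Pau_r(\mu(\M))$-extremal, Theorem \ref{T:Frextsimuep} guarantees an invertible $X\in \I_r(\omega)$ with $\|X\|=\|X^{-1}\|\leq r^{1/2}$ whose associated map $\omega_X$ lies in $\Pau_1(\mu(\M),\fH_\fc)$. The corresponding $\ast$-homomorphism $\pi_{\omega,X}:C^*(\mu(\M))\to B(\fH_\fc)$ satisfies $\pi_{\omega,X}(a)=X\omega(a)X^{-1}$ on $\mu(\M)$. Amplifying to $n\times n$ matrices gives $\omega^{(n)}(a)=(X^{(n)})^{-1}\pi_{\omega,X}^{(n)}(a)X^{(n)}$ for $a\in M_n(\mu(\M))$, so
\[
\|a\|\leq \|\omega^{(n)}(a)\|\leq \|X^{-1}\|\,\|X\|\,\|\pi_{\omega,X}^{(n)}(a)\|\leq r\,\|\pi_{\omega,X}^{(n)}(a)\|.
\]

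Finally, by the very construction of $\eps_{\mu,r}$ as a direct sum over triples $(\fc,\omega,X)$, the map $\pi_{\omega,X}$ is one of the direct summands of $\eps_{\mu,r}$, hence $\|\pi_{\omega,X}^{(n)}(a)\|\leq \|\eps_{\mu,r}^{(n)}(a)\|$. Combining this with the previous inequality yields $r^{-1}\|a\|\leq \|\eps_{\mu,r}^{(n)}(a)\|$, completing the proof. There is no real obstacle here; the only thing to be careful about is the bookkeeping confirming that the chosen $\omega$ genuinely appears in the index set defining $\eps_{\mu,r}$, which is exactly why the dimension estimate in Corollary \ref{C:Frextnorm} was tailored to match $\fd(\M)$.
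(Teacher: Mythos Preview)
Your proof is correct and follows essentially the same approach as the paper's own proof: both establish the upper bound via the complete contractivity of $*$-homomorphisms, and both obtain the lower bound by invoking Corollary~\ref{C:Frextnorm} to produce a completely norming $\Pau_r(\mu(\M))$-extremal $\omega$, then using an $X\in\I_r(\omega)$ to compare $\omega^{(n)}$ with the direct summand $\pi_{\omega,X}^{(n)}$ of $\eps_{\mu,r}^{(n)}$. You give slightly more detail on the bookkeeping (the dimension bound and the nonemptiness of $\I_r(\omega)$ via Theorem~\ref{T:Frextsimuep}), but there is no substantive difference.
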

\begin{proof}
It is clear that $\eps_{\mu,r}$ is completely contractive as it is a $*$-homomorphism. Furthermore,  we may use Corollary \ref{C:Frextnorm} to obtain a cardinal number $\fc$ such that $\fc\leq \fd(\M)$ and an element $\omega \in \Pau_r(\mu(\M),\fH_\fc)$ that is $\Pau_r(\mu(\M))$-extremal and such that for every $n\in \bN$ and every $a\in M_n(\mu(\M))$ we have $\|a\|\leq \|\omega^{(n)}(a)\|$. If $X\in \I_r(\omega)$, then 
\[
\|\eps_{\mu,r}^{(n)}(a)\|\geq \|X^{(n)}\omega^{(n)}(a)X^{-1(n)}\|\geq r^{-1}  \|a\|
\]
for every $n\in \bN$ and every $a\in M_n(\mu(\M))$.
\end{proof}

For $r\geq 1$, we define the \emph{$C_r^*$-envelope} of the pair $(\M,\mu)$ as
\[
C^*_{e,r}(\M,\mu)=\eps_{\mu,r}(C^*(\mu(\M))).
\]
If we let $\J_{\mu,r}=\ker \eps_{\mu,r}$, then we see that $C^*(\mu(\M))/\J_{\mu,r}\cong C^*_{e,r}(\M,\mu)$. In particular, if $C^*(\mu(\M))$ is simple, then the surjective $*$-homomorphism 
\[
\eps_{\mu,r}:C^*(\mu(\M))\to C^*_{e,r}(\M,\mu)
\]
is necessarily injective, so that $C_{e,r}^*(\M,\mu)$ and $C^*(\mu(\M))$ are $*$-isomorphic for every $r\geq 1$. In addition, 
if $\mu(\M)$ is a $C^*$-algebra to begin with, then in fact $\J_{\mu,r}$ is trivial and $\mu(\M)$ can be identified with the $C^*_r$-envelope of $(\M,\mu)$, as we show next.

\begin{corollary}\label{C:OAalg}
Let $\M$ be an operator space, let $\mu:\M\to B(\H_\mu)$ be a completely isometric linear map and let $r\geq 1$. If $\mu(\M)$ is  $C^*$-algebra, then $C_{e,r}^*(\M,\mu)$ is $*$-isomorphic to $C^*(\mu(\M))$.
\end{corollary}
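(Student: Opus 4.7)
The plan is to exploit Corollary \ref{C:embedding} together with the well-known rigidity of $*$-homomorphisms between $C^*$-algebras. Since $\mu(\M)$ is already a $C^*$-algebra, we have $C^*(\mu(\M))=\mu(\M)$, so the $*$-homomorphism
\[
\eps_{\mu,r}:C^*(\mu(\M))\to C^*_{e,r}(\M,\mu)
\]
is simply a $*$-homomorphism defined on $\mu(\M)$ itself, and it is surjective onto $C^*_{e,r}(\M,\mu)$ by construction.

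To finish, I would verify that $\eps_{\mu,r}$ is injective. This is where Corollary \ref{C:embedding} does the work: applying it with $n=1$ gives
\[
\|\eps_{\mu,r}(a)\|\geq r^{-1}\|a\|,\quad a\in \mu(\M),
\]
so the kernel of $\eps_{\mu,r}$ restricted to $\mu(\M)=C^*(\mu(\M))$ is trivial. Then I would invoke the standard $C^*$-algebraic fact that an injective $*$-homomorphism between $C^*$-algebras is automatically isometric, which upgrades injectivity to a $*$-isomorphism $\mu(\M)\cong C^*_{e,r}(\M,\mu)$.

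There is essentially no obstacle here; the corollary is a direct bookkeeping consequence of Corollary \ref{C:embedding}. The only subtlety worth mentioning is that the lower bound $r^{-1}\|a\|$ from Corollary \ref{C:embedding} is \emph{a priori} weaker than isometry, but for $*$-homomorphisms of $C^*$-algebras mere injectivity suffices to guarantee isometry, so the factor $r^{-1}$ is harmless.
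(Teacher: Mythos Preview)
Your argument is correct and is essentially the same as the paper's proof. The paper simply states that by Corollary~\ref{C:embedding} the surjective $*$-homomorphism $\eps_{\mu,r}$ is injective; you have just made explicit the two observations behind this --- that $C^*(\mu(\M))=\mu(\M)$ under the hypothesis, and that the lower bound $r^{-1}\|a\|$ forces trivial kernel.
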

\begin{proof}
By Corollary \ref{C:embedding}, we see that the surjective $*$-homomorphism 
\[
\eps_{\mu,r}:C^*(\mu(\M))\to C_{e,r}^*(\M,\mu)
\]
 is injective.
\end{proof}

Going back to the more interesting general case where $\mu(\M)$ is merely an operator space, we wish to establish that $C^*_{e,r}(\M,\mu)$ does not depend on the isomorphism class of $\M$ in an essential way. What kind of map should be considered an isomorphism in this context is an interesting question. A natural answer would be that an isomorphism should be an invertible map that preserves the class $\Pau_r$. Accordingly, we call a bijective linear map $\tau$ between two concretely represented operator spaces $\M$ and $\N$ a \emph{$\Pau$-isomorphism} if $\tau$ and $\tau^{-1}$ both lie in the class $\Pau_1$. 
If $\tau$ is a $\Pau$-isomorphism then for every $r\geq 1$ we see by virtue of Corollary \ref{C:Frcomp} that $\phi\circ \tau^{-1}\in \Pau_r(\N,\H_\phi)$ and $\psi\circ \tau\in \Pau_r(\M,\H_\psi)$ whenever $\phi\in \Pau_r(\M,\H_\phi)$ and $\psi\in \Pau_r(\N,\H_\psi)$.  

Let us exhibit two concretes instances of $\Pau$-isomorphisms. First, by Corollary \ref{C:hom}, we see that a bijective map $\tau:\M\to \N$ is a $\Pau$-isomorphism if it is assumed to extend to a completely isometric algebra isomorphism $\widehat\tau:\A_\M\to \A_\N$, where $\A_\M$ and $\A_\N$ denote the operator algebras generated by $\M$ and $\N$ respectively. Second, if $\M$ and $\N$ are both unital and $\tau:\M\to \N$ is a unital completely isometric linear isomorphism, then $\tau$ is necessarily a $\Pau$-isomorphism by Corollary \ref{C:F1}. 

Moreover, we have the following.

\begin{lemma}\label{L:Frisom}
Let $\M$ and $\N$ be operator spaces and let
\[
\mu:\M\to B(\H_\mu), \quad \nu:\N\to B(\H_\nu)
\]
be completely isometric linear maps. Let $r\geq 1$ and let $\fc$ be a cardinal number. Let $\tau:\mu(\M)\to \nu(\N)$ be a $\Pau$-isomorphism and let $\omega\in \E_r(\nu,\fc)$. Then $\omega\circ \tau \in \E_r(\mu,\fc)$ and $\I_r(\omega)=\I_r(\omega\circ \tau)$.
\end{lemma}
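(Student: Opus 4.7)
The plan is to prove all three parts by carefully transporting the hypotheses on $\omega$ over to $\omega \circ \tau$ using the fact that $\tau$ and $\tau^{-1}$ both lie in the class $\Pau_1$. The main workhorse will be Corollary \ref{C:Frcomp}, which ensures that composition with a $\Pau_1$-map preserves the class $\Pau_r$ for every $r \geq 1$.

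First, I would verify that $\omega \circ \tau \in \Pau_r(\mu(\M), \fH_\fc)$. Since $\omega \in \Pau_r(\nu(\N), \fH_\fc)$ and $\tau \in \Pau_1(\mu(\M), \H_\nu)$ with $\tau(\mu(\M)) \subset \nu(\N)$, this is immediate from Corollary \ref{C:Frcomp}. Next, I would establish $\Pau_r(\mu(\M))$-extremality. Suppose $\delta \in \Pau_r(\mu(\M), \H_\delta)$ satisfies $\omega \circ \tau \prec \delta$. Form $\delta \circ \tau^{-1}: \nu(\N) \to B(\H_\delta)$; by Corollary \ref{C:Frcomp} applied to $\tau^{-1} \in \Pau_1(\nu(\N), \H_\mu)$ with $\tau^{-1}(\nu(\N)) \subset \mu(\M)$, this map lies in $\Pau_r(\nu(\N), \H_\delta)$. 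A direct computation shows that $\omega \prec \delta \circ \tau^{-1}$: for every $b \in \nu(\N)$,
\[
P_{\fH_\fc}(\delta \circ \tau^{-1})(b)|_{\fH_\fc} = P_{\fH_\fc}\delta(\tau^{-1}(b))|_{\fH_\fc} = (\omega \circ \tau)(\tau^{-1}(b)) = \omega(b).
\]
Extremality of $\omega$ in $\Pau_r(\nu(\N))$ then forces $\fH_\fc$ to be reducing for $(\delta \circ \tau^{-1})(\nu(\N)) = \delta(\mu(\M))$, which gives the desired extremality of $\omega \circ \tau$. Thus $\omega \circ \tau \in \E_r(\mu, \fc)$.

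Finally, for the equality $\I_r(\omega) = \I_r(\omega \circ \tau)$, the key observation is that for any invertible operator $X \in B(\fH_\fc)$ with $\|X\| = \|X^{-1}\| \leq r^{1/2}$, one has
\[
(\omega \circ \tau)_X = X (\omega \circ \tau)(\cdot) X^{-1} = (X \omega(\cdot) X^{-1}) \circ \tau = \omega_X \circ \tau.
\]
If $X \in \I_r(\omega)$, then $\omega_X \in \Pau_1(\nu(\N), \fH_\fc)$, and Corollary \ref{C:Frcomp} applied with $\tau \in \Pau_1(\mu(\M), \H_\nu)$ gives $(\omega \circ \tau)_X = \omega_X \circ \tau \in \Pau_1(\mu(\M), \fH_\fc)$, so $X \in \I_r(\omega \circ \tau)$. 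Conversely, if $X \in \I_r(\omega \circ \tau)$, then $\omega_X = (\omega \circ \tau)_X \circ \tau^{-1} \in \Pau_1(\nu(\N), \fH_\fc)$ by the same corollary applied now to $\tau^{-1}$, so $X \in \I_r(\omega)$.

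None of the steps presents a real obstacle: the entire proof is essentially a bookkeeping exercise in applying Corollary \ref{C:Frcomp}, provided one correctly identifies the right composition to form at each stage. The only subtlety worth watching is to make sure that whenever one composes maps, the target of one genuinely lies in the source of the next, so that Corollary \ref{C:Frcomp} is applicable; this is precisely where we use the hypothesis $\tau(\mu(\M)) \subset \nu(\N)$ and its mirror for $\tau^{-1}$, both of which are automatic for a $\Pau$-isomorphism.
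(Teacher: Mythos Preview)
Your proof is correct and follows essentially the same approach as the paper's own argument: both use Corollary~\ref{C:Frcomp} to transport membership in $\Pau_r$ and $\Pau_1$ through composition with $\tau$ and $\tau^{-1}$, and both verify extremality by pushing a dilation $\delta$ of $\omega\circ\tau$ to a dilation $\delta\circ\tau^{-1}$ of $\omega$. Your write-up is slightly more explicit (e.g., you spell out $(\omega\circ\tau)_X=\omega_X\circ\tau$ and verify $\omega\circ\tau\in\Pau_r$ at the outset), but there is no substantive difference.
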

\begin{proof}
Let $\delta\in \Pau_r(\mu(\M),\H_\delta)$ such that $\omega\circ \tau\prec \delta$. Then, we have that $\delta\circ \tau^{-1}\in \Pau_r(\nu(\N),\H_\delta)$ and $\omega\prec\delta\circ \tau^{-1}$. Hence, the space $\fH_\fc$ is reducing for $\delta\circ \tau^{-1}(\nu(\N))=\delta(\mu(\M))$. This shows that $\omega\circ \tau$ is $\Pau_r(\mu(\M))$-extremal.
Next, let $X\in \I(\omega)$, so that $\omega_X\in \Pau_1(\nu(\N),\fH_\fc)$. Since $\tau$ is a $\Pau$-isomorphism, we also have $\omega_X\circ \tau\in \Pau_1(\mu(\M),\fH_\fc)$. Hence, $X\in \I_r(\omega\circ \tau)$. The same argument can be used to establish the reverse inclusion, by symmetry.
\end{proof}

We now prove that the $C_r^*$-envelope is invariant under $\Pau$-isomorphisms.

\begin{theorem}\label{T:C*envinv}
Let $\M$ and $\N$ be operator spaces and let
\[
\mu:\M\to B(\H_\mu), \quad \nu:\N\to B(\H_\nu)
\]
be completely isometric linear maps. Let $r\geq 1$ and let $\tau:\mu(\M)\to \nu(\N)$ be a $\Pau$-isomorphism. Then, the maps $\eps_{\mu,r}\circ \tau^{-1}$ and $\eps_{\nu,r}$ are unitarily equivalent on $\nu(\N)$. In particular, $C_{e,r}^*(\M,\mu)$ and $C_{e,r}^*(\N,\nu)$ are unitarily equivalent.
\end{theorem}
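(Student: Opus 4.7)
The plan is to identify $\eps_{\mu,r}$ and $\eps_{\nu,r}$ summand-by-summand, using the bijection of extremal maps supplied by Lemma \ref{L:Frisom}. Since $\tau$ is a $\Pau$-isomorphism and $\mu,\nu$ are completely isometric, $\M$ and $\N$ have the same linear dimension, hence $\fd(\M)=\fd(\N)$, so the cardinals $\fc$ over which $\fH^{r,\mu}$ and $\fH^{r,\nu}$ are indexed coincide. I would then invoke Lemma \ref{L:Frisom} twice (once with $\tau$, once with $\tau^{-1}$) to see that for every cardinal $\fc\leq \fd(\M)$ the map $\omega\mapsto \omega\circ \tau$ is a bijection from $\E_r(\nu,\fc)$ onto $\E_r(\mu,\fc)$, whose inverse is precomposition with $\tau^{-1}$, and that moreover $\I_r(\omega)=\I_r(\omega\circ\tau)$ for every $\omega\in \E_r(\nu,\fc)$.

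Next I would build the unitary by bookkeeping. The identifications above give a bijection between the index sets of the direct sum decompositions of $\fH^{r,\nu}$ and $\fH^{r,\mu}$, and on each matched pair of summands both Hilbert spaces are $\fH_\fc$. I would therefore let $U\colon\fH^{r,\nu}\to\fH^{r,\mu}$ be the unitary that sends the summand indexed by $(\fc,\omega,X)$ identically onto the summand indexed by $(\fc,\omega\circ\tau,X)$.

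The core verification is a short calculation on each summand. For $b\in \nu(\N)$, $\omega\in\E_r(\nu,\fc)$ and $X\in \I_r(\omega)=\I_r(\omega\circ\tau)$, the uniqueness clause of Theorem \ref{T:Frextsimuep} guarantees that $\pi_{\omega\circ\tau,X}$ is the unique $*$-homomorphism extending $a\mapsto X(\omega\circ\tau)(a)X^{-1}$ on $\mu(\M)$, so
\[
\pi_{\omega\circ\tau,X}(\tau^{-1}(b))=X\,\omega(b)\,X^{-1}=\pi_{\omega,X}(b).
\]
Summing over all matched indices gives $U\,\eps_{\nu,r}(b)\,U^{*}=\eps_{\mu,r}(\tau^{-1}(b))$ for every $b\in \nu(\N)$, which is exactly the claimed unitary equivalence on $\nu(\N)$.

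For the final sentence of the theorem, I would observe that $C^{*}_{e,r}(\N,\nu)=\eps_{\nu,r}(C^{*}(\nu(\N)))$ is the $C^{*}$-algebra generated by $\eps_{\nu,r}(\nu(\N))$, and similarly $C^{*}_{e,r}(\M,\mu)$ is the $C^{*}$-algebra generated by $\eps_{\mu,r}(\mu(\M))=\eps_{\mu,r}(\tau^{-1}(\nu(\N)))$. The previous paragraph's intertwining relation shows that $U$ carries one generating set onto the other, so $U^{*}C^{*}_{e,r}(\M,\mu)U=C^{*}_{e,r}(\N,\nu)$, completing the proof. There is no real obstacle here beyond clean bookkeeping; the only subtlety is to verify that $\fd(\M)=\fd(\N)$ so that the bijection of summands is well defined, and that the identification of $\I_r(\omega)$ with $\I_r(\omega\circ\tau)$ from Lemma \ref{L:Frisom} is used to align the innermost direct sums.
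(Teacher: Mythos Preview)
Your proposal is correct and follows essentially the same approach as the paper: use Lemma~\ref{L:Frisom} to obtain a bijection between the index sets of the two direct sums (including the identification $\I_r(\omega)=\I_r(\omega\circ\tau)$), define $U$ as the corresponding permutation of summands, and verify the intertwining relation summand by summand via $\pi_{\omega\circ\tau,X}(\tau^{-1}(b))=X\omega(b)X^{-1}=\pi_{\omega,X}(b)$. The paper's proof is the same argument with the indexing run in the other direction (it parametrizes $\E_r(\nu,\fc)$ as $\{\omega\circ\tau^{-1}:\omega\in\E_r(\mu,\fc)\}$ rather than $\E_r(\mu,\fc)$ as $\{\omega\circ\tau:\omega\in\E_r(\nu,\fc)\}$), but this is only a cosmetic difference.
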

\begin{proof}
We first note that $\fd(\M)=\fd(\N)$. By Lemma \ref{L:Frisom}, we see that $\zeta\in \E_r(\nu,\fc)$  if and only if $\zeta=\omega\circ \tau^{-1}$ for some $\omega\in \E_r(\mu,\fc)$. Moreover, we have that $\I_r(\omega)=\I_r(\omega\circ \tau^{-1})$ for every $\omega\in \E_r(\mu,\fc)$. For $b\in \nu(\N)$ we see that
\[
\eps_{\mu,r}\circ \tau^{-1}(b)=\bigoplus_{\fc\leq \fd(\M)}\bigoplus_{\omega\in \E_r(\mu,\fc)} \bigoplus_{X\in \I_r(\omega)}X(\omega\circ \tau^{-1}(b))X^{-1}
\]
which, after a unitary permutation of the coordinates of $\fH^{r,\mu}$, becomes
\[
\bigoplus_{\fc\leq \fd(\M)}\bigoplus_{\zeta\in \E_r(\nu,\fc)} \bigoplus_{X\in \I_r(\zeta)}X\zeta(b)X^{-1}=\eps_{\nu,r}(b).
\]
Hence, there is a unitary operator $U:\fH^{r,\nu}\to \fH^{r,\mu}$ such that 
\[
\eps_{\mu,r}\circ \tau^{-1}(b)=U\eps_{\nu,r}(b) U^*, \quad b\in \nu(\N).
\]
Consequently,
\begin{align*}
C^*_{e,r}(\M,\mu)&=C^* \left(\eps_{\mu,r}(\mu(\M))\right)=C^*\left(\eps_{\mu,r}\circ \tau^{-1}(\nu(\N))\right)\\
&=UC^*\left( \eps_{\nu,r}(\nu(\N))\right)U^*=U C^*_{e,r}(\N,\nu)U^*.
\end{align*}
\end{proof}

Now, the reader may wonder exactly how the construction of the $C^*_r$-envelope depends on the completely isometric representation $\mu$. More precisely, if 
\[
\mu_1:\M\to B(\H_1), \quad \mu_2:\M\to B(\H_2)
\]
are completely isometric linear maps, it is not clear at the moment what relation exists, if any, between $C^*_{e,r}(\M,\mu_1)$ and $C^*_{e,r}(\M,\mu_2)$. Indeed, Theorem \ref{T:C*envinv} only guarantees invariance of the envelope under the assumption that $\mu_2\circ \mu_1^{-1}$ is a $\Pau$-isomorphism. The following example shows that in general the $C^*_r$-envelopes can differ. We are grateful to Ken Davidson for suggesting the idea therein.

\begin{example}\label{E:C^*repinv}
Let $\M=\bC$ and let $r\geq 1$. Let $\mu_1:\M\to \bC$ be the identity map and let
\[
\mu_2:\M\to M_2(\bC)
\]
be the completely isometric linear map defined as
\[
\mu_2(\lambda)=\begin{bmatrix}
0 & \lambda\\
0 & 0
\end{bmatrix}, \quad \lambda\in \bC.
\]
Now, $\mu_1(\M)=\bC$ is a $C^*$-algebra, whence $C^*_{e,r}(\M,\mu_1)\cong \bC$ by Corollary \ref{C:OAalg} . On the other hand, it is easily verified that $C^*(\mu_2(\M))=M_2(\bC)$ and so it is simple. In that case, we saw previously that $C^*_{e,r}(\M,\mu_2)\cong C^*(\mu_2(\M))=M_2(\bC)$. Thus, $C^*_{e,r}(\M,\mu_2)$ is not $*$-isomorphic to $C^*_{e,r}(\M,\mu_1)$.

\end{example}

Nevertheless, we may use Theorem \ref{T:C*envinv} to establish the following universality property, which justifies the terminology ``envelope".

\begin{corollary}\label{C:C*envuniv}
Let $\M$ and $\N$ be operator spaces and let
\[
\mu:\M\to B(\H_\mu), \quad \nu:\N\to B(\H_\nu)
\]
be completely isometric linear maps. Let $r\geq 1$  and let $\tau:\mu(\M)\to \nu(\N)$ be a $\Pau$-isomorphism.  Then, there is a surjective $*$-homomorphism 
\[
\rho:C^*(\nu(\N))\to C^*_{e,r}(\M,\mu)
\]
such that $\rho\circ \tau=\eps_{\mu,r}$ on $\mu(\M)$.
\end{corollary}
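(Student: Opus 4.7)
The plan is to extract, from the proof of Theorem \ref{T:C*envinv}, the explicit unitary intertwiner and then simply transport $\eps_{\nu,r}$ across it to produce $\rho$. Concretely, the proof of Theorem \ref{T:C*envinv} furnishes a unitary operator $U:\fH^{r,\nu}\to \fH^{r,\mu}$ obtained by permuting coordinates (using Lemma \ref{L:Frisom} to identify $\E_r(\nu,\fc)$ with $\{\omega\circ\tau^{-1}:\omega\in\E_r(\mu,\fc)\}$ and $\I_r(\omega)$ with $\I_r(\omega\circ\tau^{-1})$) such that
\[
\eps_{\mu,r}\circ \tau^{-1}(b)=U\eps_{\nu,r}(b) U^*, \quad b\in \nu(\N).
\]
In particular, $U C^*_{e,r}(\N,\nu) U^* = C^*_{e,r}(\M,\mu)$.

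With this in hand, I would define
\[
\rho:C^*(\nu(\N))\to C^*_{e,r}(\M,\mu), \qquad \rho(a)=U\eps_{\nu,r}(a)U^*.
\]
Since $\eps_{\nu,r}$ is a $*$-homomorphism onto $C^*_{e,r}(\N,\nu)$ and conjugation by the unitary $U$ is a $*$-isomorphism from $B(\fH^{r,\nu})$ onto $B(\fH^{r,\mu})$ carrying $C^*_{e,r}(\N,\nu)$ onto $C^*_{e,r}(\M,\mu)$, it is immediate that $\rho$ is a surjective $*$-homomorphism onto $C^*_{e,r}(\M,\mu)$.

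It remains to verify the intertwining relation $\rho\circ\tau=\eps_{\mu,r}$ on $\mu(\M)$. Given $a\in \mu(\M)$, set $b=\tau(a)\in \nu(\N)\subset C^*(\nu(\N))$. Applying the displayed identity above with this $b$ yields
\[
\rho(\tau(a))=U\eps_{\nu,r}(\tau(a))U^*=\eps_{\mu,r}(\tau^{-1}(\tau(a)))=\eps_{\mu,r}(a),
\]
as required. There is no genuine obstacle here: the entire content is already packaged in Theorem \ref{T:C*envinv}, and the corollary amounts to re-reading the intertwining identity as a factorization through the $*$-homomorphism obtained by conjugating $\eps_{\nu,r}$ by $U$.
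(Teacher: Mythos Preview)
Your proof is correct and follows essentially the same approach as the paper: you invoke the unitary $U$ from Theorem~\ref{T:C*envinv}, set $\rho=U\eps_{\nu,r}(\cdot)U^*$, and verify the intertwining relation and surjectivity exactly as the paper does.
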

\begin{proof}
By Theorem \ref{T:C*envinv}, there is a unitary operator $U:\fH^{r,\nu}\to\fH^{r,\mu}$ such that 
\[
U C^*_{e,r}(\N,\nu)U^*=C^*_{e,r}(\M,\mu)
\]
and
\[
\eps_{\mu,r}\circ \tau^{-1}(b)=U\eps_{\nu,r}(b) U^*, \quad b\in \nu(\N).
\]
Define $\rho:C^*(\nu(\N))\to C_{e,r}^*(\M,\mu)$ as 
\[
\rho(b)=U \eps_{\nu,r}(b)U^*, \quad b\in C^*(\nu(\N)).
\]
Then, $\rho$ is a surjective $*$-homomorphism and $\rho\circ \tau=\eps_{\mu,r}$ on $\mu(\M)$.
\end{proof}

In light of the preceding developments, it seems relevant here to point out that the embedding $\eps_{\mu,r}:\mu(\M)\to \eps_{\mu,r}(\mu(\M))$ is not known to be a $\Pau$-isomorphism. We will revisit this issue below in Theorem \ref{T:conjequiv}.

Next, we wish to relate the $C^*$-algebra $C_{e,1}^*(\M,\mu)$ and the usual (unital) $C^*$-envelope of $\mu(\M)$. For that purpose, we introduce some terminology. Let $\M$ be a unital operator space and let $\H$ be a Hilbert space.  We denote by $\UCB_1(\M, \H)$ the set of unital completely contractive linear maps $\phi:\M\to B(\H)$.  
Let $\K$ be a Hilbert space containing $\H$ and let $\delta\in \UCB_1(\M,\K)$. If $\delta$ satisfies
\[
\phi(a)=P_{\H}\delta(a)|_{\H}, \quad a\in \M
\]
then we write $\phi\prec \delta$. An element $\omega\in \UCB_1(\M,\H_\omega)$ is said to be \emph{$\UCB_1(\M)$-extremal} if whenever $\delta\in \UCB_1(\M,\H_\delta)$ satisfies $\omega\prec \delta$ then we must have that $\H_\omega$ is reducing for $\delta(\M)$. We now relate two kinds of extremality.

\begin{lemma}\label{L:UCB}
Let $\M$ be an operator space and let $\omega\in \CP_1(\M,\H_\omega)$. Then  $\omega$ is $\CP_1(\M)$-extremal if and only if $\Upsilon(\omega)$ is $\UCB_1(\Upsilon(\M))$-extremal.
\end{lemma}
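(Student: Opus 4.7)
The plan is to translate the dilation relation $\prec$ back and forth between $\CP_1(\M)$ and $\UCB_1(\Upsilon(\M))$ using the unitization machinery from Subsection \ref{SS:unitization}. The main technical input has essentially been set up already: a map $\phi:\M\to B(\H_\phi)$ lies in $\CP_1(\M,\H_\phi)$ iff $\Upsilon(\phi)$ extends to a unital completely positive map on $\Upsilon(C^*(\M))$, and conversely every unital completely contractive map on $\Upsilon(\M)$ arises in this way. Nothing deep should be required beyond carefully unwinding the definitions; I do not anticipate a serious obstacle.

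For the forward direction, suppose $\omega$ is $\CP_1(\M)$-extremal and let $\Delta\in\UCB_1(\Upsilon(\M),\H_\Delta)$ with $\Upsilon(\omega)\prec\Delta$. Since $\Delta$ is unital and completely contractive on the unital operator space $\Upsilon(\M)$, by Arveson's observation (end of Subsection \ref{SS:opspaces}) $\Delta$ extends uniquely to a unital completely positive map on $\Upsilon(\M)+\Upsilon(\M)^*$, and then, by Arveson's extension theorem, to a UCP map on $\Upsilon(C^*(\M))$. Composing with $\upsilon:C^*(\M)\to\Upsilon(C^*(\M))$ (which is a contractive $*$-homomorphism) shows that $\delta:=\Delta\circ\upsilon|_\M$ belongs to $\CP_1(\M,\H_\Delta)$. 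A direct computation gives $\omega\prec\delta$, so $\CP_1(\M)$-extremality of $\omega$ forces $\H_\omega$ to be reducing for $\delta(\M)=\Delta(\upsilon(\M))$. Since $\Delta$ is unital, $\H_\omega$ is automatically reducing for $\bC I_{\H_\Delta}$, hence for all of $\Delta(\Upsilon(\M))=\Delta(\upsilon(\M))+\bC I_{\H_\Delta}$. Thus $\Upsilon(\omega)$ is $\UCB_1(\Upsilon(\M))$-extremal.

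For the converse, suppose $\Upsilon(\omega)$ is $\UCB_1(\Upsilon(\M))$-extremal and let $\delta\in\CP_1(\M,\H_\delta)$ with $\omega\prec\delta$. By the bulleted observation in Subsection \ref{SS:unitization}, $\Upsilon(\delta)$ is a unital map on $\Upsilon(\M)$ that extends to a UCP map on $\Upsilon(C^*(\M))$; in particular $\Upsilon(\delta)\in\UCB_1(\Upsilon(\M),\H_\delta)$. A short check using the formula $\Upsilon(\phi)(\upsilon(a)+\lambda e)=\phi(a)+\lambda I_{\H_\phi}$ shows $\Upsilon(\omega)\prec\Upsilon(\delta)$: compressions preserve scalar multiples of the identity, and $\omega\prec\delta$ handles the $\upsilon(\M)$ piece. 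By $\UCB_1(\Upsilon(\M))$-extremality, $\H_\omega$ is reducing for $\Upsilon(\delta)(\Upsilon(\M))=\delta(\M)+\bC I_{\H_\delta}$, and hence for $\delta(\M)$. Therefore $\omega$ is $\CP_1(\M)$-extremal, completing the proof.
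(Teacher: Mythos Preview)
Your proof is correct and follows essentially the same approach as the paper's: in each direction you transport the dilation relation through the unitization correspondence, using that unital completely contractive maps on $\Upsilon(\M)$ correspond to maps in $\CP_1(\M)$, and then read off that $\H_\omega$ is reducing for the relevant image. The only differences are cosmetic---you treat the two implications in the opposite order, and you spell out the Arveson extension step for $\Delta$ a bit more explicitly than the paper (which simply observes that $\upsilon$ is the restriction of a $*$-homomorphism).
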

\begin{proof}
Assume that $\Upsilon(\omega)$ is $\UCB_1(\Upsilon(\M))$-extremal. Let $\delta\in \CP_1(\M,\H_\delta)$ such that $\omega\prec \delta$. Then,  $\Upsilon(\delta)\in \UCB_1(\Upsilon(\M),\H_\delta)$ and it is easily verified that  $\Upsilon(\omega)\prec \Upsilon(\delta)$. We conclude that $\H_\omega$ is reducing for $\Upsilon(\delta)(\Upsilon(\M))$, and in particular for $\delta(\M)$. Hence $\omega$ is $\CP_1(\M)$-extremal. 

Conversely, assume that $\omega$ is $\CP_1(\M)$-extremal.  Let $\Delta\in \UCB_1(\Upsilon(\M),\H_\Delta)$ such that $\Upsilon(\omega)\prec \Delta$. Recall now that the map $\upsilon:\M\to \Upsilon(\M)$ is the restriction of a $*$-homomorphism, so that $\Delta\circ \upsilon\in \CP_1(\M,\H_\Delta)$. We note that $\omega\prec \Delta\circ \upsilon$.  We conclude that $\H_\omega$ is reducing for $\Delta( \upsilon(\M))$. Since $\Delta$ is unital, in fact $\H_\omega$ must  be reducing for $\Delta(\Upsilon(\M))$, so that $\Upsilon(\omega)$ is $\UCB_1(\M)$-extremal. 
\end{proof}

We now recall the construction of the standard unital $C^*$-envelope for a (concrete) unital operator space $\M$. If $\fc$ is a cardinal number, then we let $\B(\M,\fc)$ denote the collection of elements $\beta\in \UCB_1(\M,\fH_\fc)$ that are $\UCB_1(\M)$-extremal. It is well-known that every $\beta\in \B(\M,\fH_\fc)$ extends to a unique unital $*$-homomorphism $\pi_\beta:C^*(\M)\to B(\fH_\fc)$. Then, the  \emph{unital $C^*$-envelope} of $\M$ can be defined as 
\[
UC_e^*(\M)=\kappa(C^*(\M))
\]
where $\kappa$ is the unital $*$-homomorphism defined as
\[
\kappa=\bigoplus_{\fc\leq \fd(\M)}\bigoplus_{\beta\in \B(\M,\fc)} \pi_\beta
\]
that is completely isometric on $\M$. 
The reader should consult \cite{arveson1969},\cite{dritschel2005},\cite{arvesonUEP},\cite{davidson2015} for more details. Note that the usual name for $UC_e^*(\M)$ is simply the $C^*$-envelope and the standard notation is $C_e^*(\M)$, but in the context of this paper we want to emphasize the fact that the maps involved in the previous construction are all unital. 

We can now relate the unital $C^*$-envelope to the $C^*_1$-envelope.

\begin{theorem}\label{T:C*env}
Let $\M$ be an operator space and let $\mu:\M\to B(\H_\mu)$ be a completely isometric linear map. Then, $C_{e,1}^*(\M,\mu)+\bC I_{\fH^{1,\mu}}$ is $*$-isomorphic to $UC_e^*(\Upsilon(\mu(\M)))$.
\end{theorem}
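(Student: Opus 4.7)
The plan is to exploit three facts specific to $r = 1$. First, by Corollary \ref{C:F1} we have $\Pau_1(\mu(\M)) = \CP_1(\mu(\M))$, and Lemma \ref{L:CP1uepext} then guarantees that every $\omega \in \E_1(\mu,\fc)$ has the unique extension property relative to $\CP_1$, yielding a unique $*$-homomorphism $\pi_\omega : C^*(\mu(\M)) \to B(\fH_\fc)$ extending $\omega$. Second, the constraint $\|X\| = \|X^{-1}\| \leq 1$ forces every $X \in \I_1(\omega)$ to be unitary, so each summand $\pi_{\omega,X} = X\pi_\omega X^{-1}$ appearing in $\eps_{\mu,1}$ is unitarily equivalent to $\pi_\omega$ and in particular has the same kernel. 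Third, Lemma \ref{L:UCB}, combined with the unitization procedure for linear maps recalled in Subsection \ref{SS:unitization}, produces a dimension-preserving bijection $\omega \leftrightarrow \Upsilon(\omega)$ between the $\Pau_1(\mu(\M))$-extremals into $\fH_\fc$ and the $\UCB_1(\Upsilon(\mu(\M)))$-extremals into $\fH_\fc$.

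Using the identification $C^*(\Upsilon(\mu(\M))) = \Upsilon(C^*(\mu(\M)))$, I would then check that the unique unital $*$-homomorphism extending $\Upsilon(\omega)$ is precisely $\Upsilon(\pi_\omega)$, acting explicitly by
\[
\pi_{\Upsilon(\omega)}(c + \lambda e) = \pi_\omega(c) + \lambda I_{\fH_\fc}, \quad c \in C^*(\mu(\M)),\ \lambda \in \bC.
\]
This rewrites $\kappa$ as $\bigoplus_{\fc} \bigoplus_{\omega \in \E_1(\mu,\fc)} \Upsilon(\pi_\omega)$, to be contrasted with $\eps_{\mu,1} = \bigoplus_\fc \bigoplus_{\omega \in \E_1(\mu,\fc)} \bigoplus_{X \in \I_1(\omega)} X \pi_\omega X^{-1}$.

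Next I would introduce the candidate map: the unital $*$-homomorphism
\[
\Phi : \Upsilon(C^*(\mu(\M))) \longrightarrow C^*_{e,1}(\M,\mu) + \bC I_{\fH^{1,\mu}}, \quad c + \lambda e \longmapsto \eps_{\mu,1}(c) + \lambda I_{\fH^{1,\mu}}.
\]
This is well defined (the decomposition $c + \lambda e$ is unique in $\Upsilon(C^*(\mu(\M)))$), is surjective by construction, and the heart of the argument is to verify that $\ker \Phi = \ker \kappa$. From the explicit formulas, $c + \lambda e \in \ker \kappa$ iff $\pi_\omega(c) = -\lambda I_{\fH_\fc}$ for every extremal $\omega$, whereas $c + \lambda e \in \ker \Phi$ iff $X \pi_\omega(c) X^{-1} = -\lambda I_{\fH_\fc}$ for every extremal $\omega$ and every unitary $X \in \I_1(\omega)$; since conjugation by a unitary fixes scalar multiples of the identity, the two conditions coincide. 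The first isomorphism theorem then delivers
\[
UC_e^*(\Upsilon(\mu(\M))) = \kappa(\Upsilon(C^*(\mu(\M)))) \cong \Phi(\Upsilon(C^*(\mu(\M)))) = C^*_{e,1}(\M,\mu) + \bC I_{\fH^{1,\mu}}.
\]

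The main subtlety I anticipate is reconciling the indexing cardinals: $\kappa$ sums over $\fc \leq \fd(\Upsilon(\mu(\M)))$ while $\eps_{\mu,1}$ sums only up to $\fc \leq \fd(\mu(\M))$. These cardinals agree when $\dim \M$ is infinite and differ by at most a finite amount otherwise. The resolution should be that the relevant Shilov-type kernel (the largest ideal whose quotient preserves the operator space structure on $\mu(\M)$, resp. on $\Upsilon(\mu(\M))$) is insensitive to enlarging the family of extremals beyond what is needed for complete isometry; this can in turn be deduced via a cyclic direct-sum reduction using Lemma \ref{L:Frextdirectsum}, which allows one to trade any high-dimensional extremal for a direct sum of extremals of dimension controlled by $\fd(\mu(\M))$.
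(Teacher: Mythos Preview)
Your proposal is correct and follows essentially the same route as the paper: the bijection $\omega\leftrightarrow\Upsilon(\omega)$ via Lemma~\ref{L:UCB}, the identification $\pi_{\Upsilon(\omega)}=\Upsilon(\pi_\omega)$, and the observation that $\I_1(\omega)$ consists of unitaries are exactly the ingredients the paper uses. Your kernel comparison through the first isomorphism theorem is a clean repackaging of the paper's direct manipulation of the summands; the paper instead shows that adjoining the unitary conjugations $V\in\I_1(\omega)$ to $\kappa$ changes it only up to $*$-isomorphism, but the content is the same.

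The one place you overcomplicate matters is the cardinal bookkeeping. There is in fact no discrepancy: since $\aleph_0\dim\M=\aleph_0\dim\Upsilon(\mu(\M))$ (both equal $\aleph_0\max\{1,\dim\M\}$), one has
\[
\fd(\Upsilon(\mu(\M)))=(1+\aleph_0\dim\Upsilon(\mu(\M)))(\dim\Upsilon(\mu(\M)))^2=\aleph_0(\dim\Upsilon(\mu(\M)))^2,
\]
and this equals $\aleph_0(\dim\M)^2=\fd(\M)$ in both the finite- and infinite-dimensional cases. The paper dispatches the issue in one line, and no direct-sum reduction via Lemma~\ref{L:Frextdirectsum} is needed.
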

\begin{proof}
By Subsection \ref{SS:unitization}, Corollary \ref{C:F1} and Lemma \ref{L:UCB}, we see that the $\UCB_1(\Upsilon(\mu(\M)))$-extremal elements are precisely those of the form $\Upsilon(\omega)$ for some $\Pau_1(\mu(\M))$-extremal element $\omega$. 
Hence, we find that
\[
\kappa=\bigoplus_{\fc\leq \fd(\Upsilon(\mu(\M)))}\bigoplus_{\beta\in \B(\Upsilon(\mu(\M)),\fc)} \pi_\beta
\]
is unitarily equivalent to
\[
\bigoplus_{\fc\leq \fd(\Upsilon(\mu(\M)))}\bigoplus_{\omega\in \E_1(\mu,\fc)} \pi_{\Upsilon(\omega)}.
\]
Since $\aleph_0 \dim \M=\aleph_0 \dim \Upsilon(\mu(\M))$ we conclude that $\fd(\Upsilon(\mu(\M)))=\fd(\M)$ and thus
\begin{align*}
UC_e^*(\Upsilon(\mu(\M)))&=C^*\left(\left\{\bigoplus_{\fc\leq \fd(\Upsilon(\mu(\M)))}\bigoplus_{\beta\in \B(\Upsilon(\mu(\M)),\fc)}\beta(b):b\in \Upsilon(\mu(\M))\right\}\right)
\end{align*}
is unitarily equivalent to
\begin{align*}
&C^*\left(\left\{\bigoplus_{\fc\leq  \fd(\M)}\bigoplus_{\omega\in \E_1(\mu,\fc)}\Upsilon(\omega)(b):b\in \Upsilon(\mu(\M))\right\}\right).
\end{align*}
Since $\I_1(\omega)$ consists of unitary operators for every $\omega\in \E_1(\mu,\fc)$, we see that $UC_e^*(\Upsilon(\mu(\M)))$ is $*$-isomorphic to 
\begin{align*}
&C^*\left(\left\{\bigoplus_{\fc\leq  \fd(\M)}\bigoplus_{\omega\in \E_1(\mu,\fc)}\bigoplus_{V\in \I_1(\omega)}V\Upsilon(\omega)(b)V^*:b\in \Upsilon(\mu(\M))\right\}\right)\\
&=\bC I_{\fH^{1,\mu}}+C^*\left(\left\{\bigoplus_{\fc\leq  \fd(\M)}\bigoplus_{\omega\in \E_1(\mu,\fc)}\bigoplus_{V\in \I_1(\omega)}V\omega(a)V^*:a\in \mu(\M)\right\}\right)\\
&=\bC I_{\fH^{1,\mu}}+C^*\left(\left\{\bigoplus_{\fc\leq  \fd(\M)}\bigoplus_{\omega\in \E_1(\mu,\fc)}\bigoplus_{V\in \I_1(\omega)}\pi_{\omega,V}(a):a\in \mu(\M)\right\}\right)\\
&=\bC I_{\fH^{1,\mu}}+C^*_{e,1}(\M,\mu).
\end{align*}

\end{proof}

The outstanding issue that remains to be addressed is the relationship, for a given $\mu$, between the $C^*_{r}$-envelope and the $C^*_{s}$-envelope for $s\geq r$. We elucidate it partially with the following.

\begin{theorem}\label{T:OAscale}
Let $\M$ be an operator space, let $\mu:\M\to B(\H_\mu)$ be a completely isometric linear map and let $s\geq r\geq 1$. Then, 
\[
\eps_{\mu,r}(a)=Y_r \gamma_{s,r}( Y_s^{-1}\eps_{\mu,s}(a) Y_s)Y^{-1}_r, \quad a\in \mu(\M)
\]
where $Y_r\in B(\fH^{r,\mu})$ and $Y_s\in B(\fH^{s,\mu})$ are invertible operators with 
\[
\|Y_r\|=\|Y_r^{-1}\|\leq r^{1/2}, \quad \|Y_s\|=\|Y_s^{-1}\|\leq s^{1/2}
\]
and $\gamma_{s,r}$ is a completely contractive map. 
\end{theorem}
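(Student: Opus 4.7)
The plan is to realize $Y_r$ and $Y_s$ as direct sums of the very invertibles that appear in the assembly of $\eps_{\mu,r}$ and $\eps_{\mu,s}$, and to take $\gamma_{s,r}$ to be a block-diagonal compression that implements a $\Pau_s(\mu(\M))$-extremal dilation of each $\Pau_r(\mu(\M))$-extremal summand. First, set
\[
Y_r = \bigoplus_{\fc\leq\fd(\M)}\bigoplus_{\omega\in\E_r(\mu,\fc)}\bigoplus_{X\in\I_r(\omega)} X \in B(\fH^{r,\mu})
\]
and define $Y_s$ analogously. Each summand is invertible with $\|X\|=\|X^{-1}\|\leq r^{1/2}$, and since operator norm and inversion both commute with orthogonal direct sums we obtain $\|Y_r\|=\|Y_r^{-1}\|\leq r^{1/2}$ and similarly $\|Y_s\|=\|Y_s^{-1}\|\leq s^{1/2}$. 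Using $\pi_{\omega,X}|_{\mu(\M)}=X\omega(\cdot)X^{-1}$, a direct computation gives, for $a\in\mu(\M)$,
\[
Y_r^{-1}\eps_{\mu,r}(a)Y_r = \bigoplus_{(\fc,\omega,X)} \omega(a), \qquad Y_s^{-1}\eps_{\mu,s}(a)Y_s = \bigoplus_{(\fc',\omega',X')} \omega'(a),
\]
where the direct sums are indexed by the triples parametrizing $\fH^{r,\mu}$ and $\fH^{s,\mu}$, respectively.

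Second, for each index $(\fc,\omega,X)$ of $\fH^{r,\mu}$, the containment $\Pau_r(\mu(\M),\fH_\fc)\subset\Pau_s(\mu(\M),\fH_\fc)$ (which follows from the definition of the classes since $r\leq s$) shows that $\omega$ lies in $\Pau_s$. Theorem~\ref{T:Frext} then supplies a $\Pau_s(\mu(\M))$-extremal map $\omega^\sharp\in\Pau_s(\mu(\M),\fH_{\fc^\sharp})$ with $\omega\prec\omega^\sharp$ and $\fc^\sharp\leq(1+\aleph_0\dim\M)\fc$. A short cardinal-arithmetic check (treating the finite- and infinite-dimensional cases separately) confirms $\fc^\sharp\leq\fd(\M)$, so that $\omega^\sharp\in\E_s(\mu,\fc^\sharp)$. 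By Theorem~\ref{T:Frextsimuep}, the set $\I_s(\omega^\sharp)$ is nonempty; choose any $X^\sharp$ in it, and let $V_{(\fc,\omega,X)}:\fH_\fc\to\fH^{s,\mu}$ be the composition of the isometric inclusion $\fH_\fc\hookrightarrow\fH_{\fc^\sharp}$ coming from $\omega\prec\omega^\sharp$ with the inclusion of the summand indexed by $(\fc^\sharp,\omega^\sharp,X^\sharp)$. Then $V_{(\fc,\omega,X)}$ is an isometry and by construction
\[
V_{(\fc,\omega,X)}^* \bigl(Y_s^{-1}\eps_{\mu,s}(a)Y_s\bigr) V_{(\fc,\omega,X)} = \omega(a), \qquad a\in\mu(\M).
\]

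Finally, define $\gamma_{s,r}:B(\fH^{s,\mu})\to B(\fH^{r,\mu})$ by the block-diagonal formula
\[
\gamma_{s,r}(T) = \bigoplus_{(\fc,\omega,X)} V_{(\fc,\omega,X)}^* T \, V_{(\fc,\omega,X)}.
\]
Each summand is a unital completely positive compression, and a direct product of such maps followed by inclusion into the block-diagonal sub-$C^*$-algebra of $B(\fH^{r,\mu})$ is again unital and completely positive, hence completely contractive. Substituting and invoking the previous identity yields
\[
\gamma_{s,r}\bigl(Y_s^{-1}\eps_{\mu,s}(a)Y_s\bigr) = \bigoplus_{(\fc,\omega,X)}\omega(a) = Y_r^{-1}\eps_{\mu,r}(a)Y_r,
\]
and conjugation by $Y_r$ produces the claimed equation. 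The main technical obstacle is the cardinality bookkeeping that keeps $\fc^\sharp\leq\fd(\M)$, so that the $\Pau_s$-extremal dilation $\omega^\sharp$ actually enters into the canonical construction of $\eps_{\mu,s}$ and the compression $V_{(\fc,\omega,X)}^*(\cdot)V_{(\fc,\omega,X)}$ genuinely reads off the summand of $Y_s^{-1}\eps_{\mu,s}(a)Y_s$ we need.
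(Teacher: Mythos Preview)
Your proof is correct and follows essentially the same strategy as the paper: define $Y_r,Y_s$ as the direct sums of the invertibles $X$ appearing in the construction of $\eps_{\mu,r},\eps_{\mu,s}$, dilate each $\Pau_r$-extremal $\omega$ to a $\Pau_s$-extremal via Theorem~\ref{T:Frext}, verify the cardinality stays below $\fd(\M)$, and build $\gamma_{s,r}$ from the corresponding compressions. The only real difference is cosmetic: the paper defines $\gamma_{s,r}$ only on the range $\iota_{\mu,s}(\mu(\M))$, whereas you extend it to a unital completely positive map on all of $B(\fH^{s,\mu})$, which makes the complete contractivity immediate and is arguably cleaner.
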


\begin{proof}
For notational convenience, we define $\iota_{\mu,r}:\mu(\M)\to B(\fH^{r,\mu})$ as
\[
\iota_{\mu,r}(a)=\bigoplus_{\fc\leq \fd(\M)}\bigoplus_{\omega\in \E_r(\mu,\fc)} \bigoplus_{X\in \I_r(\omega)}\omega(a), \quad a\in \mu(\M).
\]
 Let $\fc\leq \fd(\M)$ and let $\omega\in \E_r(\mu,\fc)$. Then $\omega \in \Pau_s(\mu(\M),\fH_\fc)$. By virtue of Theorem \ref{T:Frext}, there is a Hilbert space $\K_{\omega}$ containing $\fH_\fc$ with
\[
\dim \K_\omega\leq (1+\aleph_0\dim \M)\fc\leq  (1+\aleph_0\dim \M)\fd(\M)=\fd(\M)
\]
along with a linear map $\zeta_\omega\in \Pau_s(\mu(\M),\K_{\omega})$ that is $\Pau_s(\mu(\M))$-extremal and such that
\[
\omega(a)=P_{\fH_\fc}\zeta_\omega(a)|_{\fH_\fc}, \quad a\in \mu(\M).
\]
There is a cardinal number $\fc_\omega \leq \fd(\M)$ and a unitary operator $U_\omega:\K_{\omega}\to \fH_{\fc_\omega}$ such that  $\zeta'_\omega\in \E_s(\mu, \fH_{\fc_\omega})$, where
\[
\zeta'_\omega(a)=U_\omega \zeta_\omega(a) U_\omega^*,\quad a\in \mu(\M).
\] 
We note that 
\[
\cup_{\fc\leq \fd(\M)}\{\zeta'_\omega:\omega\in \E_r(\mu,\fc)\}\subset \cup_{\fc\leq \fd(\M)}\E_s(\mu,\fc).
\]
We can then define a completely contractive surjective linear map
\[
\gamma_{s,r}:\iota_{\mu,s}(\mu(\M))\to \iota_{\mu,r}(\mu(\M))
\]
as
\[
\gamma_{s,r}\left( \bigoplus_{\fc\leq \fd(\M)}\bigoplus_{\zeta\in \E_s(\mu,\fc)}\bigoplus_{X\in \I_s(\zeta)}\zeta(a)\right)=
\bigoplus_{\fc\leq \fd(\M)}\bigoplus_{\omega\in \E_r(\mu,\fc)} \bigoplus_{X\in \I_r(\omega)}P_{\fH_{\fc}}U^*_\omega\zeta'_\omega(a)U_\omega|_{\fH_{\fc}}
\]
for every $a\in \mu(\M)$. Note then that $\gamma_{s,r} \circ \iota_{\mu,s}=\iota_{\mu,r}$ on $\mu(\M)$.
Put
\[
Y_r=\bigoplus_{\fc\leq \fd(\M)}\bigoplus_{\omega\in \E_r(\mu,\fc)}\bigoplus_{X\in \I_r(\omega)} X
\]
\[
Y_s=\bigoplus_{\fc\leq \fd(\M)}\bigoplus_{\zeta\in \E_s(\mu,\fc)}\bigoplus_{X\in \I_s(\zeta)}X
\] 
which satisfy 
\[
\|Y_r\|=\|Y_r^{-1}\|\leq r^{1/2}, \quad \|Y_s\|=\|Y_s^{-1}\|\leq s^{1/2}.
\]
For $a\in \mu(\M)$ we have
\[
Y_r\iota_{\mu,r}(a)Y_r^{-1}=\eps_{\mu,r}(a), \quad Y_s\iota_{\mu,s}(a)Y_s^{-1}=\eps_{\mu,s}(a)
\]
whence
\begin{align*}
\eps_{\mu,r}(a)&=Y_r\iota_{\mu,r}(a)Y_r^{-1}=Y_r \gamma_{s,r}(\iota_{\mu,s}(a))Y_r^{-1}\\
&=Y_r \gamma_{s,r}(Y_s^{-1}\eps_{\mu,s}(a)Y_s)Y_r^{-1}.
\end{align*}
%Define $\Gamma_{s,r}:\eps_{\mu,s}(\mu(\M))\to \eps_{\mu,r}(\mu(\M))$ as
%\[
%\Gamma_{s,r}(\eps_{\mu,s}(a))=Y_r \gamma_{s,r}( Y_s^{-1}\eps_{\mu,s}(a) Y_s)Y^{-1}_r, \quad a\in \mu(\M).
%\]
%Then, we have $\Gamma_{s,r}\circ\eps_{\mu,s}=\eps_{\mu,r}$ on $\mu(\M)$. It is clear that $\Gamma_{s,r}$ is surjective, and moreover we see that $\|\Gamma_{s,r}\|_{cb}\leq rs$.
\end{proof}

The relationship between the $C^*_1$-envelope and the $C^*_r$-envelope is clearer, as the next result shows. 

\begin{theorem}\label{T:C1Cr}
Let $\M$ be an operator space, let $\mu:\M\to B(\H_\mu)$ be a completely isometric linear map and let $r\geq 1$. Then, there is a surjective $*$-homomorphism $\Gamma_{r}:C^*_{e,1}(\M,\mu)\to C^*_{e,r}(\M,\mu)$ such that $\Gamma_{r}\circ \eps_{\mu,1}=\eps_{\mu,r}$.
\end{theorem}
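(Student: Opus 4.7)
The plan is to show that $\ker \eps_{\mu,1} \subseteq \ker \eps_{\mu,r}$; once this inclusion is established, the surjective $*$-homomorphism
\[
\Gamma_r: C^*_{e,1}(\M,\mu) \to C^*_{e,r}(\M,\mu)
\]
is obtained from the universal property of the quotient, and the identity $\Gamma_r\circ \eps_{\mu,1}=\eps_{\mu,r}$ holds by construction.

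To verify the inclusion of kernels, the key observation is that every direct summand appearing in the construction of $\eps_{\mu,r}$ also appears among the direct summands of $\eps_{\mu,1}$. Fix a cardinal $\fc\leq \fd(\M)$, an element $\omega \in \E_r(\mu,\fc)$, and an invertible operator $X\in \I_r(\omega)$. Set $\omega_X(a)=X\omega(a)X^{-1}$ for $a\in \mu(\M)$. By definition of $\I_r(\omega)$, the map $\omega_X$ lies in $\Pau_1(\mu(\M),\fH_\fc)$, and Corollary \ref{C:Frextsimbdry} ensures that $\omega_X$ is moreover $\Pau_1(\mu(\M))$-extremal. Hence $\omega_X\in \E_1(\mu,\fc)$. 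Since the identity operator $I$ on $\fH_\fc$ clearly belongs to $\I_1(\omega_X)$, the $*$-homomorphism $\pi_{\omega_X, I}$ is one of the direct summands constituting $\eps_{\mu,1}$.

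The heart of the argument is a uniqueness observation: both $\pi_{\omega,X}$ (coming from $\eps_{\mu,r}$) and $\pi_{\omega_X, I}$ (coming from $\eps_{\mu,1}$) are $*$-homomorphisms $C^*(\mu(\M))\to B(\fH_\fc)$ that agree with $\omega_X$ on $\mu(\M)$. By the uniqueness part of Theorem \ref{T:Frextsimuep} (applied to the $\Pau_1(\mu(\M))$-extremal element $\omega_X$, which by Corollary \ref{C:F1} has the unique extension property relative to $\CP_1(\mu(\M))$), we conclude $\pi_{\omega,X}=\pi_{\omega_X,I}$.

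Now let $b\in \ker \eps_{\mu,1}$; then $\pi_{\zeta,V}(b)=0$ for every summand $(\fc,\zeta,V)$ appearing in the construction of $\eps_{\mu,1}$. In particular, taking $(\fc,\zeta,V)=(\fc,\omega_X,I)$ for each admissible $(\omega,X)$, we obtain $\pi_{\omega,X}(b)=\pi_{\omega_X,I}(b)=0$. Taking the direct sum over all indices yields $\eps_{\mu,r}(b)=0$, so $b\in \ker \eps_{\mu,r}$. This proves $\ker \eps_{\mu,1}\subseteq \ker \eps_{\mu,r}$, and the theorem follows. No step here is particularly delicate: the one place where the machinery of the paper does real work is the identification $\pi_{\omega,X}=\pi_{\omega_X,I}$, which crucially depends on the unique extension property enjoyed by $\Pau_r$-extremal elements.
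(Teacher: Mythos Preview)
Your proof is correct and follows essentially the same route as the paper: you use Corollary~\ref{C:Frextsimbdry} to show that each $\omega_X$ with $\omega\in\E_r(\mu,\fc)$ and $X\in\I_r(\omega)$ lies in $\E_1(\mu,\fc)$, observe that $I\in\I_1(\omega_X)$, and then identify $\pi_{\omega,X}=\pi_{\omega_X,I}$ so that every summand of $\eps_{\mu,r}$ already occurs in $\eps_{\mu,1}$. The only cosmetic difference is that the paper builds $\Gamma_r$ explicitly as a compression onto the relevant summands followed by a unitary rearrangement, whereas you phrase the conclusion as a kernel inclusion $\ker\eps_{\mu,1}\subseteq\ker\eps_{\mu,r}$ and invoke the universal property of the quotient; also, the identity $\pi_{\omega,X}=\pi_{\omega_X,I}$ is in fact immediate from the very definition of $\pi_{\omega,X}$ (both are the unique $*$-homomorphism extending $\omega_X$), so your appeal to Theorem~\ref{T:Frextsimuep} here is slightly more than needed.
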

\begin{proof}
Let $\fc$ be a cardinal number satisfying $\fc\leq \fd(\M)$, let $\omega\in \E_r(\mu,\fc)$ and let $X\in \I_r(\omega)$. Then, we have $\omega_X\in \E_1(\mu,\fc)$ by Corollary \ref{C:Frextsimbdry}. Moreover, we note that if $\zeta\in \E_1(\mu,\fc)$, then $\I_1(\zeta)$ coincides with the set of unitary operators on $\fH_\fc$, so in particular it contains the identity $I$. Thus, we see that there is an element  $\zeta(\omega,X)\in \E_1(\mu,\fc)$ such that 
\[
\pi_{\zeta(\omega,X),I}=\pi_{\omega,X}.
\]
Put
\[
\Pi=\left\{ \zeta(\omega,X):\omega\in \E_r(\mu,\fc),X\in \I_r(\omega)\right\}\subset \E_1(\mu,\fc).
\]
We define a map $\gamma_r$ on $C^*_{e,1}(\M,\mu)$ to be the corresponding compression, so that
\[
\gamma_{r}\left( \bigoplus_{\fc\leq \fd(\M)}\bigoplus_{\zeta\in \E_1(\mu,\fc)}\bigoplus_{U\in \I_1(\zeta)}\pi_{\zeta,U}(a)\right)=\bigoplus_{\fc\leq \fd(\M)}\bigoplus_{\zeta\in \Pi}\pi_{\zeta,I}(a)
\]
for every $a\in C^*(\mu(\M))$. Clearly, $\gamma_r$ is a $*$-homomorphism. By choice of $\Pi$, there is a unitary operator $V$ such that
\[
V^*\gamma_{r}\left( \bigoplus_{\fc\leq \fd(\M)}\bigoplus_{\zeta\in \E_1(\mu,\fc)}\bigoplus_{U\in \I_1(\zeta)}\pi_{\zeta,U}(a)\right)V=
\bigoplus_{\fc\leq \fd(\M)}\bigoplus_{\omega\in \E_r(\mu,\fc)}\bigoplus_{X\in \I_r(\omega)}\pi_{\omega,X}(a)
\]
for every $a\in C^*(\mu(\M))$.
Define $\Gamma_r:C_{e,1}^*(\M,\mu)\to C_{e,r}^*(\M,\mu)$ as 
\[
\Gamma_r(b)=V^*\gamma_r(b)V, \quad b\in  C_{e,1}^*(\M,\mu).
\]
 Then, we see that $\Gamma_r$ is a $*$-homomorphism and that $\Gamma_{r}\circ \eps_{\mu,1}=\eps_{\mu,r}$. In particular $\Gamma_r$ is surjective.

\end{proof}

The question arises whether there is a $*$-isomorphism 
\[
\pi:C_{e,r}^*(\M,\mu)\to C_{e,1}^*(\M,\mu)
\]
satisfying $\pi\circ \eps_{\mu,r}=\eps_{\mu,1}$ on $\mu(\M)$. It is not immediately clear what the answer should be, especially in view of Examples \ref{E:extsim1} and \ref{E:extsim2}. Nevertheless, we suspect that this $*$-isomorphism does exist, although we cannot prove it in general. We can however establish the following related fact.

\begin{theorem}\label{T:conjequiv}
The following statements hold.
\begin{enumerate}

\item[\rm{(1)}] Let $\M$ be an operator space, let $\mu:\M\to B(\H_\mu)$ be a completely isometric linear map and let $r\geq 1$. Assume that the map $\eps_{\mu,r}:\mu(\M)\to \eps_{\mu,r}(\mu(\M))$ is a $\Pau$-isomorphism. Then, there is a $*$-isomorphism $\pi:C_{e,r}^*(\M,\mu)\to C^*_{e,1}(\M,\mu)$ satisfying $\pi\circ \eps_{\mu,r}=\eps_{\mu,1}$ on $\mu(\M)$. 

\item[\rm{(2)}] Let $\A$ be an operator algebra, let $\alpha:\A\to B(\H_\alpha)$ be a completely isometric homomorphism and let $r\geq 1$. Assume that there is a $*$-isomorphism $\pi:C_{e,r}^*(\A,\alpha)\to C^*_{e,1}(\A,\alpha)$ satisfying $\pi\circ \eps_{\alpha,r}=\eps_{\alpha,1}$ on $\alpha(\A)$. Then, the map $\eps_{\alpha,r}:\alpha(\A)\to \eps_{\alpha,r}(\alpha(\A))$ is a $\Pau$-isomorphism.
\end{enumerate}
\end{theorem}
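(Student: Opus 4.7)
For part (1), my plan is to exhibit an inverse to the surjective $*$-homomorphism $\Gamma_r$ furnished by Theorem \ref{T:C1Cr}. Regarding $\eps_{\mu,r}(\mu(\M))$ as an operator space sitting in $B(\fH^{r,\mu})$ via its inclusion, the hypothesis states that $\tau:=\eps_{\mu,r}|_{\mu(\M)}:\mu(\M)\to \eps_{\mu,r}(\mu(\M))$ is a $\Pau$-isomorphism between concretely represented operator spaces. I will apply Corollary \ref{C:C*envuniv}, with its own parameter set to $1$, to this $\tau$, producing a surjective $*$-homomorphism
\[
\rho:C^*(\eps_{\mu,r}(\mu(\M)))\longrightarrow C^*_{e,1}(\M,\mu)
\]
such that $\rho\circ \eps_{\mu,r}=\eps_{\mu,1}$ on $\mu(\M)$. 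Since $\eps_{\mu,r}$ is a $*$-homomorphism on $C^*(\mu(\M))$, its image $C^*_{e,r}(\M,\mu)$ equals $C^*(\eps_{\mu,r}(\mu(\M)))$, so $\rho$ may be regarded as a $*$-homomorphism $C^*_{e,r}(\M,\mu)\to C^*_{e,1}(\M,\mu)$. I will then verify that $\rho\circ \Gamma_r$ and $\Gamma_r\circ \rho$ are $*$-endomorphisms of the respective envelopes that coincide with the identity on the generating subspaces $\eps_{\mu,1}(\mu(\M))$ and $\eps_{\mu,r}(\mu(\M))$, which forces $\pi:=\rho$ to be a two-sided inverse of $\Gamma_r$ and hence the desired $*$-isomorphism. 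I do not anticipate any serious obstacle here.

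For part (2), my plan is to exploit the decomposition $\eps_{\alpha,r}=\pi^{-1}\circ \eps_{\alpha,1}$ on $\alpha(\A)$, together with the symmetric decomposition $\eps_{\alpha,r}^{-1}=\eps_{\alpha,1}^{-1}\circ \pi$ on $\eps_{\alpha,r}(\alpha(\A))$, and to show that each factor is already a $\Pau$-isomorphism. A double application of Corollary \ref{C:Frcomp}, to both the forward and the inverse map, will then deliver the conclusion. That $\pi$ restricts to a $\Pau$-isomorphism from $\eps_{\alpha,r}(\alpha(\A))$ onto $\eps_{\alpha,1}(\alpha(\A))$ is immediate, since $\pi$ and $\pi^{-1}$ are themselves $*$-homomorphisms of the ambient $C^*$-envelopes and therefore furnish completely positive extensions in both directions.

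The substantive step, and the one where the operator algebra hypothesis on $\A$ is essential, is to verify that $\eps_{\alpha,1}|_{\alpha(\A)}:\alpha(\A)\to \eps_{\alpha,1}(\alpha(\A))$ is itself a $\Pau$-isomorphism. By Corollary \ref{C:embedding} this restriction is completely isometric, and because $\eps_{\alpha,1}$ is a $*$-homomorphism it becomes a completely isometric algebra isomorphism between two operator algebras. The forward direction lies in $\Pau_1$ trivially, since the $*$-homomorphism $\eps_{\alpha,1}$ on $C^*(\alpha(\A))$ already provides the required completely positive extension. For the inverse direction, my plan is to invoke Corollary \ref{C:hom} with constant $r^{1/2}=1$ (that is, with $X=I$) applied to the completely contractive homomorphism $\eps_{\alpha,1}^{-1}:\eps_{\alpha,1}(\alpha(\A))\to B(\H_\alpha)$; this dilates $\eps_{\alpha,1}^{-1}$ to a $*$-homomorphism on $C^*_{e,1}(\A,\alpha)$, whose compression back to $\H_\alpha$ is a contractive completely positive extension, so $\eps_{\alpha,1}^{-1}\in \Pau_1$ by Corollary \ref{C:F1}. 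The main delicate point I anticipate is ensuring that the various ambient Hilbert spaces match up correctly when chaining Corollary \ref{C:Frcomp}, but beyond this bookkeeping no substantial obstruction is expected.
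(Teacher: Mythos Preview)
Your proposal is correct and follows essentially the same approach as the paper. For part (1) you combine Corollary~\ref{C:C*envuniv} (applied with parameter $1$ to the $\Pau$-isomorphism $\eps_{\mu,r}|_{\mu(\M)}$) with Theorem~\ref{T:C1Cr} and check that the two resulting $*$-homomorphisms are mutual inverses on generators; the paper does the same, though it only verifies $\Gamma_r\circ\rho=\id$ and deduces injectivity of $\rho$ from that alone. For part (2) you factor $\eps_{\alpha,r}=\pi^{-1}\circ\eps_{\alpha,1}$, observe that $\eps_{\alpha,1}$ is a completely isometric algebra isomorphism (hence a $\Pau$-isomorphism, exactly via Corollary~\ref{C:hom} as you indicate for the inverse), and conclude; the paper proceeds identically but invokes the remark preceding Lemma~\ref{L:Frisom} rather than spelling out the appeal to Corollaries~\ref{C:hom} and~\ref{C:Frcomp} explicitly.
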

\begin{proof}
(1) Recall that $C^*_{e,r}(\M,\mu)=C^*(\eps_{\mu,r}(\mu(\M)))$. Since $\eps_{\mu,r}:\mu(\M)\to \eps_{\mu,r}(\mu(\M))$ is a $\Pau$-isomorphism, we may apply  Corollary \ref{C:C*envuniv} to obtain a surjective $*$-homomorphism $\rho:C^*_{e,r}(\M,\mu)\to C_{e,1}^*(\M,\mu)$ such that $\rho\circ \eps_{\mu,r}=\eps_{\mu,1}$ on $\mu(\M)$. On the other hand, by Theorem \ref{T:C1Cr} there is a surjective $*$-homomorphism $\Gamma_r: C^*_{e,1}(\M,\mu)\to C_{e,r}^*(\M,\mu)$ such that $\Gamma_r\circ \eps_{\mu,1}=\eps_{\mu,r}$ on $\mu(\M)$. We now see that
\[
(\Gamma_r\circ \rho)(\eps_{\mu,r}(a))=\eps_{\mu,r}(a), \quad a\in \mu(\M).
\]
We conclude that
\[
\Gamma_r\circ \rho(a)=a, \quad a\in C^*_{e,r}(\M,\mu).
\]
Hence, $\rho$ is injective as desired.

(2) By Corollary \ref{C:embedding}, we see that $\eps_{\alpha,1}:\alpha(\A)\to \eps_{\alpha,1}(\alpha(\A))$ is a completely isometric algebra isomorphism, and thus $\eps_{\alpha,1}$ is a $\Pau$-isomorphism. On the other hand, we see that $\eps_{\alpha,r}=\pi^{-1}\circ \eps_{\alpha,1}$ on $\alpha(\A)$. Since $\pi$ is a $*$-isomorphism, we infer that $\eps_{\alpha,r}:\alpha(\A)\to \eps_{\alpha,r}(\alpha(\A))$ is a $\Pau$-isomorphism as well.\end{proof}

Thus, for an operator algebra $\A$ and a completely isometric homomorphism $\alpha$  on it, whether the map $\eps_{\alpha,r}:\alpha(\A)\to \eps_{\alpha,r}(\alpha(\A))$ is a $\Pau$-isomorphism is equivalent to the existence of a $*$-isomorphism $\pi:C_{e,r}^*(\A,\alpha)\to C_{e,1}^*(\A,\alpha)$ satisfying some additional natural condition. In general, we make the following conjecture.

\vspace{.5cm}

\textbf{Conjecture. }Let $\M$ be an operator space, let $\mu:\M\to B(\H_\mu)$ be a completely isometric linear map and let $r\geq 1$. Then, the $C^*$-algebras $C_{e,1}^*(\M,\mu)$ and $C_{e,r}^*(\M,\mu)$ are $*$-isomorphic.

\vspace{.5cm}

We close the paper by verifying this conjecture in some cases of interest. We already observed that if $C^*(\mu(\M))$ is simple, then $C_{e,r}^*(\M,\mu)$ and $C^*(\mu(\M))$ are $*$-isomorphic for every $r\geq 1$. On the other hand, if we assume that $C^*(\mu(\M))$ contains the ideal $\K$ of compact operators on $\H_\mu$, then we can also achieve some partial success. 
The following can be viewed as a variation of an important fact called \emph{Arveson's boundary theorem} \cite{arveson1972}. 

\begin{theorem}\label{T:Crbdrythm}
Let $\M$ be an operator space and let $\mu:\M\to B(\H_\mu)$ be a completely isometric linear map such that $C^*(\mu(\M))$ contains the ideal $\K$ of compact operators on $\H_\mu$. Let $q:C^*(\mu(\M))\to C^*(\mu(\M))/\K$ denote the quotient map. Let $r\geq 1$ and assume that there is an integer $n\in \bN$ and an element $a\in M_n(\mu(\M))$ such that $\|q^{(n)}(a)\|<r^{-1}\|a\|$. Then, the map $\eps_{\mu,r}:C^*(\mu(\M))\to C_{e,r}^*(\M,\mu)$ is a $*$-isomorphism.
\end{theorem}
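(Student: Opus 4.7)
The plan is to show that $\J_{\mu,r} := \ker \eps_{\mu,r}$ is the zero ideal; since $\eps_{\mu,r}$ is surjective onto $C_{e,r}^*(\M,\mu)$ by construction, this will give the desired $*$-isomorphism. To this end, I would exploit the familiar dichotomy for closed two-sided ideals in a $C^*$-algebra containing the compacts $\K$: the intersection $\J_{\mu,r}\cap \K$ is a closed ideal of $\K$, hence is either $\{0\}$ or $\K$.

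First I would rule out the case $\J_{\mu,r}\cap \K = \{0\}$ when $\J_{\mu,r}\ne \{0\}$. If $\J_{\mu,r}\cap \K=\{0\}$, then $jk=0$ for every $j\in\J_{\mu,r}$ and $k\in \K$. Applying this to rank-one projections $P_\xi$ onto $\bC\xi$ for arbitrary $\xi\in\H_\mu$ yields $j\xi=0$, and thus $j=0$. Hence $\J_{\mu,r}\ne 0$ forces $\K\subset\J_{\mu,r}$.

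The heart of the argument is then to derive a contradiction from $\K\subset \J_{\mu,r}$. In that case $\eps_{\mu,r}$ vanishes on $\K$, so it factors through the quotient map $q$: there exists a $*$-homomorphism $\widetilde{\eps}:C^*(\mu(\M))/\K\to C^*_{e,r}(\M,\mu)$ with $\eps_{\mu,r}=\widetilde{\eps}\circ q$. Since $*$-homomorphisms are completely contractive, this yields
\[
\|\eps_{\mu,r}^{(n)}(a)\|\leq \|q^{(n)}(a)\|<r^{-1}\|a\|
\]
for the particular $a\in M_n(\mu(\M))$ provided by the hypothesis. On the other hand, Corollary \ref{C:embedding} guarantees $r^{-1}\|a\|\leq \|\eps_{\mu,r}^{(n)}(a)\|$, a contradiction. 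Therefore $\J_{\mu,r}=\{0\}$, so $\eps_{\mu,r}$ is a $*$-isomorphism onto $C_{e,r}^*(\M,\mu)$.

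The whole proof hinges on the lower bound in Corollary \ref{C:embedding}, which is exactly the ingredient that encodes the existence of sufficiently many $\Pau_r$-extremal elements that completely norm $\mu(\M)$ up to the factor $r^{-1}$; no further technical obstacle arises once this is in hand.
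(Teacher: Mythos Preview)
Your argument is correct and reaches the same conclusion via the same key ingredient, Corollary~\ref{C:embedding}. The paper's own proof is slightly different in packaging: rather than analyzing $\ker\eps_{\mu,r}$ directly, it invokes the standard structure theory for representations of $C^*$-algebras containing $\K$, writing $\eps_{\mu,r}$ (up to unitary equivalence) as $\id^{(\fc)}\oplus(\sigma\circ q)$ for some cardinal $\fc$ and some $*$-homomorphism $\sigma$ on $C^*(\mu(\M))/\K$; the hypothesis together with Corollary~\ref{C:embedding} then forces $\fc\neq 0$, which gives injectivity. Your ideal-theoretic version is more self-contained: you bypass the representation decomposition and argue directly that a nonzero kernel must swallow $\K$ (using simplicity of $\K$ and the fact that an ideal disjoint from $\K$ annihilates every rank-one projection), and then derive the same contradiction. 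Both routes are equivalent in substance; yours trades a citation to representation theory for a short elementary computation.
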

\begin{proof}
Basic representation theory for $C^*$-algebras \cite{dixmier1977} stipulates that the $*$-homo\-morphism 
\[
\eps_{\mu,r}:C^*(\mu(\M))\to B(\fH^{r,\mu})
\]
is unitarily equivalent to 
\[
\text{id}^{(\fc)}\oplus \sigma\circ q
\]
for some cardinal number $\fc$ and some $*$-homomorphism $\sigma$ on $C^*(\mu(\M))/\K$. By assumption, we see that $\|(\sigma\circ q)^{(n)}(a)\|<r^{-1}\|a\|$. In view of Corollary \ref{C:embedding}, we conclude that $\fc\neq 0$. In particular $\eps_{\mu,r}$ is injective and thus 
\[
\eps_{\mu,r}:C^*(\mu(\M))\to C_{e,r}^*(\M,\mu)
\]
is a $*$-isomorphism.
\end{proof}

Many important classical examples of operator spaces fit into the framework of the previous theorem, such as the higher-dimensional Toeplitz algebra of multivariate operator theory.

\begin{example}\label{E:Ad}
Fix a positive integer $d\geq 2$ and let $\bB_d\subset \bC^d$ denote the open unit ball. The \emph{Drury-Arveson space} $H^2_d$ is the reproducing kernel Hilbert space on $\bB_d$ with reproducing kernel given by the formula
\[
k(z,w)=\frac{1}{1-\langle z,w\rangle_{\bC^d}}, \quad z,w\in \bB_d.
\] 
This is a Hilbert space of holomorphic functions on $\bB_d$, and it is a very natural higher-dimensional analogue of the classical Hardy space on the unit disc. It can be identified with the symmetric Fock space over $\bC^d$ \cite{arveson1998}, \cite{davidson1998}. 

A function $\phi:\bB_d\to \bC$ is a \emph{multiplier} for $H^2_d$ if $\phi f\in H^2_d$ for every $f\in H^2_d$. Examples of such functions include the holomorphic polynomials in $d$ variables.  Now, every multiplier $\phi$ gives rise to a bounded linear multiplication operator $M_\phi\in B(H^2_d)$, and the identification $\phi\mapsto M_\phi$ allows us to view the multiplier algebra as an operator algebra on $H^2_d$. The book \cite{agler2002} is an excellent reference on these topics.

Let $\A_d\subset B(H^2_d)$ denote the norm closure of the polynomial multipliers. This algebra is of tremendous importance in multivariate operator theory \cite{arveson1998},\cite{popescu1991},\cite{popescu2006}, \cite{CD2016abscont} and is also the target of intense research in function theory \cite{costea2011},\cite{DRS2015},\cite{CD2016duality}. One of the distinguishing features of $\A_d$ is that it is not a uniform algebra, in the sense that the norm of a multiplier does not coincide with its supremum norm over $\bB_d$. 

In fact, something more precise is known to hold. The so-called \emph{Toeplitz algebra} $\fT_d=C^*(\A_d)$ contains the ideal $\K$ of compact operators on $H^2_d$, and the quotient $\fT_d/\K$ is $*$-isomorphic to the $C^*$-algebra of continuous functions on the unit sphere \cite[Theorem 5.7]{arveson1998}. Moreover, since $d\geq 2$ the quotient map $q:\fT_d\to \fT_d/\K$ is not bounded below on $\A_d$ (see \cite[Theorem 3.3]{arveson1998} or \cite[Theorem 2.4]{davidson1998}). In particular, we see that $C_{e,r}^*(\A_d, \text{id})$ is $*$-isomorphic to $\fT_d$ for every $r\geq 1$, by Theorem \ref{T:Crbdrythm}.

\end{example}

Crucial to the preceding example was the fact that quotient map $q:\fT_d\to \fT_d/\K$ is not bounded below on $\A_d$, which fails when $d=1$. The algebra $\A_1$ is simply the usual disc algebra consisting of  the holomorphic functions on the open unit disc which extend to be continuous on the closed disc.  The identification of the $C^*_r$-envelope of $\A_1$ requires some classical uniform algebra machinery. In fact, we obtain the following more general result.

\begin{theorem}\label{T:Crcomm}
Let $X$ be a compact metrizable space and let $\A\subset C(X)$ be a closed unital subalgebra which separates the points of $X$. Let $\Sigma_\A\subset X$ denote the Shilov boundary of $\A$. Let $\alpha:\A\to B(\H_\alpha)$ be a completely isometric algebra homomorphism.  Then, for each $r\geq 1$ the $C^*$-algebras $C^*_{e,r}(\A,\alpha)$ and $C(\Sigma_\A)$ are $*$-isomorphic.
\end{theorem}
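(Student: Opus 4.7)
The plan is to reduce the problem to a canonical ``boundary'' representation and then close the argument with the classical machinery of uniform algebras. Define $\iota:\A\to C(\Sigma_\A)$ to be the restriction-to-boundary map. The defining property of the Shilov boundary makes $\iota$ isometric, and in fact completely isometric: for $F\in M_n(\A)$ the norm $\|F(x)\|_{M_n}$ equals the supremum of $|v^*F(x)w|$ over unit vectors $v,w\in\bC^n$, and each scalar function $v^*Fw$ lies in $\A$ hence attains its sup-norm on $\Sigma_\A$, so swapping suprema gives $\|F\|_{M_n(C(X))}=\|F|_{\Sigma_\A}\|_{M_n(C(\Sigma_\A))}$. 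Fix a faithful unital $*$-representation $\pi:C(\Sigma_\A)\to B(K)$ and set $\beta=\pi\circ\iota$. Then $\beta\circ\alpha^{-1}:\alpha(\A)\to\beta(\A)$ is a bijective completely isometric algebra isomorphism, and therefore a $\Pau$-isomorphism by Corollary \ref{C:hom} together with the remark preceding Lemma \ref{L:Frisom}. Theorem \ref{T:C*envinv} then reduces the task to proving $C^*_{e,r}(\A,\beta)\cong C(\Sigma_\A)$. Because $\iota(\A)$ contains the constants and separates points of $\Sigma_\A$, Stone-Weierstrass gives $C^*(\beta(\A))=\pi(C(\Sigma_\A))\cong C(\Sigma_\A)$, so it suffices to show that the surjective $*$-homomorphism $\eps_{\beta,r}:C(\Sigma_\A)\to C^*_{e,r}(\A,\beta)$ is injective.

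Since $C(\Sigma_\A)$ is commutative, $\ker\eps_{\beta,r}$ corresponds to a unique closed subset $F\subset\Sigma_\A$, and under the resulting identification $\|\eps_{\beta,r}(f)\|=\|f|_F\|_\infty$. Corollary \ref{C:embedding} supplies the key lower bound $\|a|_F\|_\infty\geq r^{-1}\|a\|_\infty$ for every $a\in\iota(\A)$. I would then invoke the Bishop--de Leeuw theorem, which in the compact metric setting identifies $\Ch(\A)$ with the set of peak points of $\A$ and asserts that $\Ch(\A)$ is dense in $\Sigma_\A$. Fix $\xi\in\Ch(\A)$ and a peak function $g\in\A$ with $g(\xi)=1$ and $|g(x)|<1$ for $x\neq\xi$. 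If $\xi\notin F$, pick a neighborhood $U$ of $\xi$ in $\Sigma_\A$ disjoint from $F$; by compactness $M:=\max_{x\in\Sigma_\A\setminus U}|g(x)|<1$, so $\|g^n|_F\|_\infty\leq M^n\to 0$, contradicting the inequality $\|g^n|_F\|_\infty\geq r^{-1}\|g^n\|_\infty=r^{-1}$. Hence $\Ch(\A)\subset F$; as $F$ is closed this forces $F=\Sigma_\A$, so $\eps_{\beta,r}$ is an isomorphism and the theorem follows.

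The main obstacle is reconciling the weakened lower bound from Corollary \ref{C:embedding} with the classical Shilov norming condition: the factor $r$ leaves open the possibility that $F\subsetneq\Sigma_\A$, and it is precisely the exponential concentration of iterated peak functions $g^n$ that rules this out. Metrizability of $X$ enters essentially at this step through Bishop--de Leeuw, which ensures that peak points constitute a dense subset of the Shilov boundary. Apart from that, the argument is mostly bookkeeping, translating the given $\alpha$ into the canonical $\beta$ by means of the invariance furnished by Theorem \ref{T:C*envinv}.
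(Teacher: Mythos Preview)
Your argument is correct and follows essentially the same route as the paper: reduce via Theorem \ref{T:C*envinv} to the restriction representation on $C(\Sigma_\A)$, identify the kernel of $\eps_{\cdot,r}$ with a closed subset of $\Sigma_\A$, and use iterated peak functions together with the lower bound of Corollary \ref{C:embedding} to force that subset to contain every peak point and hence equal $\Sigma_\A$. Your version is slightly more explicit in two places (the argument that restriction is completely isometric, and the insertion of a faithful $\pi$ so that the codomain is literally some $B(K)$), but these are cosmetic refinements rather than a different strategy.
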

\begin{proof}
Let $\sigma:\A\to C(\Sigma_\A)$ be the completely isometric algebra homomorphism given by restriction. Since completely isometric algebra isomorphisms are $\Pau$-isomorphisms, by virtue of Theorem \ref{T:C*envinv} we see that it suffices to show that $C^*_{e,r}(\A,\sigma)$ and $C(\Sigma_\A)$ are $*$-isomorphic. 

It follows from the Stone-Weierstrass theorem that $C^*(\sigma(A))=C(\Sigma_\A)$. Now, we note that $\eps_{\sigma,r}(1)$ is a self-adjoint projection, so that there is a unital commutative $C^*$-algebra $\fA$ such that 
\[
C_{e,r}^*(\A,\sigma)=\eps_{\sigma,r}(C(\Sigma_\A))=\fA\oplus \{0\}.
\]
The maximal ideal space of $\fA$ can be identified with a compact subset $Y\subset \Sigma_\A$. If we denote by $g$ the Gelfand transform of $\fA$, then we know that $g:\fA\to C(Y)$ is a $*$-isomorphism by the Gelfand-Naimark theorem. Moreover, note that
\[
g\circ \eps_{\sigma,r}(f)=f|_Y, \quad f\in C(\Sigma_\A).
\]
Let $\xi\in \Sigma_\A$ be a peak point for $\A$, so that there is $f\in \A$ such that $f(\xi)=1$ and $|f(x)|<1$ if $x\in \Sigma_\A\setminus\{\xi\}$. Assume that $\xi\notin Y$. Since $Y$ is compact, there is a natural number $n$ large enough so that $\|f^n|_Y\|_{C(Y)}<1/r$. Then, we see that $\|g\circ \eps_{\sigma,r}(f^n)\|<1/r$ which implies that $\|\eps_{\sigma,r}(f^n)\|<1/r$. But since $\|f^n\|=1$ and $f^n\in \A$, this contradicts Corollary \ref{C:embedding}. We thus conclude that $Y$ contains all peak points for $\A$. Since $Y$ is closed, this implies that $Y=\Sigma_\A$ (see the discussion following \cite[Theorem 11.6]{gamelin1969} or \cite[Chapter 8]{phelps2001} for instance). Thus $\fA$ is $*$-isomorphic to $C(\Sigma_\A)$. In particular, we see that $C_{e,r}^*(\A,\sigma)$ is $*$-isomorphic to $C(\Sigma_\A)$.
\end{proof}

\bibliographystyle{plain}

\bibliography{/Users/raphaelclouatre/Dropbox/Research/Shared/Chris-Raphael/biblio_main_cbshilov}

\end{document}